\DeclareFontFamily{U}{txsyc}{}
\DeclareFontShape{U}{txsyc}{m}{n}{
   <-> txsyc%
}{}
\DeclareFontShape{U}{txsyc}{bx}{n}{
   <-> txbsyc%
}{}
\DeclareFontShape{U}{txsyc}{l}{n}{<->ssub * txsyc/m/n}{}
\DeclareFontShape{U}{txsyc}{b}{n}{<->ssub * txsyc/bx/n}{}
\DeclareSymbolFont{symbolsC}{U}{txsyc}{m}{n}
\DeclareMathSymbol{\df}{\mathrel}{symbolsC}{"42}
\DeclareMathSymbol{\fd}{\mathrel}{symbolsC}{"43}
\DeclareMathSymbol{\lJoin}{\mathrel}{symbolsC}{"58}
\DeclareMathSymbol{\rJoin}{\mathrel}{symbolsC}{"59}
\newcommand{\ES}{\mathbb{E}}
\newcommand{\ER}{\mathbb{R}}
\newcommand{\EN}{\mathbb{N}}
\newcommand{\PE}{\mathbb{P}}
\newcommand{\EE}{\mathbb{E}}
\newcommand{\PP}{\mathbb{P}}
\newcommand{\RR}{\mathbb{R}}
\newcommand{\me}{\medskip}
\newcommand{\bq}{\begin{eqnarray*}}
\newcommand{\bqn}[1]{\begin{eqnarray}\label{#1}}
\newcommand{\eq}{\end{eqnarray*}}
\newcommand{\eqn}{\end{eqnarray}}
\newcommand{\thistitlepagestyle}{}
\newcommand{\ttsim}{\raise.17ex\hbox{$\scriptstyle\mathtt{\sim}$}}
\newcommand{\varlimit}{{\cal V}}
\newcommand{\Hbeta}{(\mathbf{H}^{\gamma}_{\beta})}
\newcommand{\Hr}{(\mathbf{H_r})}
\newcommand{\Hs}{(\mathbf{H}_s)}
\newcommand{\Hsc}{(\mathbf{H}_{SC}(\alpha))}
\newcommand{\Hscl}{(\mathbf{H}_{SC}(\lambda))}
\newcommand{\Hsig}{(\mathbf{H _{\boldsymbol{\sigma,p}}})}
\newcommand{\Hsiginfty}{(\mathbf{H _{\boldsymbol{\sigma,\infty}}})}
\newcommand{\Hsub}{(\mathbf{H _{\boldsymbol{Gauss,\sigma}}})}
\newcommand{\Hellip}{(\mathbf{H _{\mathcal{E}}})}
\newcommand{\tgn }{\widetilde{\gamma}_n}
\newcommand{\tgnp }{\widetilde{\gamma}_{n+1}}
\newcommand{\tGn }{\widetilde{\Gamma}_n}
\newcommand{\tGnp }{\widetilde{\Gamma}_{n+1}}
\newcommand{\tzt}{\widetilde{z}(t)}
\newcommand{\babar}{\bar}
\newcommand{\tXn}{\widetilde{X}_n}
\newcommand{\wch}{\widecheck}
\newcommand{\Ntilde}{\tilde{N}}
\def\tgnp{\widetilde{\gamma}_{n+1}}
\def\tXnp{\widetilde{X}_{n+1}}
\def\tYn{\widetilde{Y}_n}
\def\tYnp{\widetilde{Y}_{n+1}}
\def\tZn{\widetilde{Z}_n}
\def\tZnp{\widetilde{Z}_{n+1}}
\def\cZn{\widecheck{Z}_n}
\def\cXn{\widecheck{X}_n}
\def\cYn{\widecheck{Y}_n}
\def\cXnp{\widecheck{X}_{n+1}}
\def\cYnp{\widecheck{Y}_{n+1}}
\def\cZnp{\widecheck{Z}_{n+1}}
\newtheorem{pro}{Proposition} 
\newtheorem{cor}[pro]{Corollary}
\newtheorem{lem}[pro]{Lemma}
\newtheorem{theo}[pro]{Theorem}
\renewcommand{\thepro}{\arabic{pro}}
\newenvironment{rem}
{\par\me\refstepcounter{pro}\noindent{\bf Remark \thepro\ }}
{\par\hfill $\blacksquare$\par\me\noindent}
\title{Stochastic Heavy ball}
\author{Sébastien Gadat${}^\ddagger$,  Fabien Panloup${}^\dagger$
and Sofiane Saadane${}^\dagger$
}
\date{\box1
 \box2
}
\begin{document}

 \setbox1=\vbox{
 \large
 \begin{center}
 ${}^\ddagger$ Toulouse School of Economics, UMR 5604\\
Université de Toulouse,  France.\\
 \end{center}
 }
\setbox2=\vbox{
\large
\begin{center}
 ${}^\dagger$
Institut de Mathématiques de Toulouse, UMR 5219\\
Université de Toulouse and CNRS, France\\
\end{center}
} 
\setbox3=\vbox{
\hbox{\{gadat$^\ddagger$,panloup${}^\dagger$,saadane${}^\dagger$\}@math.univ-toulouse.fr\\}
\vskip2mm
\hbox{$^\ddagger$Toulouse School of Economics\\}
\hbox{Université Toulouse Capitole \\}
\hbox{21, allées de Brienne\\} 
\hbox{3100 Toulouse, France\\}
\vskip2mm
\hbox{${}^\dagger$ Institut de Mathématiques de Toulouse\\}
\hbox{Université Paul Sabatier\\}
\hbox{118, route de Narbonne\\} 
\hbox{31062 Toulouse Cedex 9, France\\}

}
\setbox5=\vbox{
\box3
}

\maketitle
\thistitlepagestyle
\abstract{This paper deals with a natural stochastic optimization procedure derived from the so-called Heavy-ball method differential equation, which was introduced by Polyak in the 1960s with his seminal contribution \cite{polyak}. The Heavy-ball method is a second-order dynamics that was investigated to minimize convex functions $f$. The family of second-order methods  recently received 
a large amount of attention, until the famous contribution of Nesterov \cite{Nesterov1}, leading to the explosion
of large-scale optimization problems.
This work provides an in-depth description of the stochastic heavy-ball method, which is an adaptation of the deterministic one when only unbiased evalutions of the gradient are available and used throughout the iterations of the algorithm. We first describe some almost sure convergence results in the case of general non-convex coercive functions $f$. We then examine the situation of convex and strongly convex potentials and derive some non-asymptotic results about the stochastic heavy-ball method. We end our study with limit theorems on several rescaled algorithms.
}
\bigskip

{\small
\textbf{Keywords}: Stochastic optimization algorithms; Second-order methods; Random dynamical systems.
 
\par
\vskip.3cm
\textbf{MSC2010:} Primary: 60J70, 35H10, 60G15, 35P15. 

}\par

\section{Introduction}

\smallskip
Finding the minimum of a function $f$ over a set $\Omega$ with an iterative  procedure is very popular among numerous scientific communities and has many applications in optimization, image processing, economics and statistics, to name a few. We refer to  \cite{NemirovskiYudin} for a general survey on optimization algorithms and discussions related to complexity theory, and to  \cite{nesterov_book,boyd} for a more focused presentation on convex optimization problems and solutions.
The most widespread approaches rely on some first-order strategies, with a sequence $(X_k)_{k \geq 0}$ that evolves over $\Omega$ with a first-order recursive formula $X_{k+1}=\Psi[X_k,f(X_k),\nabla f(X_k)]$ that uses a local approximation of $f$ at point $X_k$, where this approximation is built  with the knowledge of $f(X_k)$ and $\nabla f(X_k)$ alone. Among them, we refer to the steepest descent strategy in the convex unconstrained case, and to the Frank-Wolfe \cite{FrankWolfe} algorithm in the compact convex constrained case. A lot is known about first-order methods concerning their rates of convergence and their complexity. In comparison to second-order methods, first-order methods are generally slower and are significantly degraded on ill-conditioned optimization problems. However, the complexity of each update involved in first-order methods is relatively limited and therefore useful when dealing with a large-scale optimization problem, which is generally expensive in the case of Interior Point and Newton-like methods. 
 A second-order ``optimal" method was proposed in \cite{Nesterov1} in the 1980s' (also see \cite{BeckTeboulle} for an extension of this method with  proximal operators). The so-called \textit{Nesterov Accelerated Gradient Descent} (NAGD)  has particularly raised considerable interest due to its numerical simplicity, to its low complexity and to its mysterious behavior, making this method very attractive for large-scale machine learning problems. Among the available interpretations of NAGD, some recent advances have been proposed concerning the second-order dynamical system by \cite{candes}, being a particular case of the generalized Heavy Ball with Friction method  (referred to as HBF in the text), as previously pointed out in \cite{cabot1,cabot2}. In particular, as highlighted in \cite{cabot1}, NAGD may be seen as a specific case of HBF after a time rescaling $t=\sqrt{s}$, thus making  the acceleration explicit through this change of variable, as well as being closely linked to the modified Bessel functions when $f$ is quadratic. 
\medskip
 
A growing field of interest related to these optimization algorithms concerns the development of efficient procedures when only noisy gradients are available at each iteration of the procedure. On the practical side, this question was first introduced in the seminal contributions on stochastic approximation and optimization of \cite{RobbinsMonro} and \cite{KW}. Even though the Robbins-Monro algorithm is able to achieve an optimal $O(1/n)$ rate of convergence for strongly convex functions, its ability is highly sensitive to the step sizes used. This remark led \cite{polyakjuditsky}  to develop an averaging method that makes it possible to use longer step sizes of the Robbins-Monro algorithm, and  to then average these iterates with a Cesaro procedure so that this method produces optimal results in the minimax sense (see \cite{NemirovskiYudin}) for convex and strongly convex minimization problems, as pointed out in \cite{bach}.

On the theoretical side, numerous studies have addressed a dynamical system point of view and studied the close links between stochastic algorithms and their deterministic counterparts for some general function $f$ (\textit{i.e.}, even non convex). These links originate in the famous Kushner-Clark Theorem (see \cite{KushnerYin}) and successful improvements have been obtained using differential geometry by \cite{BenaimHirsh,Benaim} on the long-time behavior of stochastic algorithms. In particular, a growing field of interest concerns the behavior of self-interacting stochastic algorithms (see, among others, \cite{BenaimLedouxRaimond} and \cite{GadatPanloup}) because these non-Markovian processes produce interesting features from the modeling point of view (an illustration may be found in \cite{Gadat_Panloup}).

Several theoretical contributions to the study of second-order stochastic optimization algorithms exist. \cite{Lan} explores some adaptations of the NAGD in the stochastic case for composite (strongly or not) convex functions. Other authors \cite{GhadimiLan13,GhadimiLan16} obtained  convergence results for the stochastic version of a variant of NAGD for non-convex optimization for gradient Lipschitz functions but these methods cannot be used for the analysis of the Heavy-ball algorithm.
Finally, a recent work \cite{Yang} proposes a unified study of some stochastic momentum algorithms while assuming  restrictive conditions on the noise of each gradient evaluation and on the constant step size used. It should be noted that \cite{Yang} provides a preliminary result on the behavior of the stochastic momentum algorithms in the non-convex case with possible multi-well situations.
Our work aims to study the properties of a stochastic optimization algorithm naturally derived from the generalized heavy ball with friction method.

\medskip
Our paper is organized as follows: Section \ref{sec:HBF} introduces the stochastic algorithm as well as the main assumptions needed to obtain some results on this optimization algorithm. For the sake of readability, these results are then provided in Section \ref{sec:main_res} without too many technicalities.
The rest of the paper then deals with the proof of these results. Section \ref{sec:AS} is dedicated to the almost sure converegence result we can obtain in the case of a non-convex function $f$ with several local minima. Section \ref{sec:rates} establishes the convergence rates of the stochastic heavy ball in the strongly convex case. Section \ref{sec:TCL} provides a central limit theorem in a particular case of the algorithm. Appendix \ref{sec:appendix} consists of some important results on the supremum of certain random variables needed for the non-convex case.

 \section{Stochastic Heavy Ball}\label{sec:HBF}
 We begin with a brief description of what is known about the underlying dynamical system.
 
 \subsection{Deterministic Heavy Ball}

This method  introduced by Polyak in \cite{polyak}  is inspired from the physical idea of producing some inertia on the trajectory to speed up the evolution of the underlying dynamical system: a ball evolves over the graph of a function $f$ and is submitted to both damping (due to a  friction on the graph of $f$) and acceleration. More precisely, this method is a second-order dynamical system described by the following O.D.E.:
\begin{equation}\label{eq:HBF}
\ddot x_t + \gamma_t \dot x_t + \nabla f(x_t) = 0,
\end{equation}
where $(\gamma_t)_{t \geq 0}$ corresponds to the damping coefficient, which is a key parameter of the method. In particular, it is shown in \cite{cabot1} that the trajectory converges only under some restrictive conditions on the function $(\gamma_t)_{t \geq 0}$, namely:
\begin{itemize}
\item 
if $\int\limits_0^{+\infty} \gamma_s ds=\infty$, then $(f(x_t))_{t \geq 0}$ converges,  
\item  if $\int\limits_0^\infty e^{-\int\limits_0^t \gamma_sds}dt<\infty$, then $(x_t)_{t \geq 0}$ converges towards one of the minima of any convex function $f$. 
\end{itemize} 
Intuitively, these conditions translate the oscillating nature of the solutions of \eqref{eq:HBF} into a quantitative setting for the convergence of the trajectories: if $\gamma_t \longrightarrow 0$ is sufficiently fast, then the trajectory cannot converge (the limiting case being $\ddot x + \nabla f(x) = 0$).
These properties lead us to consider two natural families of functions $(\gamma_t)_{t \geq 0}$: $\gamma_t = r/t$ with $r>1$ and $\gamma_t=\gamma>0$. To convert \eqref{eq:HBF} into a tractable iterative algorithm, it is necessary to rewrite this O.D.E. using a coupled momentum equation. Consistent with \cite{cabot2}, \eqref{eq:HBF} is \textit{equivalent} to the following integro-differential equation:
\begin{equation}\label{eq:memory}
\dot{x}_t  = - \frac{1}{k(t)} \int_{0}^t h(s) \nabla f(x_s) ds,
\end{equation}
where $h$ and $k$ are two memory functions related to $\gamma$. In the natural situation of two positive increasing functions $h$ and $k$, if $(x_t)_{t \geq 0}$ is a solution of \eqref{eq:memory}, then $(\tilde{x}_s)_{s \geq 0}$ is solution of \eqref{eq:HBF} with:
$$
\tilde{x}_s = x_{\tau(s)} \quad \text{and} \quad \dot{\tau}(s) = \sqrt{(kh^{-1})(\tau(s)) } \quad \text{with} \quad 
\gamma_s= \frac{\dot k h + k \dot h}{2 h^{3/2} k^{1/2}} \circ \tau(s).
$$
We can consider two typical situations where the deterministic HBF \eqref{eq:HBF} converges (see \cite{cabot1} for further details):

\begin{itemize}
\item The \textit{ exponentially memoried HBF} corresponds to the choice $k(t) = \lambda e^{\lambda t}$ and $h(t) = e^{\lambda t}$ and to a constant damping function $\gamma_s= \sqrt{\lambda}$ when the time scale is given by $\tau(s) = \sqrt{\lambda s}$. Note that in this situation, the two convergence conditions are satisfied since:
$$
\int\limits_0^{+\infty} \gamma_s ds = \int\limits_0^{+\infty} \sqrt{\lambda} ds =+ \infty \qquad \text{and}\qquad  \int\limits_0^\infty e^{-\int\limits_0^t \gamma_sds}dt
=  \int\limits_0^\infty e^{-\sqrt{\lambda} t} dt < \infty.
$$
\item 
The \textit{ polynomially memoried  HBF} corresponds to the choice $k(t) =  t^{\alpha+1}$ and $h(t)=(\alpha+1) t^{\alpha}$ and is associated with an asymptotically vanishing damping $\gamma_s = \frac{2 \alpha+1}{s}$ and a time scale $\tau(s) = \frac{s^2}{4(\alpha+1)}$, where the choice $\alpha=1$ is associated with the NAGD (see \cite{candes} and their ``magic" constant $3=2\alpha+1$ in that case).
\end{itemize}

\subsection{Stochastic HBF}
All these remarks lead to the consideration of a natural stochastic version of \eqref{eq:memory} when $h=\dot k$. As pointed out by \cite{GadatPanloup}, the introduction of an auxiliary function $y_t=k(t)^{-1} \int_{0}^t h(s) \nabla f(x_s) ds$ makes it possible to obtain a first-order Markov evolution because
$
\dot y_t = r_t (\nabla f(x_t) - y_t)$ with $r_t=\frac{h(t)}{k(t)}$. 
 Hence, we define the stochastic Heavy Ball system as $(X_0,Y_0)=(x,y)\in\ER^{2d}$ and:

\begin{equation}
\label{eq:HBF_sto}
\left\{
\begin{aligned}
X_{n+1} &= X_n - \gamma_{n+1}Y_n\\
Y_{n+1} &= Y_n+ \gamma_{n+1}r_n(\nabla f(X_n)-Y_n) + \gamma_{n+1}r_n\Delta M_{n+1},
\end{aligned}
\right.
\end{equation}
where the natural filtration of the sequence $(X_n,Y_n)_{n\ge0}$ is denoted  $({\cal F}_n)_{n\ge1}$ and:
\begin{itemize}
\item  $(\Delta M_{n})$ is a sequence of  ${\cal F}_n)$-martingale increments. For applications, $\Delta M_{n+1}$ usually represents the difference between the 	``true" value of $\nabla f(X_n)$ and the one observed at iteration $n$ denoted $\partial_x F(X_n,\xi_n)$, where $(\xi_n)_n$ is a sequence of i.i.d. random variables and $F$ is an $\ER^d$-valued measurable function such that:
$$
\forall u \in \mathbb{R}^d \qquad 
\mathbb{E}\left[ \partial_x F( u ,\xi)\right] = \nabla f(u )
$$
In this case, 
 \begin{equation}\label{eq:standrmcase}
 \Delta M_{n+1}=\nabla f(X_n)-\partial_x F(X_n,\xi_n).
 \end{equation}
The randomness appears in  the second component of the algorithm \eqref{eq:HBF_sto}, whereas it was handled in the first component in \cite{GadatPanloup}. We will introduce some assumptions on   $f$ and  on the martingale sequence later.
\item $(\gamma_n)_{n \geq 1}$ corresponds to the step size used in the stochastic algorithm, associated with the ``time" of the algorithm represented by:
$$
\Gamma_n = \sum_{k=1}^n \gamma_k \qquad \text{such that} \qquad \lim_{n \longrightarrow + \infty} \Gamma_n = +\infty.
$$
For the sake of convenience, we also define:
$$
\Gamma_n^{(2)} = \sum_{k=1}^n \gamma_k^2,
$$
which may converge or not according to the choice of the sequence $(\gamma_k)_{k \geq 1}$.
\item $(r_n)_{n \geq 1}$ is a deterministic sequence that mimics the function $t \longmapsto r_t$ defined as:
\begin{equation}\label{eq:defr}
r_n = \frac{h(\Gamma_n)}{k(\Gamma_n)}.
\end{equation}
In particular, when an exponentially weighted HBF with $k(t)=e^{r t}$ is chosen, we have $r_n=r>0$, regardless of the value of $n$. In the other situation where $k(t) = t^r$, we obtain $r_n=r \Gamma_n^{-1}$.
\end{itemize}

\subsection{Baseline assumptions}
We introduce  some of the general assumptions we will work with below. Some of these conditions are very general, whereas others   are more specifically dedicated to the analysis of the strongly convex situation. We will use  the notation $\|.\|$ (resp. $\|.\|_F$)  below  to refer to the Euclidean norm on $\mathbb{R}^d$ (resp. the Frobenius norm on $\mathcal{M}_{d,d}(\mathbb{R})$). Finally, when  $A \in \mathcal{M}_{d,d}(\mathbb{R})$, $\|A\|_{\infty}$ will refer to the maximal size of the modulus of the coefficients of $A$: $\|A\|_{\infty}:=\sup_{i,j} |A_{i,j}|$. Our theoretical results will obviously not involve all of these hypotheses simultaneously. 

\paragraph{Function $f$} We begin with a brief enumeration of assumptions on the function $f$.

\vspace{0.5em}
\noindent
$\bullet$ Assumption $\Hs:$ $f$ is a function in $\mathcal{C}^2(\mathbb{R}^d,\mathbb{R})$ such that:

$$
 \lim_{|x| \longrightarrow + \infty} f(x) = + \infty \qquad \text{and} \qquad \|D^2 f\|_\infty:=\sup_{x\in\ER^d} \|D^2 f(x) \|_F<+\infty
 \qquad \text{and} \qquad \|\nabla f\|^2 \leq c_f f.
 $$
 \vspace{0.5em}

The assumption $\Hs$ is weak: it essentially requires that $f$  be smooth, coercive and have, at the most,
 a quadratic growth on $\infty$. In particular, no convexity hypothesis is made when $f$ satisfies $\Hs$. It would be possible to extend most of our results to the situation where $f$ is $L$-smooth (with a $L$-Lipschitz gradient), but we preferred to work with a slightly more stringent condition to avoid additional technicalities.
%
%

\vspace{0.5em}

\noindent
$\bullet$ Assumption $\Hsc:$  $f$ is a convex function such that 
$\alpha =\inf_{x\in\ER^d} \text{Sp}\left(D^2 f(x) \right)>0$ and $D^2f$ is Lipschitz.
\vspace{0.5em}

In particular, $\Hsc$ implies that $f$ is 
 $\alpha$-strongly convex, meaning that:
$$
\forall (x,y) \in \mathbb{R}^d \times \mathbb{R}^d \qquad f(x) \geq f(y) + \langle \nabla f (y),x-y\rangle + \frac{\alpha}{2} \|x-y\|^2.
$$
Of course, $\Hsc$ is still standard and is the most favorable case when dealing with convex optimization problems, leading to the best possible achievable rates. $\Hsc$ translates the fact that the spectrum of the Hessian matrix at point $x$, denoted by $\text{Sp}\left(D^2 f(x) \right)$, is lower bounded by $\alpha>0$, uniformly over $\mathbb{R}^d$. The fact that $D^2 f$ is  assumed to be Lipschitz will be useful to achieve convergence rates in Section \ref{sec:non_quadra}.

\paragraph{Noise sequence $(\Delta M_{n+1})_{n \geq 1}$}
We will essentially use  three types of  assumptions alternatively on the noise of the stochastic algorithm \eqref{eq:HBF_sto}. The first and second assumptions are concerned with a concentration-like hypothesis. The first one is very weak and asserts that the noise has a bounded $\mathbb{L}^2$ norm.

\vspace{0.5em}

\noindent
$\bullet$ Assumption $\Hsig:$ ($p\ge1$) For any integer $n$, we have:
$$\mathbb{E}(\Vert\Delta M_{n+1}\Vert^{p}|\mathcal{F}_n) \leq \sigma^2(1+f(X_n))^p.$$

\vspace{0.5em}

The assumption $(\mathbf{H _{\boldsymbol{\sigma,2}}})$ is  a standard   convergence assumption for general stochastic algorithms. For some non-asymptotic rates of convergence results, we will rely on $\Hsig$ for any $p\ge1$. In this case, 
we will denote the assumption by $\Hsiginfty$. Finally, let us note that the condition could be slightly alleviated by replacing the right-hand member by $\sigma^2(1+f(X_n)+|Y_n|^2)^p$. However, in view of the standard case \eqref{eq:standrmcase}, this improvement has little interest in practice, which explains our choice.

\vspace{0.5em}

\noindent
$\bullet$ Assumption $\Hsub:$ For any integer $n$, the Laplace transform of the noise satisfies:
$$
\forall t \geq 0 \qquad 
\mathbb{E}\left[  \exp(t \Delta M_{n+1})\vert \mathcal{F}_n \right] \leq e^{\frac{\sigma^2 t^2}{2}}.
$$

\vspace{0.5em}

This hypothesis is much stronger than $\Hsig$ and translates a sub-Gaussian behavior of $(\Delta M_{n+1})_{n \geq 1}$. In particular, it can be easily shown that $\Hsub$ implies $\Hsig$. Hence, $\Hsub$ is somewhat restrictive and will be used only to obtain one important result in the non-convex situation for the almost sure limit of the stochastic heavy ball with multiple wells.

\vspace{0.5em}

\noindent
$\bullet$ Assumption $\Hellip:$ For any iteration $n$, the noise of the stochastic algorithm satisfies:
$$
\forall v \in \mathcal{S}^1_{\mathbb{R}^d} \qquad 
  \mathbb{E} \left( \left| \langle \Delta M_{n},v \rangle \right| \, \vert X_n,Y_n \right)  \geq c_v > 0.$$
  
\vspace{0.5em}

This assumption will be essential  to derive an almost sure convergence result towards minimizers of $f$. Roughly speaking, this assumption states that the noise is uniformly elliptic given any current position of the algorithm at step $n$: the projection of the noise has a non-vanishing component  over all directions $v$. We will use this assumption to guarantee the ability of \eqref{eq:HBF_sto} to get out of any unstable point.

\paragraph{Step sizes} One important step in the use of stochastic minimization algorithms relies on an efficient choice of the step sizes involved in the recursive formula (\textit{e.g.} in Equation \ref{eq:HBF_sto}). We will deal with the following sequences  $(\gamma_n)_{n \geq 0}$ below.

\vspace{0.5em}

\noindent
$\bullet$ Assumption $\Hbeta:$ The sequence $(\gamma_n)_{n \geq 0}$ satisfies:

$$
\forall n \in \mathbb{N}\qquad \gamma_n=\frac{\gamma}{n^{\beta}} \qquad \text{with} \qquad \beta \in (0,1],
$$
leading to:
$$
 \forall \beta \in (0,1) \qquad \Gamma_n \sim \frac{\gamma}{1-\beta} n^{1-\beta} \qquad \text{whereas} \qquad \Gamma_n \sim \gamma \log n \quad \text{when} \quad \beta=1.
$$

\paragraph{Memory size}

We consider  the exponentially and polynomially-weighted HBF as a unique stochastic algorithm parameterized by the memory function $(r_n)_{n \geq 1}$. 
From the definition of $r_n$ given in \eqref{eq:defr}, we note that in the exponential case, $r_n=r$ remains constant while the inertia brought by the memory term in the polynomial case $(r_n)_{n \in \mathbb{N}}$ is defined by $r_n = \frac{r}{\Gamma_n}.$ 
Under Assumption  $\Hbeta$, we can show that regardless of the memory, we have:
$$
\sum_{n \in \mathbb{N}} \gamma_n r_n = + \infty.
$$
This is true when $r_n=r$ because $\gamma_n = \gamma n^{-\beta}$ with $\beta\leq 1$. It is also true when we deal with a polynomial memory since in that case:
\begin{itemize}
\item   if $\beta< 1$, then $\gamma_n r_n \sim \gamma n^{-\beta} \times r(1-\beta) \gamma^{-1} n^{-1+\beta} \sim r (1-\beta) n^{-1}$
\item  if $\beta=1$, then $\gamma_n r_n \displaystyle \sim \frac{r}{ n \log n }$ and $\sum_{k \leq n} \gamma_k r_k \sim \log(\log n)$.
\end{itemize}
 \noindent
 Similarly, we also have that in the polynomial case, regardless of $\beta$:
 $$
 \sum_{n} \gamma_n^2 r_n < + \infty,
 $$
 although this bound holds in the exponential situation when $\beta>1/2$. 
 Below, we will use these properties on the sequences $(\gamma_n)_{n \geq 0}$ and $(r_n)_{n \geq 0}$ and define the next set of assumptions:

\vspace{0.5em} 

 \noindent
$ \bullet$ Assumption $\Hr$: The sequence $(r_n)_{n \geq 0}$ is a non-increasing sequence such that:
 
 $$\sum_{n\ge 1}\gamma_{n+1} r_n = +\infty \qquad \text{and} \qquad \sum_{n\ge1}\gamma_{n+1}^2 r_n<+\infty \qquad \text{and} \qquad  \limsup_{n\rightarrow+\infty}\frac{1}{2\gamma_{n+1}}\left(\frac{1}{r_{n}}-\frac{1}{r_{n-1}}\right)=:c_r<1.$$
 
\vspace{0.5em}

\noindent
In the exponential case,   $c_r=0$, whereas if $r_n=r/{\Gamma}_n$, it can be shown that $c_r=\frac{1}{2r}$ and the last point is true when $r>1/2$.
In any case, $r_{\infty}$ will refer to the limiting value of $r_n$ when $n \longrightarrow + \infty$, which is either $0$ or $r>0$.

\subsection{Main results}\label{sec:main_res}

 Section \ref{sec:AS} is dedicated to the situation of a general coercive function $f$. We obtain the almost sure convergence of the stochastic HBF towards a critical point of $f$.

\begin{theo}\label{theo:as}
Assume that $f$ satisfies $\Hs$, that $(\mathbf{H _{\boldsymbol{\sigma,2}}})$ holds and that and  the sequences $(\gamma_n)_{n \geq 1}$ and $(r_n)_{n\ge1}$ are  chosen such that $\Hbeta$ and $\Hr$ are fulfilled. If for any $z$, 
$\{x, f(x)=z\}\cap\{x,\nabla f(x)=0\}$ is locally finite, then $(X_n)$ a.s. converges  towards a critical point of $f$.
\end{theo}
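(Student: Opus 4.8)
The plan is to build a discrete energy functional, run the Robbins--Siegmund almost-supermartingale lemma on it, and then identify the limit by a dynamical-systems argument that the local finiteness hypothesis turns into genuine convergence. Since coercivity together with $\|\nabla f\|^2\le c_f f$ forces $f$ to be bounded below, I may assume $f\ge 0$. The natural candidate is the mechanical energy
\[
V_n \;=\; f(X_n)+\frac{1}{2r_{n-1}}\,\|Y_n\|^2,
\]
i.e. potential plus kinetic energy, where the weight $1/r_{n-1}$ is chosen precisely so that the two cross terms produced below cancel.

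First I would expand $\mathbb{E}[V_{n+1}\mid\mathcal F_n]-V_n$ using \eqref{eq:HBF_sto}. A second-order Taylor expansion of $f$ at $X_n$, legitimate because $\|D^2 f\|_\infty<\infty$ under $\Hs$, yields the cross term $-\gamma_{n+1}\langle\nabla f(X_n),Y_n\rangle$ plus a remainder $O(\gamma_{n+1}^2\|Y_n\|^2)$. Splitting $\frac{1}{2r_n}\|Y_{n+1}\|^2-\frac{1}{2r_{n-1}}\|Y_n\|^2=\frac{1}{2r_n}(\|Y_{n+1}\|^2-\|Y_n\|^2)+\big(\frac{1}{2r_n}-\frac{1}{2r_{n-1}}\big)\|Y_n\|^2$ and taking the conditional expectation (the martingale increment $\Delta M_{n+1}$ vanishes), the factor $\frac{1}{2r_n}\cdot 2\gamma_{n+1}r_n=\gamma_{n+1}$ makes $+\gamma_{n+1}\langle\nabla f(X_n),Y_n\rangle$ cancel the contribution from $f$, leaving the friction dissipation $-\gamma_{n+1}\|Y_n\|^2$. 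The index-shift term equals $\frac12\big(\frac1{r_n}-\frac1{r_{n-1}}\big)\|Y_n\|^2\le(c_r+o(1))\gamma_{n+1}\|Y_n\|^2$ by the third clause of $\Hr$, so the net kinetic contribution is $-(1-c_r-o(1))\gamma_{n+1}\|Y_n\|^2$, which is strictly dissipative for large $n$ since $c_r<1$. The remaining second-order pieces, namely $\frac{\gamma_{n+1}^2 r_n}{2}\|\nabla f(X_n)-Y_n\|^2$ and the noise term $\frac{\gamma_{n+1}^2 r_n}{2}\mathbb{E}[\|\Delta M_{n+1}\|^2\mid\mathcal F_n]$, are controlled with $\|\nabla f\|^2\le c_f f\le c_f V_n$ and $(\mathbf{H _{\boldsymbol{\sigma,2}}})$; using $\sum\gamma_{n+1}^2 r_n<\infty$ from $\Hr$ they take the Robbins--Siegmund form $a_n V_n+b_n$ with $\sum a_n,\sum b_n<\infty$.

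The one delicate point in this step is that $(\mathbf{H _{\boldsymbol{\sigma,2}}})$ only controls $\mathbb{E}[\|\Delta M_{n+1}\|^2\mid\mathcal F_n]$ by $\sigma^2(1+f(X_n))^2$, which is quadratic in $V_n$. To stay within the Robbins--Siegmund format I would localize at $\tau_K=\inf\{n:V_n>K\}$, where $(1+f(X_n))^2\le(1+K)(1+V_n)$ is linear, deduce $\sup_n V_{n\wedge\tau_K}<\infty$ and $\mathbb{E}[V_{n\wedge\tau_K}]\le C$ uniformly, and then remove the truncation via $\mathbb{P}(\tau_K<\infty)\le C/K\to0$. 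The lemma then gives, almost surely, that $V_n$ converges and that $\sum_n\gamma_{n+1}\|Y_n\|^2<\infty$. In particular $f(X_n)$ is bounded, hence $(X_n)$ is bounded by coercivity; moreover $\sum_n\gamma_{n+1}^2\|Y_n\|^2\le\gamma_1\sum_n\gamma_{n+1}\|Y_n\|^2<\infty$ forces $\gamma_{n+1}\|Y_n\|\to0$, i.e. $X_{n+1}-X_n\to0$, while $\sum_n\gamma_{n+1}=\infty$ yields $\liminf_n\|Y_n\|=0$.

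It remains to identify the limit, and this is where I expect the \emph{main obstacle}. Since the conditional $L^2$-increments of the noise martingale $\sum_k\gamma_{k+1}r_k\Delta M_{k+1}$ are a.s. summable (again from $\sum\gamma_{n+1}^2 r_n<\infty$ and the boundedness of $f(X_n)$), that martingale converges a.s., so $(X_n,Y_n)$ shadows the mean-field flow $\dot x=-y,\ \dot y=r(\nabla f(x)-y)$, for which $V=f+\frac1{2r}\|y\|^2$ is a strict Lyapunov function with $\dot V=-\|y\|^2$. A LaSalle invariance argument then forces the admissible asymptotic regime to be $\{y=0,\ \nabla f(x)=0\}$, giving $Y_n\to0$, $\nabla f(X_n)\to0$, and convergence of $f(X_n)$ to the limit $V_\infty$. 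The genuinely hard case is the vanishing-friction (polynomial) regime $r_n=r/\Gamma_n\to0$, where the mean dynamics is non-autonomous and $X,Y$ evolve on different scales; here the quantitative dissipation $\sum_n\gamma_{n+1}\|Y_n\|^2<\infty$ together with $\liminf_n\|Y_n\|=0$ is the tool to rule out persistent excursions away from critical points. Once all limit points are known to be critical and to lie at the single level $V_\infty$, the endgame is soft: the accumulation set $\mathcal L$ of $(X_n)$ is compact (by coercivity) and connected (because $X_{n+1}-X_n\to0$), and it is contained in $\{\nabla f=0\}\cap\{f=V_\infty\}$, which is locally finite by hypothesis; a compact subset of a locally finite set is finite, and a connected finite set is a singleton. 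Hence $(X_n)$ converges to a single critical point of $f$.
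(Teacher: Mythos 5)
Your first step is sound and is in fact the degenerate case $b=0$ of the paper's own construction: the paper's Lyapunov function \eqref{eq:defVn} contains, besides your two energy terms, the hypocoercive cross term $-b\langle \nabla f(x),y\rangle$, and dropping it loses exactly the estimate your limit-identification stage needs. That cross term is what produces the extra dissipation $-b\gamma_{n+1}r_n\|\nabla f(X_n)\|^2$ in \eqref{eq:meanrev}, whence Corollary \ref{coro:asconv}$(iii)$: $\sum_n \gamma_{n+1}r_n\bigl(\tfrac{\|Y_n\|^2}{r_n}+\|\nabla f(X_n)\|^2\bigr)<\infty$ a.s., with $\sum\gamma_{n+1}r_n=\infty$ by $\Hr$. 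Your functional yields only the $\|Y_n\|^2$ half. In the exponential regime this is survivable, because there the ODE clock coincides with $\gamma_n$: from $\sum\gamma_{n+1}\|Y_n\|^2<\infty$ a Fatou argument on the shifted interpolated trajectories forces any limit trajectory to have $y\equiv 0$, and then $\dot y=r(\nabla f(x)-y)$ forces $\nabla f(x)\equiv 0$; note, however, that you would have to argue this way, because your black-box LaSalle citation is itself gapped: Bena\"im-type ``ICT sets lie in $\Lambda$'' results require the set of critical \emph{values} $f(\{\nabla f=0\})$ to have empty interior, which does not follow from the stated per-level local finiteness (Sard is unavailable for $C^2$ functions once $d\ge 3$). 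The paper sidesteps all of this by proving directly, via \eqref{eq:nntAPT} and \emph{both} halves of $(iii)$, that the shifted trajectories become constant, so every limit point is an equilibrium; your endgame (compact connected limit set inside a locally finite set is a singleton) then agrees with the paper's.

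The genuine gap is the polynomial case, which you yourself flag as the main obstacle but do not resolve — and it cannot be resolved with your ingredients. After the rescaling $\widetilde Y_n=Y_n/\sqrt{r_n}$ (paper's Lemma \ref{lem:decompZNcheche}), the limiting dynamics is $\dot x=-\widetilde y$, $\dot{\widetilde y}=\nabla f(x)$: a \emph{conservative} Newtonian system with no friction, for which $E(x,\widetilde y)=f(x)+\tfrac12\|\widetilde y\|^2$ is exactly conserved, so invariance/LaSalle yields nothing (periodic orbits and homoclinic structures inside $\{E=V_\infty\}$ are invariant and consistent with everything you have proved). Your two remaining tools are powerless here: $\liminf_n\|Y_n\|=0$ is automatic and uninformative, since $\|Y_n\|\le\sqrt{2r_{n-1}V_n}\to 0$ whatever the dynamics; and $\sum\gamma_{n+1}\|Y_n\|^2=\sum\gamma_{n+1}r_n\|\widetilde Y_n\|^2$ carries the weight $\gamma_n r_n\sim c/n$, whereas the rescaled clock is $\widetilde\gamma_n=\gamma_n\sqrt{r_n}\sim c\,n^{-(1+\beta)/2}\gg \gamma_n r_n$, so finiteness of this series does not make the kinetic energy $\int_0^T\|\widetilde y^{(n)}(s)\|^2\,ds$ over rescaled-time windows vanish (at best it gives a liminf along sparse windows, not ``all limit points are equilibria''). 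This is precisely the job of the gradient dissipation coming from the cross term. Separately, your truncation patch for the quadratic noise bound does not close: on $\{n<\tau_K\}$ the Robbins--Siegmund coefficients scale like $(1+K)\gamma_{n+1}^2 r_n$, so the uniform estimate behind your Chebyshev step is $\sup_n\mathbb{E}[V_{n\wedge\tau_K}]\le C_K$ with $C_K$ of order $e^{cK\sum\gamma_n^2 r_n}$, and $C_K/K\not\to 0$; the paper instead works (as in the proof of Lemma \ref{lem:11111}) with $\mathbb{E}[\|\Delta M_{n+1}\|^2|\mathcal F_n]\lesssim 1+f(X_n)$, the linear-in-$f$ reading of the noise assumption consistent with the standard case \eqref{eq:standrmcase}, and needs no truncation at all.
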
 
 
\noindent This result obviously implies the convergence when $f$ has a unique critical point. In the next theorem, we focus on the case where this uniqueness assumption fails, under the additional elliptic assumption 
$\Hellip$.

\begin{theo}\label{theo:asmin}
Assume that $f$ satisfies $\Hs$, that  the noise is elliptic, \textit{i.e.},  $\Hellip$ holds, and  the sequence $(\gamma_n)_{n \geq 1}$ is  chosen such that $\Hbeta$ and $\Hr$ are fulfilled. If for any $z$, $\{x, f(x)=z\}\cap\{x,\nabla f(x)=0\}$ is locally finite, we have:
\begin{itemize}
\item[$(a)$] If  $r_n=r$ (exponential memory) and  $(\mathbf{H _{\boldsymbol{\sigma,2}}})$ holds, then $(X_n)$ a.s. converges  towards a local minimum of $f$.
\item[$(b)$] If $r_n=r \Gamma_n^{-1}$ and the noise is sub-Gaussian, \textit{i.e.}, $\Hsub$ holds,  then $(X_n)$ $a.s.$ converges  towards a local minimum of $f$ when $\beta<1/3$.
\end{itemize}
\end{theo}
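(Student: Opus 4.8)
**

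The plan is to build on Theorem~\ref{theo:as}, which already guarantees almost sure convergence of $(X_n)$ towards some critical point of $f$ under the weaker hypotheses. The additional ellipticity assumption $\Hellip$ is designed precisely to exclude convergence to unstable critical points (saddle points and local maxima), so the strategy for both $(a)$ and $(b)$ is a \emph{nonconvergence-to-traps} argument: I would show that the probability of the algorithm being trapped near any critical point $x^\star$ where the Hessian $D^2 f(x^\star)$ admits a negative eigenvalue is zero. Combined with Theorem~\ref{theo:as} and the hypothesis that level sets intersected with critical points are locally finite, this forces the limit to be a critical point with nonnegative Hessian; a little more care using the strict instability in the unstable directions upgrades ``nonnegative Hessian'' to ``local minimum.''

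\textbf{Main line of argument.}
First I would fix an unstable critical point $x^\star$, i.e.\ a point where $\nabla f(x^\star)=0$ and $D^2 f(x^\star)$ has at least one negative eigenvalue, and linearize the coupled recursion \eqref{eq:HBF_sto} around the equilibrium $(x^\star,0)$. Writing $Z_n=(X_n-x^\star,Y_n)$, the deterministic part of the dynamics is governed near $x^\star$ by a $2d\times 2d$ matrix of block form $\begin{pmatrix} 0 & -\Id \\ r_n D^2 f(x^\star) & -r_n \Id\end{pmatrix}$ (up to the step-size scaling $\gamma_{n+1}$), and the presence of a negative eigenvalue of $D^2 f(x^\star)$ produces an expanding direction for this linearized map. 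The key is then to project the noise-driven recursion onto such an expanding eigendirection $v$ and show that the component cannot stay small. Here $\Hellip$ enters decisively: it guarantees that $\mathbb{E}(|\langle \Delta M_n, v\rangle|\mid \cF_{n-1})\ge c_v>0$, so the martingale increments keep injecting a nonnegligible ``kick'' into the unstable direction at every step, preventing the algorithm from being asymptotically captured by the saddle. The standard device is a supermartingale/repulsion estimate (in the spirit of Brandi\`ere--Duflo and the Pemantle-type results underlying \cite{BenaimHirsh,Benaim}) showing $\PP(Z_n\to 0)=0$.

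\textbf{Where the two regimes split.}
The two cases differ in the quantitative balance between the \emph{expansion rate} of the linearized dynamics along the unstable direction and the \emph{effective noise level} injected there, both measured in the algorithm's intrinsic time $\Gamma_n$. In case $(a)$, $r_n=r$ is constant, so the time-rescaled dynamics near $x^\star$ is autonomous with a genuinely hyperbolic unstable manifold, the expansion is exponential in $\Gamma_n$, and the $\mathbb{L}^2$ noise control from $(\mathbf{H _{\boldsymbol{\sigma,2}}})$ suffices to run the repulsion argument. In case $(b)$ the memory $r_n=r\Gamma_n^{-1}$ decays, which weakens the effective expansion along the unstable direction (the instability now grows only polynomially in $\Gamma_n$, with exponent tied to the negative eigenvalue and to $r$), so a much finer control of the noise fluctuations is needed to certify escape, and this is why the sub-Gaussian hypothesis $\Hsub$ and the restriction $\beta<1/3$ appear. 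Concretely I expect $\beta<1/3$ to be exactly the threshold under which the variance accumulated along the slowly expanding direction, $\sum \gamma_{k+1}^2 r_k^2$ reweighted by the expansion factors, remains dominated by the deterministic growth; the sub-Gaussian tails are then used to turn an in-probability escape into an almost sure one via a Borel--Cantelli / exponential-concentration argument over a sequence of excursions.

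\textbf{Main obstacle.}
The hardest part will be case $(b)$: with vanishing memory the linearized system is non-autonomous and only \emph{marginally} hyperbolic, so the clean exponential repulsion of case $(a)$ is unavailable and one must track the precise polynomial rate of separation from the saddle against the noise accumulation. Identifying the correct Lyapunov functional adapted to the expanding eigenspace of the time-dependent block matrix, controlling the cross terms between the $X$ and $Y$ coordinates induced by the coupling, and verifying that the sub-Gaussian estimate indeed yields a summable bound exactly when $\beta<1/3$, is where the real technical work lies; the constant-memory case $(a)$ should follow more directly from the classical hyperbolic-equilibrium avoidance machinery once the linearization and the role of $\Hellip$ are set up.
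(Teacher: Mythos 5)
Your outline for part $(a)$ coincides with the paper's proof: linearize $h(x,y)=(-y,\,r\nabla f(x)-ry)$ at $(x_0,0)$, note that a negative eigenvalue $\lambda$ of $D^2f(x_0)$ yields the positive eigenvalue $-\frac{r}{2}+\sqrt{\frac{r^2}{4}-r\lambda}$ of the $2\times 2$ block $A_{\lambda,r}$, use $\Hellip$ to lower-bound the noise projected on the corresponding eigendirection $e_\lambda^+$, and invoke Theorem 1 of Brandi\`ere--Duflo; nothing essential is missing there. The genuine gap is in part $(b)$. You propose to run the repulsion argument on the \emph{unrescaled} system, treated as a non-autonomous ``marginally hyperbolic'' linearization with polynomial expansion, and you conjecture that $\beta<1/3$ is the threshold at which the accumulated variance $\sum\gamma_{k+1}^2r_k^2$, reweighted by expansion factors, stays dominated by the deterministic growth. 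This plan stalls at its first step: since $r_n=r\Gamma_n^{-1}\to 0$, the linearized block $\left(\begin{smallmatrix}0&-\Id\\ \lambda r_n&-r_n\Id\end{smallmatrix}\right)$ has \emph{all} of its eigenvalues tending to $0$ (the unstable one behaves like $\sqrt{-\lambda r_n}$) and time-dependent eigenvectors, so there is no fixed expanding direction onto which to project the noise, and neither Brandi\`ere--Duflo nor the bounded-noise Pemantle/Bena\"im scheme applies as such; ``tracking the precise polynomial rate of separation'' is exactly the part your sketch leaves unsupplied.

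The paper's key idea, absent from your proposal, is a normalization rather than a finer analysis of the degenerate system: set $\tilde{X}_n=X_n$, $\tilde{Y}_n=\sqrt{\Gamma_n}\,Y_n$, with effective step $\tilde{\gamma}_{n+1}=\gamma_{n+1}\Gamma_n^{-1/2}$. The rescaled recursion \eqref{eq:algo_normalized} is then asymptotically \emph{autonomous} with limit ODE $\dot z=F(z)=(-y,\,r\nabla f(x))$, which is genuinely hyperbolic at $(x_\infty,0)$ once $D^2f(x_\infty)$ has a negative eigenvalue, at the price of an inhomogeneous remainder $U_{n+1}=O(\Gamma_n^{-1/2})$. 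Repulsion is quantified not via an eigenprojection but via the Poincar\'e-lemma function $\eta$ of Proposition \ref{prop:poincare} and the sequences $\Omega_n,S_n$ of \eqref{eq:def_omega_S}. Moreover, the constraint $\beta<1/3$ does not arise from your variance-accumulation balance but from the \emph{drift} remainder: in Proposition \ref{prop:key} the favorable sign of $\mathbb{E}[\Omega_{n+1}\,\vert\,\cF_n]$ only holds above a level $\epsilon_n\sim C_1\tilde{\gamma}_{n+1}^{\alpha}+C_2\Gamma_n^{-1/2}$, optimized at $\alpha=(1-\beta)/(1+\beta)$, and the submartingale estimate $\mathbb{E}[S_{n+1}^2-S_n^2\,\vert\,\cF_n]\ge a\tilde{\gamma}_{n+1}^2$ forces $\epsilon_n\tilde{\gamma}_{n+1}\Gamma_n^{-1/2}=o(\tilde{\gamma}_{n+1}^2)$, i.e.\ $\alpha>1/2$, i.e.\ $\beta<1/3$. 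Finally, $\Hsub$ is not used to convert an in-probability escape into an almost sure one by Borel--Cantelli over excursions: it enters once, in Step 1 of the proof of Theorem \ref{theo:aspoly}, to bound $\mathbb{E}\bigl[\sup_{k\ge n}\tilde{\gamma}_k^2\|\Delta M_k\|^2\bigr]\lesssim \tilde{\gamma}_n^2\log(\tilde{\gamma}_n^{-2})$ (Theorem \ref{theo:sup_sub}, proved by coupling each sub-Gaussian coordinate with a Gaussian and chaining), which is precisely what replaces the boundedness of the martingale increments assumed in Pemantle and Bena\"im; the almost sure conclusion then comes from the conditional-probability argument of Step 3 together with the invariance argument showing $S_n\to 0$ on $\{\mathcal{T}_{\mathcal{N}}=+\infty\}$. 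Without the $\sqrt{\Gamma_n}$ rescaling, the $\eta$-scheme, and this supremum bound, your outline for $(b)$ cannot be completed as stated.
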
 
\begin{rem} $\rhd$ 
The previous result provides some guarantees when $f$ is a multiwell potential. In $(a)$, we consider the exponentially weighted HBF and show that the convergence towards a local minimum of $f$ always holds under the additional
assumption $\Hellip$.
To derive this result, we will essentially use the former results of \cite{BrandiereDuflo} on ``homogeneous" stochastic algorithms.

\noindent $\rhd$ 
Point $(b)$ is concerned by polynomially-weighted HBF and deserves more comment:
\begin{itemize}
\item  First, the result is rather difficult because of the time inhomogeneity of the stochastic algorithm, which can be written as $Z_{n+1} = Z_n + \gamma_{n+1} F_n(Z_n)  + \gamma_{n+1} \Delta M_{n+1}$: the drift term $F_n$ depends on $Z_n$ and on the integer $n$, which will induce technical difficulties in the proof of the result. In particular, the assumption $\beta<1/3$ will be necessary to obtain a good lower bound of the drift term in the unstable manifold direction with the help of the Poincar\'e Lemma near hyperbolic equilibrium of a differential equation.
\item Second, the sub-Gaussian assumption 
$\Hsub$ is less general than $(\mathbf{H _{\boldsymbol{\sigma,2}}})$ even though it is still a reasonable assumption within the framework of a stochastic algorithm. To prove $(b)$, we will need to control the fluctuations of the stochastic algorithm around its deterministic drift, which will be quantified by the expectation of the random variable $\sup_{k \geq n} \gamma_k^{2} \|\Delta M_{k}\|^2$. The sub-Gaussian assumption will be mainly used to obtain an upper bound of such an expectation, with the help of a coupling argument. Our proof will follow a strategy used in \cite{Pemantle} and \cite{Benaim} where this kind of expectation has to be upper bounded. Nevertheless, the novelty of our work is also to generalize the approach to unbounded martingale increments: the arguments of \cite{Pemantle,Benaim} are only valid for a bounded martingale increment, which is a somewhat restrictive framework.
\end{itemize}
\end{rem}
 \noindent
In Section \ref{sec:rates}, we focus on the consistency rate under stronger assumptions on the convexity of $f$. In the exponential memory case, we are able to control the quadratic error and to establish a CLT for the stochastic algorithm under the general assumption  $\Hsc$. In the polynomial case, the problem is more involved and we propose  a result for the quadratic error only when $f$ is a quadratic function (see Remark \ref{rem:commentsL2rate} for further comments on this restriction).
More precisely,  using the notation $\lesssim$ to refer to an inequality, up to a universal multiplicative constant, we establish the following results.

\begin{theo}\label{theo:rates}
Denote by $x^\star$ the unique minimizer of $f$ and assume that  $\Hbeta$, $\Hs$, $\Hsc$ and $(\mathbf{H _{\boldsymbol{\sigma,2}}})$  hold, we have:
\begin{itemize}
\item[$(a)$]When  $r_n=r$ (exponential memory) and       $\beta<1$, we have:
$$
\mathbb{E} \left[\|X_n-x^\star\|^2  + \|Y_n\|^2\right] \lesssim \gamma_n
$$
If  $\Hsiginfty$ holds and $\beta=1$, set $\alpha_r= r\left(1-\sqrt{1-\frac{(4\underline{\lambda}) \wedge r}{r}}\right)$ where $\underline{\lambda}$ denotes the smallest eigenvalue of $D^2 f(x^\star)$.
 We have, for any $\varepsilon>0$:
\begin{equation*}
\mathbb{E}  \left[\|X_n-x^\star\|^2  + \|Y_n\|^2\right] \lesssim \begin{cases} n^{-1}&\textnormal{if $\gamma\alpha_r>1$}\\
n^{-\alpha_r+\varepsilon} &\textnormal{if $\gamma\alpha_r\le 1$}.
\end{cases}
\end{equation*}
\item[$(b)$] Let $f:\ER^d\rightarrow\ER$ be a quadratic function. Assume that $r_n=r \Gamma_n^{-1}$ (polynomial memory) with $\beta<1$.  Then, if  $r>\frac{1+\beta}{2(1-\beta)}$, we have:
$$
\mathbb{E}  \left[\|X_n-x^\star\|^2  + \Gamma_n \|Y_n\|^2 \right]\lesssim \gamma_n
$$
 When $r_n=r \Gamma_n^{-1}$ (polynomial memory) and $\beta=1$, we have:
$$
\mathbb{E}\left[ \|X_n-x^\star\|^2  + \log n \|Y_n\|^2\right] \lesssim \frac{1}{\log n}.
$$
\end{itemize}
\end{theo}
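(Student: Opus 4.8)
The engine of the proof is a modified energy (Lyapunov) functional, quadratic in the pair $(X_n-x^\star,Y_n)$ and carrying a cross term, whose conditional expectation contracts by a factor $1-c\gamma_{n+1}$ at each step. This mimics the continuous-time mechanism: the bare energy $r(f(x_t)-f(x^\star))+\tfrac12\|y_t\|^2$ only dissipates along $\|y_t\|^2$, but adding a cross term $-\epsilon\langle x_t-x^\star,y_t\rangle$ and invoking strong convexity $\langle\nabla f(x),x-x^\star\rangle\ge\alpha\|x-x^\star\|^2$ turns it into a genuine contraction $\le-c(\|x_t-x^\star\|^2+\|y_t\|^2)$ for $\epsilon$ small. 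Accordingly, the plan is: (i) define a discrete analogue $V_n$; (ii) expand $\mathbb{E}[V_{n+1}\mid\mathcal F_n]$, controlling the $\gamma_{n+1}^2$ discretization remainders through $\|D^2f\|_\infty$ (assumption $\Hs$) and cancelling the linear noise contribution via the martingale property of $\Delta M_{n+1}$; (iii) reduce to a scalar inequality $u_{n+1}\le(1-c_n)u_n+R_n$ for $u_n=\mathbb{E}[V_n]$, and close it with a standard comparison lemma for nonnegative sequences adapted to $\gamma_n=\gamma n^{-\beta}$.

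For part $(a)$ I would take $V_n=(f(X_n)-f(x^\star))+\tfrac{1}{2r}\|Y_n\|^2-\epsilon\langle X_n-x^\star,Y_n\rangle$. The weight $\tfrac1{2r}$ on $\|Y_n\|^2$ is dictated by the requirement that the first-order term $\langle\nabla f(X_n),Y_n\rangle$ cancel; what survives, after a Young inequality on the remaining cross term, is $\mathbb{E}[V_{n+1}\mid\mathcal F_n]\le(1-c\gamma_{n+1})V_n+R_n$ with $V_n\asymp\|X_n-x^\star\|^2+\|Y_n\|^2$ (by $\Hs$ and $\Hsc$). For $\beta<1$ any positive contraction constant $c$ suffices: the $\gamma_{n+1}^2$ noise floor dominates and unrolling yields $u_n\lesssim\gamma_n$. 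For $\beta=1$ the rate enters the exponent, so the crude $c$ must be replaced by the sharp decay rate of the linearised drift $\begin{pmatrix}0&-I\\ rD^2f(x^\star)&-rI\end{pmatrix}$; its block spectrum $\mu=\tfrac12(-r\pm\sqrt{r^2-4r\lambda})$, $\lambda\in\Sp(D^2f(x^\star))$, produces exactly the squared-norm rate $\alpha_r$ (the cases $r\le4\underline{\lambda}$ and $r>4\underline{\lambda}$ giving the under- and over-damped regimes). Here one linearises $\nabla f$ around $x^\star$, treating the quadratic remainder (legitimate since $D^2f$ is Lipschitz) as a perturbation absorbed by the contraction once $X_n$ is close to $x^\star$. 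Unrolling a recursion of the form $u_{n+1}\le(1-\alpha_r\gamma_{n+1})u_n+C\gamma_{n+1}^2$ then balances the homogeneous decay against the summed variance, the threshold $\gamma\alpha_r\gtrless1$ separating the fast regime $n^{-1}$ from the drift-limited one (up to $\varepsilon$); this last analysis is where the higher-moment control of $\Hsiginfty$ is required.

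For part $(b)$ the restriction to quadratic $f$ makes the recursion exactly linear (no Taylor remainder), so one can work with a pure quadratic form $V_n=\|X_n-x^\star\|^2+w_n\|Y_n\|^2+\text{cross}$ with a time-dependent weight $w_n$. The statement itself indicates the correct weight, $w_n=\Gamma_n$ (resp. $\log n$), which records that $Y_n$ dissipates faster than $X_n-x^\star$ when the memory $r_n=r\Gamma_n^{-1}$ vanishes. The work is then to tune $w_n$ and the cross-term coefficient so that the weighted form stays positive definite and contracts despite the growth of $w_n$: since $\gamma_{n+1}r_n\sim r(1-\beta)n^{-1}$ while $\Gamma_{n+1}/\Gamma_n\approx1+(1-\beta)n^{-1}$, the $Y$-contraction beats the weight growth precisely when $2r(1-\beta)>1+\beta$, i.e. $r>\tfrac{1+\beta}{2(1-\beta)}$. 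Tracking the $\Gamma_n$-dependent coefficients through the comparison lemma gives $u_n\lesssim\gamma_n$ for $\beta<1$, while for $\beta=1$ the vanishing memory degrades the effective contraction to order $(n\log n)^{-1}$, yielding only the slow rate $(\log n)^{-1}$.

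The principal obstacles are threefold. First, under the $\mathbb L^2$ noise assumption the remainder is bounded only by $C\gamma_{n+1}^2r_n^2(1+f(X_n))^2$, which grows like $\|X_n-x^\star\|^4$; closing the recursion therefore demands a preliminary uniform moment estimate $\sup_n\mathbb{E}[(1+f(X_n))^2]<\infty$ (and higher moments under $\Hsiginfty$), obtained from a separate coercivity-based stability argument using $\Hr$ (in particular $\sum\gamma_{n+1}^2r_n<\infty$). Second, and most delicate, is the sharp $\beta=1$ rate in $(a)$: one must combine the exact linearised decay rate $\alpha_r$ with control of the nonlinear remainder, which forces the analysis to be localised near $x^\star$ and hence to lean on the already-known convergence and on the moment bounds. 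Third, in $(b)$ the time-inhomogeneity means there is no single contraction rate; the whole difficulty is the simultaneous tuning of the weight $w_n$ against the vanishing memory, with the threshold on $r$ being exactly the borderline of discrete integrability.
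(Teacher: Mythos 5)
Your overall architecture---a cross-term Lyapunov function for $\beta<1$, a linearized spectral analysis producing $\alpha_r$ for $\beta=1$, and a $\Gamma_n$-weighted quadratic form with the threshold arithmetic $(r-\tfrac12)(1-\beta)>\beta$ for part $(b)$---is essentially the paper's route. Part $(a)$ with $\beta<1$ matches the paper's Lemma \ref{lem:11111}/Proposition \ref{pro:controVPP} argument, and your account of $(b)$ identifies the right rescaling and the right borderline (the paper implements it slightly differently, via $\tilde Y_n=Y_n/\sqrt{r_n}$ and a \emph{time-varying} diagonalization $Q_n$ of the rescaled $2\times2$ matrix, whose limiting eigenvalues are purely imaginary $\pm i\sqrt{\lambda}$, with the basis drift controlled by $\|(Q_{n+1}-Q_n)Q_n^{-1}\|_\infty=O(\gamma_n\Gamma_n^{-3/2})$; your fixed Lyapunov form with tuned weight $w_n$ would have to reproduce exactly this positive-definiteness tuning, which is the delicate point when the rotation dominates the damping).

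The genuine gap is in $(a)$, $\beta=1$, for non-quadratic $f$. You propose to localise near $x^\star$ and let the contraction absorb the Taylor remainder ``once $X_n$ is close'', leaning on the already-known almost sure convergence. This cannot yield the stated non-asymptotic moment bound: $\mathbb{E}\|X_n-x^\star\|^2$ integrates over the whole space, a.s.\ convergence carries no rate on the complement of a neighborhood of $x^\star$, and truncating there destroys the martingale cancellation. Quantitatively, after linearization the one-step inequality contains a cubic term $\gamma_{n+1}\mathbb{E}\|\cZn\|^3$, which via $|x|\le\varepsilon+C_\varepsilon|x|^2$ becomes $\varepsilon\gamma_{n+1}\mathbb{E}\|\cZn\|^2+C_\varepsilon\gamma_{n+1}\mathbb{E}\|\cZn\|^4$; your preliminary estimate $\sup_n\mathbb{E}[(1+f(X_n))^2]<\infty$ only makes this remainder $O(\gamma_{n+1})$, and the recursion $u_{n+1}\le(1-\alpha_r\gamma_{n+1})u_n+C\gamma_{n+1}$ has a constant fixed point---no decay at all. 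The missing idea is the paper's \emph{power increase} device (Lemma \ref{lem:powerincrement} together with Proposition \ref{pro:controVPP}): one first proves the \emph{decaying} bound $\mathbb{E}[V_n^K]\le C\gamma_n$ for $K$ so large that $K\varepsilon\gamma>1$ (the crude Lyapunov rate $\varepsilon$ is too small when $\beta=1$, but raising $V_n$ to the power $K$ multiplies it by $K$), and then runs a \emph{decreasing} induction on the moment order $k$: the bound $\mathbb{E}\|\cZn\|^{2(k+1)}\lesssim\gamma_n$ turns the quartic remainder at level $k$ into $O(\gamma_{n+1}^2)$, so the sharp contraction $2\mu_++\varepsilon$ survives at level $k$. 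This cascade is precisely where $\Hsiginfty$ (all moments, not merely a second or fourth) is consumed, it is the source of the $\varepsilon$ loss in the exponent $n^{-\alpha_r+\varepsilon}$, and it is what makes the final rate depend only on $D^2f(x^\star)$ rather than on $\inf_x \mathrm{Sp}(D^2f(x))$---the feature the paper singles out as the main point of its Remark \ref{rem:commentsL2rate}.
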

For $(a)$, the case $\beta<1$ is a consequence of Proposition \ref{pro:controVPP} (or Proposition \ref{prop:vitesseexpo} in the quadratic case), whereas the (more involved) case $\beta=1$ is dealt with Propositions \ref{prop:vitesseexpo} and \ref{pro:vitesseexpononquad} for the quadratic and the non-quadratic cases, respectively. We first stress that that when $\beta<1$, the noise only needs to satisfy $\Hsig$ to obtain our  upper bound. When we deal with $\beta=1$, we could prove a positive result in the quadratic case when we only assume $\Hsig$. Nevertheless, the stronger assumption $\Hsiginfty$ is necessary to produce a result in the general strongly convex situation.
 Finally, $(b)$ is a consequence of Proposition \ref{prop:quadpol}.

\begin{rem} \label{rem:commentsL2rate} $\rhd$ It is worth noting that in $(a)$ ($\beta=1$), the dependency of the parameter $\alpha_r$  in $D^2f$ only appears through the smallest eigenvalue of $D^2 f(x^\star)$. In particular, it does not depend on $\displaystyle \inf_{x\in\ER^d}\underline{\lambda}_{D^2 f(x)}$ as it could be expected in this type of result. In other words, we are almost able to  retrieve the conditions that appear when $f$ is  quadratic. This optimization of the constraint is achieved with a ``power increase'' argument, but this involves a stronger assumption $\Hsiginfty$  on the noise.  \smallskip

\noindent $\rhd$ The restriction to quadratic functions in the polynomial case may appear surprising. In fact, the ``power increase'' argument does not work in this non-homogeneous
case. However, when $\beta<1$, it would be possible to extend to non-quadratic functions  through a Lyapunov argument (on this topic, see Remark \ref{rem:polylyapou}), but under some 
quite involved conditions on $r$, $\beta$ and  the  Hessian of $f$. Hence, we chose to only focus  on the quadratic case and to try to obtain some potentially optimal conditions on $r$ and $\beta$ only (in particular, there is no dependence to 
the spectrum of $D^2 f$). The interesting point is that it is possible to preserve the standard rate order when $\beta<1$ but under  the  constraint  $r>\frac{1+\beta}{2(1-\beta)}$, which increases with $\beta$. In particular, the rate $\mathcal{O}(n^{-1})$ cannot be attained in this case (see Remark \ref{rem:ratepolyy}  for more details).\smallskip

\noindent


\end{rem}

 \noindent
Finally, we conclude by a central limit theorem related to the stochastic algorithm the \textit{exponential memory case}.

\begin{theo}\label{theo:TCL} Assume $\Hs$ and $\Hsc$ are true. Suppose that $r_n=r$ and that $\Hbeta$ holds with $\beta\in(0,1)$ or, $\beta=1$ and $\gamma\alpha_r>1$.   Assume that $(\mathbf{H _{\boldsymbol{\sigma,p}}})$ holds with $p>2$ when $\beta<1$ and $p=\infty$ when $\beta=1$.
Finally, suppose that the following condition is fulfilled:
\begin{equation}\label{eq:hypdeltamnproba}
\ES\left[(\Delta M_{n+1}) (\Delta M_{n+1})^t|{\cal F}_{n-1}\right] \xrightarrow{n\rightarrow+\infty}\varlimit\quad \textnormal{in probability}
\end{equation}
where $\varlimit$ is a symmetric positive $d\times d$-matrix.  Let $\sigma$ be a $d\times d$-matrix such that $\sigma\sigma^t=\varlimit.$
  Then,  \begin{itemize}
\item[$(i)$] The normalized algorithm $\left(\frac{X_n}{\sqrt{\gamma_n}},\frac{Y_n}{\sqrt{\gamma_n}}\right)_n $ converges in law to a centered Gaussian distribution $\mu_{\infty}^{(\beta)}$, which is the invariant distribution  of the (linear) diffusion with infinitesimal generator $\mathcal{L}$ defined on ${\cal C}^2$-functions by:
$$\mathcal{L}g(z) =
 \left\langle \nabla g(z),\left(\frac{1}{2\gamma }1_{\{\beta=1\}} I_{2d}+H\right) z\right\rangle 
 + \frac{1}{2}{\rm Tr}(\Sigma^T D^2 g(z)\Sigma) $$
with 
$$H=\begin{pmatrix}
0&-I_d \\
  r D^2 f(x^\star)&-r I_d
\end{pmatrix}\quad\textnormal{and}\quad 
 \Sigma=\begin{pmatrix}
0&0\\ 0& \sigma
\end{pmatrix}.$$
 \item[$(ii)$] In the simple situation where $\varlimit =\sigma_0^2  I_d$ ($\sigma_0>0$) and $\beta<1$. In this case, the covariance  of $\mu_{\infty}^{(\beta)}$  is given by 
$$
\frac{\sigma_0^2}{2} \left( \begin{matrix} \{D^2f(x^\star)\}^{-1} & \mathbf{0}_{d\times d} \\
 \mathbf{0}_{d\times d} & r I_d
 \end{matrix}\right)
$$

In particular, 
$$\frac{X_n}{\sqrt{\gamma_n}}\Longrightarrow {\cal N}(0,\frac{\sigma_0^2}{2} \{ D^2f(x^\star)\}^{-1} ).$$
\end{itemize}
\end{theo}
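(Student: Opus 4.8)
The plan is to study the centred pair $Z_n := (X_n - x^\star,\, Y_n)^\top \in \ER^{2d}$, for which $(x^\star,0)$ is the hyperbolic equilibrium of \eqref{eq:HBF_sto}, and to analyse its rescaled version $\widetilde Z_n := \gamma_n^{-1/2} Z_n$. First I would linearise the drift: writing $\nabla f(X_n) = D^2 f(x^\star)(X_n - x^\star) + R_n$ with $\|R_n\| \lesssim \|X_n - x^\star\|^2$ (legitimate since $D^2 f$ is Lipschitz under $\Hsc$), the recursion \eqref{eq:HBF_sto} with $r_n=r$ becomes the affine system
$$Z_{n+1} = (I_{2d} + \gamma_{n+1} H)\, Z_n + \gamma_{n+1}\, r\,(0, \Delta M_{n+1})^\top + \gamma_{n+1}\, r\,(0, R_n)^\top,$$
with $H$ as in the statement. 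Passing to $\widetilde Z_n$ and using $\sqrt{\gamma_n/\gamma_{n+1}} = 1 + \tfrac{\beta}{2n} + O(n^{-2})$, the effective linear part is governed by $\widetilde H := \tfrac{1}{2\gamma}\mathbf{1}_{\{\beta=1\}} I_{2d} + H$: the correction $\tfrac{\beta}{2n}$ equals $\tfrac{1}{2\gamma}\gamma_{n+1}$ up to $o(\gamma_{n+1})$ when $\beta=1$, and is $o(\gamma_{n+1})$ when $\beta<1$, which is exactly why the $\tfrac{1}{2\gamma}$ term only survives in the critical regime. A block computation over the eigendirections of $D^2 f(x^\star)$ shows that the eigenvalues of $H$ solve $\lambda^2 + r\lambda + r\mu = 0$ for $\mu \in \mathrm{Sp}(D^2 f(x^\star))$, so the spectral abscissa of $H$ is $-\alpha_r/2$; hence $\widetilde H$ is Hurwitz precisely when $\beta<1$, or $\beta=1$ and $\gamma\alpha_r>1$. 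I would then fix a Lyapunov inner product $P \succ 0$ with $P\widetilde H + \widetilde H^\top P \prec 0$, in which the linear flow is a strict contraction; this is indispensable because $H$ is neither symmetric nor normal, so no orthonormal spectral reduction is available.

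With this in hand, the backbone is a martingale central limit theorem for triangular arrays. Unrolling the rescaled recursion from a fixed index $n_0$ gives $\widetilde Z_n = \Pi_{n_0,n}\widetilde Z_{n_0} + \mathcal N_n + \mathcal R_n$, where $\Pi_{k,n} = \prod_{j=k+1}^n(I_{2d} + \gamma_j \widetilde H + o(\gamma_j))$ contracts as $\|\Pi_{k,n}\|_P \lesssim e^{-c(\Gamma_n - \Gamma_k)}$ with $c>0$, the fluctuation term $\mathcal N_n = r\sum_{k} \Pi_{k,n}\,\gamma_k^{1/2}(0,\Delta M_k)^\top$ is a sum of martingale increments, and $\mathcal R_n$ collects the linearisation errors. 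For $\mathcal N_n$ I would check the two Lindeberg--Feller conditions: the predictable quadratic variation converges (a Riemann-sum/discrete-Lyapunov argument), using the a priori bound $\mathbb{E}\|Z_n\|^2 \lesssim \gamma_n$ of Theorem \ref{theo:rates} together with the conditional covariance convergence \eqref{eq:hypdeltamnproba}, to the solution $S$ of $\widetilde H S + S \widetilde H^\top + r^2\Sigma\Sigma^\top = 0$ (the factor $r^2$ arising because the driving increment is $r\,\Delta M$, whose conditional covariance tends to $r^2\varlimit$); and the Lindeberg condition, which follows from $\Hsig$ with $p>2$ when $\beta<1$ and from the stronger $\Hsiginfty$ when $\beta=1$. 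The negligibility $\mathcal R_n \to 0$ in probability comes from $\|R_k\| \lesssim \|Z_k\|^2 = \gamma_k\|\widetilde Z_k\|^2$: summing $\gamma_k^{1/2}\|R_{k-1}\|$ against the contraction weights $e^{-c(\Gamma_n-\Gamma_k)}$ and invoking the $L^2$ bound leaves a vanishing factor, the estimate being tight (and genuinely requiring $\Hsiginfty$) in the critical case $\beta=1$.

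Combining these, $\widetilde Z_n \Longrightarrow \mathcal N(0,S)$. Since $S$ solves $\widetilde H S + S\widetilde H^\top + r^2\Sigma\Sigma^\top = 0$, it is exactly the covariance of the unique invariant law of the linear diffusion $dU_t = \widetilde H U_t\,dt + r\Sigma\,dW_t$, whose generator is the operator $\mathcal L$ of the statement; this identifies the limit as $\mu_\infty^{(\beta)}$ and proves $(i)$. For $(ii)$, with $\beta<1$ (so $\widetilde H = H$) and $\varlimit = \sigma_0^2 I_d$, I would insert the block-diagonal ansatz $S = \tfrac{\sigma_0^2}{2}\,\mathrm{diag}\big(\{D^2 f(x^\star)\}^{-1},\, r I_d\big)$ into the Lyapunov equation and verify directly, using $H = \begin{pmatrix} 0 & -I_d\\ rD^2 f(x^\star) & -rI_d\end{pmatrix}$, that the off-diagonal blocks cancel and the lower block yields $-r^2\sigma_0^2 I_d$, matching $-r^2\Sigma\Sigma^\top$; reading off the top-left block then gives the stated Gaussian law of $X_n/\sqrt{\gamma_n}$.

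The main obstacle is the interplay between the non-normality of the drift and the critical regime $\beta=1$. Because $H$ couples the position and momentum blocks asymmetrically, every quantitative estimate (the contraction of $\Pi_{k,n}$, the convergence of the predictable variation, and the control of $\mathcal R_n$) must be carried out in the adapted $P$-norm rather than the Euclidean one. Moreover, when $\beta=1$ the stabilising shift $\tfrac{1}{2\gamma}I_{2d}$ only barely renders $\widetilde H$ Hurwitz, the contraction rate being the small positive number $\alpha_r/2 - \tfrac{1}{2\gamma}$; consequently both the Lindeberg verification and the remainder estimate are tight and require the all-moment hypothesis $\Hsiginfty$ rather than the weaker $\Hsig$.
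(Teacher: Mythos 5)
Your proposal is correct, but it takes a genuinely different route from the paper. The paper works at the process level: it interpolates the rescaled sequence into continuous-time processes $(\babar{Z}^{(n)})_{n\ge1}$, proves tightness (Lemma \ref{lem:tightnesszbarr}), identifies every weak limit as a solution of the martingale problem $({\cal L},{\cal C})$ (Lemma \ref{lem:identificationzbarr}), and then — because the noise is degenerate, acting only on the $y$-component — invokes a H\"ormander bracket/controllability argument plus Meyn--Tweedie ergodicity to get uniqueness of, and convergence to, the invariant measure, concluding with a shifting argument (Step 3) to transfer this to the fixed-time marginals; part $(ii)$ is then extracted from the identities $\int \check{\mathcal{L}}\phi\, d\check{\mu}_{\infty}^{(\beta)}=0$ tested on quadratic $\phi$ after diagonalising $D^2 f(x^\star)$. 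You instead unroll the linearised affine recursion and apply a martingale CLT for triangular arrays: all the ergodicity machinery is replaced by the contraction estimate on $\Pi_{k,n}$ in an adapted Lyapunov norm (needed, as you rightly stress, since $H$ is non-normal), the limit covariance comes out directly as the solution $S$ of the Lyapunov equation, and part $(ii)$ reduces to your one-line block verification of the ansatz — which does check out. Your route trades away the structural by-products of the paper's Step 2 (irreducibility and exponential ergodicity of the limiting Ornstein--Uhlenbeck dynamics) for a more elementary, self-contained argument with an explicit error decomposition; your accounting of where the hypotheses bite (the remainder and Lindeberg estimates in the critical case $\beta=1$ rest on the rate $\ES[\|Z_n\|^2]\lesssim\gamma_n$ of Theorem \ref{theo:rates}, which is where $\Hsiginfty$ and $\gamma\alpha_r>1$ enter) matches the paper's own use of these assumptions in the tightness corollary. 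Two minor remarks. First, your Lyapunov equation carries $r^2\Sigma\Sigma^{\top}$, i.e.\ diffusion coefficient $r\Sigma$; the generator displayed in the theorem writes $\frac{1}{2}{\rm Tr}(\Sigma^T D^2 g\,\Sigma)$ without the factor $r$, but your normalisation is the one consistent with part $(ii)$ and with the explicit generator used in the paper's limit-variance section (which has $r^2\sigma_0^2\Delta_y/2$), so you have in effect adopted the intended (correct) constant. Second, your Lindeberg verification implicitly needs $\sup_n \ES[\|\Delta M_n\|^{p}]<+\infty$ for some $p>2$, which under $\Hsig$ requires $\sup_n \ES[f^p(X_n)]<+\infty$; this is exactly the moment bound the paper derives along the lines of Proposition \ref{pro:controVPP}, and it should be cited or reproved rather than taken for granted.
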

\begin{rem} 
$\rhd$ As a first comment of the above theorem, let us note that in the  fundamental example where:
 $$\Delta M_{n+1}=\nabla f(X_n)-\partial_x F(X_n,\xi_n), \quad n\ge1,$$
 the additional assumption \eqref{eq:hypdeltamnproba}  is a continuity assumption. Actually,  in this case:
 $$\ES[\Delta M_n\Delta M_n^t|{\cal F}_{n-1}]=\bar{\varlimit}(X_n), \quad \textnormal{with $\bar{\varlimit}(x)={\rm Cov}(F(x,\xi_1))$.}$$
Thus, since $X_n\rightarrow x^\star$ $a.s.$,  Assumption  \eqref{eq:hypdeltamnproba} is equivalent to the continuity of  $\bar{\varlimit}$ in $x^\star$ so that: 
$$\varlimit=\bar{\varlimit}(x^\star).$$ 
$\rhd$
Point $(ii)$ of Theorem \ref{theo:TCL} reveals the behavior of the asymptotic variance of $Y$ increases with $r$. This translates the fact that the instantaneous speed coordinate $Y$ is proportional to $r$ in Equation \eqref{eq:HBF_sto}, which then implies a large variance of the $Y$ coordinate when we use an important value of $r$.

\noindent
$\rhd$ When $\beta=1$, it is also possible (but rather technical) to make the limit variance explicit.
The expression obtained with the classical stochastic gradient descent with step-size $\gamma n^{-1}$ and  Hessian $\lambda$,  the asymptotic variance is $\gamma/(2 \lambda \gamma -1)$, whose optimal value is attained when $\gamma=\lambda^{-1}$ (it attains   the Cramer-Rao lower bound). 
Concerning now the stochastic HBF, for example, when $d=1$  and $r \geq 4\lambda$ (the result is still valid in higher dimensions, see Section \ref{sec:TCL}), we can show that:
$$
\lim_{n \longrightarrow + \infty}
\gamma_n^{-1} \mathbb{E}[X_n^2] = \sigma_0^2 \frac{ 2  \lambda r \gamma^3}{ (\gamma r - 1) ( 2  \lambda\gamma - \check{\alpha}_{-}) (2  \lambda\gamma - \check{\alpha}_{+})},
$$
where $ \check{\alpha}_{+}= 1+\sqrt{1-\frac{4 \lambda}{r}}$ and $ \check{\alpha}_{-} =  1-\sqrt{1-\frac{4 \lambda}{r}}$.
Similar expressions may be obtained when $r<4 \lambda$. Note also that we assumed that $\gamma \alpha_r >1$, and it is easy to check that this condition implies that $\gamma r>1$ because $\alpha_r \leq  r$, regardless of $r$. In the meantime, this condition also implies that $2 \lambda \gamma > \check{\alpha}_{+} \geq \check{\alpha}_{-}$.

Finally,  This explicit value could be used  to find the optimal calibration of the parameters to obtain the best asymptotic variance. Unfortunately, the expressions are rather technical and we can see that such calibrations are far from being independent of $\lambda$, the a priori unknown Hessian of $f$ on $x^\star$.

\noindent
\end{rem}
 
\section{Almost sure convergence of the stochastic heavy ball}\label{sec:AS}
 
In this section, the baseline assumption on the function $f$ is $\Hs$, and we are thus interested in the almost sure convergence of the stochastic HBF. In particular, \textit{we do not make any convexity assumption on $f$}.

\noindent
Below, we will sometimes use  standard and  sometimes more intricate normalizations for the coupled process $Z_n=(X_n,Y_n)$. These normalizations will be of a different nature and, to be as clear as possible, we will always use the same notation $\cZn$ and $\breve{Z}_n$ to refer to a rotation of the initial vector $Z_n$, whereas $\tZn$ will introduce a scaling in the $Y_n$ component of $Z_n$ by a factor $\sqrt{r_n}$.

\subsection{Preliminary result}
We first state a useful upper bound that makes it possible to derive a Lyapunov-type control for the mean evolution of the stochastic algorithm $(X_n,Y_n)_{n \geq 1}$ described by \eqref{eq:HBF_sto}. This result is based on the important function $(x,y) \longmapsto V_n(x,y)$ that depends on two parameters $(a,b) \in \mathbb{R}_+^2$ defined by:
\begin{equation}\label{eq:defVn}
V_n(x,y) = (a+b r_{n-1})f(x)+\frac{a}{2r_{n-1}} \| y\|^2 -b\langle\nabla f(x),y\rangle.
\end{equation}
We will show that $V_n$  plays the role of a (potentially time-dependent) Lyapunov function for the sequence $(X_n,Y_n)_{n \geq 1}$. The construction of $V_n$ shares a lot of similarity with other Lyapunov functions built to control second-order systems. If the two first terms are classical and generate a $-\|y\|^2$ term, the last one is more specific to hypo-coercive dynamics and was already used in \cite{Haraux}. Recent works fruitfully exploit this kind of Lyapunov function  (see, among others, the kinetic Fokker-Planck equations in \cite{Villani} and the memory gradient diffusion in \cite{GadatPanloup}). This function is obtained by the introduction of some Lie brackets of differential operators, leading to the presence of $\langle \nabla f(x),y\rangle$ that generates a mean reverting effect on the variable $x$.

\begin{lem} \label{lem:11111}Assume that  $(\mathbf{H _{\boldsymbol{\sigma,2}}})$  and $\Hs$ hold and suppose that $c_r<1$. Then,  for any  $(a,b) \in \mathbb{R}_+^2$ such that:

\begin{equation}\label{eq:condabr}
\frac{a}{b} > \left( \frac{1}{2}  \vee  \frac{\|D^2 f\|_\infty}{1-c_r} \vee  r_{\infty} (c_f-1)\right),
\end{equation}
we have:
\begin{itemize}
\item[$(i)$]
A constant $C_1>0$ and an integer  $n_0\in\EN$ exist such that for any $n\ge n_0$,
\begin{equation}\label{eq:minoVn}
\forall x,y\in\ER^d,\quad V_n(x,y)\ge C_1 \left(f(x)+\frac{\|y\|^2}{r_{n-1}}\right).
\end{equation}
\item[$(ii)$] Some positive constants $C_2$, $C_3$ and $c_{a,b}$ exist such that:
\begin{align}
 \ES[V_{n+1}&(X_{n+1},Y_{n+1})|{\cal F}_n]\nonumber\\&
 \le V_n(X_n,Y_n)(1+C_2 \gamma_{n+1}^2r_{n})-c_{a,b}\gamma_{n+1}\|Y_n\|^2-b\gamma_{n+1} r_{n}\|\nabla f(X_n)\|^2+C_3\gamma_{n+1}^2 r_n.\label{eq:meanrev}
 \end{align}
 \end{itemize}
 
\end{lem}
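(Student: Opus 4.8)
The plan is to establish both items by direct algebra, leaning on $\Hs$ (through $\|\nabla f\|^2\le c_f f$ and $\|D^2 f\|_\infty<\infty$), the monotonicity of $(r_n)$, and the three thresholds collected in \eqref{eq:condabr}. For $(i)$ the only possibly negative contribution to $V_n$ in \eqref{eq:defVn} is the cross term $-b\langle\nabla f(x),y\rangle$, which I would tame with a weighted Young inequality
$$b|\langle\nabla f(x),y\rangle|\le \frac{b\,t\,r_{n-1}}{2}\|\nabla f(x)\|^2+\frac{b}{2\,t\,r_{n-1}}\|y\|^2,$$
where $t>0$ is free and the factor $r_{n-1}$ is inserted precisely so that the induced $\|y\|^2$-penalty scales as $r_{n-1}^{-1}$. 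Combining with $\|\nabla f\|^2\le c_f f$ gives
$$V_n(x,y)\ge\Big(a+b\,r_{n-1}\big(1-\tfrac{t c_f}{2}\big)\Big)f(x)+\frac{1}{2r_{n-1}}\Big(a-\frac{b}{t}\Big)\|y\|^2.$$
Choosing $t$ slightly above $b/a$ makes the $\|y\|^2$-coefficient a positive multiple of $r_{n-1}^{-1}$; letting $t\downarrow b/a$ the $f$-coefficient tends to $a+br_\infty(1-\tfrac{b c_f}{2a})$, which is positive exactly because $a/b>r_\infty(c_f-1)$ gives $a+br_\infty>b r_\infty c_f$ while $a/b>1/2$ gives $\tfrac{b^2 r_\infty c_f}{2a}<b r_\infty c_f$. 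Since $r_{n-1}\to r_\infty$, both coefficients are bounded below by a positive constant for $n\ge n_0$, which yields \eqref{eq:minoVn}.

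For $(ii)$ I would plug the dynamics \eqref{eq:HBF_sto} into $V_{n+1}(X_{n+1},Y_{n+1})$ and expand to second order in $\gamma_{n+1}$: write $f(X_{n+1})=f(X_n)-\gamma_{n+1}\langle\nabla f(X_n),Y_n\rangle+R$ with $|R|\le\tfrac12\gamma_{n+1}^2\|D^2f\|_\infty\|Y_n\|^2$; use $\mathbb{E}[\|Y_{n+1}\|^2|\mathcal{F}_n]=\|Y_n+\gamma_{n+1}r_n(\nabla f(X_n)-Y_n)\|^2+\gamma_{n+1}^2 r_n^2\,\mathbb{E}[\|\Delta M_{n+1}\|^2|\mathcal{F}_n]$; and write $\nabla f(X_{n+1})=\nabla f(X_n)-\gamma_{n+1}A_nY_n$ with $A_n:=\int_0^1 D^2f(X_n-s\gamma_{n+1}Y_n)\,ds$ and $\|A_n\|_{\mathrm{op}}\le\|D^2f\|_\infty$, so that in $-b\langle\nabla f(X_{n+1}),Y_{n+1}\rangle$ the martingale increment again disappears after conditioning (every prefactor being $\mathcal{F}_n$-measurable). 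The decisive fact is that the three first-order occurrences of $\langle\nabla f(X_n),Y_n\rangle$ --- from the descent in $f$, from the $\|Y\|^2$ expansion, and from the cross term --- cancel exactly, since $-(a+br_n)+a+br_n=0$; this is exactly what the shape of $V_n$ is designed to produce.

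After this cancellation, the first-order-in-$\gamma_{n+1}$ remainder consists of the dissipative term $-b\gamma_{n+1}r_n\|\nabla f(X_n)\|^2$ --- which is literally the one in \eqref{eq:meanrev} --- together with a $\|Y_n\|^2$ term gathering $-a\gamma_{n+1}\|Y_n\|^2$ (from $\|Y\|^2$), $+b\gamma_{n+1}\langle A_nY_n,Y_n\rangle\le b\gamma_{n+1}\|D^2f\|_\infty\|Y_n\|^2$ (the Hessian factor in the cross term), and the mismatch term $\tfrac{a}{2}(r_n^{-1}-r_{n-1}^{-1})\|Y_n\|^2$ arising because $V_{n+1}$ carries $r_n$ while $V_n$ carries $r_{n-1}$. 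Rewriting the latter as $a\gamma_{n+1}\cdot\tfrac{1}{2\gamma_{n+1}}(r_n^{-1}-r_{n-1}^{-1})\|Y_n\|^2$ and invoking $\limsup\tfrac{1}{2\gamma_{n+1}}(r_n^{-1}-r_{n-1}^{-1})=c_r<1$, the $\|Y_n\|^2$-coefficient is at most $-\gamma_{n+1}\big(a(1-c_r-\delta)-b\|D^2f\|_\infty\big)$ for $n$ large and $\delta$ small, which is $-c_{a,b}\gamma_{n+1}$ with $c_{a,b}>0$ precisely under $a/b>\|D^2f\|_\infty/(1-c_r)$. All remaining $\gamma_{n+1}^2$ contributions --- the $f$-remainder, $\tfrac{a}{2}\gamma_{n+1}^2 r_n\|\nabla f(X_n)-Y_n\|^2$, the noise term $\tfrac{a}{2}\gamma_{n+1}^2 r_n\,\mathbb{E}[\|\Delta M_{n+1}\|^2|\mathcal{F}_n]$, and the $\gamma_{n+1}^2 r_n$ terms carrying $A_n$ --- are bounded, using $(\mathbf{H _{\boldsymbol{\sigma,2}}})$ to control $\mathbb{E}[\|\Delta M_{n+1}\|^2|\mathcal{F}_n]$ by a multiple of $1+f(X_n)$, together with item $(i)$ (hence $f\le C_1^{-1}V_n$ and $\|Y_n\|^2\le C_1^{-1}r_{n-1}V_n$) and $r_{n-1}\sim r_n$ (a consequence of the $c_r$-condition and $\gamma_n r_n\to0$), by a common bound of the form $C_2\gamma_{n+1}^2 r_n V_n+C_3\gamma_{n+1}^2 r_n$. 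Collecting all terms and discarding $b(r_n-r_{n-1})f(X_n)\le0$ produces \eqref{eq:meanrev}.

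I expect the genuine work to be the bookkeeping rather than any single estimate. The three delicate points are: verifying the exact first-order cancellation of the $\langle\nabla f,Y_n\rangle$ terms, which is what pins down the coefficients of $V_n$; turning the index mismatch $r_n$ versus $r_{n-1}$ into a $\|Y_n\|^2$ term whose size is governed by $c_r$, so that the sign of $c_{a,b}$ matches the threshold $a/b>\|D^2f\|_\infty/(1-c_r)$; and checking that the martingale contribution grows only linearly in $f$, hence in $V_n$, so that it is absorbed into $C_2\gamma_{n+1}^2 r_n V_n+C_3\gamma_{n+1}^2 r_n$ instead of generating an uncontrolled higher-order term in $f$.
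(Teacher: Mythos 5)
Your proof is correct and follows essentially the same route as the paper's: a (parametrized) Young inequality combined with $\|\nabla f\|^2\le c_f f$ for $(i)$, and for $(ii)$ a second-order Taylor expansion in which the three first-order $\langle\nabla f(X_n),Y_n\rangle$ contributions cancel exactly (coefficient $-(a+br_n)+a+br_n=0$), the $r_{n-1}$-versus-$r_n$ mismatch is absorbed through $c_r<1$ into the dissipative $-c_{a,b}\gamma_{n+1}\|Y_n\|^2$ term, and all $\gamma_{n+1}^2$ remainders (including the conditional noise term from $(\mathbf{H _{\boldsymbol{\sigma,2}}})$) are dominated by $C\gamma_{n+1}^2 r_n(1+V_n)$ via item $(i)$. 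If anything, your explicit use of $\|Y_n\|^2\le C_1^{-1}r_{n-1}V_n$ together with $r_{n-1}\sim r_n$ to attach the factor $r_n$ to the Hessian remainder of $f$ is slightly more careful bookkeeping than the paper's stated bound on $\Delta R_{n+1}$, but the argument is the same.
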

\noindent
\underline{\textit{Proof:}}

\noindent
\underline{Point $(i)$:} For any non-negative $u,v$, the elementary inequality  $ uv\le \frac{\rho}{2} u^2+\frac{1}{2\rho} v^2$ holds for any $\rho>0$. We apply this inequality with $u=\|\nabla f(x)\|$, $v=\|y\|$ and $\rho= 2r_{n}$ and obtain:
$$|\langle\nabla f(x),y\rangle|\le  r_{n-1} \|\nabla f(x)\|^2+ \frac{a}{4 r_{n-1}} \|y\|^2.$$
It follows from Assumption $\Hs$ that $\|\nabla f\|^2 \leq c_f f$. Using the above inequality, we obtain  that for any $x,y\in\ER^d$:
$$V_n(x,y)\ge (a+br_{n-1}(1-c_f)) f(x)+\frac{1}{2r_{n-1}}\left[ a - \frac{b}{2} \right]\| y\|^2.$$
Choosing now $a$ and $b$ such that  $a> b/2$ and $a>b r_{\infty}(c_f-1)$, we obtain the first assertion follows from \eqref{eq:condabr}.
\hfill  $\diamond$

\noindent
\underline{Point $(ii)$:}
The Taylor formula ensures the existence of $\xi_{n+1,1}$ and $\xi_{n+1,2}$ in $[X_n,X_{n+1}]$ such that:
\begin{align*}
&V_{n+1}(X_{n+1},Y_{n+1})=(a+b r_n) \left(f(X_n)-\gamma_{n+1} \langle \nabla f(X_n),Y_n\rangle+\frac{\gamma_{n+1}^2}{2} Y_n^{t} D^2 f(\xi_{n+1,1}) Y_n\right)\\
&+\frac{a}{2 r_n}\left(\|Y_n\|^2+2{}\gamma_{n+1}r_n\left(\langle Y_n,\nabla f(X_n)\rangle-\|Y_n\|^2+\langle Y_n+\gamma_{n+1}r_n(\nabla f(X_n)\rangle-Y_n),\Delta M_{n+1}\rangle\right)+\gamma_{n+1}^2r_n^2 \|\Delta M_{n+1}\|^2\right)\\
&-b \left\langle \nabla f(X_n)-\gamma_{n+1} D^2f(\xi_{n+1,2}) Y_n, Y_n+ \gamma_{n+1}r_n\left(\nabla f(X_n)-Y_n + \Delta M_{n+1}\right)\right \rangle.
\end{align*}
Combining the similar terms leads to:

\begin{equation*}\label{eq:djkskljfl}
\begin{split}
V_{n+1}(X_{n+1},Y_{n+1})&=V_n(X_n,Y_n)- b(r_{n}-r_{n-1}) f(X_n)\\
& +\gamma_{n+1}\langle\nabla f(X_n),Y_n\rangle\left(\underset{=0}{\underbrace{-a-b r_{n}+a+b r_{n}}}\right) -\gamma_{n+1} Y_n^t D_{n+1} Y_n-\gamma_{n+1} r_{n} b\|\nabla f(X_n)\|^2\\
&+\gamma_{n+1} r_n \Delta N_{n+1}+\gamma_{n+1} \Delta R_{n+1},
\end{split}
\end{equation*}
where $(\Delta N_n)_{n\ge 1}$ is a sequence of martingale increments, $D_n$ is a $d\times d$-matrix defined by:
$$D_{n+1}= a\left(1-\frac{1}{2\gamma_{n+1}}\left(\frac{1}{r_{n}}-\frac{1}{r_{n-1}}\right)\right) I_d-b D^2 f(\xi_{n+1,2}),$$
and $\Delta R_{n+1}$ is a remainder term. Using $\Hs$, we know that $D^2f$ is bounded, and we have the following bound for $\Delta R_{n+1}$:
$$\|\Delta R_{n+1}\|\le C_2\gamma_{n+1}r_{n}\left(\|Y_n\|^2+ \|\Delta M_{n+1}\|^2+\| \nabla f(X_n)\|. \|Y_n\|\right),$$
where $C_2$ is a deterministic positive constant independent of $n$. The fact that $(r_{n})_{n \geq 1}$ is a bounded sequence combined with Assumptions $(\mathbf{H _{\boldsymbol{\sigma,2}}})$  and $\Hs$ yields
$\ES[\|\Delta R_{n+1}\||{\cal F}_n]\le C_2\gamma_{n+1}r_{n}\left(1+ \|Y_n\|^2+ f(X_n)\right).$
It follows that:
$$\forall n \geq n_0 \qquad \ES[\|\Delta R_{n+1}\||{\cal F}_n]\le C_2\gamma_{n+1} r_{n} V_n(X_n,Y_n).$$
Second, the condition given by \eqref{eq:condabr} shows that an integer $n_1\ge n_0$  and a constant $c_{a,b}>0$ exist such that: 
 $$D_{n+1} Y_n^{\otimes 2}\ge c_{a,b}\|Y_n\|^2.$$
Using the previous bounds in $V_{n+1}(X_{n+1},Y_{n+1})$ and the fact that $(r_n)_{n\in\mathbb{N}}$ is non-increasing  shows that:
$$
\exists n_2  \geq n_1\quad  \forall n \geq n_2: \quad
 \ES[V_{n+1} (X_{n+1},Y_{n+1})|{\cal F}_n]
 \le V_n(X_n,Y_n)(1+C\gamma_{n+1}^2r_n)-c_{a,b}\gamma_{n+1}\|Y_n\|^2-b\gamma_{n+1} r_n\|\nabla f(X_n)\|^2. 
$$
 \hfill $\diamond \square$

\noindent 
Note that if $\Hr$ holds, then  Equation \eqref{eq:meanrev} provides a strong repelling effect on the system $(x,y)$ because in that case, $\sum \gamma_{n+1} r_n = + \infty$. This makes it possible to obtain a more precise  a.s. convergence result, stated below.

\begin{cor}\label{coro:asconv}  If $(\mathbf{H _{\boldsymbol{\sigma,2}}})$  and $\Hs$ hold and $(r_n)_{n\ge1}$ satisfies $\mathbf{(H_r)}$,
then we have:
\begin{itemize}
\item[(i)]
$$\sup_{n\ge 1} \left(\ES[f(X_n)]+\frac{1}{r_n}\ES[\|Y_n\|^2]\right)<+\infty$$
\item[(ii)] $(V_n(X_n,Y_n))_{n\ge 1}$ is $a.s.$-convergent to $V_\infty\in\ER_+$. In particular, $(X_n)_{n\ge1}$ and $(Y_n/{\sqrt{r_n}})_{n\ge1}$ are $a.s.$-bounded.
\item[(iii)] $\displaystyle\sum_{n\ge1}\gamma_{n+1}r_n \left(\frac{\|Y_n\|^2}{r_{n}}+\|\nabla f(X_n)\|^2\right)<+\infty$ $a.s.$
\item[(iv)] $({Y}_n/\sqrt{r_n})_{n\ge0}$ tends to $0$ since $n\rightarrow+\infty$ and every limit point of $(X_n)_{n\ge0}$ belong to $\{x,\nabla f(x)=0\}$. Furthermore, if for any $z$, $\{x, f(x)=z\}\cap\{x,\nabla f(x)=0\}$ is locally finite,
$(X_n)_{n\ge0}$ converges towards a critical point of $f$.
\end{itemize}
\end{cor}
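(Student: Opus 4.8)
The entire statement is to be read off from Lemma~\ref{lem:11111}, so the plan is to exploit \eqref{eq:minoVn} and \eqref{eq:meanrev} as much as possible. For $(i)$, I would take expectations in \eqref{eq:meanrev}: setting $v_n:=\ES[V_n(X_n,Y_n)]$ and discarding the two nonpositive drift terms gives the scalar recursion $v_{n+1}\le v_n(1+C_2\gamma_{n+1}^2 r_n)+C_3\gamma_{n+1}^2 r_n$ for $n\ge n_0$. Since $\Hr$ guarantees $\sum_n\gamma_{n+1}^2 r_n<\infty$, the elementary lemma on such recursions (compare with $\prod_k(1+C_2\gamma_{k+1}^2r_k)\le\exp(C_2\sum_k\gamma_{k+1}^2 r_k)<\infty$) yields $\sup_n v_n<\infty$. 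Plugging this into the lower bound \eqref{eq:minoVn}, namely $v_n\ge C_1(\ES[f(X_n)]+r_{n-1}^{-1}\ES\|Y_n\|^2)$, and using that consecutive values $r_{n-1}$ and $r_n$ are comparable (true both for $r_n\equiv r$ and for $r_n=r/\Gamma_n$) to pass from $r_{n-1}^{-1}$ to $r_n^{-1}$, gives $(i)$.

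For $(ii)$ and $(iii)$ I would apply the Robbins--Siegmund theorem directly to \eqref{eq:meanrev}, which is already in conditional form. Assuming $V_n\ge0$ (without loss of generality $f\ge0$, $f$ being coercive hence bounded below by $\Hs$), the nonnegative coefficients $C_2\gamma_{n+1}^2 r_n$ and $C_3\gamma_{n+1}^2 r_n$ are summable by $\Hr$, so the theorem delivers \emph{simultaneously}: $V_n(X_n,Y_n)$ converges a.s. to a finite $V_\infty\ge0$, and $\sum_n\bigl(c_{a,b}\gamma_{n+1}\|Y_n\|^2+b\gamma_{n+1} r_n\|\nabla f(X_n)\|^2\bigr)<\infty$ a.s. The latter is exactly $(iii)$, once one writes $\gamma_{n+1}\|Y_n\|^2=\gamma_{n+1}r_n\cdot(\|Y_n\|^2/r_n)$. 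The a.s.\ boundedness asserted in $(ii)$ is then immediate from \eqref{eq:minoVn}: $f(X_n)\le V_\infty/C_1$ a.s.\ so $(X_n)$ is a.s.\ bounded by coercivity, and $\|Y_n\|^2/r_{n-1}\le V_\infty/C_1$ a.s.\ gives the boundedness of $(Y_n/\sqrt{r_n})$.

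Part $(iv)$ is where the work lies. From $(iii)$ one has $\sum_n\gamma_{n+1}r_n\,(\|Y_n\|^2/r_n)<\infty$ while $\Hr$ forces $\sum_n\gamma_{n+1}r_n=+\infty$; hence $\liminf_n \|Y_n\|^2/r_n=0$ a.s. The delicate point, which I expect to be the main obstacle, is to upgrade this $\liminf$ to a true limit. I would study the increments of $s_n:=\|Y_n\|^2/r_n$: expanding $\|Y_{n+1}\|^2$ from the $Y$-recursion and using the third condition of $\Hr$ to control the ratio $r_n/r_{n+1}$, the increment $s_{n+1}-s_n$ decomposes into a genuinely dissipative term of order $-\delta\,\gamma_{n+1}r_n s_n$ (with $\delta>0$ arising precisely from $c_r<1$), an a.s.-summable remainder, and a martingale increment whose predictable bracket is summable thanks to $(\mathbf{H}_{\boldsymbol{\sigma,2}})$ together with the a.s.\ boundedness of $f(X_n)$ and of $\|Y_n\|^2/r_n$ from $(ii)$. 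The truly troublesome piece is the cross term $\gamma_{n+1}\langle\nabla f(X_n),Y_n\rangle$, which cannot be split so as to be summable and dominated by the dissipation at once; it must be absorbed through the gradient sum of $(iii)$, which is exactly the role of the term $-b\langle\nabla f,y\rangle$ built into $V_n$. A further Robbins--Siegmund application then forces $s_n$ to converge a.s., and its limit must equal $\liminf s_n=0$. In the exponential regime $r_n\equiv r$ this is transparent (one works directly with $\|Y_n\|^2$, the dissipation absorbing everything); in the polynomial regime $r_n=r/\Gamma_n\to0$ I would alternatively observe that the cross term of $V_n$ vanishes, so that $V_n\to V_\infty$ already forces $f(X_n)+\tfrac12\|Y_n\|^2/r_{n-1}$ to converge.

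Finally, since $r_n$ is bounded, $Y_n/\sqrt{r_n}\to0$ gives $Y_n\to0$, whence $X_{n+1}-X_n=-\gamma_{n+1}Y_n\to0$; therefore the limit set $\mathcal L$ of $(X_n)$ is a.s.\ compact and connected, $f(X_n)$ converges to some $L$, and $\mathcal L\subset\{f=L\}$. To establish that $\mathcal L\subset\{\nabla f=0\}$ I would argue by contradiction: a limit point $\bar x$ with $\|\nabla f(\bar x)\|\ge 2\eta$ gives a ball $B(\bar x,\rho)$ on which $\|\nabla f\|\ge\eta$, so the $\gamma r$-weighted occupation time of $B(\bar x,\rho)$ is finite by $(iii)$, while each excursion of $(X_n)$ across the annulus $B(\bar x,\rho)\setminus B(\bar x,\rho/2)$ costs a definite amount of the kinetic sum $\sum\gamma_{n+1}\|Y_n\|^2$ through $\|X_q-X_p\|\le\sum\gamma_{n+1}\|Y_n\|$ and Cauchy--Schwarz. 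Reconciling these two weightings ($\gamma_{n+1}$ versus $\gamma_{n+1}r_n$) closes cleanly for the exponential memory but is genuinely more delicate for the polynomial one; conceptually this is the LaSalle invariance principle for the heavy-ball flow, for which $(X_n,Y_n)$ is an asymptotic pseudo-trajectory. Once $\mathcal L\subset\{f=L\}\cap\{\nabla f=0\}$ is known, the local finiteness hypothesis makes $\mathcal L$ a connected subset of a locally finite set, hence a single point, yielding the a.s.\ convergence of $(X_n)$ to a critical point of $f$.
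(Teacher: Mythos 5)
Your treatment of $(i)$--$(iii)$ is exactly the paper's: Robbins--Siegmund applied to \eqref{eq:meanrev}, combined with the lower bound \eqref{eq:minoVn} (your separate scalar recursion for $(i)$ is redundant, since Robbins--Siegmund already gives $\sup_n \ES[V_n]<\infty$, but it is harmless). The divergence is in $(iv)$. The paper does \emph{not} attempt a second Lyapunov/martingale analysis of $s_n=\|Y_n\|^2/r_n$: it runs the ODE method, interpolating the (renormalized) algorithm and showing via \eqref{eq:nntAPT} that the shifted trajectories are asymptotically \emph{stationary} pseudo-trajectories of $\dot z=h(z)$ (resp.\ $\dot z=\tilde h(z)$ with $\tilde h(\tilde x,\tilde y)=(-\tilde y,\nabla f(\tilde x))$ after the rescaling $\tYn=Y_n/\sqrt{r_n}$, $\tgn=\gamma_n\sqrt{r_n}$ of Lemma \ref{lem:decompZNcheche}), so that every limit point $z_\infty$ satisfies $h(z_\infty)=0$, delivering $\tYn\to0$ and $\nabla f(X_\infty)=0$ in a single stroke; the connectedness/local-finiteness ending is then the same as yours. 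Your route for $(iv)$ is genuinely different, and in the exponential regime $r_n\equiv r$ it can indeed be made to work: there $(iii)$ gives $\sum\gamma_{n+1}\|\nabla f(X_n)\|^2<\infty$, so after Young's inequality the cross term in the expansion of $\|Y_{n+1}\|^2$ is summable, Robbins--Siegmund yields convergence of $\|Y_n\|^2$, and the weightings in your excursion argument agree.

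The genuine gap is the polynomial case $r_n=r\Gamma_n^{-1}\to0$, and it occurs at both of the places you flag. First, for upgrading $\liminf s_n=0$ to $s_n\to0$: the cross term $2\gamma_{n+1}\langle Y_n,\nabla f(X_n)\rangle$ leaves, after Young, a term of order $\gamma_{n+1}\|\nabla f(X_n)\|^2$, whereas $(iii)$ only controls $\sum\gamma_{n+1}r_n\|\nabla f(X_n)\|^2$; these differ by the unbounded factor $r_n^{-1}$, so the remainder is not summable and Robbins--Siegmund cannot be applied to $s_n$. Your fallback does not close this: since $Y_n=\sqrt{r_n}\cdot(Y_n/\sqrt{r_n})\to0$ by $(ii)$, the cross term of $V_n$ and the term $br_{n-1}f(X_n)$ indeed vanish, but $V_n\to V_\infty$ then only forces the \emph{sum} $af(X_n)+\frac{a}{2}\|Y_n\|^2/r_{n-1}$ to converge, which is perfectly compatible with $s_n$ oscillating between $0$ and a positive level while $f(X_n)$ compensates; knowing $\liminf s_n=0$ along a subsequence gives no contradiction, so neither $s_n\to0$ nor the convergence of $f(X_n)$ (which your endgame needs, to place the limit set inside a level set) is obtained. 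Second, your excursion/occupation-time contradiction suffers the identical weighting mismatch: $(iii)$ bounds the $\gamma_n r_n$-weighted occupation time of the ball around a non-critical limit point, while each annulus crossing consumes $\gamma$-weighted kinetic energy $\sum\gamma_{n+1}\|Y_n\|^2$; with $r_n\to0$, a finite $\gamma r$-weighted occupation time does not preclude infinitely many crossings. This is precisely the difficulty the paper's simultaneous rescaling of time and speed is designed to remove — the substitution $\tgn=\gamma_n\sqrt{r_n}$ makes the sums appearing in the Cauchy--Schwarz bound \eqref{eq:nntAPT} match the quantities that $(iii)$ renders finite — so to complete your draft in the polynomial regime you would essentially have to import that renormalized asymptotic-pseudo-trajectory argument; the direct bookkeeping on $s_n$ does not close.
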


\begin{proof}
\underline{Proof of $(i)-(ii)-(iii)$:}
 Under the conditions on $(r_n)$, we can check that   some positive $a$ and $b$ exist such that the conclusions of the previous lemma hold true. We then deduce that:
 \begin{align*}
 \ES[V_{n+1}&(X_{n+1},Y_{n+1})|{\cal F}_n]\\&
 \le V_n(X_n,Y_n)(1+C \alpha_{n+1} )-U_{n+1},
 \end{align*}
 with $\alpha_n = \gamma_n^2 r_n$ and $U_{n+1} = c_{a,b}\gamma_{n+1} \|Y_n\|^2+b\gamma_{n+1} r_n\|\nabla f(X_n)\|^2$.
Subsequently, using the Robbins-Siegmund Theorem (see, $e.g.$, Theorem \ref{theo:Robbins_Siegmund} in Section \ref{subsec:Rob-Sieg}, borrowed from \cite{Duflo}), we deduce, on the one hand, that  $\sup_{n \geq 1} \mathbb{E}[V_n(X_n,Y_n)] < +\infty$ and that $(V_n(X_n,Y_n))_{n\ge1}$ almost surely (and in $L^1$) converge towards a random variable $V_{\infty} \in \mathbb{R}_+$.  In particular, the coercivity of $f$ implies the $a.s.$-boundedness of  $(X_n)_{n \geq 0}$. On the other hand, the Robbins-Siegmund Theorem also implies that:
$$\sum_{n\ge1}\gamma_{n+1}r_n \left(\frac{\|Y_n\|^2}{r_{n}}+\|\nabla f(X_n)\|^2\right)<+\infty \quad a.s.$$
Hence, the three first statements follow.  \hfill $\diamond$

\noindent
\underline{Proof of $(iv)$:}  The proof relies on the so-called \textit{ODE method} (see, $e.g.$, \cite{Benaim}). Set $r_\infty=\lim_{n\rightarrow+\infty} r_n$. We deal with  cases $r_\infty>0$ and $r_\infty=0$ separately.\smallskip

\noindent \textbf{Case $\bf{r_\infty>0}$ (exponential memory)}:  Set  ${\Gamma_n}=\sum_{k=0}^n{\gamma}_k$ with the convention ${\gamma}_0=0$. Denote by $(\bar{z}(t))_{t\ge0}$ the interpolated process defined by $\bar{z}({\Gamma}_n)=Z_n=(X_n,Y_n)'$, $n\ge0$, with linear interpolations between times ${\Gamma}_n$ and ${\Gamma}_{n+1}$ and let  $\bar{z}^{(n)}$ be the associated \textit{shifted-sequence} defined by:
$$\bar{z}^{(n)}(t)=\bar{z}(t+{\Gamma}_n)\quad t\ge 0.$$ 

Setting $\varepsilon_n=(0, (r_{n-1}-r_\infty)(\nabla f(X_n)-Y_n)+\Delta M_n)'$ and  $h(x,y)=(-y,r_\infty(\nabla f(x)-y))'$, we have:
$$Z_{n+1}=Z_n+\gamma_{n+1} (h(Z_n)+\varepsilon_{n+1}).$$
Set $N(n,t)=\inf\{k\ge n, \gamma_{n+1}+\ldots+\gamma_k\ge t\}$ (with the convention $\inf \emptyset= n$). Then, since $(Z_n)_{n\ge0}$ is $a.s.$-bounded,  it is a classical result on stochastic algorithm theory (see, $e.g.$, \cite{Duflo}, Theorem 9.2.8 and the remark below) that if for any $T>0$,
\begin{equation}\label{condcompacite}
\limsup_{n\rightarrow+\infty}\sup_{t\in[0,T]}\left\|\sum_{k=n+1}^{N(n,t)+1}{\gamma}_k\varepsilon_k\right\|=0\quad a.s.,
\end{equation}
then
$(\bar{z}^{(n)})_{n\ge0}$ is relatively compact (for the topology of uniform convergence on compact sets) and its limit points are solutions to  the ODE $\dot{z}=h(z)$.  
Let us prove \eqref{condcompacite}. Let $T>0$. Using the Cauchy-Schwarz inequality, we have, for every $t\in[0,T]$:
\begin{equation}\label{eq:nntAPT}
\begin{split}
\sum_{k=n+1}^{N(n,t)+1}{\gamma}_k (\|\nabla f(X_{k-1})\|+\|Y_k\|)&\le 
\sqrt{2}\left(\sum_{k=n+1}^{N(n,t)+1}{\gamma}_k\right)^{\frac{1}{2}}\left(\sum_{k=n+1}^{N(n,t)+1}{\gamma}_k\left(\|\nabla f(X_{k-1})\|^2+\|Y_{k-1}\|^2\right)\right)^{\frac{1}{2}}\\
&\le \sqrt{2(T+\gamma_1)}\left(\sum_{k=n+1}^{+\infty}{\gamma}_k\left(\|\nabla f(X_{k-1})\|^2+\|Y_{k-1}\|^2\right)\right)^{\frac{1}{2}}\xrightarrow{n\rightarrow+\infty}0,
\end{split}
\end{equation}
where the last convergence follows from $(iii)$. On the basis of Assumption $(\mathbf{H _{\boldsymbol{\sigma,2}}})$  and $(iii)$,  we also note that $(\langle \sum_{k=1}^n\gamma_k \Delta M_k\rangle)_{n\ge1}$ is $a.s.$-convergent so that $\sum{\gamma}_n\Delta M_n$. It easily follows that:
  \begin{equation*}
\limsup_{n\rightarrow+\infty}\sup_{t\in[0,T]}\left\|\sum_{k=n+1}^{N(n,t)+1}{\gamma}_k\Delta M_k\right\|=0\quad a.s.
\end{equation*}
and that   \eqref{condcompacite} is satisfied. 
Now, we again deduce  from  \eqref{eq:nntAPT} that for any $T>0$,
$$\sup_{t\le T}\|\bar{z}^{(n)}(t)-\bar{z}^{(n)}(0)\|=\sup_{t\le T}\|\bar{z}^{(n)}(t)-Z_n\|\xrightarrow{n\rightarrow+\infty}0$$
so that  each limit point is stationary. At this stage, we have thus proven that every limit point of $(\bar{z}^{n})_{n\ge0}$ is a stationary solution to $\dot{z}=h(z)$. This implies that any limit point $Z_\infty$ of $(Z_n)_{n\ge0}$ satisfies $h(Z_\infty)=0$ (and thus $Y_\infty=\nabla f(X_\infty)=0$). Actually, let $(Z_{n_k})_{k\ge1}$ be a convergent subsequence of the ($a.s.$ bounded) sequence $(Z_n)_{n\ge0}$ and denote  its limit by  $Z_\infty$. Up to a second extraction, $(\bar{z}^{(n_k)})$ converges to a stationary solution  $\bar{z}^{\infty}$ of $\dot{z}=h(z)$. As a consequence, $h(\bar{z}^{\infty}(t))=0$ for any $t\ge0$. In particular,
$h(\bar{z}^{\infty}(0))=h(Z_\infty)=0$. By $(ii)$ and the fact that $(Y_n)_{n\ge0}$ converges to $0$, we also deduce that $(f(X_n))_{n\ge0}$ is $a.s.$-convergent. To conclude the proof, it remains to observe that the set of possible limits of subsequences of $(X_n)_{n\ge1}$ is connected. This is true since $X_n-X_{n-1}=-\gamma_{n} Y_{n-1}\rightarrow0$ as $n\rightarrow+\infty$. \hfill $\diamond$

\vspace{0.5em}

\noindent \textbf{Case $\bf{r_\infty=0}$ (polynomial memory)}:   In this case, the proof is somewhat similar  but the identification of the asymptotic dynamics  requires an appropriate normalization of $Y_n$\footnote{In fact, due to the asymptotic stationarity, the limiting dynamics is not  intrinsic.}. Let us set:
$$\tgn=\gamma_n \sqrt{r_n},\; \tGn=\sum_{k=0}^n\tilde{\gamma}_k,\, \tXn=X_n, \,\tYn=\frac{Y_n}{\sqrt{r_n}}.$$
Also set by $\tZn=(\tXn,\tYn)'$. The dynamic  of $\tZn$ is described by  Lemma \ref{lem:decompZNcheche} below. We denote as $(\tzt)_{t\ge0}$  the  interpolated process, $i.e.$ defined by $\tilde{z}(\tGn)=\tZn$, $n\ge0$,  with linear interpolations between times $\tGn$ and $\tGnp$ and let  $\tilde{z}^{(n)}$ be the associated \textit{shifted-sequence} defined by
$$\tilde{z}^{(n)}(t)=\tilde{z}(t+\tGn)\quad t\ge 0.$$ 
 With this setting, the idea  is to show that the sequence $(\tilde{z}^{(n)}(t))_{t\ge0}$ is tight with limits being stationary solutions of a homogeneous O.D.E. $\dot{z}=\tilde{h}(z)$ ($\tilde{h}$ being the drift to be determined). The sequence $(\tZn)_{n \geq 0}$ satisfies Lemma \ref{lem:decompZNcheche} that shows that $\tZnp=\tZn+\tgnp \left(\tilde{h}(\tZn)+\tilde{\varepsilon}_{n+1}\right)$ with $\tilde{h}(\tilde{x},\tilde{y}) := (-\tilde{y},\nabla f(\tilde{x}))'$
 and:
 $$
 \tilde{\varepsilon}_{n+1}= \begin{pmatrix} 0\\
\upsilon_n^{(1)}\nabla f(\tXn)+\upsilon_n^{(2)}\tYn+\sqrt{\frac{r_n}{r_{n+1}}}\Delta M_{n+1}
\end{pmatrix},
 $$
 where $\upsilon_n^{(1)}$ and $\upsilon_n^{(2)}$ are given in the statement of Lemma \ref{lem:decompZNcheche}.

 \noindent
On the basis of Assumption $\Hr$, we know that:
 $\limsup_{n\rightarrow+\infty}\frac{1}{2\gamma_{n+1}}\left(\frac{1}{r_{n+1}}-\frac{1}{r_n}\right)<1$ so that:
$$\upsilon_n^{(1)}=O\left(\frac{r_n-r_{n+1}}{r_{n+1}}\right)=O(\tgnp \sqrt{r_n}) \qquad \text{and} \qquad \upsilon_n^{(2)}=O\left(\sqrt{r_n}\right).$$
Thus, $(\upsilon_n^{(1)})_{n\ge1}$ and  $(\upsilon_n^{(2)})_{n\ge1}$ converge to $0$ as $n\rightarrow+\infty$. We can now repeat the arguments used in the situation $r_\infty>0$ and we obtain:
\begin{equation*}
\limsup_{n\rightarrow+\infty}\sup_{t\in[0,T]}\left\|\sum_{k=n+1}^{\widetilde{N}(n,t)+1}\tilde{\gamma}_k\tilde{\varepsilon}_k\right\|=0\quad a.s.,
\end{equation*} 
 where $\widetilde{N}(n,t)=\inf\{k\ge n, \widetilde{\gamma}_{n+1}+\ldots+\widetilde{\gamma}_k\ge t\}$. 
We can still combine  \eqref{eq:nntAPT} and $(iii)$ to obtain  $\sup_{t\le T}|\tilde{z}^{(n)}(t)-\tilde{z}^{(n)}(0)|\xrightarrow{n\rightarrow+\infty}0$
 for any $T>0$. We conclude that  $(\tilde{z}^{(n)})_{n\ge0}$ is relatively compact and that its limits are stationary solutions of $\dot{z}=\tilde{h}(z)$. The end of the proof is exactly the same as in the case $r_\infty>0$. \hfill $\diamond \square$
 \end{proof}



\begin{lem} \label{lem:decompZNcheche}
$$\tZnp=\tZn+\tgnp \left(\tilde{h}(\tZn)+\tilde{\varepsilon}_{n+1}\right)$$
where
$\tilde{h}(\tilde{x},\tilde{y})=(-\tilde{y},\nabla f(\tilde{x})-\sqrt{r_\infty} \tilde{y})'$ and 
$$\tilde{\varepsilon}_{n+1}= \begin{pmatrix} 0\\
\upsilon_n^{(1)}\nabla f(\tXn)+\upsilon_n^{(2)}\tYn+\sqrt{\frac{r_n}{r_{n+1}}}\Delta M_{n+1}
\end{pmatrix},
$$
with 
$$\upsilon_n^{(1)}=\sqrt{\frac{r_n}{r_{n+1}}}-1\quad \textnormal{and}\quad \upsilon_n^{(2)}=\frac{1}{\wch{\gamma}_{n+1}}\upsilon_n^{(1)}+\left(\sqrt{r_\infty}-\frac{r_n}{\sqrt{r_{n+1}}}\right).$$
\end{lem}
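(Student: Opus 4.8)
\emph{The plan is} to prove the identity by direct substitution of the normalized variables $\tXn=X_n$ and $\tYn=Y_n/\sqrt{r_n}$ into the recursion \eqref{eq:HBF_sto}, and then to collect terms so as to separate the homogeneous drift $\tilde{h}$ from the (asymptotically vanishing) remainder $\tilde{\varepsilon}_{n+1}$. The one bookkeeping fact that drives the whole computation is that the renormalized step must be read as $\tgnp=\gamma_{n+1}\sqrt{r_n}$ (equivalently $\widetilde{\gamma}_k=\gamma_k\sqrt{r_{k-1}}$): this is the only choice for which the factor $\gamma_{n+1}r_n$ appearing in the $Y$-update splits cleanly as $\gamma_{n+1}r_n=\tgnp\sqrt{r_n}$, and the only one compatible with a vanishing first coordinate of $\tilde{\varepsilon}_{n+1}$. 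Indeed, for the first coordinate \eqref{eq:HBF_sto} gives $\tXnp=X_{n+1}=X_n-\gamma_{n+1}Y_n=\tXn-\gamma_{n+1}\sqrt{r_n}\,\tYn=\tXn-\tgnp\tYn$, which is exactly $\tXn+\tgnp(-\tYn)$, matching the first entry $-\tilde{y}$ of $\tilde{h}$ with no leftover term.

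For the second coordinate I would divide the $Y$-update of \eqref{eq:HBF_sto} by $\sqrt{r_{n+1}}$ and substitute $Y_n=\sqrt{r_n}\,\tYn$ together with $\nabla f(X_n)=\nabla f(\tXn)$. This produces three contributions: the carried-over term $\sqrt{r_n/r_{n+1}}\,\tYn$; the mean-reverting term $(\gamma_{n+1}r_n/\sqrt{r_{n+1}})(\nabla f(\tXn)-\sqrt{r_n}\,\tYn)$; and the noise term $(\gamma_{n+1}r_n/\sqrt{r_{n+1}})\Delta M_{n+1}$. The noise term is immediately $\tgnp\sqrt{r_n/r_{n+1}}\,\Delta M_{n+1}$, which is precisely the martingale contribution recorded in $\tilde{\varepsilon}_{n+1}$.

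It then remains to match the two deterministic coefficients against $\tYn+\tgnp(\nabla f(\tXn)-\sqrt{r_\infty}\,\tYn+\upsilon_n^{(1)}\nabla f(\tXn)+\upsilon_n^{(2)}\tYn)$. The coefficient of $\nabla f(\tXn)$ equals $\gamma_{n+1}r_n/\sqrt{r_{n+1}}=\tgnp\sqrt{r_n/r_{n+1}}$, and this is $\tgnp(1+\upsilon_n^{(1)})$ exactly when $\upsilon_n^{(1)}=\sqrt{r_n/r_{n+1}}-1$, fixing $\upsilon_n^{(1)}$. For the coefficient of $\tYn$, I would subtract the carried value $1$ and the extracted drift $-\tgnp\sqrt{r_\infty}$ and then divide by $\tgnp$; using $\sqrt{r_n/r_{n+1}}-1=\upsilon_n^{(1)}$, this leaves precisely $\upsilon_n^{(2)}=\upsilon_n^{(1)}/\tgnp+(\sqrt{r_\infty}-r_n/\sqrt{r_{n+1}})$, i.e. the stated value with $\wch{\gamma}_{n+1}$ understood as $\tgnp$. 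Since the whole statement is a closed-form algebraic identity, there is no genuine obstacle beyond the careful tracking of the radicals $\sqrt{r_n}$, $\sqrt{r_{n+1}}$, $\sqrt{r_\infty}$ and the index shift; the only delicate point is the reading $\tgnp=\gamma_{n+1}\sqrt{r_n}$, which is simultaneously what forces the first remainder to vanish and what lets the $\sqrt{r_n}$ freed from $\gamma_{n+1}r_n=\tgnp\sqrt{r_n}$ recombine against the $\sqrt{r_\infty}$ damping hidden in $\tilde{h}$.
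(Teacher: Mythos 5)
Your proof is correct and takes essentially the same route as the paper, which likewise verifies the lemma by direct substitution of $\tXn=X_n$, $\tYn=Y_n/\sqrt{r_n}$ into \eqref{eq:HBF_sto} and matching the coefficients of $\nabla f(\tXn)$, $\tYn$ and $\Delta M_{n+1}$. Your observation that the identity only holds exactly under the reading $\tgnp=\gamma_{n+1}\sqrt{r_n}$ is well taken: the paper's displayed formula for $\tYnp$ in its proof is itself only consistent with that convention (rather than with the literal definition $\tgn=\gamma_n\sqrt{r_n}$ applied at index $n+1$), and similarly $\wch{\gamma}_{n+1}$ in the statement of the lemma should be read as $\tilde{\gamma}_{n+1}$, so you have correctly resolved a minor notational slip rather than introduced a discrepancy.
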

\begin{proof}
First, the fact that $\tXnp=\tXn-\tilde{\gamma}_{n+1}\tYn$ is obvious. Second,
$$\tYnp=\tYn \sqrt{\frac{r_n}{r_{n+1}}+}\tilde{\gamma}_{n+1}\left(\sqrt{\frac{r_n}{r_{n+1}}}\nabla f(\tXn)-\frac{r_n}{\sqrt{r_{n+1}}}\tYn+\sqrt{\frac{r_n}{r_{n+1}}}\Delta M_{n+1}\right).$$
The lemma follows. \hfill $\square$
\end{proof}

\subsection{Convergence to a local minimum\label{sec:asconv}}
To motivate the next theoretical result, we address the result of Corollary \ref{coro:asconv}. We have shown the almost sure convergence of \eqref{eq:HBF_sto} towards a  point of the form $(x_{\infty},0)$  in both  exponential and polynomial cases where $x_{\infty}$ is a critical point of $f$. This result is obtained under very weak assumptions on $f$ and on the noise $(\Delta M_{n+1})_{n \geq 1}$ and is rather close to Theorems 3-4 of \cite{Yang} (obtained within a different framework). Unfortunately, it this only provides a very partial answer to the problem of minimizing $f$ because 
nothing is said  about the stability of the limit of the sequence $(X_n)_{n \geq 0}$ by Corollary \ref{coro:asconv}: the attained critical point  may be a local maximum, a saddle point or a local minimum. 

This result is made more precise below and we establish  some sufficient guarantees for the a.s. convergence of $(X_n)$ towards a \textit{minimum} of $f$, even if $f$ possesses some local traps with the additional assumption $\Hellip$. This proof follows the approach described in  \cite{BrandiereDuflo} and \cite{Benaim} but requires some careful adaptations because of the hypo-elliptic noise of the algorithm (there is no noise on the $x$-component) for both the exponentially and polynomially-weighted memory. Moreover, the linearization of the inhomogeneous drift around a critical point of $f$ in the polynomial memory case is a supplementary difficulty we need to bypass.
 
 Note that some recent works on stochastic algorithms (see, \textit{e.g.}, \cite{Rechtlol})
deal with the convergence to minimizers of $f$ of \textit{deterministic} gradient descent with a randomized initialization. In our case, we will obtain a rather different result because of the randomization of the algorithm at each iteration. Note, however that the main ingredient of the proofs below will be the stable manifold theorem (the Poincar\'e Lemma on stable/unstable hyperbolic points of \cite{Poincare}) and its consequence around hyperbolic points. This geometrical result is also used in \cite{Rechtlol}.

\subsubsection{Exponential memory $r_n=r>0$}\label{sec:asexpo}

The exponential memory case may be (almost) seen as an application of Theorem 1 of \cite{BrandiereDuflo}. More precisely, 
if $Z_n=(X_n,Y_n)$ and $h(x,y) = (-y,r \nabla f(x) - r y)$, then the underlying stochastic algorithm may be written as:
$$
Z_{n+1} = Z_n + \gamma_n h(Z_n) + \gamma_n \Delta M_n,
$$
 When $r_n=r>0$ (exponential memory), Corollary \ref{coro:asconv} applies and $Z_n \xrightarrow {a.s.} Z_{\infty}=(X_\infty,0)$ where $X_{\infty}$ is a critical point of $f$.
For the analysis of the dynamics around a critical point of the drift, the critical poinf of $f$ is denoted $x_0$ and  we can linearize the drift around $(x_0,0) \in \mathbb{R}^d \times \mathbb{R}^d$ as:
$$
h(x,y) = 
\left(\begin{matrix} 0 & -I_d \\ r D^2(f)(x_0) & -r I_d
\end{matrix} \right) \left( \begin{matrix}x-x_0 \\ y\end{matrix} \right) + O(\|x-x_0\|^2),
$$
where $I_d$ is the $d\times d$ identity-squared matrix and $D^2(f)(x_0)$ is the Hessian matrix of $f$ at point $x_0$. When $x_0$ is not a local minimum of $f$, the spectral decomposition of $D^2(f)(x_0)$  leads to the spectral decomposition:
$$
\exists P \in \mathcal{O}_d(\mathbb{R}) \qquad D^2(f)(x_0) = P^{-1} \Lambda P,
$$
where $\Lambda$ is a diagonal matrix with at least one negative eigenvalue $\lambda<0$.
Considering now $\cZn=(\cXn,\cYn)$ where $\cXn = P X_n$ and $\cYn= P Y_n$, we have:
$$
\cZnp = \cZn + \gamma_n \tilde{h}(\cZn) + \gamma_n P \Delta M_n,
$$
where $\wch{h}$ may be linearized as:
$$
\wch{h}(\wch{x},\wch{y}) = 
\left(\begin{matrix} 0 & -I_d \\ r \Lambda & -r I_d
\end{matrix} \right) \left( \begin{matrix} \wch{x} - \wch{x}_0 \\ \wch{y}\end{matrix} \right) + O(\|\wch{x}-\wch{x}_0\|^2) \qquad \text{where} \qquad \wch{x}_0 = P x_0.
$$
In particular, if $e_{\lambda}$ is an eigenvector associated with the eigenvalue $\lambda <0$ of $D^2f(x_0)$, we can see that the linearization of $\tilde{h}$ on the space  $Span(e_{\lambda}) \otimes (1,0,\ldots,0)$ acts as:
$$
A_{\lambda,r} = \left(\begin{matrix} 0 & -1 \\ r \lambda & -r 
\end{matrix} \right).
$$
Its spectrum is $Sp(A_{\lambda,r}) = -\frac{r}{2} \pm \sqrt{\frac{r^2}{4} - r \lambda}$. The important fact is that when $\lambda <0$, 
 the eigenvalue $-\frac{r}{2} + \sqrt{\frac{r^2}{4} - r \lambda}$ is positive and whose corresponding eigenspace  is
$
 E_{\lambda}^+ = \left(1,\frac{1}{2} - \sqrt{\frac{1}{4} - \lambda / r} \right).$
 In the initial space $\mathbb{R}^d \times \mathbb{R}^d$ (without applying the change of basis through $P \otimes P$), the corresponding eigenvector is:
 
 $$ e_{\lambda}^+ = e_{\lambda} \otimes \left( \frac{1}{2} - \sqrt{\frac{1}{4} - \lambda / r}  \right) e_{\lambda} 
$$
Consequently, when $x_0$ is not a local minimum of $f$, it generates a hyperbolic equilibrium of $h$ and we can apply the 
 ``general" local trap Theorem 1 of  \cite{BrandiereDuflo}.
 If $\Pi_{E_{\lambda}^+}$ denotes the projection on the eigenspace $Span(e_{\lambda}^+)$, then the noise in the direction $E_{\lambda}^+$ is:

$$
\xi_n^+ = \Pi_{E_{\lambda}^+} \left(0,\Delta M_n \right) = 
\frac{\langle  \Delta M_n,e_{\lambda} \rangle}{\| e_\lambda\|^2} e_\lambda.
$$
Now, Assumption $(\mathbf{H _{\boldsymbol{\mathcal{E}}}})$ implies that: 
$$
\liminf_{n\longrightarrow + \infty} \mathbb{E} \left\| \Pi_{E_{\lambda}^+} \left(0,\Delta M_n \right) \right\| \geq c_{e_\lambda} >0.
$$
We can then apply Theorem 1 of \cite{BrandiereDuflo} and conclude the following result.

\begin{theo}\label{theo:asexpo}

If $(\mathbf{H _{\boldsymbol{\sigma,2}}})$ , $\Hs$ and $\Hellip$ hold and $r_n=r$, then $X_n$ a.s. converges  towards a local minimum of $f$.
\end{theo}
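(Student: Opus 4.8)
The plan is to rule out, for each critical point $x_0$ of $f$ that is not a local minimum, the event that $(X_n)$ converges to $x_0$, and then to combine this with the a.s.\ convergence to a critical point already supplied by Corollary \ref{coro:asconv}. Since $r_n=r$ is constant, the recursion is a \emph{homogeneous} stochastic approximation $Z_{n+1}=Z_n+\gamma_n h(Z_n)+\gamma_n\Delta M_n$ with $h(x,y)=(-y,\,r\nabla f(x)-ry)$, so the natural tool is the noise-traps avoidance theorem (Theorem~1 of \cite{BrandiereDuflo}): it asserts that such an algorithm cannot converge to a linearly unstable equilibrium along which the noise is non-degenerate. Thus the whole proof reduces to verifying these two conditions at every non-minimal critical point.

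For the instability condition I would use the linearization already computed above. At a critical point $x_0$ that is not a local minimum, $D^2f(x_0)$ carries a negative eigenvalue $\lambda<0$; restricting the linearized drift to the plane generated (in both the $x$- and $y$-blocks) by the associated eigenvector $e_\lambda$ reduces it to $A_{\lambda,r}$, whose spectrum is $-\tfrac r2\pm\sqrt{\tfrac{r^2}4-r\lambda}$. As $\lambda<0$ the $+$ branch is strictly positive, so $(x_0,0)$ is hyperbolic with a one-dimensional unstable eigenspace $E_\lambda^+$, exactly the instability required by \cite{BrandiereDuflo}.

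The delicate point, and the main obstacle, is the noise non-degeneracy, because the system is hypo-elliptic: $\Delta M_n$ acts only on the $Y$-coordinate, so the driving randomness lives in $\{0\}\times\RR^d$ and could \emph{a priori} be transverse to the unstable direction. The crux is therefore to confirm that $E_\lambda^+$ is not contained in the noise-free subspace $\RR^d\times\{0\}$. The explicit eigenvector computed above has $y$-block equal to $e_\lambda$ scaled by $\tfrac12-\sqrt{\tfrac14-\lambda/r}$, a factor that is strictly negative, hence nonzero, precisely because $\lambda<0$; this is what makes the single-block noise still excite the unstable mode. Projecting $(0,\Delta M_n)$ onto $E_\lambda^+$ then produces a nonzero multiple of $\langle\Delta M_n,e_\lambda\rangle\,e_\lambda$, and Assumption $\Hellip$ yields $\liminf_n\EE\|\Pi_{E_\lambda^+}(0,\Delta M_n)\|>0$, the excitation hypothesis of the theorem.

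It remains to assemble the pieces. Applying Theorem~1 of \cite{BrandiereDuflo} at each such $x_0$ gives $\PP(X_\infty=x_0)=0$. Because Corollary \ref{coro:asconv} also forces $f(X_n)$ to converge, every possible non-minimal limit lies on a single level set of $f$, which the local-finiteness hypothesis makes at most countable along the (a.s.\ bounded) trajectory; a countable union of null events is null, so $X_\infty$ is a.s.\ a local minimum. I expect the substance of the argument to be concentrated entirely in the hypo-ellipticity check above, the homogeneous structure and the $\mathbb{L}^2$ bound $(\mathbf{H _{\boldsymbol{\sigma,2}}})$ letting the remaining hypotheses of \cite{BrandiereDuflo} be read off directly.
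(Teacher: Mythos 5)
Your proposal is correct and follows essentially the same route as the paper: rewrite the scheme as a homogeneous stochastic approximation with drift $h(x,y)=(-y,\,r\nabla f(x)-ry)$, linearize at a non-minimal critical point to exhibit the hyperbolic matrix $A_{\lambda,r}$ with positive eigenvalue $-\tfrac r2+\sqrt{\tfrac{r^2}4-r\lambda}$, observe that the unstable eigenvector $e_\lambda^+$ has a nonvanishing $y$-block (the factor $\tfrac12-\sqrt{\tfrac14-\lambda/r}$ being nonzero since $\lambda<0$), so that $\Hellip$ gives $\liminf_n\EE\|\Pi_{E_\lambda^+}(0,\Delta M_n)\|>0$, and invoke Theorem~1 of \cite{BrandiereDuflo} together with Corollary \ref{coro:asconv}. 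Your identification of the hypo-ellipticity check as the crux, and your closing countable-union argument via the local-finiteness hypothesis, match the paper's reasoning.
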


\subsubsection{Polynomial memory $r_n=r \Gamma_n^{-1} \longrightarrow 0$}\label{sec:aspoly}
We introduce a key normalization of the speed coordinate and define the rescaled process:
$$
\tXn = X_n \qquad \text{and} \qquad \tYn = \sqrt{\Gamma_n} Y_n.
$$
We can note that $\tYn = \sqrt{r} Y_n r_n^{-1/2}$ and the important conclusion brought by $(iv)$ of Corollary \ref{coro:asconv} is that
$(\tXn,\tYn)\xrightarrow {a.s.} (X_\infty,0)$  still holds (under the assumptions of Corollary \ref{coro:asconv}) 
We can write the recursive upgrade of the couple $(\tXn,\tYn)$. The evolution of $(\tXn)_{n \geq 0}$ is easy to write:
$
\tXnp= \tXn - \frac{\gamma_{n+1}}{\sqrt{\Gamma_n}} \tYn.
$
The recursive formula satisfied by $(\tYn)_{n \geq 0}$ is:
\begin{eqnarray*}
\tYnp&=&\sqrt{\Gamma_{n+1}} \left[ Y_n + \gamma_{n+1}r_{n+1}\left(\nabla f(X_n)-Y_n + \Delta M_{n+1}\right)\right] \\
& = & \frac{\sqrt{\Gamma_{n+1}}}{\sqrt{\Gamma_n}} \tYn + r \frac{\gamma_{n+1}}{\sqrt{\Gamma_n}} \times \frac{\sqrt{\Gamma_{n+1}}}{\sqrt{\Gamma}_n} \nabla f (\tXn) - r \frac{\gamma_{n+1}}{\sqrt{\Gamma_n}} \times \frac{\sqrt{\Gamma_{n+1}}}{ \Gamma_n} \tYn  +r \frac{\gamma_{n+1}}{\sqrt{\Gamma_n}} \times \frac{\sqrt{\Gamma_{n+1}}}{\sqrt{\Gamma}_n}  \Delta M_{n+1} 
\end{eqnarray*}
Hence, the couple $(\tXn,\tYn)$ evolves as an almost standard stochastic algorithm, whose step size is  $\tgnp = \gamma_{n+1} \Gamma_n^{-1/2}$: 

\begin{equation}\label{eq:algo_normalized}                                                     
\left\{
\begin{aligned}
\tXnp &= \tXn - \tgnp \tYn\\
\tYnp &= \tYn + r \tgnp  \nabla f(\tXn)  + \tgnp q_{n+1} \Delta M_{n+1} + \tgnp U_{n+1},
\end{aligned}
\right.
\end{equation}
where $q_{n+1} = \sqrt{\Gamma_{n+1}/\Gamma_n} = 1+o(n^{-1})$ as $n \longrightarrow + \infty$ and $(U_{n+1})_{n \geq 1}$ is defined by:
$$U_{n+1}=\frac{1/2-r q_{n+1}+o(n^{-1})}{ \sqrt{\Gamma_n}} \tYn + r (q_{n+1}-1) \nabla f(\tXn).$$
This dynamical system is related to the deterministic one $\left\{
\begin{aligned}
\dot{x}_t &= - y_t\\
\dot{y}_t &=  r \nabla f(x_t)
\end{aligned}\right.$ or equivalently:
\begin{equation}\label{eq:ode_normalized}
 \dot{z}_t=F(z_t) \quad \text{with} \quad F(z)=F(x,y)=(-y,r\nabla f (x)).
\end{equation}

It is easy to see that when $x_{\infty}$ is a local maximum of $f$, then the above drift is unstable near $z_{\infty} = (x_{\infty},0)$.	
Unfortunately, Theorem 1 of \cite{BrandiereDuflo} cannot be applied because of the size of the remainder terms involved in \eqref{eq:algo_normalized} and the a.s. convergence of $(X_n,Y_n)_{n \geq 0}$ requires further investigation.
From \cite{Benaim}, we borrow a tractable construction of a  ``Lyapunov" function $\eta$ in the neighborhood of each hyperbolic point, which translates a mean repelling effect of the unstable points. This construction still relies on the Poincar\'e Lemma (see \cite{Poincare} and \cite{Hartman} for a recent reference). Again, in the neighborhood of any hyperbolic point, we will treat the projection $\Pi_{+}$ as a projection on the unstable manifold.

\begin{pro}[\cite{Benaim}]\label{prop:poincare}
For any local maximum point $x_{\infty}$ of $f$, a compact neighborhood  $\mathcal{N}$ of  $z_{\infty} = (x_{\infty},0)$ and a positive function $\eta \in \mathcal{C}^2(\mathbb{R}^d \times \mathbb{R}^d,\mathbb{R}_{+}^*)$ exist such that:
\begin{itemize}
\item[$(i)$] $\forall z=(x,y) \in \mathcal{N}, D\eta(z): \mathbb{R}^d \times \mathbb{R}^d \longrightarrow \mathbb{R}^d \times \mathbb{R}^d$ is Lipschitz, convex and positively homogeneous.
\item[$(ii)$] Two constants $k>0$ and $c_1>0$ and a neighborhood $U$ of $(0,0)$ exist such that:
$$
\forall z \in \mathcal{N} \quad \forall u \in U \qquad \eta(z+u) \geq \eta(z) + \langle D\eta(z) , u\rangle  - k \|u\|^2,
$$
and if $\lfloor \, \rfloor_{+}$ denotes the positive part:
$$
\forall z \in \mathcal{N}  \quad \forall u \in U \qquad \lfloor D\eta(z)(u)\rfloor_{+} \geq c_1 \|\Pi_{+}(u)\|.
$$
\item[$(iii)$] A positive constant $\kappa$ exists such that:
$$ \forall z \in \mathcal{N} \qquad \langle D\eta(z),F(z)\rangle \geq \kappa \eta(z)$$
\end{itemize}
\end{pro}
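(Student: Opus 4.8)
The plan is to follow the construction of \cite{Benaim} for a hyperbolic equilibrium, specialized to the drift $F(x,y)=(-y,r\nabla f(x))$ of \eqref{eq:ode_normalized}. First I would verify that $z_\infty=(x_\infty,0)$ is a hyperbolic fixed point. Since $\nabla f(x_\infty)=0$ we have $F(z_\infty)=0$, and the Jacobian is $L:=DF(z_\infty)=\begin{pmatrix}0&-I_d\\ rD^2f(x_\infty)&0\end{pmatrix}$. Seeking eigenvectors of the form $(v,w)$ with $D^2f(x_\infty)v=\lambda v$, one finds that each eigenvalue $\lambda$ of $D^2f(x_\infty)$ contributes the pair $\pm\sqrt{-r\lambda}$ to the spectrum of $L$. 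As $x_\infty$ is a (nondegenerate) local maximum, $D^2f(x_\infty)\prec 0$, so every such pair is real and nonzero: $z_\infty$ is fully hyperbolic, with a $d$-dimensional unstable subspace $E^+$ (exponents $+\sqrt{r|\lambda_i|}$) and a $d$-dimensional stable subspace $E^-$. I write $\Pi_+$ for the associated spectral projection. Note that this is also where the restriction to maxima matters: at a saddle the undamped linearization would have purely imaginary exponents and $z_\infty$ would fail to be hyperbolic.

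Next I would equip $E^+$ with a Lyapunov (adapted) inner product: since $L_+:=L|_{E^+}$ has all its eigenvalues with real part $\ge\rho>0$, there exist $\langle\cdot,\cdot\rangle_+$ and $\kappa_0\in(0,\rho]$ such that $\langle L_+u,u\rangle_+\ge\kappa_0\langle u,u\rangle_+$ for all $u\in E^+$. Let $N(u)=\langle u,u\rangle_+^{1/2}$. The function $N$ is convex and positively homogeneous of degree one; its directional derivative $DN(u)\cdot v=\langle u,v\rangle_+/N(u)$ is convex and positively homogeneous of degree zero in $v$ and Lipschitz, and it satisfies the linear repelling bound $DN(u)\cdot L_+u\ge\kappa_0 N(u)$ together with the semiconcavity and the directional lower bounds in the expanding directions that underlie $(i)$ and $(ii)$. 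These are exactly the features of the adapted norm isolated in \cite{Benaim}.

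The delicate point is $(iii)$, the mean repelling $\langle D\eta,F\rangle\ge\kappa\eta$. The naive choice $\eta(z)=N(\Pi_+(z-z_\infty))$ fails, because near the stable manifold $\eta$ can be much smaller than $\|z-z_\infty\|$, so the quadratic remainder $F(z)-L(z-z_\infty)=O(\|z-z_\infty\|^2)$ may dominate it. To repair this I would invoke the local stable manifold theorem, a consequence of the Poincar\'e linearization lemma \cite{Poincare}, to get a local diffeomorphism $\Phi$ (as smooth as $F$ permits) with $\Phi(z_\infty)=0$, $D\Phi(z_\infty)=\mathrm{Id}$, mapping $W^{s}_{\mathrm{loc}}$ into $\{\zeta^+=0\}=E^-$. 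Setting $\eta(z)=N(\pi_+\Phi(z))$ with $\pi_+$ the projection onto $E^+$, the pushforward field $G=\Phi_*F$ leaves $E^-$ invariant and has $DG(0)=L$; invariance of $\{\zeta^+=0\}$ forces $G^+(0,\zeta^-)=0$, whence a Taylor expansion in $\zeta^+$ gives $G^+(\zeta)=L_+\zeta^++R(\zeta)$ with $\|R(\zeta)\|\le C\|\zeta^+\|\,\|\zeta\|$. The crucial gain is the factor $\|\zeta^+\|=\|\pi_+\Phi(z)\|\asymp\eta(z)$: one then computes $\langle D\eta(z),F(z)\rangle=DN(\zeta^+)\cdot G^+(\zeta)\ge\kappa_0 N(\zeta^+)-C'\|\zeta\|\,N(\zeta^+)$, and shrinking $\mathcal{N}$ so that $C'\,\mathrm{diam}(\mathcal{N})\le\kappa_0/2$ yields $(iii)$ with $\kappa=\kappa_0/2$. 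Properties $(i)$ and $(ii)$ transfer to $\eta$ because $D\Phi(z_\infty)=\mathrm{Id}$, so on a small enough $\mathcal{N}$ the coordinate projection $\pi_+\circ D\Phi$ is close to $\Pi_+$.

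I expect the main obstacle to be precisely this nonlinear step: producing a coordinate change that straightens the stable manifold while retaining enough regularity for the asserted smoothness of $\eta$ (under $\Hs$, $F$ is only $\mathcal{C}^1$, so $W^{s}_{\mathrm{loc}}$ and hence $\eta$ inherit limited smoothness, and a mollification may be needed), and then checking that the repelling estimate survives the remainder --- where the factorization $G^+=L_+\zeta^++O(\|\zeta^+\|\,\|\zeta\|)$ off the stable manifold is indispensable. Since the statement is quoted from \cite{Benaim}, an equally valid route is simply to verify that $z_\infty$ meets the hyperbolicity hypotheses of that reference and invoke it; the only input specific to the heavy-ball dynamics is then the spectral computation of the first paragraph.
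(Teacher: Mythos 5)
Your proposal is correct and takes essentially the same route as the paper, which states this proposition without proof as a quotation of \cite{Benaim}: the only ingredient specific to the heavy-ball dynamics is your first-paragraph spectral check that $DF(z_\infty)$ has eigenvalues $\pm\sqrt{-r\lambda_i}$, hence a $d$-dimensional unstable subspace at a nondegenerate local maximum, and your sketch of the adapted-norm/straightened-stable-manifold construction is precisely the construction the paper alludes to (its reference to the Poincar\'e lemma and, for $d=1$, the approximation $\eta(x,y)=\frac{1}{2}\|y-\sqrt{-\lambda}\,x\|^2$). One remark: as the paper's later use of $(iii)$ confirms (it deduces $\eta(z)=0$ on the limit set), $\eta$ must vanish on the stable manifold, so the statement's ``$\mathbb{R}_{+}^*$'' and ``$\mathcal{C}^2$'' are loose transcriptions of \cite{Benaim}, and your construction — including the regularity caveat that $F$ is only $\mathcal{C}^1$ under $\Hs$, so the remainder bound should read $\|R(\zeta)\|\le \varepsilon(\|\zeta\|)\|\zeta^+\|$ with $\varepsilon(s)\to0$ rather than $C\|\zeta^+\|\,\|\zeta\|$, which still yields $(iii)$ — correctly reflects this.
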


When $d=1$, it is possible to check that if $\lambda$ is a negative eigenvalue
of the Hessian of $f$ around a local maximum $x_{\infty}$, then the drift may be linearized in $(-y,\lambda (x-x_\infty))$ and a reasonable approximation of $\eta$ is given by $\eta(x,y) = \frac{1}{2} \|y-\sqrt{-\lambda} x \|^2$. Nevertheless, the situation is more involved in higher dimensions and the construction of the function $\eta$ relies on the stable manifold theorem.
We are now able to state the next important result.
\begin{theo}\label{theo:aspoly}
Assume that the noise satisfies $\Hsub$ and $\Hellip$, that the function satisfies $\Hs$, and that $\gamma_n = \gamma n^{-\beta}$ with $\beta<1/3$, then $(X_n)_{n \geq 0}$ a.s. converges  towards a local minimum of $f$. 
\end{theo}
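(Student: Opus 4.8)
The goal is to show that in the polynomial memory case, the rescaled process $(\tXn,\tYn)=(X_n,\sqrt{\Gamma_n}\,Y_n)$ cannot converge to a point $(x_\infty,0)$ where $x_\infty$ is a non-minimizing critical point of $f$ (a local maximum or saddle), so that combined with Corollary \ref{coro:asconv}(iv), which already guarantees convergence to \emph{some} critical point, the limit must be a local minimum. The strategy follows the ``avoidance of unstable traps'' approach of \cite{Benaim} and \cite{BrandiereDuflo}, using the Lyapunov function $\eta$ from Proposition \ref{prop:poincare} adapted to the inhomogeneous dynamics \eqref{eq:algo_normalized} with step size $\tgnp=\gamma_{n+1}\Gamma_n^{-1/2}$. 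The plan is to argue by contradiction: assume $(\tXn,\tYn)\to z_\infty=(x_\infty,0)$ with positive probability on some event, localize the dynamics inside the compact neighborhood $\mathcal{N}$, and derive a repelling estimate on $\eta(\tZn)$ that forces the process to escape $\mathcal{N}$, contradicting convergence.

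First I would write the one-step evolution of $\eta(\tZn)$ along \eqref{eq:algo_normalized}. Applying the lower bound $(ii)$ of Proposition \ref{prop:poincare} with the increment $u=\tZnp-\tZn=\tgnp(F(\tZn)+\text{noise}+\text{remainder})$, and then the drift estimate $(iii)$, one obtains heuristically
\begin{equation*}
\eta(\tZnp)\ge \eta(\tZn)\bigl(1+\kappa\,\tgnp\bigr)+\tgnp\lfloor D\eta(\tZn)(\xi_{n+1})\rfloor_{+}-k\,\tgnp^2\|F(\tZn)+\xi_{n+1}+U_{n+1}\|^2-C\,\tgnp\|U_{n+1}\|,
\end{equation*}
where $\xi_{n+1}=q_{n+1}(0,\Delta M_{n+1})$ is the (rescaled) martingale term. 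The elliptic assumption $\Hellip$, pushed through the second display of $(ii)$, gives a genuinely positive lower bound on the conditional expectation of the noise contribution $\lfloor D\eta(\tZn)(\xi_{n+1})\rfloor_{+}\gtrsim c_1\|\Pi_{+}\xi_{n+1}\|$, which is what makes the unstable point repelling in the presence of noise. Summing these increments, one shows that $\eta(\tZn)$ cannot converge to $\eta(z_\infty)$ (which one normalizes to a value incompatible with the accumulated positive drift), yielding the contradiction. The fact that $\sum\tgnp=+\infty$ (since $\tgnp\sim\sqrt{r(1-\beta)}\,n^{-1/2}$ for $\beta<1$, and more generally $\sum\gamma_{n+1}\sqrt{r_n}=+\infty$ under $\Hr$) is what drives the exponential-type growth of $\eta$ out of the trap.

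The main obstacle, and the place where $\beta<1/3$ and the sub-Gaussian hypothesis $\Hsub$ enter, is controlling two error sources against the favorable $\kappa\,\tgnp$ drift. The remainder term $U_{n+1}$ in \eqref{eq:algo_normalized} is of order $\Gamma_n^{-1/2}\tYn+ (q_{n+1}-1)\nabla f(\tXn)=O(\Gamma_n^{-1/2})$ relative to the drift, reflecting the time-inhomogeneity; to ensure this perturbation of the linearized dynamics does not destroy the hyperbolicity encoded in $(iii)$, one needs $\tgnp$ to decay fast enough that $\sum\tgnp\cdot\Gamma_n^{-1/2}$ or the associated spectral shift stays controlled, and the Poincaré/stable-manifold linearization remains valid uniformly — this is precisely where the quantitative bound $\beta<1/3$ is forced, giving a good lower bound on the drift in the unstable direction. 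The second obstacle is the quadratic error term $k\,\tgnp^2\|\xi_{n+1}\|^2$: since the martingale increments $\Delta M_{n+1}$ are \emph{unbounded}, one cannot directly invoke the bounded-increment arguments of \cite{Pemantle,Benaim}. The plan here is to control $\ES[\sup_{k\ge n}\tilde\gamma_k^2\|\Delta M_k\|^2]$ via a coupling/truncation argument exploiting the sub-Gaussian tails from $\Hsub$, showing this expectation is summable-controlled so that the quadratic fluctuations are negligible compared to the linear-in-$\tgnp$ repelling drift. Assembling the repelling estimate with these two controls, one concludes by the standard noise-activated escape argument (as in Theorem 1 of \cite{BrandiereDuflo}, suitably generalized to unbounded noise) that the trap is avoided almost surely, giving convergence to a local minimum.
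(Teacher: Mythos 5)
Your skeleton matches the paper's proof — the rescaling $(\tXn,\tYn)=(X_n,\sqrt{\Gamma_n}\,Y_n)$ with step $\tgnp=\gamma_{n+1}\Gamma_n^{-1/2}$, the function $\eta$ from Proposition \ref{prop:poincare}, the sub-Gaussian coupling control of $\ES[\sup_{k\ge n}\tilde{\gamma}_k^2\|\Delta M_k\|^2]$ (this is exactly Theorem \ref{theo:sup_sub}), and the rough location of the $\beta<1/3$ constraint. But the central escape mechanism is wrong as you state it. Your one-step display is not a valid pathwise bound: by Proposition \ref{prop:poincare}$(ii)$ the noise enters through the \emph{signed} linear term $\langle D\eta(\tZn),\xi_{n+1}\rangle$, which has zero conditional mean (at best nonnegative, by convexity and Jensen), and you cannot replace it by its positive part $\lfloor D\eta(\tZn)(\xi_{n+1})\rfloor_{+}$ in a lower bound — for the convex positively homogeneous functional $u\mapsto\langle v,u\rangle$ the mean is exactly zero while the positive part has strictly positive mean. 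Consequently ``summing the increments'' yields no contradiction with convergence: the favorable drift $\kappa\,\tgnp\,\eta(\tZn)$ of $(iii)$ is proportional to $\eta(\tZn)$ itself and degenerates precisely on the stable manifold $\{\eta=0\}$, which is where the process may be heading. In the paper, $\Hellip$ is not used to create a positive mean drift from the noise; via Lemma 9.7 of \cite{Benaim} it produces a lower bound on the conditional \emph{second} moment, $\ES[\Omega_{n+1}^2|\mathcal{F}_n]\ge c\,\tgnp^2$ — a variance, not a mean — and this is the only repelling force near $\{\eta=0\}$.

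For the same reason you cannot delegate the conclusion to ``the standard noise-activated escape argument (Theorem 1 of \cite{BrandiereDuflo}, suitably generalized)'': the paper explicitly notes that this theorem is \emph{not} applicable here, because the remainder terms of \eqref{eq:algo_normalized} are too large, and the ``suitable generalization'' is precisely the missing content. The paper's route is to define $\Omega_{n+1}$ and $S_n$ as in \eqref{eq:def_omega_S} and to prove Proposition \ref{prop:key}: $(i)$ $\ES[\Omega_{n+1}^2|\mathcal{F}_n]\le c\,\tgnp^2$; $(ii)$ nonnegativity of the conditional mean above the threshold $\epsilon_n\sim\Gamma_n^{-1/2}\sim\tgnp^{\alpha}$, where the interpolation inequality \eqref{eq:minoration_2} is applied with the specific H\"older exponent $\alpha=(1-\beta)/(1+\beta)$ so that the term $k_\alpha\tgnp^{1+\alpha}$ matches the $O(\Gamma_n^{-1/2})$ inhomogeneous remainder $U_{n+1}$; $(iii)$ the strict submartingale property $\ES[S_{n+1}^2-S_n^2|\mathcal{F}_n]\ge a\,\tgnp^2$, which holds only if $\epsilon_n\tgnp\Gamma_n^{-1/2}=o(\tgnp^2)$, i.e. $\alpha>1/2$, i.e. $\beta<1/3$ — so the constraint comes from this threshold-versus-variance comparison, not from preserving hyperbolicity of the linearization as you suggest. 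One then runs Pemantle's three-step scheme: Step 1 uses the stopped submartingale $S_k^2-a\sum\tilde{\gamma}_i^2$ to drive $S_n$ above $\sqrt{b\delta_n}$ (with $\delta_n=\sum_{i\ge n}\tilde{\gamma}_i^2$) with probability at least $1-3b/a$, and it is only here that $\Hsub$ and Theorem \ref{theo:sup_sub} are needed, to bound the stopped quadratic term $\sup_k\tilde{\gamma}_k^2\|\Delta M_k\|^2$; Step 2 is a Doob argument keeping $S_i\ge\frac{\sqrt{b}}{2}\sqrt{\delta_n}$ forever with probability bounded away from zero (this requires $\epsilon_n=o(\sqrt{\delta_n})$, again $\beta<1/3$); Step 3 upgrades this to ``$S_n\not\to0$ a.s.'' via a L\'evy 0--1 argument. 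The final contradiction is then dynamical: if the process never left $\mathcal{N}$, invariance of the limit set together with $\eta(\Phi_t(z))\ge e^{\kappa t}\eta(z)$ and boundedness of $\eta$ on $\mathcal{N}$ would force $S_n=\eta(\tZn)\to0$. Without these quantitative steps your sketch does not close.
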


The proof relies on an argument of \cite{Pemantle,Benaim} even though it requires major modifications to deal with the time inhomogeneity of the process and the unbounded noise, which are assumed in these previous works. We denote  $\mathcal{N}$ as any neighborhood of $z_{\infty}$ and consider any integer $n_0 \in \mathbb{N}$. We then introduce $\tZn =(\tXn,\tYn)$ and the stopping time:
$$
T := \inf \left\{n \geq n_0 \, : \, \tZn \notin \mathcal{N} \right\}.
$$
We will show that $\mathbb{P}(T<+\infty) = 1$, which implies the conclusion.
We introduce two sequences $(\Omega_n)_{n \geq n_0}$ and $(S_n)_{n \geq n_0}$:
\begin{equation}\label{eq:def_omega_S}
\Omega_{n+1} = [\eta(\tZnp)-\eta(\tZn)] \mathbf{1}_{n < T} + \tilde{\gamma}_{n+1}\mathbf{1}_{n\geq T} \quad \text{and}\quad S_n=\eta(\tilde{Z}_{n_0})+\sum_{k=n_0+1}^n \Omega_k.
\end{equation}
Note that the construction of $\eta$ implies that $z \longmapsto D\eta(z)$ is Lipschitz, so that the following inequality holds:
$$
 \eta(z+u)-\eta(z) \geq  \langle D\eta(z), u \rangle - \frac{\|D\eta\|_{Lip} \|u\|^2}{2}.
$$
 This inequality provides some information when $u$ is small. In the meantime, $\eta$ is positive so that:
\begin{equation}\label{eq:minoration_2}
\forall \alpha \in (0,1] \quad \exists k_{\alpha}>0 \quad \forall (z,u) \in \mathcal{N}\times\mathbb{R}^d \qquad 
 \eta(z+u)-\eta(z) \geq  \langle D\eta(z), u \rangle - k_{\alpha} \|u\|^{1+\alpha}
\end{equation} 
The family of inequalities described in \eqref{eq:minoration_2} will be used with an appropriate value of $\alpha$ in the next result.

\begin{pro}\label{prop:key}
The random variables $(\Omega_n)_{n \geq 0}$ satisfy the following conditions:
\begin{itemize}
\item[$(i)$] A constant $c$ exists such that:
$$
\mathbb{E}[\Omega_{n+1}^2 \vert \mathcal{F}_n ] \leq c \tilde{\gamma}_{n+1}^2
$$
\item[$(ii)$] A sequence $(\epsilon_n)_{n \geq 0}$ exists such that:
$$
\mathbf{1}_{S_n \geq \epsilon_n} \mathbb{E} [\Omega_{n+1} \vert \mathcal{F}_n ]   \geq 0,
$$
with $\epsilon_n \sim c n^{-(1-\alpha)/2}$ for a large enough $c$   and $\alpha = (1-\beta)/(1+\beta)$.
\item[$(iii)$] Assume that $\beta < \frac{1}{3}$, then $(S_n^2)_{n \geq 0}$ has a submartingale increment:
$$
 \mathbb{E} [S_{n+1}^2 - S_n^2 \vert \mathcal{F}_n ] \geq a \tilde{\gamma}_{n+1}^2
$$
for a small enough constant $a$.
\end{itemize}
\end{pro}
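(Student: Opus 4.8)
The plan is to verify the three statements in Proposition \ref{prop:key} by directly expanding the increment $\Omega_{n+1}=\eta(\tZnp)-\eta(\tZn)$ on the event $\{n<T\}$ using the recursive dynamics \eqref{eq:algo_normalized} and the properties of $\eta$ from Proposition \ref{prop:poincare}, while carefully tracking the sub-Gaussian tails of $\Delta M_{n+1}$.

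For part $(i)$, I would bound $\Omega_{n+1}^2$ by controlling $\|\tZnp-\tZn\|$. On $\{n<T\}$ the normalized increment is $\tgnp(\tilde h(\tZn)+\tilde\varepsilon_{n+1})$ with step $\tgnp=\gamma_{n+1}\Gamma_n^{-1/2}$; since $\tZn\in\mathcal N$ (a compact) the drift is bounded, and the fluctuation is driven by $q_{n+1}\Delta M_{n+1}$ with $q_{n+1}=1+o(n^{-1})$. Because $\eta$ has Lipschitz derivative and grows at most quadratically on $\mathcal N$, and $D\eta$ is positively homogeneous hence Lipschitz-controlled, I can bound $\Omega_{n+1}^2\lesssim \tgnp^2(1+\|\Delta M_{n+1}\|^2)$ plus higher-order terms; taking conditional expectation and invoking $\Hsub\Rightarrow\Hsig$ (so $\ES[\|\Delta M_{n+1}\|^2|\mathcal F_n]$ is bounded on the compact $\mathcal N$) gives $\ES[\Omega_{n+1}^2|\mathcal F_n]\lesssim \tgnp^2$.

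For part $(ii)$, the key is the mean-repelling estimate $(iii)$ of Proposition \ref{prop:poincare}: $\langle D\eta(\tZn),F(\tZn)\rangle\ge\kappa\eta(\tZn)$. Using \eqref{eq:minoration_2} with exponent $1+\alpha$ and $\alpha=(1-\beta)/(1+\beta)$, I would write
\begin{equation*}
\ES[\Omega_{n+1}|\mathcal F_n]\ge \tgnp\langle D\eta(\tZn),\tilde h(\tZn)+\ES[\tilde\varepsilon_{n+1}|\mathcal F_n]\rangle - k_\alpha\,\ES[\|\tZnp-\tZn\|^{1+\alpha}|\mathcal F_n].
\end{equation*}
The drift term contributes $\gtrsim \kappa\,\tgnp\,\eta(\tZn)$ (up to the vanishing corrections $U_{n+1}$, whose $\tYn$-part carries the factor $\Gamma_n^{-1/2}$ that the scaling was designed to kill); the remainder is $O(\tgnp^{1+\alpha})$ after using $\Hsub$ to bound the $(1+\alpha)$-moment of the martingale increment. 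The threshold $\epsilon_n\sim c n^{-(1-\alpha)/2}$ is exactly calibrated so that when $S_n\ge\epsilon_n$ (and hence, via the construction, $\eta(\tZn)$ is bounded below by a comparable quantity) the positive drift dominates the remainder, yielding $\ES[\Omega_{n+1}|\mathcal F_n]\ge0$. The bookkeeping that $\tgnp\,\epsilon_n\gtrsim \tgnp^{1+\alpha}$ pins down the choice of $\alpha$.

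For part $(iii)$, I would expand $\ES[S_{n+1}^2-S_n^2|\mathcal F_n]=2S_n\ES[\Omega_{n+1}|\mathcal F_n]+\ES[\Omega_{n+1}^2|\mathcal F_n]$ and split on whether $S_n\ge\epsilon_n$ or $S_n<\epsilon_n$. On $\{S_n\ge\epsilon_n\}$, part $(ii)$ makes the cross term non-negative and part $(i)$ handles the square, though I must check the square term does not force the lower bound below $a\tgnp^2$. On $\{S_n<\epsilon_n\}$, the cross term $2S_n\ES[\Omega_{n+1}|\mathcal F_n]$ may be negative but is bounded by $2\epsilon_n\cdot O(\tgnp)$; this must be dominated by a genuinely positive contribution to $\ES[\Omega_{n+1}^2|\mathcal F_n]$ coming from the elliptic lower bound $\Hellip$ on the noise in the unstable direction (via $\lfloor D\eta(z)(u)\rfloor_+\ge c_1\|\Pi_+(u)\|$ from Proposition \ref{prop:poincare}$(ii)$), giving a term of order $\tgnp^2$. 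I expect the main obstacle to be precisely this last balancing act: showing that the unavoidable negative drift near $S_n=0$, of size $\epsilon_n\tgnp\sim n^{-\beta-(3-\alpha)/2+\ldots}$, is beaten by the variance floor of order $\tgnp^2\sim n^{-(1+\beta)}$ produced by the elliptic noise. Comparing the exponents is where the hypothesis $\beta<1/3$ enters, and reconciling this with the unbounded (merely sub-Gaussian) increments — as opposed to the bounded increments of \cite{Pemantle,Benaim} — will require the coupling argument alluded to in the remark, controlling $\ES[\sup_{k\ge n}\tgnp^2\|\Delta M_k\|^2]$.
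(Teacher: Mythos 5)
Your proposal is correct and follows essentially the same route as the paper's proof: Lipschitz control of $\eta$ on the compact $\mathcal{N}$ plus conditional moment bounds for $(i)$, the expansion \eqref{eq:minoration_2} with $\alpha=(1-\beta)/(1+\beta)$ calibrated against the $\Gamma_n^{-1/2}$ corrections from $U_{n+1}$ for $(ii)$, and the split on $\{S_n\ge\epsilon_n\}$ with the variance floor $\ES[\Omega_{n+1}^2|\mathcal{F}_n]\ge c\,\tgnp^2$ (Lemma 9.7 of \cite{Benaim} under $\Hellip$) against the negative cross term of order $\tgnp^{1+2\alpha}$, which forces $\alpha>1/2$, i.e.\ $\beta<1/3$, for $(iii)$. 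One small clarification: the coupling control of $\ES[\sup_{k\ge n}\tilde{\gamma}_k^2\|\Delta M_k\|^2]$ that you invoke at the end is not needed for this proposition (conditional second moments on the stopped, compactly confined process suffice, so $\Hsub$ plays no role here); it only enters later, in Step 1 of the proof of Theorem \ref{theo:aspoly}, where the stopped martingale term cannot be handled conditionally.
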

\noindent
\textit{Proof:}

\noindent
\underline{Proof of $(i)$.} 
When $n \geq T$, we have $\Omega_{n+1} = \tilde{\gamma}_{n+1}$ by definition and the conclusion follows. In the other situation when $n \leq T$, we use the Lipschitz continuity of $\eta$: if $m = \sup_{z \in \mathcal{N}} \|D \eta(z)\|$, then Equation \eqref{eq:algo_normalized} yields:
\begin{equation*}
\|\eta(\tZnp)-\eta(\tZn)\|^2 \leq 4 m^2 
\tilde{\gamma}_{n+1}^2 
\left[ \|\tYn\|^2 + r^2 \|\nabla f(\tXn)\|^2+ q_{n+1}^2 \|\Delta M_{n+1}\|^2+\|U_{n+1}\|^2\right].
\end{equation*}
The neighborhood $\mathcal{N}$ being compact, we deduce from the previous inequality that a
constant $C>0$ exists such that:
$$
\mathbf{E} \left[ \|\Omega_{n+1}\|^2 \mathbf{1}_{n < T} \, \vert \mathcal{F}_n \right] \leq  
\mathbf{E} \left[ \|\eta(\tZnp)-\eta(\tZn)\|^2   \mathbf{1}_{n < T} \vert \mathcal{F}_n \right]  \leq C \tilde{\gamma}_{n+1}^2,
$$
where we used a uniform upper bound on $\mathbb{E}[ \|\Delta M_{n+1} \|^2  \mathbf{1}_{n < T} \vert \mathcal{F}_n]$,
 leading to the proof of $(i)$. \hfill $\diamond$

\noindent
\underline{Proof of $(ii)$.}
 Note that $\mathbf{1}_{n<T}$ and $ \mathbf{1}_{n \geq T}$ are $\mathcal{F}_n$ measurable and we have:
$$ 
 \mathbf{1}_{n \geq T} \mathbb{E}\left[ \Omega_{n+1} \vert \mathcal{F}_n \right]  = 
  \mathbf{1}_{n \geq T} \tilde{\gamma}_{n+1} \geq 0.
 $$
 On the complementary set, we also have:
\begin{eqnarray*}
\mathbf{1}_{n<T} \mathbb{E}\left[ \Omega_{n+1} \vert \mathcal{F}_n \right]  &
\geq& \mathbf{1}_{n<T} \mathbb{E}\left[  [\eta(\tZnp)-\eta(\tZn)] \vert \mathcal{F}_n \right] =   \mathbf{1}_{n<T}
\mathbb{E}\left[ \eta(\tZnp)-\eta(\tZn) \vert \mathcal{F}_n \right]
\end{eqnarray*}
Hence, we can use the lower bound given by \eqref{eq:minoration_2}: for any value of $\alpha \in (0,1]$:
\begin{eqnarray*}
 \mathbf{1}_{n<T} \mathbb{E}\left[ \Omega_{n+1} \vert \mathcal{F}_n \right] 
&\geq  &\mathbf{1}_{n<T} \left[ \tilde{\gamma}_{n+1} \langle D\eta(\tZn),  F(\tZn) \rangle + \tilde{\gamma}_{n+1}  \langle D\eta(\tZn),  \mathbb{E} [ \Delta M_{n+1} \vert \mathcal{F}_n ]+U_{n+1}\rangle \right] \\
&  & - \mathbf{1}_{n<T} k_{\alpha} \tilde{\gamma}_{n+1}^{1+\alpha} \left[  \|\tYn\| + r \|\nabla f(\tXn)\|+ q_{n+1} \|\Delta M_{n+1}\|+\|U_{n+1}\| \right] ^{1+\alpha}
\end{eqnarray*}
where we used the triangle inequality  in the last line to derive an upper bound of $\|\tZnp-\tZn\|$. When $n<T$, $\tZn$ is bounded and we  have $\mathbb{E} [\|\Delta M_{n+1}\|^2 \vert \mathcal{F}_n] \leq  \sigma^2 M$ for a large enough $M$. Hence, the H\" older inequality implies that:
$$
\mathbb{E}[ \|\Delta M_{n+1}\|^{1+\alpha} \vert \mathcal{F}_n] \leq \sigma^{1+\alpha} M^{\frac{1+\alpha}{2}}.
$$
Therefore, we can find a large enough constant $C_1>0$ such that:
$$
 \mathbf{1}_{n<T} \mathbb{E}\left[ \Omega_{n+1} \vert \mathcal{F}_n \right] \geq \mathbf{1}_{n<T}\left[  \tilde{\gamma}_{n+1} \langle D\eta(\tZn),  F(\tZn) \rangle -  m \tilde{\gamma}_{n+1} \|U_{n+1}\| - C_1 \tilde{\gamma}_{n+1}^{1+\alpha} \right].
$$
The  lower bound $(iii)$ of Proposition  \ref{prop:poincare} and the definition of $U_{n+1}$ implies   that a constant $C_2$ exists such that:
$$
 \mathbf{1}_{n<T} \mathbb{E}\left[ \Omega_{n+1} \vert \mathcal{F}_n \right] \geq \mathbf{1}_{n<T} \tilde{\gamma}_{n+1} \left[ \kappa \eta(\tZn) - C_1 \tilde{\gamma}^{\alpha}_{n+1} - \frac{C_2}{\sqrt{\Gamma_{n}}} \right]
$$
We now choose $\alpha$ so that $\tilde{\gamma}_{n+1}^{\alpha} \simeq \Gamma_{n}^{-1/2}$, which corresponds to the choice:
$$
\alpha=\frac{1-\beta}{1+\beta}.
$$
Defining  $\epsilon_n = \kappa^{-1} \left[ C_1 \tilde{\gamma}_{n+1} + C_2 \Gamma_n^{-1/2}\right]$, we then deduce that if $n < T$, then $S_n= \eta(\tZn)$ so that:
$$
\mathbf{1}_{S_n \geq \epsilon_n} \mathbb{E}\left[ \Omega_{n+1} \vert \mathcal{F}_n \right] \geq 0,
$$
which concludes the proof. In particular, $\epsilon_n$ must be chosen on the order $\tilde{\gamma}_{n+1}^{\alpha}$ (or on the order $\Gamma_n^{-1/2} \sim n^{-(1-\beta)/2}$). 
\hfill $\diamond$

\noindent
\underline{Proof of $(iii)$.}
Observe that $S_{n+1}^2 - S_n^{2} = \Omega_{n+1}^2+ 2 S_n \Omega_{n+1}$. Now, if $S_n \geq \epsilon_n$, then we have seen in the proof of $(ii)$ that:
$$
\mathbf{1}_{S_n \geq \epsilon_n} \mathbb{E} [ S_{n+1}^2 - S_n^{2} \vert \mathcal{F}_n] = 
\mathbf{1}_{S_n \geq \epsilon_n} \mathbb{E}[  \Omega_{n+1}^2 \vert \mathcal{F}_n] + 2 S_n \mathbf{1}_{S_n \geq \epsilon_n}\mathbb{E}[ \Omega_{n+1} \vert \mathcal{F}_n] 
 \geq 
\mathbf{1}_{S_n \geq \epsilon_n} \mathbb{E}[ \Omega_{n+1}^2 \vert \mathcal{F}_n].
$$
In the other situation, we have $S_n \leq \epsilon_n$, meaning that $n<T$ 
and we have seen in the proof of $(ii)$ that:
\begin{eqnarray*}
\mathbf{1}_{n <T} \mathbb{E}  [ \Omega_{n+1} \vert \mathcal{F}_n] &\geq &\mathbf{1}_{n <T}   \left[ \tilde{\gamma}_{n+1} \kappa \eta(\tZn) + \tilde{\gamma}_{n+1}  \langle D\eta(\tZn),  U_{n+1}\rangle \right] \\
& & - k_{2}  \tilde{\gamma}_{n+1}^{2} \left[  \|\tYn\| + r \|\nabla f(\tXn)\|+ q_{n+1} \|\Delta M_{n+1}\|+\|U_{n+1}\| \right] ^{2}
\end{eqnarray*}
Consequently, because of the positivity of $\eta$, we deduce that:
$$
\mathbf{1}_{n <T} \mathbb{E}  [ \Omega_{n+1} \vert \mathcal{F}_n]  \geq - \|D \eta(\tZn)\| \times O(\tilde{\gamma}_{n+1} \Gamma_n^{-1/2}) - O(\tilde{\gamma}_{n+1}^2).
$$
We know that $D\eta$ is locally bounded on $\mathcal{N}$, we then obtain:
\begin{eqnarray*}
\mathbf{1}_{S_n  \leq \epsilon_n} \mathbb{E}  [ \Omega_{n+1} \vert \mathcal{F}_n] &
= & \mathbf{1}_{S_n  \leq \epsilon_n} \mathbf{1}_{n <T} \mathbb{E}  [ \Omega_{n+1} \vert \mathcal{F}_n] = \mathbf{1}_{\eta(\tZn)  \leq \epsilon_n} \mathbf{1}_{n <T}\mathbb{E}  [ \Omega_{n+1} \vert \mathcal{F}_n]  \\
& \geq & - \mathbf{1}_{\eta(\tZn)  \leq \epsilon_n} \mathbf{1}_{n <T} \left[ \|D \eta(\tZn)\| \times O(\tilde{\gamma}_{n+1} \Gamma_n^{-1/2})+ O(\tilde{\gamma}_{n+1}^2)\right]. \\
& \geq &- C \tilde{\gamma}_{n+1} \left[\Gamma_n^{-1/2} +  \tilde{\gamma}_{n+1}\right],
\end{eqnarray*}
for a large enough constant $C$.
In the two situations, we then have:
$$
\mathbb{E} [ S_{n+1}^2 - S_n^{2} \vert \mathcal{F}_n]  \geq \mathbb{E}[  \Omega_{n+1}^2 \vert \mathcal{F}_n] - 2 C \epsilon_n  \tilde{\gamma}_{n+1}^2 - 2 C \epsilon_n  \tilde{\gamma}_{n+1} \Gamma_n^{-1/2}.
$$
Finally,  Lemma 9.7 of \cite{Benaim} and our hypoelliptic assumption  $\Hellip$ implies that for small enough $c$:
$$
 \mathbb{E}[  \Omega_{n+1}^2 \vert \mathcal{F}_n] \geq c \tilde{\gamma}_{n+1}^2
$$
The conclusion follows if $  \epsilon_n \tilde{\gamma}_{n+1} \Gamma_n^{-1/2} = o\left( \tilde{\gamma}_{n+1}^2\right) $.
Since $\epsilon_n$ is chosen on the order $\Gamma_n^{-1/2} \sim \tilde{\gamma}_{n+1}^{\alpha}$ with $\alpha=(1-\beta)/(1+\beta)$,
this condition is equivalent to:
$$
\tilde{\gamma}_{n+1}^{1+2 \alpha} =o\left( \tilde{\gamma}_{n+1}^2\right).
$$
 meaning that $\alpha  > 1/2$. It then implies that $\beta$ should be less than $1/3$. \hfill $\diamond \square$

\medskip
\noindent
We use now the key estimations derived from Proposition \ref{prop:key} to obtain the proof of  the main result of this Section.

\noindent
\textit{Proof of Theorem \ref{theo:aspoly}:} The proof is split into three parts. We consider:
 $$
 S_n = S_0 + \sum_{k=1}^n \Omega_k \qquad \text{and define} \qquad \delta_n = \sum_{i\geq n} \tilde{\gamma}_i^{2}.
 $$
 In our case, we have chosen $\beta \in (0,1/3)$ and we can check that:
\begin{equation}\label{eq:gamma_delta}
\tilde{\gamma}_n \sim n^{-(1+\beta)/2}\quad \text{so that} \quad \delta_n\sim n^{-\beta}.
\end{equation}
We consider the sequence $\epsilon_n$ defined in Proposition \ref{prop:key}:
$$\epsilon_n \sim \Gamma_n^{-1/2} \sim \tilde{\gamma}_{n+1}^\alpha \quad \text{with} \quad \alpha=\frac{1-\beta}{1+\beta}>1/2.$$ In this case, we have: $$\epsilon_n =n^{-(1-\beta)/2}=o(n^{-\beta/2}) = o(\sqrt{\delta_n}) \quad \text{because} \quad \beta<1/3< 1/2.$$
The proof now proceeds  by considering the sequential crossings $S_n \leq c \sqrt{\delta_n}$ and $S_n \geq c \sqrt{\delta_n}$ for a suitable value of $c$.\\

\vspace{0.5em}

\noindent
\textit{\underline{Step 1:} $S_n$ becomes greater than $\sqrt{b \delta_n}$ with a positive probability.}

\noindent
For a given constant $b$ and a positive $n \in \mathbb{N}$, we introduce the stopping time:
$$
\mathcal{T} = \inf \left\{ i \geq n \, : S_i \geq \sqrt{b \delta_i} \right\},
$$
and we show that an $\epsilon>0$ exists such that
$
\mathbb{P}\left(\mathcal{T}<\infty \right) \geq 1 - \epsilon.
$
For $a$  given by $(iii)$ of Proposition \ref{prop:key}, we consider: $$\mathcal{M}_k = S_k^2 - a \sum_{i=0}^k \tilde{\gamma}_{i}^2.$$
 $(\mathcal{M}_k)_{k \geq n}$ is a submartingale, so that $(\mathcal{M}_{k \wedge \mathcal{T}})_{k \geq n}$ is also a stopped submartingale. This yields:
\begin{equation}\label{eq:minoP}
\mathbb{E}\left[S^2_{m\wedge \mathcal{T}} - S_n^2 \vert \mathcal{F}_n \right] \geq a \mathbb{E} \left[ \sum_{n+1}^{m \wedge \mathcal{T}}  \tilde{\gamma}_i^2 \vert \mathcal{F}_n \right] \geq a \left( \sum_{n+1}^m \tilde{\gamma}_i^2\right) \mathbb{P}\left(\mathcal{T}> m \vert \mathcal{F}_n \right).
\end{equation}
In the meantime, we can decompose $S_{m\wedge \mathcal{T}}^2 - S_n^2$ into:
\begin{eqnarray*}
S_{m\wedge \mathcal{T}}^2 - S_n^2 & = & S_{m\wedge \mathcal{T}}^2-S_{m\wedge \mathcal{T}-1}^2 + S_{m\wedge \mathcal{T} - 1}^2 - S_n^2\\
& \leq & 2 S_{m\wedge \mathcal{T} - 1} \Omega_{m\wedge \mathcal{T}} + \Omega_{m \wedge \mathcal{T}}^2 + S_{m\wedge \mathcal{T} - 1}^2\\
& \leq &2  S_{m \wedge \mathcal{T} - 1}^2+ 2\Omega_{m \wedge \mathcal{T}}^2\\
& \leq & 2 b \delta_{m \wedge \mathcal{T} -1}+2 \Omega_{m \wedge \mathcal{T}}^2.
\end{eqnarray*}
Since $(\delta_k)_{k \geq n}$ is decreasing, we then have $\delta_{m \wedge \mathcal{T} -1} \leq \delta_n$. We then study the remaining term. We can use Equation \eqref{eq:algo_normalized} and the Lipschitz continuity of $\eta$ over the neighborhood $\mathcal{N}$ (before time $T$) to obtain a large enough $C$ such that:
\begin{eqnarray*}
\Omega_{m \wedge \mathcal{T}}^2 &=&  \Omega_{m \wedge \mathcal{T}}^2 \left[ \mathbf{1}_{m \wedge \mathcal{T} - 1 < T} +  \mathbf{1}_{m \wedge \mathcal{T} - 1 \geq T} \right] \\
&  = &
\left[\eta(\tilde{Z}_{m \wedge \mathcal{T}})-\eta(\tilde{Z}_{m \wedge \mathcal{T}-1})\right]^2  \mathbf{1}_{m \wedge \mathcal{T} - 1 < T}
+ \tilde{\gamma}^{2}_{m \wedge \mathcal{T} } \mathbf{1}_{m \wedge \mathcal{T} - 1 \geq T} \\
&\leq &  C [ \tilde{\gamma}^{2}_{m \wedge \mathcal{T} } +  \tilde{\gamma}^{2}_{m \wedge \mathcal{T} } \| \Delta M_{m \wedge \mathcal{T} } \|^2 ].
\end{eqnarray*}
However, nothing more is known about the stopped process $ \| \Delta M_{m \wedge \mathcal{T} } \|^2$ and 
 we are forced to use:
$$
\mathbb{E} \left[ S_{m\wedge \mathcal{T}}^2 - S_n^2  \vert \mathcal{F}_n\right] \leq  2  b \delta_{n} + 
 2 C  \left[ \tilde{\gamma}_n^2 + \mathbb{E} \left[ \sup_{k \geq n} \tilde{\gamma}_k^2 \|\Delta M_k\|^2\right] \right].
$$
Given that all $\Delta M_k$ are independent sub-Gaussian random variables that satisfy Inequality \eqref{eq:subgaussian}, we can use Theorem
\ref{theo:sup_sub} and obtain that a constant $C$ large enough exists such that for any $\epsilon>0$:
\begin{equation}\label{eq:DeltaS}
\mathbb{E} 
\left[ S_{m\wedge \mathcal{T}}^2 - S_n^2  \vert \mathcal{F}_n\right] \leq 2 b \delta_n + 2C \tilde{\gamma}_n^{2} \log (\tilde{\gamma}_n^{-2} ).
\end{equation}
We can plug the estimate \eqref{eq:DeltaS} into Inequality \eqref{eq:minoP} to obtain:
$$
\mathbb{P} \left(\mathcal{T}>m \vert \mathcal{F}_n \right) \leq \frac{ 2 b \delta_n + 2 C  \tilde{\gamma}_n^{2} \log (\tilde{\gamma}_n^{-2})}{a \sum_{i=n+1}^m \tilde{\gamma}_i^2}.
$$
Letting $m \longrightarrow + \infty$, we deduce that:
$$
\mathbb{P} \left(\mathcal{T} =  \infty  \vert \mathcal{F}_n \right) 
\leq \frac{ 2 b}{a} + \frac{ 2 C  \tilde{\gamma}_n^{2} \log (\tilde{\gamma}_n^{-2})}{a \delta_n}.
$$
According to the calibration \eqref{eq:gamma_delta}, we have
$\tilde{\gamma}_n^{2} \log (\tilde{\gamma}_n^{-2}) = o(\delta_n).$
Consequently,   we can choose $n$ large enough  such that:
$$\mathbb{P} \left(\mathcal{T} <  \infty  \vert \mathcal{F}_n \right) 
\geq 1- \frac{ 3 b}{a}.$$ \hfill $\diamond$

\vspace{0.5em}

\noindent
\textit{\underline{Step 2:} The sequence $(S_k)_{k \geq n}$ may remain larger than $\sqrt{b/2 \delta_n}$ with a positive probability.}

\noindent
We introduce the stopping time $\mathcal{S}$ and the event $E_n \in \mathcal{F}_n$:
$$
\mathcal{S}= \inf\{i \geq n \, : \, S_i < \frac{\sqrt{b}}{2} \sqrt{\delta_n} \} \qquad \text{and} \qquad E_n = \left\{ S_n \geq \sqrt{b} \sqrt{\delta_n} \right\}.
$$
Since the sequence $(\delta_i)_{i \geq n}$ is non-increasing,   $(ii)$ of Proposition \ref{prop:key} yields:
\begin{eqnarray*}
\mathbb{E} \left[ S_{(i+1)\wedge \mathcal{S}} - S_{i \wedge \mathcal{S}}\vert \mathcal{F}_i \right] &=& \mathbf{1}_{\mathcal{S}>i} 
\mathbb{E} \left[ S_{i+1} - S_{i}\vert \mathcal{F}_i \right]  = 
\mathbf{1}_{\mathcal{S}>i} \mathbf{1}_{S_i \geq \sqrt{b/2 \delta_n}}
\mathbb{E} \left[ S_{i+1} - S_{i}\vert \mathcal{F}_i \right] \\
& \geq & \mathbf{1}_{\mathcal{S}\geq i} \mathbf{1}_{S_i \geq \sqrt{b/2 \delta_i}}
\mathbb{E} \left[ X_{i+1} \vert \mathcal{F}_i \right]   \geq \mathbf{1}_{\mathcal{S}\geq i} \mathbf{1}_{S_i \geq \epsilon_i}
\mathbb{E} \left[ X_{i+1} \vert \mathcal{F}_i \right]  \geq 0.\\
\end{eqnarray*}
Hence, $(S_{i \wedge \mathcal{S}})_{i \geq n}$ is a submartingale and the Doob decomposition reads $S_{i \wedge \mathcal{S}} = M_i+I_i$ where  $(M_i)_{i \geq n}$ is a Martingale and $(I_i)$ is a predictable increasing process such that $I_n=0$. Hence, 
$$
\mathbb{P}(\mathcal{S}=\infty \vert \mathcal{F}_n) = \mathbb{P}_{\vert \mathcal{F}_n}\left( \forall i \geq n \, : S_i \geq \frac{\sqrt{b}}{2} \sqrt{\delta_n} \right)
\geq \mathbb{P}_{\vert \mathcal{F}_n}\left( \forall i \geq n \, : M_i \geq \frac{\sqrt{b}}{2} \sqrt{\delta_n} \right)
$$
On the event $E_n$,  $S_n =M_n \geq \sqrt{b} \sqrt{\delta_n}$ so that $M_i-M_n \leq M_i -  \sqrt{b} \sqrt{\delta_n}$. Therefore:
$$
\mathbb{P}\left( \forall i \geq n \, : M_i \geq \frac{\sqrt{b}}{2} \sqrt{\delta_n} \, \vert \,\mathcal{F}_n \right)\mathbf{1}_{E_n}
\geq \mathbb{P} \left( \forall i \geq n \, : M_i-M_n \geq - \frac{\sqrt{b}}{2} \sqrt{\delta_n} \, \vert \,\mathcal{F}_n \right)\mathbf{1}_{E_n}. 
$$
The rest of the proof follows a standard martingale argument:
\begin{eqnarray*}
\mathbb{E}\left( (M_i-M_n)^2 \vert \mathcal{F}_n \right) & = & \sum_{j=n}^{i-1}
\mathbb{E}\left( (M_{j+1}-M_j)^2 \vert \mathcal{F}_n \right)  =  \sum_{j=n}^{i-1}
\mathbb{E}\left( \mathbb{E}\left(  (M_{j+1}-M_j)^2 \vert \mathcal{F}_j \right) \vert \mathcal{F}_n \right) \\
& = & \sum_{j=n}^{i-1}
\mathbb{E}\left( \mathbb{E}\left(  (S_{j+1}-S_j)^2 \vert \mathcal{F}_j \right) - (I_{j+1}-I_j)^2 \vert \mathcal{F}_n \right) \\
& \leq  &  \sum_{j=n}^{i-1}
\mathbb{E}\left(  (S_{j+1}-S_j)^2  \vert   \mathcal{F}_n \right)  \leq    \sum_{j=n}^{i-1}
\mathbb{E}\left(  \Omega_{j+1}^2  \vert   \mathcal{F}_n \right) \leq c \sum_{j=n}^{i} \tilde{\gamma}_{j+1}^2 \leq c \delta_n.
\end{eqnarray*}
where we used the upper bound given by $(i)$ of Proposition \ref{prop:key} in the last line. Now, the Doob inequality implies that:
\begin{eqnarray*}
\mathbb{P}( \inf_{n \leq i \leq m} (M_i-M_n) \leq -s \vert \mathcal{F}_n) &=& \mathbb{P}( \inf_{n \leq i \leq m} (M_i-M_n-t) \leq -s-t \vert \mathcal{F}_n)\\
& \leq & \mathbb{P}( \sup_{n \leq i \leq m} |M_i-M_n-t| \leq s+t \vert \mathcal{F}_n) \\
& \leq & \frac{E  \left((M_m-M_n-t)^2 \vert \mathcal{F}_n \right)}{(s+t)^2}\\
&  = &\frac{E\left( (M_m-M_n)^2 \vert \mathcal{F}_n \right) + t^2}{(s+t)^2} = \frac{c \delta_n + t^2}{(s+t)^2}.
\end{eqnarray*}
We   apply this inequality with $s=\frac{\sqrt{b}}{2} \sqrt{\delta_n}$ and use
$(s+t)^2 \leq (1+\vartheta) s^2 + (1+\vartheta^{-1}) t^2$ for any $\vartheta>0$. It leads to:

$$
\mathbb{P}\left( \inf_{n \leq i \leq m} (M_i-M_n) \leq -\frac{\sqrt{b}}{2} \sqrt{\delta_n} \vert \mathcal{F}_n\right) 
\leq \frac{c \delta_n+t^2}{(1+\vartheta) b \delta_n/4 +  (1+\vartheta^{-1}) t^2 }.
$$
We now choose $\vartheta = 4 c/b$ , $t = \sqrt{\delta_n}$ and deduce that:
$$
\mathbb{P}\left( \inf_{n \leq i \leq m} (M_i-M_n) \leq -\frac{\sqrt{b}}{2} \sqrt{\delta_n} \vert \mathcal{F}_n\right)  
\leq \frac{c+1}{c+1+b/4c}.
$$
Consequently, we deduce that:
$$
\mathbb{P}(\mathcal{S}=\infty \vert \mathcal{F}_n) \mathbf{1}_{E_n} \geq 
\mathbb{P}_{\vert \mathcal{F}_n}\left( \forall i \geq n \, : M_i \geq \frac{\sqrt{b}}{2} \sqrt{\delta_n} \right)\mathbf{1}_{E_n}
\geq \left(1- \frac{c+1}{c+1+b/4c}\right) \mathbf{1}_{E_n}=  \frac{b}{b+4c+4c^2} \mathbf{1}_{E_n}
$$\hfill $\diamond$

\vspace{0.5em}

\noindent
\underline{Step 3:} \textit{$(S_n)_{n \geq 0}$ does not converge to $0$ with probability $1$.}

\noindent
We denote $\mathcal{G}$ as the event that $(S_n)_{n \geq 0}$ does not converge to $0$. For any integer $n$, we have the inclusion:
$$
\left\{ \mathcal{S} = + \infty \right\} = \left\{ \forall i \geq n \, : \, S_i \geq \sqrt{b/4} \sqrt{\delta_n} \right\} \subset \mathcal{G},
$$
which implies:
$$
\mathbb{E}[ \mathbf{1}_{\mathcal{G}} \vert \mathcal{F}_i] \mathbf{1}_{\mathcal{T}=i} =
\mathbb{E}[ \mathbf{1}_{\mathcal{G}} \vert \mathcal{F}_i] \mathbf{1}_{\mathcal{T}=i} 
\mathbf{1}_{E_i} \geq \frac{b}{b+4c+4c^2} \mathbf{1}_{\mathcal{T}=i}\mathbf{1}_{E_i}  =  \frac{b}{b+4c+4c^2} \mathbf{1}_{\mathcal{T}=i}
$$
Hence, 
\begin{eqnarray*}
\mathbb{E}[ \mathbf{1}_{\mathcal{G}} \vert \mathcal{F}_n] &=&\sum_{i\geq n} 
\mathbb{E}[ \mathbf{1}_{\mathcal{G}} \mathbf{1}_{\mathcal{T}=i} \vert \mathcal{F}_n]\, = \, \mathbb{E}\left[ \mathbb{E}[ \mathbf{1}_{\mathcal{G}}  \vert \mathcal{F}_i] \mathbf{1}_{\mathcal{T}=i} \, \vert \mathcal{F}_n \right] \\ & \geq &   \frac{b}{b+4c+4c^2} \sum_{i\geq n} \mathbb{E}\left[\mathbf{1}_{\mathcal{T}=i} \, \vert \mathcal{F}_n \right]
 \geq  \frac{b}{b+4c+4c^2} \mathbb{P}\left( \mathcal{T}<+\infty \vert \mathcal{F}_n \right)  \geq \frac{b}{b+4c+4c^2} \left( 1-\frac{3b}{a}\right)>0.
\end{eqnarray*}
Since $\mathbf{1}_{\mathcal{G}} \in \mathcal{F}_{\infty}$, we have $\lim_{n \longrightarrow + \infty} \mathbb{E}[ \mathbf{1}_{\mathcal{G}} \vert \mathcal{F}_n] = \mathbf{1}_{\mathcal{G}}$. The previous lower bound implies that $\mathcal{G}$ almost surely holds.  \hfill $\diamond$

\vspace{0.5em}

\noindent
\underline{Conclusion of the proof:} \textit{The stochastic algorithm does not converge to a local trap.}

\noindent
Consider $\mathcal{N}$ a neighborhood of a local maximum of $f$, and its associated function $\eta$ given by Proposition \ref{prop:poincare}. We then consider the random variables $(\Omega_n)_{n \geq 0}$ and $(S_n)_{n \geq 0}$. We have seen that $S_n$ does not converge to $0$ with probability $1$. We define:
$$
\mathcal{T}_{\mathcal{N}} := \inf \left\{n \geq 0 \,: \, \tZn \notin \mathcal{N} \right\}.
$$
and assume that $\mathcal{T}_{\mathcal{N}} = +\infty$. In that case, we always have:
$$
\Omega_{n+1} = \eta(\tZnp) - \eta(\tZn) \qquad \text{and} \qquad S_n = \eta(\tZn).
$$
The limit set of $(\tZn)_{n \geq 0}$ is a non empty  compact subset of $\mathcal{N}$, which is left invariant by the flow $(\Phi_t)_{t \geq 0}$ of the O.D.E. whose drift is $F$. Now, consider $z$ in
$\overline{(\tZn)_{n \geq 0}}$ and apply
 $(iii)$ of Proposition \ref{prop:poincare}. We then have  $\eta(\Phi_t(z)) \geq e^{\kappa t } \eta(y)$. Since $\eta(\Phi_t(z)) \leq \sup_{\mathcal{N}} \eta$, we therefore deduce that $\eta(z)=0$. Hence, the unique limiting value for $(S_n)_{n \geq 0}$ is zero,  meaning that $S_n \longrightarrow 0$ as $n \longrightarrow +\infty$.  However, we have seen in Step 3 that $S_n$ does not converge to $0$ with probability $1$. Therefore, $\PP(\mathcal{T}_{\mathcal{N}}=+\infty)=0$ and the process does not converge towards a local maximum of $f$ with probability $1$. \hfill
 $\square$

\section{Convergence rates for strongly convex functions}\label{sec:rates}

This section focuses on the convergence rates of algorithm \eqref{eq:HBF_sto} according to the step-size $\gamma_n = \gamma n^{-\beta}$ for $\lambda$-strongly convex function $f$ with a $L$-Lipschitz gradient, corresponding to the assumptions $\Hscl$ and $\Hs$.

\subsection{Quadratic case}

We first study the benchmark case of a purely quadratic function $f$, meaning that $\nabla f$ is linear. In this case, $f(x)  = \frac{1}{2} \|A x\|^2$ and $\nabla f(x)=  S x$, leading to the following form of the algorithm:

\begin{equation}
\label{eq:HBF_sto_linear}
\left\{
\begin{aligned}
X_{n+1} &= X_n - \gamma_{n+1}Y_n\\
Y_{n+1} &= Y_n+ \gamma_{n+1}r_n(S X_n -Y_n) + \gamma_{n+1}r_n\Delta M_{n+1},
\end{aligned}
\right.
\end{equation}
where $S$ is a $d \times d$ squared matrix defined by $S = A' A$. The matrix $S$ is assumed to be positive definite with lower bounded eigenvalues, \textit{e.g.}, $Sp(S) \subset [\lambda,+\infty[$ when $f$ is $\Hscl$ with $\lambda>0$.

\subsubsection{Reduction to a two dimensional system}\label{subs:redtotwo}
Equation \eqref{eq:HBF_sto_linear} may be parameterized in a simpler form using the spectral decomposition of $S = P^{-1} \Lambda P$, where $P$ is orthogonal, and $\Lambda$ is a diagonal matrix: 
$$\forall (i,j) \in \{1\ldots d\}^2 \qquad \Lambda_{i,j}=\lambda_i \delta_{i,j} \geq \lambda >0.$$
Keeping the notation $(\cXn,\cYn)_{n \geq 1}$ for the change of basis induced by $P$, we define $\cXn = P X_n$ and $\cYn = PY_n$ and obtain:
\begin{equation*}
\left\{
\begin{aligned}
\cXnp &= \cXn - \gamma_{n+1}\tYn\\
\cYnp &= \cYn+ \gamma_{n+1}r_n(\Lambda \cXn -\cYn) + \gamma_{n+1}r_n P \Delta M_{n+1},
\end{aligned}
\right.
\end{equation*}
Since $\Lambda$ is diagonal, we are now led to study the evolution of $d$ couples of stochastic algorithms:
$$
\forall i \in \{1 \ldots d\} \qquad 
\left\{
\begin{aligned}
\wch{x}_{n+1}^{(i)} &= \wch{x}_{n}^{(i)} - \gamma_{n+1}\wch{y}_{n}^{(i)}\\
\wch{y}_{n+1}^{(i)}  &= \wch{y}_{n}^{(i)}+ \gamma_{n+1}r_n(\lambda_i \wch{x}_{n}^{(i)} -\wch{y}_{n}^{(i)}) + \gamma_{n+1}r_n \Delta \wch{M}^{(i)}_{n+1},
\end{aligned}
\right.
$$
where we used the notations $\cXn=(\wch{x}_n^{(i)})_{1 \leq i \leq d}$ and $\cYn=(\wch{y}_n^{(i)})_{1 \leq i \leq d}$. Consequently, in the quadratic case, the stochastic HBF may be reduced to $d$ couples of $2$-dimensional random dynamical systems:
\begin{equation}\label{eq:HBF_sto_linear2}
\forall i \in \{1, \ldots, d\}^2 \qquad \cZnp^{(i)}=  (I_2+\gamma_{n+1} C_n^{(i)}) \cZn^{(i)} + \gamma_{n+1} r_n \Sigma_2 \Delta N^{(i)}_{n+1},
\end{equation}
where  
$$
\cZn^{(i)} := (\wch{x}^{(i)}_n,\wch{y}^{(i)}_n) \quad \text{and} \quad 
C_n^{(i)} = \left( \begin{matrix}
0 & -1 \\ \lambda^{(i)} r_n & - r_n
\end{matrix} \right) \qquad \text{and} \qquad \Sigma_2 = \left( \begin{matrix}
0 & 0 \\  0 & 1
\end{matrix} \right),
$$
$\lambda^{(i)}=\Lambda_{i,i} \geq \lambda >0$ and $(\Delta N_n^{(i)})_{n\ge1}$ is a sequence of martingale increments.
 
  It is worth noting that due to the multiplication by the matrix $P$, the martingale increment $ \Delta N^{(i)}_{n+1}$ potentially depends on the whole coordinate $(\cZn^{(j)})_{1 \leq j \leq d}$. In a completely general case, this involves technicalities mainly due to the fact that the system \eqref{eq:HBF_sto_linear2} is not completely autonomous (in general, the components  $\cZn^{(i)}$ and $\cZn^{(j)}$ do not evolve   independently). To overcome this difficulty, the idea is to obtain some general controls for a system solution to \eqref{eq:HBF_sto_linear2} and  to then bring the controls of each coordinate together. For the sake of simplicity, we propose in the sequel to state the results in the general case but to only make the proof for \eqref{eq:HBF_sto_linear2} with the assumption that: 
\begin{equation}\label{newmartingale}
 \ES[|\Delta N^{(j)}_{n+1}|^2|{\cal F}_n]\le C (1+ \|\cXn^{(j)}\|^2).
 \end{equation}
 


From now on, we will omit the indexation by $j$ to alleviate the notations.
An easy computation shows that the characteristic polynomial of $C_n$ is given by:
$$
\chi_{_{C_n}}(t) = \left(t+\frac{r_n}{2}\right)^2 + \frac{r_n (4 \lambda - r_n)}{4}.
$$
We now  consider the two different cases:
\begin{itemize}
\item For all $n \geq 1$, $C_n$ has two real or complex eigenvalues whose values do not change from $n$ to $n$, which corresponds to $r_n= r$. This case necessarily corresponds to an exponentially-weighted memory and  $r_n$ is thus kept fixed constant: $r_n=r \geq 4 \lambda$
or $r_n=r<4 \lambda$.
\item For a large enough $n$, $C_n$ has two complex conjugate and vanishing eigenvalues. This situation may occur if we use  a polynomially-weighted memory because, in that case, $r_n \longrightarrow 0$ as $n \longrightarrow + \infty$.
\end{itemize}
\subsubsection{Exponential memory $r_n=r$}
We first study the situation when $r_n=r$, which is easier to deal with from a technical point of view.

\begin{pro}\label{prop:vitesseexpo} Assume $(\mathbf{H_{\bold{\sigma,2}}})$. Let $(Z_n)_{n\ge0}$ be defined by  \eqref{eq:HBF_sto_linear} with $Sp(S) \subset [\lambda,+ \infty[$ and $r_n=r$. Set:
$$
\alpha_r = 
\displaystyle\begin{cases}
r \left(1-\sqrt{1-\frac{4 \lambda}{r}}\right), \,\,\,\, \qquad \text{if} \qquad r \geq 4 \lambda\\
r  \qquad \qquad  \qquad \qquad \qquad \text{if} \qquad r < 4 \lambda, \\
\end{cases}.
$$
Assume that $\gamma_n = \gamma n^{-\beta}$, we then have:
\begin{itemize}
\item[$(i)$]  If $\beta<1$,   then a constant $c_{r,\lambda,\gamma} $ exists  such that:
$$
\forall n \geq 1 \qquad \mathbb{E}\left[\|X_n\|^2+\|Y_n\|^2\right] \leq c_{r,\lambda,\gamma} \gamma_n.
$$
\item[$(ii)$] If $\beta=1$, then a constant $c_{r,\lambda,\gamma} $  exists such that:
$$
\forall n \geq 1 \qquad \mathbb{E}\left[\|X_n\|^2+\|Y_n\|^2\right] \leq c_{r,\lambda,\gamma} n^{-\left(1 \wedge \gamma \alpha_r \right)} \log(n)^{\mathbf{1}_{\{\gamma \alpha_r=1\}}}.
$$
\end{itemize}
\end{pro}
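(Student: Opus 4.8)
The plan is to exploit the reduction already carried out: by \eqref{eq:HBF_sto_linear2} it suffices to control, block by block, the two-dimensional recursion $\cZnp = (I_2 + \gamma_{n+1} C)\cZn + \gamma_{n+1} r \Sigma_2 \Delta N_{n+1}$ with the \emph{fixed} matrix $C = \left(\begin{smallmatrix} 0 & -1 \\ \lambda r & -r\end{smallmatrix}\right)$, and then to sum the resulting bounds over the $d$ diagonal blocks, the slowest block (the one attached to the smallest eigenvalue $\lambda$) dictating the global rate. First I would record the spectral picture of $C$: its eigenvalues are $-\tfrac r2 \pm \sqrt{\tfrac{r^2}4 - r\lambda}$, so its spectral abscissa equals $-\alpha_r/2$ in every regime (when $r\ge 4\lambda$ two real negative eigenvalues with largest one $-\alpha_r/2$; when $r<4\lambda$ complex conjugate eigenvalues of real part $-r/2=-\alpha_r/2$).

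The core of the argument is to track the second moment $v_n := \ES[\cZn^{t} Q \cZn]$ for a well-chosen positive-definite $Q$, rather than the naive Euclidean norm, which is not contracted by $I_2+\gamma C$ since $C$ is not symmetric. The point on which the \emph{sharpness} of the exponent $\alpha_r$ rests is that $C$ admits a \emph{normal} real form: when $r<4\lambda$ a real change of basis $R$ turns $C$ into $-\tfrac r2 I_2 + \omega J$ with $J$ the rotation generator and $\omega^2 = r\lambda - r^2/4$, while when $r>4\lambda$ it turns $C$ into $\mathrm{diag}(\mu_+,\mu_-)$. Setting $Q = R^{t}R$ one checks $C^{t}Q + QC \preceq -\alpha_r Q$, with equality in the complex case. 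Writing $G_n = I_2 + \gamma_{n+1}C$ and using $C^{t}QC \preceq \kappa Q$, this gives $G_n^{t} Q G_n \preceq (1-\alpha_r\gamma_{n+1}+\kappa\gamma_{n+1}^2)Q$. Since $\ES[\Delta N_{n+1}\mid\mathcal{F}_n]=0$ kills the cross term, and since the injected noise is controlled through \eqref{newmartingale} by $\ES[\,\|\Sigma_2\Delta N_{n+1}\|^2\mid\mathcal{F}_n]\le C(1+\|\cXn\|^2)\le C(1+ c\, v_n)$, I arrive at the master scalar recursion
$$ v_{n+1} \le \bigl(1-\alpha_r\gamma_{n+1}+\kappa'\gamma_{n+1}^2\bigr)\,v_n + C'\gamma_{n+1}^2 , $$
valid for $n$ large. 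The coupling of the blocks through the orthogonal matrix $P$ is harmless: it only enters the $O(\gamma_{n+1}^2)v_n$ remainder, so one runs the same computation on the aggregate $V_n=\sum_i \ES[(\cZn^{(i)})^{t}Q^{(i)}\cZn^{(i)}]$ and absorbs it.

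It then remains to feed this recursion into a standard deterministic lemma on sequences of the form $v_{n+1}\le(1-a\gamma_{n+1}+o(\gamma_{n+1}))v_n + b\gamma_{n+1}^2$. For $\beta<1$ one shows by an elementary induction that $v_n\le K\gamma_n$ for $K$ slightly larger than $b/a$, the mechanism being that $\gamma_n-\gamma_{n+1}=O(\gamma_n/n)=o(\gamma_n\gamma_{n+1})$ when $\beta<1$; this yields $(i)$. For $\beta=1$, i.e. $\gamma_n=\gamma/n$, the same recursion with $a=\alpha_r$ produces the usual trichotomy governed by $\gamma\alpha_r$ versus $1$, and comparison with $n^{-1}$, with $n^{-\gamma\alpha_r}$ and with $n^{-1}\log n$ gives the rate $n^{-(1\wedge\gamma\alpha_r)}\log(n)^{\mathbf 1_{\{\gamma\alpha_r=1\}}}$ of $(ii)$. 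Finally I would transfer the bound on $V_n$ back to $\ES[\|X_n\|^2+\|Y_n\|^2]$ using that $P$ is orthogonal and that $z\mapsto z^{t}Qz$ is equivalent to $\|z\|^2$.

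The main obstacle is obtaining the exponent $\alpha_r$ \emph{without loss}: a generic Lyapunov argument solving only $C^{t}Q+QC\preceq -(\alpha_r-\varepsilon)Q$ would give $n^{-\gamma\alpha_r+\varepsilon}$, so one genuinely must use the explicit normal form of the $2\times2$ block to reach the sharp constant. The delicate sub-case is the boundary $r=4\lambda$, where $C$ carries a true Jordan block at $-\alpha_r/2$ and no positive-definite $Q$ realizes the sharp inequality; there an extra polynomial factor from the non-diagonalizable part must be shown to be absorbed (for $\beta<1$) or to merge with the critical logarithm (for $\beta=1$). The other point requiring care is the mild nonlinearity created by the feedback of the state into the noise bound \eqref{newmartingale}, but this is resolved simply by absorbing the $v_n$-proportional part of the noise into the $O(\gamma_{n+1}^2)v_n$ remainder, as above.
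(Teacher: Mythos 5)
Your proposal is correct and is essentially the paper's own argument: your quadratic form $z^{t}Qz$ with $Q=R^{t}R$ is exactly the squared Euclidean norm of the paper's transformed variable $\cZn = QZ_n$ obtained by (real or complex) diagonalization of the $2\times 2$ block, and it leads to the same master recursion $v_{n+1}\le\bigl(1-\alpha_r\gamma_{n+1}+O(\gamma_{n+1}^2)\bigr)v_n + O(\gamma_{n+1}^2)$, closed by the same deterministic sequence estimates (Propositions \ref{prop:controle_algo}$(iii)$ and \ref{prop:controle_algo2}$(iii)$, which you re-prove by an elementary induction). The one divergence is to your credit: you explicitly flag the boundary $r=4\lambda$, where the two eigenvectors coincide and the diagonalization used in the paper (equivalently, your sharp Lyapunov inequality $C^{t}Q+QC\preceq-\alpha_rQ$) degenerates into a Jordan block — a corner case the paper's proof passes over silently.
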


\noindent
\underline{\textit{Proof:}} According to Subsection \ref{subs:redtotwo}, we only make the proof for a system solution to \eqref{eq:HBF_sto_linear2} with the assumption that \eqref{newmartingale} holds. We begin with the simplest case where $r \geq 4 \lambda$. The  above   computations show that:
\begin{equation}\label{eq:specC}
Sp(C_n) = \left\{\mu_{+}= \frac{-r+\sqrt{(r-4  \lambda) r}}{2};\mu_{-}= \frac{-r-\sqrt{(r-4  \lambda) r}}{2}\right\},
\end{equation}
while the associated eigenvectors are given by
$
e_+ = \left( \begin{matrix} 1 \\ - \mu_+ \end{matrix} \right)$ and $e_- = \left( \begin{matrix} 1 \\ - \mu_- \end{matrix} \right)$ and are kept fixed throughout the iterations of the algorithm. Consequently, \eqref{eq:HBF_sto_linear2} may be rewritten in an even simpler way:
\begin{equation}\label{eq:cZnp}
\cZnp = \left( \begin{matrix}
1+\gamma_{n+1} \mu_+ & 0 \\ 0 & 1+\gamma_{n+1} \mu_-
\end{matrix} \right) \cZn+r \gamma_{n+1} \widecheck{\xi}_{n+1},
\end{equation}
where $\cZn=  Q Z_n$ ($(Z_n)$ being defined by \eqref{eq:HBF_sto_linear2} ) where $Q$ is an invertible matrix such that $C_n = Q^{-1} \left( \begin{matrix}
  \mu_+ & 0 \\ 0 &  \mu_-
\end{matrix} \right) Q$ and $\widecheck{\xi}_{n+1} = Q \Sigma_2  \Delta {N}_{n+1}$. The squared norm of $(\cZn)_{n \geq 1}$ is now controlled using a standard martingale argument and Assumption $(\mathbf{H_{\bold{\sigma,2}}})$:
\begin{eqnarray*}
\mathbb{E} \left[ \|\cZnp\|^2 \vert \mathcal{F}_n \right]& \leq & [(1+  \mu_{+} \gamma_{n+1})^2+C\gamma_{n+1}^2] \|\cZn\|^2 +C \gamma_{n+1}^2,\\
\end{eqnarray*}
so that by setting $u_n=\ES[\|\cZn\|^2]$, this yields:
\begin{equation}\label{ineq:unn}
u_{n+1}\le   (1+  2\mu_{+} \gamma_{n+1}+C_1\gamma_{n+1}^2)+C_2\gamma_{n+1}^2.
\end{equation}
The result then follows from Propositions \ref{prop:controle_algo} $(iii)$ and \ref{prop:controle_algo2} $(iii)$ (see Appendix  \ref{sec:appendix}).\smallskip



\noindent We now study the situation $r<4 \lambda$. In this case, $C_n$ possesses two conjugate complex eigenvalues: 
$$
Sp(C_n) = \left\{ 
\mu_+ = \frac{-r+ \mathfrak{i} \sqrt{r(4\lambda-r)}}{2}  ; \mu_- = \frac{-r- \mathfrak{i} \sqrt{r(4\lambda-r)}}{2}.
\right\},
$$
Once again, we use the notation $(\cZn)_{n \geq 1}$ defined as $\cZn = Q Z_n$ with $Q$ an invertible (complex) matrix such that $S_n = Q^{-1} \left( \begin{matrix}
  \mu_+ & 0 \\ 0 &  \mu_-
\end{matrix} \right) Q$ and $\widecheck{\xi}_{n+1} = Q \Sigma_2 \Delta N_{n+1}$. The squared norm of $(\cZn)_{n \geq 1}$ may be controlled while paying attention to the modulus of complex numbers, and we obtain an inequality similar to \eqref{ineq:unn}.
\begin{eqnarray*}
\mathbb{E} \left[ \|\cZnp\|^2 \vert \mathcal{F}_n \right] 
& \leq & \max\left( \left| 1+  \mu_{+} \gamma_{n+1}\right|^2; \left| 1+  \mu_{-} \gamma_{n+1}\right|^2 \right) \|\cZn\|^2 +C_2\gamma_{n+1}^2,\\
& \leq &  \left( \left(1-\frac{\gamma_{n+1} r}{2}\right)^2 + C_1 \gamma_{n+1}^2 \right) \|\cZn\|^2+  C_2\gamma_{n+1}^2,\\
& \leq & \left( 1- \gamma_{n+1} r + C_1 \gamma_{n+1}^2  \right) \|\cZn\|^2+ C_2\gamma_{n+1}^2. \\
\end{eqnarray*}
Once again, we can apply $(iii)$ of Propositions \ref{prop:controle_algo}$(iii)$ and \ref{prop:controle_algo2}$(iii)$   to obtain the desired conclusion. \hfill $\square$
\begin{rem}
In the above proposition, the constants $c_{r,\lambda,\gamma}$ are not made explicit. However, it is possible to obtain an estimation if we assume that $\ES[\Delta M_{n+1}|^2]\le \sigma^2$ and $r\ge 4\lambda$. In this particular case, with the notations of \eqref{ineq:unn}, we  have:
$$u_{n+1}\le \left(1-\alpha_r \gamma_n\right)u_n+ r^2 \sigma^2\|Q_r\|^2 \gamma_{n+1}^2,$$
where $u_n = \mathbb{E} \|\cZn\|^2$.
The Propositions \ref{prop:controle_algo} $(iii)$ and \ref{prop:controle_algo2} $(iii)$ now imply that:
$$
 \mathbb{E}\left[\|\cZn\|^2\right] \leq  \mathbb{E}\left[\|\wch{Z}_0\|^2\right]e^{-\alpha_r \Gamma_n} + C_\gamma\frac{2r^2 \|Q_r\|^2 }{\alpha_r} \sigma^2 \gamma_n,
$$
which, in the end, provide an explicit upper bound of $\mathbb{E} \| Z_n\|^2$ since $Z_n = Q^{-1}_r \cZn$. 

A more important issue concerns the rate obtained when $\beta=1$ and we can remark in the statement of Proposition \ref{prop:vitesseexpo} that this rate depends on the size of $\gamma$ and of $\alpha_r$. In particular, the best rate (of order $\mathcal{O}(n^{-1})$) is obtained when $\gamma \alpha_r > 1$, meaning that $\alpha_r$ must be as large as possible to optimize the performance of the algorithm and we therefore obtain a non-adaptive rate. It is easy to see that $r \longmapsto \alpha_r$  increases  on $[0,4 \lambda]$
and decreases on $[4 \lambda,+\infty)$. It attains its maximal value ($\max_{r} \alpha_r = 4\lambda$) when $r=4 \lambda$. This maximal value is twice the size of the eigenvalue of the (standard) stochastic gradient descent (SGD).
Finally, $\lim_{r \longrightarrow + \infty} \alpha_r=2 \lambda$. This limiting value $2 \lambda$ corresponds to the size of the eigenvalue of the SGD. In other words, the limit $r=+\infty$ in HBF may be seen as an almost identical situation to SGD.

 If we compare the rate of convergence of  HBF to the one of SGD using the same step size $\gamma_n=\gamma n^{-1}$, we see that choosing a reasonably large $r$ makes it possible to obtain a less stringent condition on $\gamma$ to recover the (optimal) rate $\mathcal{O}(n^{-1})$. In particular, the rate of  the HBF is better when $r \geq 2\lambda$ than the one attained by the SGD. Unfortunately, it seems impossible  to obtain an adaptive procedure on the choice of $(\gamma,r)$  that guarantees the rate $\mathcal{O}(n^{-1})$, unlike the Polyak-Ruppert averaging procedure.

\end{rem}

\subsubsection{Polynomial memory $r_n =r \Gamma_n^{-1} \longrightarrow 0$}

This case is more intricate because of the variations with $n$ of the eigenvectors of the matrix $C_n$ defined in \eqref{eq:HBF_sto_linear2}. 

\begin{pro}\label{prop:quadpol}
 Assume $(\mathbf{H_{\bold{\sigma,2}}})$. Let $(Z_n)_{n\ge0}$ be defined by  \eqref{eq:HBF_sto_linear} with $Sp(S) \subset [\lambda,+ \infty[$ and $r_n=\frac{r}{\Gamma_n}$.
 \begin{itemize}
\item[$(i)$] If $\beta<1$ and $r>\frac{1+\beta}{2(1-\beta)}$, a constant $c_{\beta,\lambda,r}$ exists such that:
$$
\forall n \geq 1 \qquad 
\mathbb{E} \|X_n \|^2 \leq c_{\beta,\lambda,r}   \gamma_n,
$$
and 
$$
\forall n \geq 1 \qquad 
\mathbb{E} \|Y_n \|^2 \leq c_{\beta,\lambda}  \gamma_n r_n.
$$
\item[$(ii)$] If $\beta=1$, a constant $C$ exists such that:
$$
\forall n \geq 1 \qquad 
\mathbb{E} \|X_n \|^2 \leq \frac{C}{\log n}
$$
and 
$$
\forall n \geq 1 \qquad 
\mathbb{E} \|Y_n \|^2 \leq \frac{C}{n \log n}
$$

\end{itemize}
\end{pro}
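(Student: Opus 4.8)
The plan is to build on the reduction of Subsection~\ref{subs:redtotwo}, so that it suffices to control one of the two-dimensional systems \eqref{eq:HBF_sto_linear2}, namely $\cZnp = (I_2 + \gamma_{n+1}C_n)\cZn + \gamma_{n+1}r_n\Sigma_2\Delta N_{n+1}$ with $C_n = \begin{pmatrix} 0 & -1 \\ \lambda r_n & -r_n\end{pmatrix}$ and noise obeying \eqref{newmartingale}. Since $r_n\to 0$, the eigenvalues $\mu_\pm = \tfrac{1}{2}\big(-r_n \pm i\sqrt{r_n(4\lambda-r_n)}\big)$ become complex with real part $-r_n/2$, and the diagonalizing matrix $Q_n$ has $\|Q_n\|,\|Q_n^{-1}\|\sim r_n^{-1/2}\to\infty$. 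I would therefore avoid diagonalizing as in Proposition~\ref{prop:vitesseexpo}; instead I would work with the rescaled speed $\tYn = Y_n/\sqrt{r_n}$ of Lemma~\ref{lem:decompZNcheche} together with a time-dependent quadratic Lyapunov function mirroring $V_n$ of Lemma~\ref{lem:11111} specialized to $f(x)=\tfrac{1}{2}\langle Sx,x\rangle$, of the form $\mathcal{V}_n(x,y)=\lambda x^2 + r_{n-1}^{-1}y^2 - 2\theta\langle x,y\rangle$ with $\theta>0$ small. The weights are chosen so that $\mathcal{V}_n\asymp x^2 + r_{n-1}^{-1}y^2$ (the analogue of \eqref{eq:minoVn}), which is consistent with the anticipated equilibrium $\ES\|X_n\|^2\sim\gamma_n$, $\ES\|Y_n\|^2\sim\gamma_n r_n$.

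The key computation is the one-step mean evolution $\ES[\mathcal{V}_{n+1}(\cZnp)|\mathcal{F}_n]$. Expanding and collecting terms as in the proof of Lemma~\ref{lem:11111}, the cross term $-2\theta\langle x,y\rangle$ generates the mean-reverting contribution $-c\,\gamma_{n+1}r_n(\lambda x^2 + r_{n-1}^{-1}y^2)$; the decisive point is that the $y^2$ coefficient is $r_{n-1}^{-1}$, so the increment carries the extra term $(r_n^{-1}-r_{n-1}^{-1})y^2$, and $\frac{1}{2\gamma_{n+1}}(r_n^{-1}-r_{n-1}^{-1})\to c_r=\frac{1}{2r}$ in the polynomial regime. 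This is exactly the quantity controlled in $\Hr$, and it reduces the effective damping rate by the factor $(1-c_r)=1-\frac{1}{2r}$. Bounding the martingale contribution through \eqref{newmartingale} by $C\gamma_{n+1}^2 r_n(1+\|x\|^2)$ and absorbing the $\|x\|^2$ part into $\mathcal{V}_n$, I would obtain, for $n$ large, $\ES[\mathcal{V}_{n+1}(\cZnp)|\mathcal{F}_n]\le (1-(1-c_r)\gamma_{n+1}r_n)\,\mathcal{V}_n(\cZn) + C\gamma_{n+1}^2 r_n$.

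Taking expectations, $v_n:=\ES[\mathcal{V}_n(\cZn)]$ obeys a scalar recursion $v_{n+1}\le (1-a_n)v_n + b_n$ with $a_n=(1-c_r)\gamma_{n+1}r_n\,(1+o(1))$ and $b_n=C\gamma_{n+1}^2 r_n$, to which I would apply the comparison Propositions~\ref{prop:controle_algo} and \ref{prop:controle_algo2}. For $\beta<1$ one has $\gamma_{n+1}r_n\sim r(1-\beta)/n$, so the homogeneous factor decays like $n^{-\theta_0}$ with $\theta_0=(1-c_r)r(1-\beta)=(r-\tfrac{1}{2})(1-\beta)$, while the forcing produces the equilibrium rate $b_n/a_n\sim\gamma_n$. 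The bound $v_n\lesssim\gamma_n$, hence $\ES\|X_n\|^2\lesssim\gamma_n$ and (via the lower bound on $\mathcal{V}_n$) $\ES\|Y_n\|^2\lesssim\gamma_n r_n$, then holds precisely when $\theta_0>\beta$, i.e. exactly when $r>\tfrac{1+\beta}{2(1-\beta)}$. For $\beta=1$ the picture genuinely changes: $\gamma_{n+1}r_n\sim r/(n\log n)$ so $\sum_{k\le n}\gamma_{k+1}r_k\sim r\log\log n$, the equipartition $\ES\|Y_n\|^2\approx\lambda r_n\ES\|X_n\|^2$ breaks and the energy is dominated by its $X$-component; feeding the correspondingly modified recursion into the $\beta=1$ versions of the same comparison lemmas yields the logarithmic rates $\ES\|X_n\|^2\lesssim 1/\log n$ and $\ES\|Y_n\|^2\lesssim 1/(n\log n)$.

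The main obstacle is precisely the time-inhomogeneity: because the eigenbasis of $C_n$ rotates and degenerates as $r_n\to 0$, no clean geometric contraction is available, and the effective damping picks up the $(1-c_r)$ correction coming from the drift of the natural scaling $r_n^{-1}$ of the speed coordinate. Obtaining the sharp threshold $r>\tfrac{1+\beta}{2(1-\beta)}$ rather than a cruder sufficient condition requires carrying this factor with its exact value $c_r=1/(2r)$ through the Lyapunov increment, and checking that the residual $O(\gamma_{n+1}^2 r_n)$ and $O(r_n^{-1}-r_{n-1}^{-1})$ terms never spoil the sign of the net damping. This last verification is where the restriction to quadratic $f$ (so that $\nabla f$ is linear and no Taylor remainder appears) is used, keeping all error terms genuinely second order.
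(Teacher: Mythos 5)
Your proposal is correct in substance for part $(i)$, and it reaches the same scalar recursion as the paper, but by a genuinely different route. The paper also starts from the reduction of Subsection~\ref{subs:redtotwo} and the rescaling $\tYn=Y_n/\sqrt{r_n}$ of Lemma~\ref{lem:decompZNcheche}, but it then \emph{does} diagonalize: not the original $C_n$ (whose eigenbasis degenerates at rate $r_n^{-1/2}$, exactly as you observe), but the rescaled matrix $\tilde{C}_n$, whose eigenvalues converge to $\pm i\sqrt{\lambda}$, so that $Q_n$ and $Q_n^{-1}$ stay uniformly bounded; the time-inhomogeneity is paid for through the rotation defect $Q_{n+1}Q_n^{-1}=I_2+\Upsilon_n$ with $\|\Upsilon_n\|_\infty=O(\gamma_n\Gamma_n^{-3/2})$, small enough not to perturb the contraction $\bigl(1+\frac{\gamma_{n+1}}{\Gamma_n}(\frac{1}{2}-r+o(1))\bigr)$ of \eqref{eq:cznppol}--\eqref{ineq:refcaspol}. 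That yields precisely your $a_n\sim(r-\frac12)\gamma_{n+1}/\Gamma_n$, i.e. $\alpha=(r-\frac12)(1-\beta)$, followed by the same induction under $\alpha>\beta$. Your Lyapunov alternative is the one the paper itself alludes to in Remarks~\ref{rem:commentsL2rate} and~\ref{rem:polylyapou} (and judges ``rough'' for non-quadratic $f$); your observation that after spectral reduction the weights can be tuned eigenvalue by eigenvalue, so the sharp threshold survives in the quadratic case, is sound. One step must, however, be written out: expanding $-2\theta\langle x_{n+1},y_{n+1}\rangle$ also produces the \emph{positive} term $+2\theta\gamma_{n+1}\|y_n\|^2$, so relative to the weights the $y$-rate is $2(1-c_r-\theta)\gamma_{n+1}r_n$ while the $x$-rate is $2\theta\gamma_{n+1}r_n$; only the balanced choice $\theta\approx(1-c_r)/2$ delivers the averaged rate $a_n=(1-c_r)\gamma_{n+1}r_n(1+o(1))$ that you assert, an unbalanced $\theta$ giving a strictly smaller constant and a cruder threshold. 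Since $r>\frac{1+\beta}{2(1-\beta)}$ is a strict inequality, the small Young-inequality losses incurred in absorbing $2\theta\gamma_{n+1}r_n\langle x,y\rangle$ are then harmless, as you anticipated.

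For part $(ii)$ your justification is thinner than you suggest. Both your energy method and the paper's diagonalization control the combined quantity $\ES[\|X_n\|^2+r_n^{-1}\|Y_n\|^2]$; from a bound of order $1/\log n$ on this energy one obtains $\ES\|X_n\|^2\lesssim 1/\log n$ but only $\ES\|Y_n\|^2\lesssim r_n/\log n\asymp(\log n)^{-2}$, which is weaker than the stated $(n\log n)^{-1}$. Your appeal to a ``broken equipartition'' with energy dominated by the $X$-component is an assertion, not an argument: in the rescaled time the system still rotates ($\tilde{\Gamma}_n\sim 2c\sqrt{\log n}\to\infty$), so nothing in the scalar recursion you set up separates the two components. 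To be fair, the paper's own treatment of $(ii)$ after \eqref{ineq:refcaspol} is equally terse and has the same limitation; but as it stands, the $Y$-bound of the statement does not follow from your recursion, and a genuinely componentwise refinement would be needed to recover it.
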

\begin{rem}\label{rem:ratepolyy} We can observe that when $\beta<1$, the rates of the exponential case are preserved under a constraint on $r$ which becomes harder and harder when $\beta$ is close to $1$: $r$ needs to be greater than $\frac{1+\beta}{2(1-\beta)}$.   Carefully  following the proof of this result, we could in fact show that 
when $1/2<r<\frac{1+\beta}{2(1-\beta)}$, then $\mathbb{E} \|X_n \|^2 \leq C  n^{-(r-\frac{1}{2})(1-\beta)}$.  Since $(r-\frac{1}{2})(1-\beta)\longrightarrow 0$ as $\beta \longrightarrow 1$, our upper bound in $(\log n)^{-1}$ related to the case $\beta=1$ becomes reasonable. Another possible interpretation of the poor convergence rate in that case is that the size of the negative real part of the eigenvalues of $C_n$ is on the order $\frac{1}{n \log n}$, which leads to a contraction of the bias equivalent to $\mathcal{O}\left( e^{- c \sum_{1}^n \frac{1}{k \log k}} \right)$. Regardless of $c$, we cannot obtain  a polynomial rate of convergence in that case since 
$\sum_{1}^n \frac{1}{k \log k} \sim \log  \log n$.
\end{rem}

\begin{proof}

\noindent  \underline{Proof of $(i)$:} We study the case $\beta<1$ here. According to the arguments used in the proof of Proposition \ref{prop:vitesseexpo} and Subsection \ref{subs:redtotwo}, the dynamical system may be reduced to $d$ couples of systems in the form $(x_n^{(i)},y_n^{(i)})_{n \geq 1}$ so that we only make the proof for a system solution to \eqref{eq:HBF_sto_linear2} under  assumption \eqref{newmartingale}. Another key feature of the polynomial case has been observed in the proof of the a.s. convergence of the algorithm (Theorem \ref{theo:aspoly}): the study of the rate in the polynomial case involves a normalization of the algorithm with a $\sqrt{r_n}$-scaling of the $Y$ coordinate. Therefore, we set  $\tilde{Z}_n=(\tilde{X}_n,\tilde{Y}_n)$ with 
$\tilde{X}_n=X_n$ and $\tilde{Y}_n=Y_n/\sqrt{r_n}$. With these notations, we obtain (similar to Lemma \ref{lem:decompZNcheche}):
\begin{equation}
\tilde{Z}_{n+1}=  (I_2+\tilde{\gamma}_{n+1} \tilde{C}_n) \tilde{Z}_n + \tilde{\gamma}_{n+1} \sqrt{\frac{r_n}{r_{n+1}}} \Sigma_2 \Delta N_{n+1},
\end{equation}
with $\tilde{\gamma}_{n+1}=\gamma_{n+1} \sqrt{r_n}$ and:
$$
\tilde{C}_n = \left( \begin{matrix}
0 & -1 \\ \lambda \sqrt{\frac{r_n}{r_{n+1}}} & \rho_{n}
\end{matrix} \right)
$$
with 
$$\rho_n:=\frac{1}{\tilde{\gamma}_{n+1}}\left(\sqrt{\frac{r_n}{r_{n+1}}}-1\right)-\frac{r_n}{\sqrt{r_{n+1}}}.$$
Since $r_n=r\Gamma_n^{-1}$, the following expansion holds: 
\begin{equation}\label{eq:expansionrho}
\rho_n=\frac{1}{\sqrt{\Gamma_n}}\left(\frac{1}{2\sqrt{r}}-\sqrt{r}\right)+O\left(\frac{\gamma_n}{\Gamma_n^{\frac{3}{2}}}\right).
\end{equation}
In particular, for a large enough $n$, $\rho_n<0$ if and only if $r>1/2$.  Furthermore, an integer $n_0\in\EN$ exists such that for any $n\ge n_0$, $\tilde{C}_n$ has complex eigenvalues given by:
$$\mu_{\pm}^{(n)}=\frac{1}{2}\left(\rho_n\pm i \sqrt{4\lambda  \sqrt{\frac{r_n}{r_{n+1}}}-\rho_n^2}\right)\xrightarrow{n\rightarrow+\infty}\pm i\sqrt{\lambda}.$$
We define the diagonal matrix:
$$\Lambda_n:=\left( \begin{matrix}
\mu_+^{(n)}   &0 \\ 0& \mu_-^{(n)} 
\end{matrix} \right) $$ and let $Q_n$ be the matrix that satisfies
$
Q_n^{-1} \Lambda_n Q_n = \tilde{C}_n.$
We have:
$$Q_n^{-1} =  \left( \begin{matrix}1 & 1 \\
-\mu_+^{(n)}   & -\mu_-^{(n)}
\end{matrix} \right)\quad \textnormal{and}\quad Q_n =  \frac{1}{\mu_+^{(n)}-{\mu_-^{(n)}}}  \left( \begin{matrix} -{\mu_-^{(n)}} & -1 \\
\mu_+^{(n)}   & 1
\end{matrix} \right). $$
We can  now introduce  the change of basis brought by $Q_n$ and the new coordinates $\cZn :=Q_n \tZn$. We have:
\begin{eqnarray}\label{eq:ztildepoll}
\cZnp &= & Q_{n+1}(I_2+\tilde{\gamma}_{n+1}\tilde{C}_n) Q_n^{-1}\cZn + \tilde{\gamma}_{n+1}\sqrt{\frac{r_n}{r_{n+1}}} Q_{n+1}\Sigma_2  \Delta {N}_{n+1}\nonumber\\
& = &  Q_{n+1} Q_{n}^{-1}(I_2+\tilde{\gamma}_{n+1}\Lambda_n)\cZn+  \tilde{\gamma}_{n+1}\sqrt{\frac{r_n}{r_{n+1}}} Q_{n+1}\Sigma_2  \Delta {N}_{n+1}.\label{eq:cznppol} 
\end{eqnarray}
We now observe that:
$$Q_{n+1} Q_{n}^{-1}=I_2+\Upsilon_n \quad \textnormal{with}\quad \Upsilon_n=(Q_{n+1}-Q_n)Q_{n}^{-1}$$
and that for $n$ large enough:
$$\|\Upsilon_n\|_\infty\le C\|Q_{n+1}-Q_n\|_\infty=O(|\mu_+^{(n+1)}-\mu_+^{(n)}|)=O\left(|\rho_{n+1}-\rho_n|+|\mathfrak{Im}(\mu_{+}^{(n+1)}-\mu_{+}^{(n)})|\right).$$
 Expansion \eqref{eq:expansionrho},  the fact that
 $\sqrt{\frac{r_n}{r_{n+1}}}=1+\frac{1}{2}\frac{\gamma_{n+1}}{\Gamma_n}+O\left(\frac{\gamma_{n+1}^2}{\Gamma_n^2}\right)$
and  the Lipschitz continuity of $x\mapsto\sqrt{1+x}$ on $[-1/2,+\infty)$ yield:
$$\|\Upsilon_n\|_\infty=O\left(\frac{\gamma_n}{\Gamma_n^{\frac{3}{2}}}+\frac{\gamma_n-\gamma_{n-1}}{\Gamma_n}\right)=O\left(\frac{\gamma_n}{\Gamma_n^{\frac{3}{2}}}\right)=O\left(n^{-\frac{\beta+3}{2}}\right).$$
From the above, we obtain,  for any $z\in\ER^2$,
$$\|Q_{n+1} Q_{n}^{-1}(I_2+\tilde{\gamma}_{n+1}\Lambda_n)z\|^2\le \left[\left(1+\tilde{\gamma}_{n+1}\frac{\rho_n}{2}+O\left(\frac{\gamma_n}{\Gamma_n^{\frac{3}{2}}}\right)\right)^2+\left(\tilde{\gamma}_{n+1}\mathfrak{Im}(\mu_+^{(n)})+O\left(\frac{\gamma_n}{\Gamma_n^{\frac{3}{2}}}\right)\right)^2\right]\|z\|^2,$$
which after several computations yields:
$$\|Q_{n+1} Q_{n}^{-1}(I_2+\tilde{\gamma}_{n+1}\Lambda_n)z\|^2\le\left(1+\frac{\gamma_{n+1}}{\Gamma_n} \left(\frac{1}{2}-{r}+o(1)\right)\right)\|z\|^2.$$
Note that a universal constant $C$ (independent of $n$) exists such that  $\|Q_{n+1}\|_\infty\le C$ and the upper bounds above can be used into  \eqref{eq:cznppol} to deduce that:
\begin{equation}\label{ineq:refcaspol}
\|\cZnp\|^2\le \left(1+\frac{\gamma_{n+1}}{\Gamma_n} \left(\frac{1}{2}-{r} \right) + b \left(\frac{\gamma_{n+1}}{\Gamma_n}\right)^2\right)\|\cZn\|^2+\tilde{\gamma}_{n+1}\Delta\widecheck{M}_n+C\frac{\gamma_{n+1}^2}{\Gamma_n}\|\Delta N_{n+1}\|^2,
\end{equation}
where $(\Delta\widecheck{M}_n)_{n\ge1}$ is a sequence of martingale increments and $b$ a large enough constant.

When $\gamma_n= \gamma n^{-\beta}$ with $\beta<1$, the fact that $\Gamma_n= \frac{n^{1-\beta}}{1-\beta}+O(1)$ combined with  the upper bound of the variance of the martingale \eqref{newmartingale} imply that: 
\begin{equation}\label{eq:identpolind}
\ES[\|\cZnp\|^2]\le \left(1- \frac{\alpha}{n}+\frac{b}{n^2}\right)\ES[\|\cZn\|^2]+C n^{-1-\beta}
\end{equation}
where $\alpha:=(r-\frac{1}{2})(1-\beta)$.  Under the condition $r>\frac{1+\beta}{2(1-\beta)}$, we observe that:
$$\alpha >\beta.$$
An induction based on Inequality \eqref{eq:identpolind} yields:
\begin{eqnarray*}
\ES[\|\cZnp\|^2]&\le  &\ES[\|\widecheck{Z}_{n_\varepsilon}\|^2]\prod_{\ell=n_\varepsilon}^n\left(1-\frac{\alpha}{\ell} + \frac{b}{\ell^2}\right)+C\sum\limits_{k=n_\varepsilon+1}^n{k}^{-1-\beta}\prod_{\ell=k+1}^n\left(1-\frac{\alpha}{\ell} + \frac{b}{\ell^2}\right)
\\
&\le &   C n^{-\beta}
\end{eqnarray*}
where in the second line, we repeated an argument used in the proof of Propositions \ref{prop:controle_algo2}  and made use of the property $\alpha>\beta$. To conclude the proof, it remains to observe that $\|Q_{n+1}^{-1}\|_\infty\le C$ regardless of $n$. \hfill $\diamond$

\smallskip

$(ii)$ When $\beta=1$, Inequality \ref{ineq:refcaspol} leads to:
\begin{equation*}
\ES[\|\cZnp\|^2]\le \left(1- \frac{\alpha}{n\log n} + \frac{b}{n^2 \log n}\right)\ES[\|\cZn\|^2]+\frac{C}{n^2 \log n}
\end{equation*}
and a procedure similar to the one used above (given that $\sum_{k=1}^n (k\log k)^{-1}\sim \log(\log n)$) leads to the desired result.

\hfill $\diamond \square$
\end{proof}

\subsection{The non-quadratic case  under exponential memory }\label{sec:non_quadra}
The objective of this subsection is to extend the results of the quadratic case to strongly convex functions satisfying $\Hsc$ for a given positive $\alpha$.  As pointed out in Remark \ref{rem:commentsL2rate}, we are not able to obtain neat and somewhat intrinsic results in the polynomial memory case, so we therefore preferred to only consider the exponential memory one. 

With the help of  Subsection \ref{subs:redtotwo}, we can restrain the study to the situation where $d=1$ and $f$ has a unique minimum in $x^\star$ and we denote  
$\lambda=f''(x^\star)$, which is assumed to be positive.
 We also assume that 
$\underline{f''}=\inf_{x\in\ER} f''(x)>0$. 
It is worth noting that in this setting,  we are able to obtain some non-asymptotic bounds with some assumptions on $\lambda$ only. This means that our results do not involve the quantity $\underline{f''}$.
To only involve the value of the second derivative in $x^\star$, the main argument is a  \textit{power increase}  stated in the next lemma.
\begin{lem}\label{lem:powerincrement}
Let $(u_n^{(k)})_{n\ge0,k\ge1}$ be a sequence of non-negative numbers satisfying for every integers $n\ge0$ and $k\ge1$,
 \begin{equation}\label{eq:condun111}
 u_{n+1}^{(k)}\le (1-a_k\gamma_{n+1}+b_k\gamma_{n+1}^2)u_{n}^{(k)}+C_k(\gamma_{n+1}^2+\gamma_{n+1} u_{n}^{(k+1)})
 \end{equation}
 where $(a_k)_{k\ge1}$ and $(b_k)_{k\ge1}$ are  sequences of positive numbers.
Furthermore, assume that   $K\ge2$ exists and a constant $C>0$ exists such that:
\begin{equation}\label{eq:condun222}
\forall n\ge1, \quad u_n^{(K)}\le C\gamma_n.
\end{equation}
Then, suppose that $\gamma_n=\gamma n^{-\beta}$ $(\gamma>0$, $\beta\in(0,1])$ and that $\underline{a}:=\min_{k\le K} a_k>0$ and $\bar{b}:=\max_{k\le K} b_k<+\infty$. \smallskip

\noindent 
(i) If $\beta\in(0,1)$,  a constant $C>0$ exists such that for every $k\in\{1,\ldots,K\}$,
$$\forall n\ge1,\quad u_n^{(k)}\le C\gamma_n.$$

\noindent (ii) If $\beta=1$ and $\underline{a}\gamma>1$,   a constant $C>0$ exists such that  for every $k\in\{1,\ldots,K\}$,
\begin{equation}
\forall n\ge2, \quad u_n^{(k)}\le Cn^{-1}.
\end{equation}
\end{lem}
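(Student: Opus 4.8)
The plan is to prove both statements by a finite \emph{downward} induction on the index $k$, running from $k=K$ (where the bound is postulated) down to $k=1$. The key structural observation is that the coupling term $\gamma_{n+1}u_n^{(k+1)}$ in \eqref{eq:condun111} is the only place where level $k+1$ enters the recursion governing level $k$. Hence, once the desired bound is established at level $k+1$, it can be inserted into \eqref{eq:condun111} at level $k$ and absorbed into the intrinsic $\gamma_{n+1}^2$ remainder, reducing \eqref{eq:condun111} to a scalar recursion of the type treated in the appendix.

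First I would record the elementary comparison that $\gamma_n=\gamma n^{-\beta}$ is non-increasing and satisfies $\gamma_n\le c_\beta\,\gamma_{n+1}$ with $c_\beta=2^\beta$, so that $\gamma_n$ and $\gamma_{n+1}$ are of the same order; in case (ii) this also means $n^{-1}\asymp\gamma_n$. The base case $k=K$ is exactly hypothesis \eqref{eq:condun222}. For the inductive step, suppose the claimed bound holds at level $k+1$, i.e.\ $u_n^{(k+1)}\le C_{k+1}\gamma_n$ for all $n$ (case (i)), respectively $u_n^{(k+1)}\le C_{k+1}n^{-1}$ (case (ii)). Using the comparison above one obtains
$$C_k\,\gamma_{n+1}u_n^{(k+1)}\le C_kC_{k+1}c_\beta\,\gamma_{n+1}^2,$$
so that \eqref{eq:condun111} becomes
$$u_{n+1}^{(k)}\le\bigl(1-a_k\gamma_{n+1}+b_k\gamma_{n+1}^2\bigr)u_n^{(k)}+C_k'\gamma_{n+1}^2,\qquad C_k'=C_k(1+C_{k+1}c_\beta).$$

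I would then invoke Proposition \ref{prop:controle_algo}$(iii)$ and Proposition \ref{prop:controle_algo2}$(iii)$ applied with contraction rate $a_k$ and second-order coefficient $b_k$: in case (i) ($\beta<1$) these yield $u_n^{(k)}\le C\gamma_n$, and in case (ii) ($\beta=1$) they yield $u_n^{(k)}\le Cn^{-1}$ provided $a_k\gamma>1$. The latter condition holds uniformly over $k\le K$ because $a_k\gamma\ge\underline{a}\gamma>1$ by hypothesis, while $\bar{b}=\max_{k\le K}b_k<+\infty$ controls the second-order term uniformly (and guarantees the multiplicative factor lies in $(0,1)$ for $n$ large). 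Since the induction consists of only the fixed finite number $K-1$ of steps, every constant $C_k'$ and the resulting $C$ stays finite, which closes the induction and proves both $(i)$ and $(ii)$.

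The argument is largely bookkeeping once this structure is in view; the only genuinely substantive point, and hence the main thing to get right, is the observation that the coupling term $\gamma_{n+1}u_n^{(k+1)}$ carries the same order $\gamma_{n+1}^2$ as the intrinsic remainder, so that passing from level $k+1$ to level $k$ preserves rather than degrades the rate. Everything else reduces to a direct appeal to the scalar recursion estimates of the appendix.
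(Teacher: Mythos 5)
Your proof is correct and is essentially the paper's own argument: a decreasing induction from $k=K$ down to $k=1$, in which the inductive bound $u_n^{(k+1)}\lesssim\gamma_n$ turns the coupling term $\gamma_{n+1}u_n^{(k+1)}$ into an $O(\gamma_{n+1}^2)$ remainder (using $\gamma_n\le 2^{\beta}\gamma_{n+1}$), after which Propositions \ref{prop:controle_algo}$(iii)$ and \ref{prop:controle_algo2}$(iii)$ give the rates, with $\underline{a}\gamma>1$ ensuring $a_k\gamma>1$ uniformly in $k\le K$ when $\beta=1$. No gaps; the finitely many induction steps keep all constants finite, exactly as in the paper.
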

\begin{proof} Let $K\ge2$.   We proceed by a decreasing induction on $k\in\{1,\ldots,K\}$.  The initialization is given by \eqref{eq:condun222}. Then, let $k\in\{1,\ldots,K-1\}$ and assume that $u_n^{(k+1)}\le C_{k+1}\gamma_n$ (where $C_k$ is a positive constant that does not depend on $n$). We can use this upper bound in the second term of the right hand side of \eqref{eq:condun111} and obtain:
$$u_{n+1}^{(k)}\le (1-\underline{a}\gamma_{n+1}+\bar{b}\gamma_{n+1}^2)u_{n}^{(k)}+C\gamma_{n+1}^2$$
where $C$ is a constant that  does not depend on $n$.\smallskip

\noindent 
When $\beta<1$, it follows from Proposition \ref{prop:controle_algo}$(iii)$ that:
$$\forall n\ge1, \quad u_n^{(k)}\lesssim \gamma_n.$$\hfill $\diamond$

\noindent If $\beta=1$ and $\underline{a}\gamma>1$ now, the above control is a consequence of  Proposition \ref{prop:controle_algo2}$(iii)$. This concludes the proof. \hfill $\diamond\square$
\end{proof}

We will  apply this lemma to $u_{n}^{(k)}= \ES[|\cZn|^{2k}]$ where $\cZn$ is an appropriate linear transformation of $Z_n$

Therefore, we will mainly have to check that Conditions \eqref{eq:condun111} and \eqref{eq:condun222} hold.

\begin{pro}\label{pro:controVPP}
Assume  $\Hs$,  $\Hsc$ and $\mathbf{(H_{\sigma,\infty})}$ with $p\ge 1$. Let $a$ and $b$ be some positive numbers such that \eqref{eq:condabr} holds. Then,   an integer $K\ge1$ exists  such that
for any  $p\ge K$:
\begin{equation}\label{eq:controlvnp}
 \ES[V_n^p(X_n,Y_n)]\le C_p\gamma_n.
 \end{equation}
Furthermore, if $r_n=r$ and $\gamma_n=\gamma n^{-\beta}$ with $\beta\in(0,1)$, then \eqref{eq:controlvnp} holds for $p=K=1$ under $\mathbf{(H_{\sigma,2})}$ instead of $\Hsiginfty$.
As a consequence,
\begin{equation}\label{eq:controL2bbbb}
\ES[\|X_n-x^\star\|^{2K}+\|Y_n\|^{2K}]\le C\gamma_n.
\end{equation}
\end{pro}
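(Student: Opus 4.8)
The plan is to run the \emph{power-increase} Lemma~\ref{lem:powerincrement} on the sequence $u_n^{(k)}:=\ES[V_n^k(X_n,Y_n)]$. After normalising $f(x^\star)=0$, the lower bound \eqref{eq:minoVn}, the $\alpha$-strong convexity inequality $f(x)\ge\frac{\alpha}{2}\|x-x^\star\|^2$ and the boundedness of $(r_n)$ give $V_n\gtrsim\|X_n-x^\star\|^2+\|Y_n\|^2$, so that the bound $u_n^{(K)}\le C\gamma_n$ at the single power $K$ is enough to deduce \eqref{eq:controL2bbbb}. Hence everything reduces to checking, for this choice of $u_n^{(k)}$, the two hypotheses \eqref{eq:condun111} and \eqref{eq:condun222} of Lemma~\ref{lem:powerincrement}.

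Step~1, verifying the recurrence \eqref{eq:condun111}, is the heart of the proof. I would write $V_{n+1}=V_n+\Delta V_{n+1}$ and expand $V_{n+1}^k$ by the binomial formula. Conditionally on $\mathcal{F}_n$, the first-order term $kV_n^{k-1}\ES[\Delta V_{n+1}\mid\mathcal{F}_n]$ is controlled by the mean-reversion estimate \eqref{eq:meanrev}: its negative part $-c_{a,b}\gamma_{n+1}\|Y_n\|^2-b\gamma_{n+1}r_n\|\nabla f(X_n)\|^2$ is converted into a genuine drift $-a_k\gamma_{n+1}V_n^k$ by combining the gradient-domination inequality $\|\nabla f(x)\|^2\ge2\alpha f(x)$ (a consequence of $\Hsc$) with the two-sided comparison $V_n\asymp f(X_n)+\|Y_n\|^2/r_{n-1}$; since the memory is exponential ($r_n\equiv r$) these coefficients do not degenerate, and one finds $a_k=ka_1$ with $a_1>0$. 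The higher-order terms of the expansion carry powers of the increment $\Delta M_{n+1}$, whose conditional moments are bounded through $\Hsiginfty$ by $\sigma^2(1+f(X_n))^m\lesssim(1+V_n)^m$; the dominant multiplicative term is of size $\gamma_{n+1}u_n^{(k+1)}$, and Young's inequality absorbs all the remaining cross-terms into $b_k\gamma_{n+1}^2u_n^{(k)}+C_k\gamma_{n+1}^2$. This yields exactly \eqref{eq:condun111}. Because expanding the $k$-th power calls for moments of $\Delta M_{n+1}$ up to order $2k$, the general statement genuinely requires $\Hsiginfty$; for $k=1$ no expansion occurs, \eqref{eq:meanrev} already delivers \eqref{eq:condun111} with floor $C\gamma_{n+1}^2$ and no coupling term, and this is what allows the final assertion to hold under the weak $(\mathbf{H _{\boldsymbol{\sigma,2}}})$ when $\beta<1$.

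Step~2 is the base case \eqref{eq:condun222} and the conclusion. When $\beta<1$, the scalar inequality obtained at $k=1$, $u_{n+1}^{(1)}\le(1-a_1\gamma_{n+1}+b_1\gamma_{n+1}^2)u_n^{(1)}+C\gamma_{n+1}^2$, is handed to Proposition~\ref{prop:controle_algo}$(iii)$ and produces $u_n^{(1)}\le C\gamma_n$, so $K=1$ is admissible. When $\beta=1$, a level $k$ carries the sharp rate only once its contraction is strong enough, i.e. $a_k\gamma=ka_1\gamma>1$; since $a_k=ka_1\uparrow\infty$, this holds for every $k\ge K:=\lceil(a_1\gamma)^{-1}\rceil+1$, and at such a top power Proposition~\ref{prop:controle_algo2}$(iii)$ supplies the base estimate $u_n^{(K)}\le C\gamma_n$. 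With \eqref{eq:condun111} and \eqref{eq:condun222} available, Lemma~\ref{lem:powerincrement} then transports the bound down to all $k\le K$, which gives \eqref{eq:controlvnp} and, specialised at $p=K$, the announced consequence \eqref{eq:controL2bbbb}.

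The decisive difficulty lies in Step~1 and in securing the base case: here $\Delta M_{n+1}$ is a \emph{multiplicative} noise whose conditional moments grow like powers of $f(X_n)\asymp V_n$, so the Lyapunov recursion for $\ES[V_n^k]$ does not close on itself but leaks into the higher moment $\ES[V_n^{k+1}]$. Quantifying this leakage uniformly — estimating every cross-term of the binomial expansion by $\Hsiginfty$ and Young's inequality so that precisely the one-step coupling of \eqref{eq:condun111} appears, while keeping $a_k$ explicitly proportional to $k$ so the threshold $a_K\gamma>1$ is eventually reached — is the delicate part; once this is done the propagation through Lemma~\ref{lem:powerincrement} is routine.
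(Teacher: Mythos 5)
Your architecture inverts the paper's and, as written, has no anchor. In the paper, Proposition \ref{pro:controVPP} is proved \emph{without} Lemma \ref{lem:powerincrement}: writing $V_{n+1}\le V_n+\gamma_{n+1}\Delta_{n+1}$ with $\ES[\Delta_{n+1}\,|\,{\cal F}_n]\le(-\varepsilon+C\gamma_{n+1})r_nV_n+C\gamma_{n+1}r_n$ (strong convexity gives $\|\nabla f(x)\|^2\gtrsim f(x)$, hence a drift contractive in $V_n$ itself) and, under $\Hsiginfty$, $\ES[|\Delta_{n+1}|^j\,|\,{\cal F}_n]\le C_j(1+V_n^j)$, the binomial expansion of $(V_n+\gamma_{n+1}\Delta_{n+1})^K$ \emph{closes at the single power $K$}: the noise enters the first-order term only through the martingale increment $-br_n\langle\nabla f(X_n),\Delta M_{n+1}\rangle$, whose conditional mean vanishes, and every term of order $j\ge2$ is bounded by $C\gamma_{n+1}^2(1+V_n^K)$ because the conditional moments of $\Delta_{n+1}$ are of matched order in $V_n$. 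One then applies Propositions \ref{prop:controle_algo}$(iii)$/\ref{prop:controle_algo2}$(iii)$ directly to the closed scalar recursion (plus a separate induction for the polynomial memory $r_n=r/\Gamma_n$, $\beta<1$, which your reduction to exponential memory silently discards although the proposition covers it). The coupling term $\gamma_{n+1}u_n^{(k+1)}$ you place at the heart of Step~1 does not occur in this proposition; it is the signature of the \emph{linearization error} $|\phi_n|\le C(X_n-x^\star)^2$ in Proposition \ref{pro:vitesseexpononquad}, which is where Lemma \ref{lem:powerincrement} is genuinely used — and where its base case \eqref{eq:condun222} is supplied precisely by the present proposition. Your plan is therefore circular: if the recursion leaked at every level as you assert, your Step~2 could not invoke Proposition \ref{prop:controle_algo2}$(iii)$ at the top power $K$ (that proposition needs a self-contained recursion with floor $C\gamma_{n+1}^2$), and the hierarchy would have no anchor at any finite $K$.

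There are two further mismatches with the statement. First, \eqref{eq:controlvnp} is asserted for all $p\ge K$ — the drift coefficient $p\varepsilon\gamma_{n+1}r_n$ only improves as $p$ grows — not for $p\le K$; your ``transport down to all $k\le K$'' goes the wrong way. Second, that downward transport is unavailable exactly where you need it: for $\beta=1$, Lemma \ref{lem:powerincrement}$(ii)$ requires $\underline{a}\gamma=a_1\gamma>1$, which is the case your choice $K=\lceil(a_1\gamma)^{-1}\rceil+1$ is designed to bypass; when $a_1\gamma\le1$ the lemma's hypothesis fails, and indeed the rate $n^{-1}$ at low powers is not expected to hold (compare Theorem \ref{theo:rates}$(a)$, which only gives $n^{-\alpha_r+\varepsilon}$ when $\gamma\alpha_r\le1$). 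The repair is to drop the hierarchy altogether: verify the matched-order moment bounds on $\Delta_{n+1}$, expand at a single power $p\ge K$, conclude power by power, and deduce \eqref{eq:controL2bbbb} from \eqref{eq:minoVn} raised to the power $K$, as the paper does.
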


\begin{rem} \label{rem:polylyapou} Note that the second assertion \eqref{eq:controL2bbbb} easily follows  from Equations \eqref{eq:minoVn} and \eqref{eq:controlvnp} and from the fact that under $\Hsc$, a constant $c$ exists such that for all $x$, $f(x)\ge c\|x\|^2.$

Moreover, note that this proposition is not restricted to the exponential memory case. In particular, as suggested in Remark \ref{rem:commentsL2rate}, this Lyapunov approach could lead to some (rough) controls of the quadratic error in the polynomial case when the function is not quadratic.
\end{rem}

\begin{proof} We begin by the first assertion under Assumption $\mathbf{(H_{\sigma,\infty})}$. Going back to the proof of Lemma \ref{lem:11111} (and to the associated notations), we obtain the existence of some positive $a$ and $b$ such that
\begin{equation*}
\begin{split}
&V_{n+1}(X_{n+1},Y_{n+1})\le V_n(X_n,Y_n)+\gamma_{n+1}\Delta_{n+1}\quad\textnormal{with}\\
&\Delta_{n+1}=-c_{a,b} \|Y_n\|^2- r_{n} b\|\nabla f(X_n)\|^2
-br_n\langle \nabla f(X_n),\Delta M_{n+1}\rangle+\Delta R_{n+1}\quad (c_{a,b}>0).
\end{split}
\end{equation*}
Denoting the smallest (positive) eigenvalue of  $D^2 f(x^\star)$ by $\underline{\lambda}$, we have:
$$\|\nabla f(x)\|^2\ge \underline{\lambda}\|x\|^2\ge C\ \underline{\lambda} f(x).$$
Following the arguments of the proof of Lemma \ref{lem:11111} once again, we can easily deduce the existence of some  positive $\varepsilon$ and $C$ such that:
$$\ES[\Delta_{n+1}|{\cal F}_n]\le (-\varepsilon +C\gamma_{n+1}) r_n V_n(X_n,Y_n)+C\gamma_{n+1} r_n.$$
Using $\mathbf{(H_{\sigma,\infty})}$, we also obtain for every $r\ge 1$:
$$\ES[\|\Delta_{n+1}\|^r|{\cal F}_n]\le C_r(1+V_n^{r}(X_n,Y_n)).$$
As a consequence, a binomial expansion of $(V_n(X_n,Y_n)+\gamma_{n+1}\Delta_{n+1})^K$ yields:
$$\ES[V_{n+1}^K(X_{n+1},Y_{n+1})|{\cal F}_n]\le (1-K\varepsilon\gamma_{n+1} r_n+C\gamma_{n+1}^2 r_n)V_n^K(X_n,Y_n)+C\gamma_{n+1}^2 r_n.$$
Setting $u_n=\ES[V_{n+1}^K(X_{n+1},Y_{n+1})]$, we obtain:
$$u_{n+1}\le (1-K\varepsilon\gamma_{n+1} r_n+C\gamma_{n+1}^2 r_n)u_n+C\gamma_{n+1}^2 r_n.$$
Now, assume that $\gamma_n=\gamma n^{-\beta}$ with $\beta\in(0,1]$ and successively consider  exponential and polynomial cases:\smallskip
\begin{itemize}
\item{} If $r_n=r$ and $\beta<1$, the result holds with $K=1$ by Proposition \ref{prop:controle_algo}$(iii)$. \hfill $\diamond$
\item{} If $r_n=r$ and $\beta=1$, we have to choose $K$ large enough in order that $K\varepsilon\gamma >1$. In this case, Proposition  \ref{prop:controle_algo2}$(iii)$ yields the result. \hfill $\diamond$
\item{} If  $r_n=r/{\Gamma_n}$  and $\beta<1$ now, then the above inequality yields the existence of a $\rho>\beta$ and a $n_0\ge1$ for $K$ large enough  such that:
$$\forall n\ge n_0,\quad u_{n+1}\le \left(1-\frac{\rho}{n}\right) u_n+C n^{-\beta-1}.$$
We have:
$$u_n\le u_{n_0}\prod_{k=n_0}^n \left(1-\frac{\rho}{k}\right)+ C \sum_{k=n_0+1}^n k^{-\beta-1}\prod_{\ell=k+1}^n\left(1-\frac{\rho}{k}\right).$$
Given that $1-x\le \exp(-x)$  and that $\sum_{k=1}^n \frac{1}{k}=\log n+ O(1)$, we obtain:
$$u_n\le C n^{- \rho}(1+\sum_{k=n_0+1}^n k^{-\beta-1+\rho})\le Cn^{-\beta}$$
where in the last inequality, we deduced that $-\beta-1+\rho>-1$ since $\rho<\beta$.
\end{itemize}
\hfill $\diamond \square$
\end{proof}
\begin{pro} \label{pro:vitesseexpononquad}Assume  $\Hs$,  $\Hsc$and $\Hsiginfty$ and $r_n=r$ for all $n\ge1$.   Set $\lambda=f''(x^\star)$. Then, assume that $\gamma_n=\gamma n^{-\beta}$ with $\beta\in(0,1]$.
\begin{itemize}
\item If $\beta<1$,
then:
$$\ES[\|X_n-x^\star\|^2]+\ES[\|Y_n\|^2]\le C\gamma_n.$$
\item If $\beta=1$, then for every $\varepsilon>0$,   a constant $C_\varepsilon$ exists such that
$$\ES[\|X_n-x^\star\|^2]\le C_\varepsilon n^{-((r+\varepsilon-\sqrt{r^2-4\lambda r}1_{r\ge 4\lambda}) \gamma)\wedge 1}.$$

\end{itemize}
\end{pro}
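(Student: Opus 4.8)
The case $\beta<1$ requires essentially no new work: it is a direct consequence of Proposition \ref{pro:controVPP}. With $r_n=r$ and $\beta\in(0,1)$ that proposition already gives $\ES[V_n(X_n,Y_n)]\le C\gamma_n$ under $(\mathbf{H}_{\sigma,2})$ (the case $p=K=1$), and combining this with the lower bound \eqref{eq:minoVn} and the strong-convexity inequality $f(x)\ge c\|x-x^\star\|^2$ (we may assume $f(x^\star)=0$) yields $\ES[\|X_n-x^\star\|^2+\|Y_n\|^2]\le C\gamma_n$. The whole difficulty lies in the critical regime $\beta=1$, where the target exponent $\gamma\alpha_r\wedge 1$ must depend on the Hessian only through $\lambda=f''(x^\star)$ and not through $\inf f''$.

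The plan is to reduce, as in Subsection \ref{subs:redtotwo}, to $d=1$ with $x^\star=0$, and to write $f'(x)=\lambda x+\psi(x)$ where, since $D^2f$ is Lipschitz under $\Hsc$, the remainder obeys $|\psi(x)|\le\tfrac{L_2}{2}|x|^2$. With this splitting the pair $Z_n=(X_n,Y_n)$ follows exactly the linearised quadratic recursion \eqref{eq:HBF_sto_linear2} driven by $C=\left(\begin{smallmatrix}0&-1\\ \lambda r&-r\end{smallmatrix}\right)$, perturbed by the extra drift $\gamma_{n+1}r(0,\psi(X_n))^t$ and the noise. Diagonalising $C=Q^{-1}\Lambda Q$ exactly as in the proof of Proposition \ref{prop:vitesseexpo} and setting $\cZn=QZ_n$, the essential gain is the contraction $\|(I_2+\gamma_{n+1}\Lambda)z\|^2\le(1-\alpha_r\gamma_{n+1}+C\gamma_{n+1}^2)\|z\|^2$, in which $\alpha_r$ is governed by $\lambda$ alone (the regimes $r\ge 4\lambda$ and $r<4\lambda$ being handled separately, as in Proposition \ref{prop:vitesseexpo}).

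The heart of the argument is then to produce, for $u_n^{(k)}:=\ES[\|\cZn\|^{2k}]$, the hierarchy \eqref{eq:condun111} with $a_k$ arbitrarily close to $k\alpha_r$. Expanding $\|\cZnp\|^{2k}$, the linear part contributes $(1-k\alpha_r\gamma_{n+1}+O(\gamma_{n+1}^2))u_n^{(k)}$; the nonlinear drift produces a term bounded by $Ck\gamma_{n+1}\|\cZn\|^{2k+1}$, which I would split through Young's inequality $\|\cZn\|^{2k+1}\le\tfrac{\delta}{2}\|\cZn\|^{2k}+\tfrac{1}{2\delta}\|\cZn\|^{2k+2}$: the first piece is absorbed into the contraction (degrading $a_k$ to $k(\alpha_r-C\delta)$, whence the arbitrarily small loss in the exponent), while the second piece is precisely the source $\gamma_{n+1}u_n^{(k+1)}$ appearing in \eqref{eq:condun111}. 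This is the step where the stronger assumption $\Hsiginfty$ is unavoidable and it is the main obstacle: one must keep the contraction constant equal to $k\alpha_r$ (so that only $f''(x^\star)$ enters) while paying for the quadratic nonlinearity solely through the next moment $u_n^{(k+1)}$. The numerous higher-order terms generated by the noise in the binomial expansion each carry a prefactor $\gamma_{n+1}^{2j}$ with $j\ge1$, and since Proposition \ref{pro:controVPP} together with Jensen's inequality already guarantees that \emph{all} polynomial moments of $\cZn$ are bounded, every such term is $O(\gamma_{n+1}^2)$; only the genuine first-order source $\gamma_{n+1}u_n^{(k+1)}$ survives.

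It then remains to close the induction. Proposition \ref{pro:controVPP} provides the base case: for $K$ large enough, $u_n^{(K)}\le C\gamma_n=Cn^{-1}$. If $\gamma\alpha_r>1$, one applies Lemma \ref{lem:powerincrement}$(ii)$ directly (valid because $\underline a=a_1=\alpha_r-C\delta$ still satisfies $\underline a\gamma>1$ for $\delta$ small), obtaining $u_n^{(1)}\le Cn^{-1}$ and hence the rate $n^{-1}$. If $\gamma\alpha_r\le 1$, Lemma \ref{lem:powerincrement}$(ii)$ no longer applies and I would instead run a finite \emph{decreasing} induction on $k$ using Proposition \ref{prop:controle_algo2}: assuming $u_n^{(k+1)}\le C n^{-\theta_{k+1}}$, the inequality \eqref{eq:condun111} reads $u_{n+1}^{(k)}\le(1-k\alpha_r\gamma\,n^{-1}+O(n^{-2}))u_n^{(k)}+C\,n^{-1-\min(1,\theta_{k+1})}$, so Proposition \ref{prop:controle_algo2} gives $u_n^{(k)}\le C_\varepsilon n^{-\theta_k}$ with $\theta_k=\min(k\alpha_r\gamma,1)$, up to a logarithmic correction at the equality cases. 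Since $k\mapsto\min(k\alpha_r\gamma,1)$ is non-decreasing, the induction propagates cleanly from $\theta_K=1$ down to $\theta_1=\gamma\alpha_r\wedge 1$, and translating back through the fixed change of basis $Q$ yields $\ES[\|X_n-x^\star\|^2]\lesssim n^{-\gamma\alpha_r\wedge 1}$ up to an arbitrarily small loss $\varepsilon$ in the exponent, which is exactly the announced bound.
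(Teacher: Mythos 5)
Your proof is correct and follows essentially the same route as the paper: the same linearization $f'(x)=\lambda x+\phi(x)$ with $|\phi(x)|\lesssim|x-x^\star|^2$ from the Lipschitz Hessian in $\Hsc$, the same diagonalization inherited from Proposition \ref{prop:vitesseexpo} after the reduction to $d=1$, the same Young-type split (the paper uses $|x|\le\varepsilon+C_\varepsilon|x|^2$, you use $\|\cZn\|^{2k+1}\le\frac{\delta}{2}\|\cZn\|^{2k}+\frac{1}{2\delta}\|\cZn\|^{2k+2}$, which is the same absorption) to generate the hierarchy \eqref{eq:condun111} with $a_k=k(\alpha_r-O(\varepsilon))$, the base case $u_n^{(K)}\le C\gamma_n$ from Proposition \ref{pro:controVPP}, and the conclusion via the power-increase Lemma \ref{lem:powerincrement} when $\gamma\alpha_r>1$. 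The one place you genuinely depart from (and improve on) the paper is the regime $\gamma\alpha_r\le1$: the paper simply invokes Lemma \ref{lem:powerincrement}, whose part $(ii)$ formally requires $\underline{a}\gamma>1$, whereas you run an explicit decreasing induction on $k$ with exponents $\theta_k=\min(k\alpha_r\gamma,1)$, which propagates cleanly from $\theta_K=1$ down to $\theta_1=(\gamma\alpha_r)\wedge1$ up to the $\varepsilon$-loss; note only that Proposition \ref{prop:controle_algo2} as stated treats the source $\gamma_k^2$, so for the source $n^{-1-\theta_{k+1}}$ with $\theta_{k+1}<1$ you should redo its elementary computation ($u_n\lesssim n^{-a_k\gamma}\sum_k k^{-1-\theta_{k+1}+a_k\gamma}$, exactly as in the polynomial step of the proof of Proposition \ref{pro:controVPP}) rather than cite it verbatim — a cosmetic fix, since your $\varepsilon$-degraded contraction absorbs the logarithmic corrections at the equality cases.
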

\begin{proof} The starting point is to linearize the gradient:
$$ f'(X_n)=\lambda (X_n-x^\star)+\phi_n\quad \textnormal{where}\quad \phi_n= (f''(\xi_n)-f''(x^\star))(X_n-x^\star).$$
Since  $f''$ is Lipschitz continuous, then:
\begin{equation}\label{eq:controllphinnn}
|\phi_n|\le C (X_n-x^\star)^2.
\end{equation}
Let us begin with the case where the matrix $C_n$ defined in \eqref{eq:HBF_sto_linear2} has real eigenvalues $\mu_+$ and $\mu_-$ (given by \eqref{eq:specC}).  With the notations introduced in \eqref{eq:cZnp},
\begin{equation}
\cZnp = \left( \begin{matrix}
1+\gamma_{n+1} \mu_+ & 0 \\ 0 & 1+\gamma_{n+1} \mu_-
\end{matrix} \right) \cZn+r\gamma_{n+1}Q \begin{pmatrix}0\\ \phi_n\end{pmatrix}+r \gamma_{n+1} \widecheck{\xi}_{n+1}.
\end{equation}
As a consequence, 
$$\|\cZnp\|^{2} \le  (1+  \mu_{+} \gamma_{n+1})^{2} \|\cZn\|^{2}+C\gamma_{n+1} \|\cZn\|^{3}+\gamma_{n+1}^2 ( \|\cZn\|^{4}+\|\Delta N_{n+1}\|^2)+\Delta {\cal M}_{n+1}$$
where $(\Delta {\cal M}_{n})$ is a sequence of martingale increments. Using the elementary inequality $|x|\le \varepsilon +C_\varepsilon |x|^2$, $x\in\ER$ (available for any $\varepsilon>0$),
$$\|\cZnp\|^{2} \le  [(1+  (2\mu_{+}+\varepsilon) \gamma_{n+1}+C\gamma_{n+1}^2) ]\|\cZn\|^{2}+C_\varepsilon \gamma_{n+1}  \|\cZn\|^{4}+C\gamma_{n+1}^2\|\Delta N_{n+1}\|^2+\Delta N_{n+1}.$$
Then, by Assumption $\Hsiginfty$  and the fact $\sup_{n}\ES[|\cZn|^{r}]<+\infty$ for any $r>1$ (by Proposition \ref{pro:controVPP} for example), we obtain, for any $k\ge1$,  
\begin{eqnarray*}
\mathbb{E} \left[ \|\cZnp\|^{2k} \right] \le  (1+  k(2\mu_{+}+\varepsilon) \gamma_{n+1}+C_k\gamma_{n+1}^2) \ES[\|\cZn\|^{2k}]+ C_{k,\varepsilon}(\gamma_{n+1}\ES[\|\cZn\|^{2k+2}]+\gamma_{n+1}^2).
\end{eqnarray*}
At this stage, we observe that Assumption \eqref{eq:condun111} is satisfied with $u_n^{(k)}=\ES[\|\cZn\|^{2k}]$ and $a_k=k(2\mu_{+}+\varepsilon) $. Using Proposition \ref{pro:controVPP} and Lemma \ref{lem:11111}$(i)$, we check that the second assumption of Lemma \ref{lem:powerincrement} also holds. Thus, the result follows in this case from this lemma.\smallskip
\hfill $\square$

\end{proof}

\section{Limit of the rescaled algorithm}\label{sec:TCL}

In this paragraph, we establish a (functional) Central Limit Theorem when the memory is exponential, $i.e.$, when $r_n=r$ 
and when $\Hsc$ holds. In particular, $f$ admits a unique minimum $x^\star$.  
Without loss of generality, we assume that $x^\star=0$.

\subsection{Rescaling stochastic HBF}

We start with an appropriate rescaling  by a factor $\sqrt{\gamma_n}$. More precisely,  we define
a sequence $(\babar{Z}_n)_{n\ge1}$:
$$\babar{Z}_n =\frac{ Z_n}{\sqrt{ \gamma_{n}}}=\left(\frac{X_n}{\sqrt{\gamma_n}},\frac{Y_n}{\sqrt{\gamma_n}}\right).$$ Given that $f$ is ${\cal C}^2$ (and that $x^\star=0$), we  ``linearize" $\nabla f$ around $0$ with a Taylor formula and obtain that $\xi_n\in[0,X_n]$ exists such that:
$$\nabla f(X_n)=D^2 f(\xi_n)X_{n}.$$
Therefore, we can compute that:
$$\babar{Z}_{n+1}=\babar{Z}_n+ \gamma_{n+1}b_n(\babar{Z}_n)+\sqrt{\gamma_{n+1}}\begin{pmatrix} 0\\ \Delta M_{n+1}\end{pmatrix}$$
where $b_n$ is defined by: 

\begin{equation}\label{eq:defbn}b_n(z)=\frac{1}{\gamma_{n+1}}\left(\sqrt{\frac{\gamma_{n}}{\gamma_{n+1}}}-1\right) z+ \bar{C}_n z, \quad z\in\ER^{2d},\end{equation}
where:
\begin{equation}\label{eq:defCn}
\bar{C}_n\,:=\,\sqrt{\frac{\gamma_{n}}{\gamma_{n+1}}}
\begin{pmatrix}
0 & -I_d\\
r D^2 f(\xi_n) &-r I_d\end{pmatrix}.
\end{equation}
It is important to observe that: 
\begin{equation}\label{eq:drift_normalise}
\frac{1}{\gamma_{n+1}} \left(\sqrt{\frac{\gamma_{n}}{\gamma_{n+1}}}-1\right) = \gamma^{-1} (n+1)^{\beta} \left[1+\frac{\beta}{2n}+o(n^{-1})-1\right] =
\left \{
\begin{array}{c @{ \, } c}
    o(n^{\beta-1}) &\qquad  \textit{if} \qquad  \beta<1 \\
    \frac{1}{2 \gamma} +o(1) &\qquad   \textit{if}\qquad   \beta=1\\
\end{array}
\right.
\end{equation}

We associate to the sequence $(\babar{Z}_n)_{n\geq 1}$ a sequence $(\babar{Z}^{(n)})_{n\ge1}$ of continuous-time processes defined by:
\begin{equation}\label{eq:babar}\babar{Z}^{(n)}_t= \babar{Z}_n+ B_t^{(n)}+M_t^{(n)}, \quad t\ge0,\end{equation}
where: 
$$B_t^{(n)} = \sum\limits_{k=n+1}^{\Ntilde(n,t)} \gamma_{k} b_{k-1}(\babar{Z}_{k-1})+(t-\underline{t}_n) b_{\Ntilde(n,t)}(\bar{Z}_{\Ntilde(n,t)}),$$
$$M_t^{(n)}=\sum\limits_{k=n+1}^{\Ntilde(n,t)}\sqrt{ \gamma_{k}}\begin{pmatrix} 0\\ \Delta M_{k}\end{pmatrix}+\sqrt{t-\underline{t}_n}\begin{pmatrix} 0\\ \Delta M_{\Ntilde(n,t)+1}\end{pmatrix}.$$
We used the standard notations $\underline{t}_n= \Gamma_{\Ntilde(n,t)}- \Gamma_n$ above where
$N(n,t) = \min\left\{m\geq n,  \sum\limits_{k=n+1}^m \gamma_{k}>t\right\}.$
\smallskip

\noindent To obtain a CLT, we  show that $(\babar{Z}^{(n)})_{n\ge1}$ converges in distribution to a stationary diffusion, following a classical roadmap based on a tightness result and on an identification of the limit as a solution to a martingale problem.

\subsection{Tightness}

The next lemma holds for any sequence of processes that satisfy \eqref{eq:babar}.
\begin{lem}\label{lem:tightnesszbarr} Assume that $D^2f$ is bounded, that $\sup_{k\ge1}\ES[\|\babar{Z}_k\|^2]<+\infty$ and that a $p>2$ exists such that $\sup_{k\ge1}\ES[\|\Delta M_k\|^{p}]<+\infty$, then $(\babar{Z}^{(n)})_{n\ge1}$ is tight  for the weak topology induced by the weak convergence on compact intervals.

\end{lem}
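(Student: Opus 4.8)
Tightness of $(\bar{Z}^{(n)})_{n\geq1}$ — The plan is to use the standard Aldous-Kolmogorov-type tightness criterion for càdlàg (here continuous) processes: it suffices to control the increments $\bar{Z}^{(n)}_{t}-\bar{Z}^{(n)}_{s}$ uniformly in $n$ over short time intervals. Since $\bar{Z}^{(n)}_t-\bar{Z}^{(n)}_s = (B^{(n)}_t-B^{(n)}_s)+(M^{(n)}_t-M^{(n)}_s)$ by the decomposition \eqref{eq:babar}, I would treat the drift part and the martingale part separately and bound each in a suitable $\mathbb{L}^q$ norm.

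First I would handle the drift term $B^{(n)}$. From \eqref{eq:defbn}, \eqref{eq:defCn} and the computation \eqref{eq:drift_normalise}, the coefficient $b_n(z)$ is linear in $z$ with matrix norm bounded uniformly in $n$: the prefactor $\gamma_{n+1}^{-1}(\sqrt{\gamma_n/\gamma_{n+1}}-1)$ is $O(1)$ (it is $o(n^{\beta-1})$ or converges to $1/(2\gamma)$ by \eqref{eq:drift_normalise}), and $\bar{C}_n$ is bounded because $D^2f$ is bounded under $\Hs$. Hence there is a constant $L$ with $\|b_n(z)\|\leq L\|z\|$ for all $n$, so that
\begin{equation*}
\mathbb{E}\|B^{(n)}_t-B^{(n)}_s\| \leq L\,\mathbb{E}\!\left[\sum_{k=N(n,s)+1}^{N(n,t)}\gamma_k\|\bar{Z}_{k-1}\|\right]+L(t-s)\sup_k\mathbb{E}\|\bar{Z}_k\|.
\end{equation*}
Using the hypothesis $\sup_k\mathbb{E}\|\bar{Z}_k\|^2<+\infty$ (hence $\sup_k\mathbb{E}\|\bar Z_k\|<+\infty$) and the fact that $\sum_{k=N(n,s)+1}^{N(n,t)}\gamma_k \leq t-s+\gamma_{N(n,s)+1}$, I get a bound of the form $C(t-s+\gamma_n)$, which is small for short intervals once $n$ is large. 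This controls the drift increments uniformly in $n$.

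The main obstacle is the martingale part $M^{(n)}$, because the increments $\Delta M_k$ are only assumed to have a uniformly bounded $\mathbb{L}^p$ moment for some $p>2$ rather than being bounded. The plan is to use the Burkholder-Davis-Gundy inequality together with the $\mathbb{L}^{p/2}$ triangle inequality (or Rosenthal's inequality) on the discrete martingale $\sum_{k=N(n,s)+1}^{N(n,t)}\sqrt{\gamma_k}(0,\Delta M_k)^t$. Since $\langle M^{(n)}\rangle_t-\langle M^{(n)}\rangle_s = \sum_k \gamma_k\,\mathbb{E}[\|\Delta M_k\|^2|\mathcal{F}_{k-1}]\,$ over the relevant index range, and each conditional second moment is bounded by $\sigma^2(1+f(X_{k-1}))\leq C(1+\|\bar Z_{k-1}\|^2\gamma_{k-1})$ via $\Hsig$ with $p=2$, the bracket increment is again $O(t-s+\gamma_n)$ in expectation. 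Applying BDG with the exponent $p>2$ yields
\begin{equation*}
\mathbb{E}\|M^{(n)}_t-M^{(n)}_s\|^p \leq C_p\,\mathbb{E}\Big(\langle M^{(n)}\rangle_t-\langle M^{(n)}\rangle_s\Big)^{p/2}+C_p\sum_k\gamma_k^{p/2}\mathbb{E}\|\Delta M_k\|^p,
\end{equation*}
and the first term is $\lesssim (t-s)^{p/2}$ while the extra sum is controlled by $\sup_k\mathbb{E}\|\Delta M_k\|^p<+\infty$ times $\sum\gamma_k^{p/2}$, which is a negligible lower-order correction. The reason $p>2$ is essential here is precisely that the Kolmogorov-Chentsov / Aldous criterion requires a bound of the form $\mathbb{E}\|M^{(n)}_t-M^{(n)}_s\|^p\leq C(t-s)^{p/2}$ with $p/2>1$ to rule out oscillations; a pure $\mathbb{L}^2$ bound would be borderline and insufficient for continuity of limit points. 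Combining the drift and martingale estimates, I conclude via the standard criterion that the laws of $(\bar{Z}^{(n)})_{n\geq1}$ form a tight family for the topology of uniform convergence on compact intervals, completing the proof. \hfill $\square$
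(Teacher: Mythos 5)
Your proof is correct and follows the same skeleton as the paper's — the same decomposition $\babar{Z}^{(n)}=\babar{Z}_n+B^{(n)}+M^{(n)}$, the same linear-growth bound $\|b_k(z)\|\le C(1+\|z\|)$ fed by $\sup_k\ES[\|\babar{Z}_k\|^2]<\infty$, and the same use of the $p>2$ moment assumption on the martingale part — but it is packaged through a different tightness criterion. The paper invokes Billingsley's oscillation criterion (Theorem 8.3): it must show $\PE(\sup_{s\in[t,t+\delta]}\|\babar{Z}^{(n)}_s-\babar{Z}^{(n)}_t\|\ge\varepsilon)\le\eta\delta$, and accordingly needs a \emph{second}-moment Chebyshev bound on the drift (yielding $\delta^2/\varepsilon^2\le\eta\delta$, so that summing over the $O(T/\delta)$ windows stays small) and Doob's maximal inequality plus Minkowski on the martingale, where $p>2$ enters through the crude estimate $\sum_k\gamma_k^{p/2}\le\gamma_n^{p/2-1}\sum_k\gamma_k$ with $\gamma_n^{p/2-1}\to0$. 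You instead use the Aldous criterion with increment moment bounds via Rosenthal/BDG, where $p>2$ enters as the H\"older exponent $p/2>1$ in $(t-s)^{p/2}$. Both mechanisms are legitimate; yours is arguably more standard for diffusion-approximation arguments, while the paper's avoids BDG entirely. Two points of care in your version. First, your drift estimate is a \emph{first}-moment bound of order $C(t-s+\gamma_n)$: this is adequate for Aldous (increments at stopping times go to $0$ in probability as $\delta\to0$, $n\to\infty$), but it would \emph{not} suffice for the paper's criterion, which requires the probability to be $\le\eta\delta$ — Markov would only give $O(\delta/\varepsilon)$, which is useless after summing over windows; so your argument really is tied to the Aldous formulation, and you should state the criterion precisely rather than hedging between "Aldous" and "Kolmogorov-Chentsov" (the Rosenthal remainder $\gamma_n^{p/2-1}(t-s+\gamma_n)$ is not a power of $(t-s)$ uniformly in $n$, so a pure Kolmogorov-Chentsov application would need separate handling of sub-grid increments, e.g.\ via the $\sqrt{t-\underline{t}_n}$ interpolation weights). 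Second, your detour through $\Hsig$ to control the bracket is off-hypothesis: the lemma only assumes $\sup_k\ES[\|\Delta M_k\|^p]<\infty$, and this already suffices, since by conditional Jensen $\ES[(\ES[\|\Delta M_k\|^2|{\cal F}_{k-1}])^{p/2}]\le\ES[\|\Delta M_k\|^p]$, giving $\ES[(\langle M^{(n)}\rangle_t-\langle M^{(n)}\rangle_s)^{p/2}]\lesssim(t-s+\gamma_n)^{p/2}$ directly without any growth assumption on the conditional variance.
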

\begin{proof} 
\noindent First, note that $\babar{Z}_0^{(n)}=\babar{Z}_n$,  the assumption $\sup_{k\ge1}\ES[\|\babar{Z}_k\|^2]<+\infty$ implies the tightness of $(\babar{Z}_0^{(n)})_{n\ge1}$ (on $\ER^{2d}$). Then, by a classical  criterion  (see, $e.g.$, \cite[Theorem 8.3]{billingsley}), we deduce that  a sufficient condition for the tightness of $(\babar{Z}^{(n)})_{n\ge1}$ (for the weak topology induced by the uniform convergence on compacts intervals) is the following property: for any $T>0$, for any positive $\varepsilon$ and $\eta$,  a $\delta>0$ exist and an integer $n_0$  such that for any $t\in[0,T]$ and $n\ge n_0$, 
$$
\PE(\sup_{s\in[t,t+\delta]}\| \babar{Z}^{(n)}_s-\babar{Z}^{(n)}_t\|\ge\varepsilon)\le \eta\delta.
$$
We consider $B^{(n)}$ and $M^{(n)}$ separately and begin by the drift term $B^{(n)}$. On the one hand, 
$$
\PE\left(\sup_{s\in[t,t+\delta]}\| {B}^{(n)}_s-{B}^{(n)}_t\|\ge\varepsilon\right)\le \PE\left( \sum\limits_{k=N(n,t)}^{N(n,t+\delta)+1}\gamma_{k} \|b_{k-1}(\babar{Z}_{k-1})\|\ge\varepsilon\right).
$$
The Chebyschev inequality and the fact that $\|b_k(z)\|\le C(1+\|z\|)$ (where $C$ does not depend on $k$) yield:
$$ 
\PE\left(\sup_{s\in[t,t+\delta]}\| {B}^{(n)}_s-{B}^{(n)}_t\|\ge\varepsilon\right)\le   \varepsilon^{-2} \ES\left[\left(\sum\limits_{k=N(n,t)}^{N(n,t+\delta)+1}\gamma_{k} (1+\|(\babar{Z}_{k-1})\|)\right)^2\right]
$$
The Jensen inequality and the fact that $\sum_{k=N(n,t)}^{N(n,t+\delta)+1} \gamma_k\le 2\delta$
 when $n$ is large enough imply   that a constant $C$ exists such that for large enough $n$ and for a small enough $\delta$:
$$ 
\PE\left(\sup_{s\in[t,t+\delta]}\| {B}^{(n)}_s-{B}^{(n)}_t\|\ge\varepsilon\right)\le \varepsilon^{-2}\times C\delta^2(1+ \sup_{k\ge 1}\ES[\|\bar{Z}_k\|^2])\le \eta\delta$$
 \hfill $\diamond $

We now consider the martingale component  $M^{(n)}$: if we denote $\alpha=\sqrt{\frac{t-\underline{t}_n}{\gamma_{N(n,t)+1}}}$, we have for any $t\ge 0$, 
$$M_s^{(n)}=(1-\alpha) M_{N(n,s)}^{(n)}+\alpha M_{N(n,s)+1}^{(n)}$$
so that $\|M_s^{(n)}-M_t^{(n)}\|\le \max \{\|M_{N(n,s)}^{(n)}-M_t^{(n)}\|,\|M_{N(n,s)+1}^{(n)}-M_t^{(n)}\|\}$. As a consequence, 
$$ 
\PE\left(\sup_{s\in[t,t+\delta]}\| {M}^{(n)}_s-{M}^{(n)}_t\|\ge\varepsilon\right)\le 
\PE\left(\sup_{N(n,t)+1 \leq k \leq N(n,t+\delta)+1}\| {M}^{(n)}_{ \Gamma_k}-{M}^{(n)}_t\|\ge\varepsilon\right)$$
Let $p>2$ and applying the Doob inequality, the assumption of the lemma leads to:
 $$ 
\PE\left(\sup_{s\in[t,t+\delta]}\| {M}^{(n)}_s-{M}^{(n)}_t\|\ge\varepsilon\right)\le  \varepsilon^{-p} \ES\left[\| {M}^{(n)}_{\Ntilde(n,t+\delta)+1}-{M}^{(n)}_t\|^{p}\right]
$$
and the Minkowski inequality yields:
 $$ 
\PE\left(\sup_{s\in[t,t+\delta]}\| {M}^{(n)}_s-{M}^{(n)}_t\|\ge\varepsilon\right)\le \varepsilon^{-p}  \sum_{k=N(n,t)+1}^{N(n,t+\delta)+1} \gamma_k^{\frac{p}{2}}\ES\left[\|\Delta M_k\|^p\right].
$$
Under the assumptions of the lemma,  $\ES[[\|\Delta M_k\|^p]\le C$. Furthermore, we can use the rough upper bound:
$$\sum_{k=N(n,t)+1}^{N(n,t+\delta)+1} \gamma_k^{\frac{p}{2}}\le  \gamma_n^{\frac{p}{2}-1}\sum_{k=N(n,t)+1}^{N(n,t+\delta)+1} \gamma_k\le \eta\delta$$
for large enough $n$. This concludes the proof. \hfill $\diamond\square$



\end{proof}

\begin{cor} Let the assumptions of Theorem \ref{theo:TCL} hold, then $(\babar{Z}^{(n)})_{n\ge1}$ is tight.
\end{cor}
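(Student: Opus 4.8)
The plan is to deduce the corollary directly from Lemma \ref{lem:tightnesszbarr}: under the standing hypotheses of Theorem \ref{theo:TCL} I would verify the three structural assumptions of that lemma --- boundedness of $D^2 f$, the bound $\sup_{n\ge1}\ES[\|\babar{Z}_n\|^2]<+\infty$, and the existence of some $p>2$ with $\sup_{n\ge1}\ES[\|\Delta M_n\|^p]<+\infty$ --- after which tightness of $(\babar{Z}^{(n)})_{n\ge1}$ is exactly the conclusion of the lemma and no further work is needed.

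The first two points are essentially bookkeeping on top of already established results. The bound $\|D^2 f\|_\infty<+\infty$ is part of $\Hs$, so the first assumption holds. For the second, I would use that $\babar{Z}_n=Z_n/\sqrt{\gamma_n}$ and that $x^\star=0$ in this section, whence $\ES[\|\babar{Z}_n\|^2]=\gamma_n^{-1}\ES[\|X_n-x^\star\|^2+\|Y_n\|^2]$. Since $r_n=r$ and $\Hsc$ holds, Theorem \ref{theo:rates}$(a)$ (equivalently Proposition \ref{prop:vitesseexpo} or Proposition \ref{pro:controVPP}) gives $\ES[\|X_n-x^\star\|^2+\|Y_n\|^2]\lesssim\gamma_n$ when $\beta<1$ and $\lesssim n^{-1}$ when $\beta=1$ (using $\gamma\alpha_r>1$, which is assumed). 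As $\gamma_n=\gamma n^{-\beta}$, with $n^{-1}\asymp\gamma_n$ in the case $\beta=1$, the ratio stays bounded in $n$, giving the second assumption; note that these two points rely only on $(\mathbf{H}_{\sigma,2})$, which is implied by the standing noise hypothesis.

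The third assumption is the technical heart. Conditioning on $\mathcal{F}_{n-1}$ in $\Hsig$ yields $\ES[\|\Delta M_n\|^p]\le\sigma^2\ES[(1+f(X_{n-1}))^p]$, so the task reduces to a uniform $L^p$ bound on $f(X_n)$, which by the at-most-quadratic growth of $f$ granted by $\Hs$ amounts to a uniform bound on $\ES[\|X_n\|^{2p}]$. When $\beta=1$ the hypothesis is $\Hsiginfty$, and Proposition \ref{pro:controVPP} gives $\ES[V_n^p(X_n,Y_n)]\le C_p\gamma_n$ for every large enough integer $p$; combined with the lower bound \eqref{eq:minoVn} (hence \eqref{eq:controL2bbbb}) this controls all polynomial moments of $X_n$ uniformly, so the third assumption holds for any $p$. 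When $\beta<1$ only $\Hsig$ with a finite $p>2$ is available, and the same power-increase/Lyapunov machinery must supply the required moments of $f(X_n)$; here I expect the main obstacle to be the moment bookkeeping, since each factor of the increment $\Delta_{n+1}$ in the recursion for $V_n^m$ carries a $\|\Delta M_{n+1}\|^2$ term, so one must track carefully how many noise moments are consumed and ensure that the available order $p>2$ suffices to close the estimate. Once this uniform $L^p$ bound on $f(X_n)$ is secured we obtain $\sup_n\ES[\|\Delta M_n\|^p]<+\infty$, and Lemma \ref{lem:tightnesszbarr} gives the tightness of $(\babar{Z}^{(n)})_{n\ge1}$.
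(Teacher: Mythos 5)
Your proposal is correct and follows essentially the same route as the paper: it reduces the corollary to the hypotheses of Lemma \ref{lem:tightnesszbarr}, obtains $\sup_n\ES[\|\babar{Z}_n\|^2]<+\infty$ from Theorem \ref{theo:rates}$(a)$ (using $\gamma\alpha_r>1$ when $\beta=1$), and secures $\sup_n\ES[\|\Delta M_n\|^{p}]<+\infty$ by bounding $\sup_n\ES[f^p(X_n)]$ through the Lyapunov function of Proposition \ref{pro:controVPP}. The moment-bookkeeping concern you flag in the case $\beta<1$ is precisely the step the paper itself dispatches in one line (``following the lines of the proof of Proposition \ref{pro:controVPP}'' under $\Hsig$ with $p>2$), so your treatment is no less complete than the paper's own.
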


\begin{proof} To prove this result, it is enough to check that the assumptions of Lemma \ref{lem:tightnesszbarr} are satisfied. First, one remarks that the assumptions of Theorem \ref{theo:TCL} imply the ones of Theorem \ref{theo:rates}$(a)$ so that $\ES[\|Z_n-z^\star\|^2]\le C\gamma_n$ (this also holds when $\beta=1$ since we assume that $\gamma\alpha_r>1$). As a consequence, $\sup_{k\ge1}\ES[\|\babar{Z}_k\|^2]<+\infty$.\smallskip

\noindent On the other hand, since $\mathbf{(H_{\sigma,p})}$ holds for a given $p>2$, we can derive by following the lines of the proof of Proposition \ref{pro:controVPP} that $\sup_{n\ge1}\ES[V^p(X_n,Y_n)]<+\infty$.
As a consequence, $\sup_n\ES[f^p(X_n)]<+\infty$ and $\mathbf{(H_{\sigma,p})}$ leads to:
$$\sup_{n\ge1}\ES[\|\Delta M_n\|^{p}]\lesssim \sup_n\ES[f^p(X_n)]<+\infty.$$
\hfill $\square$
\end{proof}

\subsection{Identification of the limit}

\noindent Starting from our compactness result above, we now characterize the  potential weak limits of $(\bar{Z}^{(n)})_{n\ge1}$. This step is strongly based on the following lemma.
\begin{lem}\label{lem:identificationzbarr} Suppose that the assumptions of Lemma \ref{lem:tightnesszbarr} hold and that:
$$\ES[\Delta M_n(\Delta M_n)^t|{\cal F}_{n-1}]\xrightarrow{n\rightarrow+\infty} \varlimit\quad \textnormal{in probability},$$
where $\sigma^2$ is a positive symmetric $d\times d$-matrix. Then, for every $C^2$-function 
 $g:\ER^{2d}\rightarrow\ER$, compactly supported with   Lipschitz continuous second derivatives, we have:
$$\mathbb{E}(g(\babar{Z}_{n+1})-g(\babar{Z}_{n})|\mathcal{F}_n) =  \gamma_{n+1}\mathcal{L}g(\babar{Z}_{n})+R_n^{g}$$
where $ \gamma_{n+1}^{-1} R_n^{g}\rightarrow 0$ in $L^1$ and $\mathcal{L}$ is the infinitesimal generator defined in Theorem \ref{theo:TCL}.

\end{lem}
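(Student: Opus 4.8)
The plan is to expand $g(\babar{Z}_{n+1})$ around $\babar{Z}_n$ with a second-order Taylor formula and then take the conditional expectation, using that $(\Delta M_k)$ are $\mathcal{F}$-martingale increments. Writing the one-step increment as $\Delta_{n+1}:=\babar{Z}_{n+1}-\babar{Z}_n=\gamma_{n+1}b_n(\babar{Z}_n)+\sqrt{\gamma_{n+1}}\,e_{n+1}$ with $e_{n+1}=(0,\Delta M_{n+1})^t$, the Lipschitz continuity of $D^2 g$ gives a remainder $\rho_{n+1}$ with $|\rho_{n+1}|\le \tfrac{1}{6}\mathrm{Lip}(D^2 g)\,\|\Delta_{n+1}\|^3$, so that
$$g(\babar{Z}_{n+1})-g(\babar{Z}_n)=\langle\nabla g(\babar{Z}_n),\Delta_{n+1}\rangle+\tfrac12\Delta_{n+1}^t D^2 g(\babar{Z}_n)\Delta_{n+1}+\rho_{n+1}.$$
Taking $\ES[\,\cdot\,|\mathcal{F}_n]$, the first-order noise term vanishes and the cross terms in the quadratic form vanish (since $\ES[e_{n+1}|\mathcal{F}_n]=0$), leaving
$$\ES[g(\babar{Z}_{n+1})-g(\babar{Z}_n)|\mathcal{F}_n]=\gamma_{n+1}\langle\nabla g(\babar{Z}_n),b_n(\babar{Z}_n)\rangle+\tfrac12\mathrm{Tr}\!\big(D^2 g(\babar{Z}_n)\ES[\Delta_{n+1}\Delta_{n+1}^t|\mathcal{F}_n]\big)+\ES[\rho_{n+1}|\mathcal{F}_n].$$

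To identify the transport part of $\mathcal{L}g$, I would insert the expansion \eqref{eq:drift_normalise} of the scalar prefactor in $b_n$ together with the convergence $\bar{C}_n\to H$. On the support of $\nabla g$ the vector $\babar{Z}_n$ is bounded, hence $X_n=\sqrt{\gamma_n}\,O(1)\to 0$; since $\xi_n\in[0,X_n]$ this forces $\xi_n\to x^\star$, so $D^2 f(\xi_n)\to D^2 f(x^\star)$ by continuity and $\bar{C}_n\to H$. Combined with \eqref{eq:drift_normalise}, this yields $\gamma_{n+1}b_n(z)=\gamma_{n+1}\big(\tfrac{1}{2\gamma}\mathbf{1}_{\{\beta=1\}}I_{2d}+H\big)z+\gamma_{n+1}\epsilon_n z$ with a (random) $\epsilon_n\to 0$, uniformly bounded under $\Hs$. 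Because $z\mapsto\nabla g(z)\,z$ is bounded (compact support), dominated convergence gives $\langle\nabla g(\babar{Z}_n),\epsilon_n\babar{Z}_n\rangle\to 0$ in $L^1$, so the drift contribution matches the first term of $\mathcal{L}g$ up to an $o(\gamma_{n+1})$ error.

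For the diffusion part, I would use $\ES[\Delta_{n+1}\Delta_{n+1}^t|\mathcal{F}_n]=\gamma_{n+1}\left(\begin{smallmatrix}0&0\\0&\ES[\Delta M_{n+1}\Delta M_{n+1}^t|\mathcal{F}_n]\end{smallmatrix}\right)+O(\gamma_{n+1}^2)$, the hypothesis that the conditional covariance tends to $\varlimit=\sigma\sigma^t$ in probability, and the identities $\Sigma\Sigma^t=\left(\begin{smallmatrix}0&0\\0&\sigma\sigma^t\end{smallmatrix}\right)$ and $\mathrm{Tr}(D^2 g\,\Sigma\Sigma^t)=\mathrm{Tr}(\Sigma^t D^2 g\,\Sigma)$. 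The leading term is then exactly $\gamma_{n+1}\,\tfrac12\mathrm{Tr}(\Sigma^t D^2 g(\babar{Z}_n)\Sigma)$. The residual $\tfrac{\gamma_{n+1}}2\mathrm{Tr}\big(D^2 g(\babar{Z}_n)(\ES[\Delta M_{n+1}\Delta M_{n+1}^t|\mathcal{F}_n]-\varlimit)\big)$, divided by $\gamma_{n+1}$, is bounded by $\|D^2 g\|_\infty$ times a quantity converging to $0$ in probability that is uniformly integrable, since $\sup_n\ES[\|\Delta M_n\|^p]<+\infty$ for some $p>2$ forces (by Jensen) $\sup_n\ES[(\ES[\|\Delta M_{n+1}\|^2|\mathcal{F}_n])^{p/2}]<+\infty$; hence it tends to $0$ in $L^1$.

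The main obstacle is the $L^1$ control of $\ES[\rho_{n+1}|\mathcal{F}_n]$, since the cube produces $\gamma_{n+1}^{3/2}\|\Delta M_{n+1}\|^3$ while only a $p$-th moment with $p>2$ (not necessarily $p\ge 3$) is available. The device I would use is the compact support of $g$ together with a small/large splitting of $\|\Delta_{n+1}\|$: the remainder is nonzero only when $\babar{Z}_n$ or $\babar{Z}_{n+1}$ lies in $\supp(g)$; on $\{\|\Delta_{n+1}\|\le 1\}$ one has $\|\Delta_{n+1}\|^3\le\|\Delta_{n+1}\|^p$, and on $\{\|\Delta_{n+1}\|>1\}$ one replaces the Taylor bound by the crude bound $2\|g\|_\infty+\|\nabla g\|_\infty\|\Delta_{n+1}\|+\tfrac12\|D^2 g\|_\infty\|\Delta_{n+1}\|^2\le C\|\Delta_{n+1}\|^p$. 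In both regimes, on the support where $b_n(\babar{Z}_n)$ is bounded, $\ES[\|\Delta_{n+1}\|^p\mathbf{1}_{\supp}|\mathcal{F}_n]\le C\gamma_{n+1}^{p/2}$, so that $\gamma_{n+1}^{-1}\ES[\rho_{n+1}|\mathcal{F}_n]=O(\gamma_{n+1}^{p/2-1})\to 0$. Collecting the drift-approximation error, the $O(\gamma_{n+1}^2)$ quadratic term, the covariance residual, and this third-order remainder into $R_n^g$ gives $\gamma_{n+1}^{-1}R_n^g\to 0$ in $L^1$, which is the assertion.
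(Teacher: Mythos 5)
Your overall route is the paper's own: a second-order Taylor expansion, cancellation of the first-order noise and of the cross terms in the quadratic form by the martingale property, identification of the drift via the expansion \eqref{eq:drift_normalise} together with the Lipschitz continuity of $D^2f$, and passage from convergence in probability to convergence in $L^1$ for the conditional covariances through uniform integrability (your conditional Jensen bound is precisely the paper's uniform integrability argument). Two local choices differ harmlessly. For the drift and the quadratic term you localize on $\supp g$, where $\babar{Z}_n$ is deterministically bounded, whereas the paper works globally and invokes $\sup_n\ES[\|\babar{Z}_n\|^2]<+\infty$; both are fine, since $\nabla g(\babar{Z}_n)$ and $D^2g(\babar{Z}_n)$ vanish off the support. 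For the third-order remainder, the paper does not split small and large increments: it uses that $D^2g$, being bounded and Lipschitz, is $\varepsilon$-H\"older for every $\varepsilon\in(0,1]$, chooses $\varepsilon$ with $2+\varepsilon\le p$, bounds the remainder by $C\|\babar{Z}_{n+1}-\babar{Z}_n\|^{2+\varepsilon}$ and concludes from $\ES[\|\babar{Z}_{n+1}-\babar{Z}_n\|^{\bar p}]\le C\gamma_{n+1}^{\bar p/2}$, $\bar p\le p$ (inequality \eqref{dsiudiqoidu}). Your $\{\|\Delta_{n+1}\|\le 1\}$ versus $\{\|\Delta_{n+1}\|>1\}$ splitting reaches the same exponent $\gamma_{n+1}^{(p\wedge 3)/2-1}$; note only that $\|\Delta_{n+1}\|^3\le\|\Delta_{n+1}\|^p$ on the small event requires $p\le 3$, so you should first replace $p$ by $p\wedge 3$, which costs nothing.

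There is, however, one genuine hole in your remainder estimate. You correctly observe that $\rho_{n+1}$ vanishes unless $\babar{Z}_n\in\supp g$ or $\babar{Z}_{n+1}\in\supp g$, but your conditional moment bound $\ES[\|\Delta_{n+1}\|^{p}\mathbf{1}\,\vert\,\mathcal{F}_n]\le C\gamma_{n+1}^{p/2}$ rests on the boundedness of $b_n(\babar{Z}_n)$, which is guaranteed only on the first event. On $\{\babar{Z}_n\notin\supp g,\ \babar{Z}_{n+1}\in\supp g\}$ the quantity $\|\babar{Z}_n\|$, hence $\|b_n(\babar{Z}_n)\|$, is unbounded and your estimate does not apply: localization at $\babar{Z}_n$ controls $\nabla g(\babar{Z}_n)$ and $D^2g(\babar{Z}_n)$, but not the increment itself. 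The repair is exactly the paper's device. On that event every derivative of $g$ vanishes at $\babar{Z}_n$, so the global bound $|\rho_{n+1}|\le C\|\Delta_{n+1}\|^{q}$ with $q=p\wedge 3$ still holds, and one bounds $\|\Delta_{n+1}\|\le C\left(\gamma_{n+1}\|\babar{Z}_n\|+\sqrt{\gamma_{n+1}}\|\Delta M_{n+1}\|\right)$ unconditionally; the resulting term $\gamma_{n+1}^{q}\ES[\|\babar{Z}_n\|^{q}]=\gamma_{n+1}^{q/2}\,\ES[\|Z_n\|^{q}]$ then only requires boundedness of the $q$-th moments of the \emph{unrescaled} chain $(Z_n)$, which is how \eqref{dsiudiqoidu} is derived from $\Hsig$ and is supplied, in the application, by the corollary following Lemma \ref{lem:tightnesszbarr}. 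With that substitution in place of your support restriction, the rest of your argument goes through verbatim and yields $\gamma_{n+1}^{-1}R_n^{g}\rightarrow 0$ in $L^1$ as claimed.
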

\begin{rem}\label{rem:underdif} We recall that  $\mathcal{L}$ is the infinitesimal generator  of the following stochastic differential equation:
$$d\babar{Z}_t = \babar{H}\babar{Z}_tdt + \Sigma dB_t$$
where: $\babar{H}=\frac{1}{2\gamma }1_{\{\beta=1\}} I_{2d}+H$ and $\Sigma$ is defined in Theorem \ref{theo:TCL}. $(\babar{Z}_t)_{t \geq 0}$ lies in the family of  Ornstein-Uhlenbeck processes: on the one hand, the drift and diffusion coefficients being respectively linear and constant, $(\babar{Z}_t)_{t \geq 0}$ is a Gaussian diffusion; on the other hand,  since $\babar{H}$ has negative eigenvalues, $(\babar{Z}_t)_{t \geq 0}$ is ergodic. 
\end{rem}

\begin{proof}
$C$ will denote an absolute constant whose value may change from line to line, for the sake of convenience.
We use a Taylor expansion between $\babar{Z}_{n}$ and $\babar{Z}_{n+1}$ and obtain that $\theta_n$ exists in $[0,1]$ such that:
\begin{eqnarray}
g(\babar{Z}_{n+1})-g(\babar{Z}_{n}) &=& \langle\nabla g(\babar{Z}_{n}),(\babar{Z}_{n+1}-\babar{Z}_{n})\rangle+\frac{1}{2}(\babar{Z}_{n+1}-\babar{Z}_{n})^TD^2 g(\babar{Z}_{n})(\babar{Z}_{n+1}-\babar{Z}_{n})\label{eqfopiopidpisdfoipsd}\\
&+& \underbrace{\frac{1}{2}(\babar{Z}_{n+1}-\babar{Z}_{n})^T(D^2g(\theta\babar{Z}_{n} + (1-\theta)\babar{Z}_{n+1} )-D^2 g(\babar{Z}_{n}))(\babar{Z}_{n+1}-\babar{Z}_{n})}_{R_{n+1}^{(1)}}.\nonumber 
\end{eqnarray}
We first deal with the remainder term  $R_{n+1}^{(1)}$ and observe that $(\bar{C}_n)$ introduced in \eqref{eq:defCn} is uniformly bounded so that a constant $C$ exists such that  $\|b_n(z)\|\le C\| z\|$. We thus conclude that:
$$\|  \babar{Z}_{n+1}-\babar{Z}_{n}\|\le C\left(\gamma_{n+1}\| \babar{Z}_{n}\|+\sqrt{\gamma_{n+1}}\|\Delta M_{n+1}\|\right).$$
Using $\Hsig$, we deduce that for any ${\bar p}\le p$,
\begin{equation}\label{dsiudiqoidu}
\mathbb{E}\left[\|  \babar{Z}_{n+1}-\babar{Z}_{n}\|^{\bar{p}}\right]\le C \gamma_{n+1}^{\frac{\bar{p}}{2}}.
\end{equation}
Since $D^2g$ is Lipschitz continuous and compactly supported, $D^2 g$ is also $\varepsilon$-Hölder for all $\varepsilon\in(0,1]$. We choose $\varepsilon$   such that 
$2+\varepsilon\le p$ and obtain:
\begin{eqnarray*}
\mathbb{E}\left[\vert R_{n+1}\vert\right]\leq C\mathbb{E}\left[\|  \babar{Z}_{n+1}-\babar{Z}_{n}\|^{2+\varepsilon}\right] \leq C\gamma_{n+1}^{1+\frac{\varepsilon}{2}}.
\end{eqnarray*}
We deduce that $\gamma_{n+1}^{-1} R_{n+1}^{(1)}\rightarrow 0$ in $L^1$.  \hfill $\diamond$


\noindent
Second, we can express \eqref{eq:drift_normalise} when $\gamma_n=\gamma n^{-\beta}$ with $\beta\in(0,1]$ in the following form:
$$\epsilon_n:= \frac{1}{\gamma_{n+1}}\left(\sqrt{\frac{\gamma_{n}}{\gamma_{n+1}}}-1\right)  - \frac{1}{2\gamma }1_{\{\beta=1\}} = o(1).$$
Then, given that $D^2f $ is Lipschitz (and that $x^\star=0$), it follows that:
$$\forall z \in \RR^d \times \RR^d \qquad \left\| b_n (z)-\left(\frac{1}{2\gamma}1_{\{\beta=1\}} I_{2d}+H\right)  z\right\|\le (\varepsilon_{n}+\|\bar{X}_n\|) \|z\|$$
where $(\varepsilon_n)_{n\ge1}$ is a deterministic sequence such that $\lim_{n\rightarrow+\infty}\varepsilon_n=0$. 

Under the conditions of Theorem \ref{theo:TCL}, we may apply the convergence rates obtained in Theorem \ref{theo:rates} and observe that $ \sup_{n}\ES[\|X_n\|^2] \lesssim \gamma_n $, meaning that $\sup_{n}\ES[\|\babar{Z}_n\|^2]<+\infty$.  Since $\|\babar{X}_n\| \leq \|\babar{Z}_n\|$, we deduce that: 
$$\ES[\langle\nabla g(\babar{Z}_{n}),(\babar{Z}_{n+1}-\babar{Z}_{n})\rangle|{\cal F}_n]= \gamma_{n+1}\langle \nabla g(\babar{Z}_n),(\frac{1}{4\gamma \sqrt{r}}1_{\{\beta=1\}} I_{2d}+H) \babar{Z}_n\rangle+R_n^{(2)}$$
where  $\gamma_{n+1}^{-1} R_{n}^{(2)}\rightarrow 0$ in $L^1$ as $n\rightarrow+\infty$. Let us now consider the second term of the right-hand side of \eqref{eqfopiopidpisdfoipsd}. We have:
$$\ES[(\babar{Z}_{n+1}-\babar{Z}_{n})^TD^2 g(\babar{Z}_{n})(\babar{Z}_{n+1}-\babar{Z}_{n})|{\cal F}_n]= \gamma_{n+1}\sum_{i,j} D^2_{y_i y_j} g(\babar{Z}_{n})\ES[\Delta M_{n+1}^i\Delta M_{n+1}^j|{\cal F}_n]
+ R_{n}^{(3)}$$
where 
$$| \gamma_{n+1}^{-1}R_n^{(3)}|\le C  \gamma_{n+1}\|\babar{Z}_n\|^2\xrightarrow{n\rightarrow+\infty}0\quad \textnormal{in $L^1$}$$
under the assumptions of the lemma. To conclude the proof, it remains to note that under the assumptions of the lemma for any $i$ and $j$, $(\ES[\Delta M_{n+1}^i\Delta M_{n+1}^j|{\cal F}_n])_{n\ge1}$ is a uniformly integrable sequence
that satisfies:
 $$\ES[\Delta M_{n+1}^i\Delta M_{n+1}^j|{\cal F}_n]=\varlimit_{i,j}\quad\textnormal{in probability.}$$
Thus, the convergence also holds in $L^1$.
The conclusion of the lemma easily follows from the boundedness of $D^2g$.\hfill $\diamond \square$
\end{proof}
\smallskip

We are now able to prove Theorem  \ref{theo:TCL}:
\smallskip

\noindent \textbf{Proof of Theorem \ref{theo:TCL}, $(i)$}: Note that under the assumptions of Theorem \ref{theo:TCL}, we can apply  Lemma \ref{lem:tightnesszbarr}  and Lemma \ref{lem:identificationzbarr} and obtain that the sequence of processes $(\babar{Z}^{(n)})_{n\ge1}$ is tight. The rest of the proof is then  divided into two steps. In the first one, we prove that every weak limit of $(\babar{Z}^{(n)})_{n\ge1}$ is a  solution of  the  martingale problem $(\mathcal{L},\mathcal{C})$ where ${\cal C}$ denotes the class of ${\cal C}^2$-functions with compact support and Lipschitz-continuous second derivatives.  Before going further, let us recall that, owing to the Lipschitz continuity of the coefficients,  this martingale problem is well-posed, $i.e.$, tha,t existence and uniqueness hold for the weak solution starting from a given initial distribution $\mu$ (see, $e.g.$, \cite{ethier-kurtz} or \cite{SV}).

In a second step, we prove the uniqueness of the invariant distribution related to the operator $\mathcal{L}$ and the convergence in distribution to this invariant measure.  We end this proof by showing that  $(\babar{Z}^{(n)})$ converges to this invariant distribution, so that  the sequence $(\babar{Z}^{(n)})_{n\ge1}$ converges to a stationary solution of the previously introduced martingale problem.
 We will characterize this invariant (Gaussian) distribution in the next paragraph.\smallskip

\noindent \textbf{Step 1}: Let $g$ belong to ${\cal C}$ and let $({\cal F}_t^{(n)})_{t\ge0}$ be the natural filtration of $\babar{Z}^{(n)}$. To prove that any weak limit of $(\babar{Z}^{(n)})_{n\ge1}$ solves  the martingale problem $(\mathcal{L}, {\cal C})$, it is enough to show  that: 
$$\forall t\ge 0,\quad  g(\babar{Z}^{(n)}_t)-g(\babar{Z}^{(n)}_0)-\int_0^t \mathcal{L} g(\babar{Z}^{(n)}_s)ds={\cal M}_t^{(n,g)}+{\cal R}_t^{(n,g)}$$
where $({\cal M}_t^{(n,g)})_{t\ge0}$ is an $({\cal F}_t^{(n)})$-adapted martingale and ${\cal R}_t^{(n,g)}\rightarrow0$ in probability for any $t\ge0$.
We set: 
$${\cal M}_t^{(n,g)}=\sum_{k=n+1}^{N(n,t)}g(\babar{Z}_{k+1})-g(\babar{Z}_{k})-\ES[g(\babar{Z}_{k+1})-g(\babar{Z}_{k})|{\cal F}_{k-1}].$$
By construction, $({\cal M}_t^{(n,g)})_{t\ge0}$ is an $({\cal F}_t^{(n)})$-adapted martingale (given that ${\cal F}_s^{(n)}={\cal F}_{\underline{s}_n}^{(n)}$) and: 
\begin{align*}
{\cal R}_t^{(n,g)}=g(\babar{Z}^{(n)}_t)-g(\babar{Z}^{(n)}_{\underline{t}_n})-\int_{\underline{t}_n}^t \mathcal{L} g(\babar{Z}^{(n)}_s)ds +\int_{0}^{\underline{t}_n} \left(\mathcal{L} g(\babar{Z}^{(n)}_{\underline{s}_n})- \mathcal{L} g(\babar{Z}^{(n)}_s)\right)ds
+\sum_{k=n}^{N(n,t)-1} R_k^g
\end{align*}
where  $(R_k^g)_{k\ge1}$ has been defined in Lemma \ref{lem:identificationzbarr}. Using an argument similar to \eqref{dsiudiqoidu},
we can check that for any $t\ge0$:
$$\sup_{s\le t}\ES[\|\babar{Z}^{(n)}_s-\babar{Z}^{(n)}_{\underline{s}_n}\|^2]\le C\sqrt{\gamma}_n.$$
This inequality combined with the Lipschitz continuity of $g$ and its derivatives  implies that the  first three terms tend to $0$ when $n\rightarrow+\infty$. Now, concerning   the last one, the previous lemma yields: 
$$\ES\left[\left|\sum_{k=n}^{N(n,t)-1} R_k^g\right|\right]\le C t\sup_{k\ge n}\ES\left[\left| \gamma_{k}^{-1}R_k^g\right|\right]\xrightarrow{n\rightarrow+\infty}0.$$ \hfill $\diamond $

\noindent \textbf{Step 2}:  First, let us prove that uniqueness holds for the invariant distribution  related to  $\mathcal{L}$. We denote it by $\mu_{\infty}^{(\beta)}$ below.  In this simple setting where the coefficients are linear, we could use the fact that 
the process, which is solution to the martingale problem, is Gaussian so that any invariant distribution is so. Uniqueness could then be deduced through  the characterization of the mean and the variance through the  relationship $\int {\cal L}f(x) \mu_{\infty}^{(\beta)}(dx) =0$ (see next subsection for such an approach). However,  at this stage, we prefer to use a more general strategy related to the hypoellipticity of ${\cal L}$  (see, $e.g.$, \cite{GadatPanloup} for a similar approach). More precisely, 
set 
$L_D := - \langle y , \partial_x \rangle + r \langle D^2 f(x^\star) x - y],  \partial_y \rangle $ and $\sigma_i := \sum_{j=1}^d \sigma_i^j \partial_{y_j}$, 
where $\sigma$  satisfies $\sigma\sigma^t=\mathcal{V}$ (where $\mathcal{V}$ is defined by \eqref{eq:hypdeltamnproba}). We have assumed that $\sigma$ is invertible, so that:
$$
\text{span} (\sigma_1, \ldots, \sigma_d ) = \text{span}(\partial_{y_1}, \ldots, \partial_{y_d}).
$$
Therefore, 
$$
\text{Lie} \left( L_D,\sigma_1,\ldots, \sigma_d\right) = \text{Lie} \left( L_D,\partial_{y_1}, \ldots, \partial_{y_d}\right) 
$$
Now, it is straightforward to check that:
$$
\forall i \in \{1, \ldots, d\} \qquad 
\left[ L_D ,\partial_{y_i} \right](f) = - \partial_{x_i} (f),
$$
and we deduce that $\text{Lie} \left( L_D,\sigma_1,\ldots, \sigma_d\right) = \text{Lie} \left( \partial_{x_1},\ldots,\partial_{x_d},\partial_{y_1}, \ldots, \partial_{y_d}\right)$.
This means  that the Hormand\"er bracket condition holds at any point $z$ of $\ER^{2d}$, which implies that the process admits a density $(p_t(z,.))_{t\ge0}$ such that for any $t>0$, $(z,z')\mapsto p_t(z,z')$, which is smooth on $\ER^{2d}\times\ER^{2d}$. It is moreover possible to show that these densities are positive, for any $t>0$, given that the linear vector field is approximately controllable: for any time $T>0$, any $\eta>0$ and any couple of initial points $(x_0,y_0)$ and ending points $(x_T,y_T)$, we can build a function $\varphi$  such that $\dot \varphi \in \mathbb{L}^2$ and such that the controlled trajectory:
\begin{equation}\label{controlpb}
\left\{
\begin{array}{ccl}
\dot {x}(t) &= &-y(t)   \\
\dot {y}(t)& = &r(t)(\nabla U (x(t)) - y(t)) + \sigma \dot \varphi,
\end{array}
\right.
\end{equation}
satisfies: $z_0=(x_0,y_0)$ and $\|z_T-(x_T,y_T)\|\leq \eta$. This implies the irreducibility of the diffusion and, therefore, the uniqueness of the invariant distribution. We refer to \cite{GadatPanloup} for more details on this controllability problem.

Then, checking that ${\cal L}\|x\|^2\le \beta-\alpha \|x\|^2$ for positive $\alpha$ and $\beta$, it can be classically deduced from the Meyn-Tweedie-type arguments (see \cite{meyn-tweedie}) that the process converges locally uniformly, exponentially fast in total variation  to $\mu_{\infty}^{(\beta)}$.  For more details, we refer to    \cite[Theorem 4.4]{mattingly_stuart}. Below, we will only use the following corollary: for any bounded Lipschitz-continuous function $f$,  for any compact set $K$ of $\ER^{2d}$, 
\begin{equation}\label{eq:convuniformtoinvmes}
\sup_{z\in K}|P_t f(z)-\mu_{\infty}^{(\beta)}(f)|\xrightarrow{t\rightarrow+\infty}0
\end{equation}
where $(P_t)_{t\ge0}$ denotes the  semi-group related to the (well-posed) martingale problem $({\cal L},{\cal C})$. \hfill $\diamond$
\smallskip

\noindent \textbf{Step 3}: 
\noindent Let $(\babar{Z}_{n_k})_{k\ge1}$ be a (weakly) convergent subsequence of $(\babar{Z}_n)_{n\ge1}$ to a probability $\nu$. We have to prove that $\nu=\mu_\infty^{(\beta)}$. To do this, we take advantage of the ``shifted'' construction of the sequence $(\babar{Z}^{(n)})_{n \in \mathbb{N}}$. More precisely, as a result of construction, for any positive $T$, a sequence $(\psi(n_k,T))_{k\ge1}$  exists  such that: 
 $$N(T,\psi(n_k,T))=n_k.$$
 In other words,
 $$\babar{Z}^{(\psi(n_k,T))}_{\underline{T}_{\psi(n_k,T)}}=\bar{Z}_{n_k}.$$
 At the price of a potential extraction, $(\babar{Z}^{(\psi(n_k,T))})_{k\ge1}$ is convergent to a continuous process, which is denoted by $Z^{\infty,T}$ below. Given that 
 $\babar{Z}^{(n)}_T-\babar{Z}^{(n)}_{\underline{T}_n}$ tends to $0$ as $n\rightarrow+\infty$ in probability, it follows that 
 $Z^{\infty,T}_T$ has distribution $\nu$. However, according to Step 1, $Z^{\infty,T}$ is also a solution to the martingale problem $({\cal L},{\cal C})$ so that  for any Lipschitz continuous function $f$,
 $$\ES[f(Z^{\infty,T}_T)]-\mu_\infty^{(\beta)}(f)=\int_{\ER^{2d}} \left(P_T f(z)-\mu_\infty^{(\beta)}(f)\right)\PE_{Z_0^{\infty,T}}(dz).$$
 Denote by ${\cal P}$, the set of weak limits of $(\babar{Z}_n)_{n\ge1}$. ${\cal P}$ is tight and as a result of construction, 
 $Z_0^{\infty,T}$ belongs to ${\cal P}$. Thus, for any $\varepsilon>0$,  a compact set $K_\varepsilon$  exists such that for any $T>0$, 
$$\left |\int_{K_\varepsilon^c} \left(P_T f(z)-\mu_\infty^{(\beta)}(f)\right)\PE_{Z_0^{\infty,T}}(dz)\right|\le 2\|f\|_\infty\sup_{\mu\in{\cal P}}\mu(K_\varepsilon^c)\le2\|f\|_\infty \varepsilon.$$
On the other hand, 
$$\left| \int_{K_\varepsilon}\left( P_T f(z)-\mu_\infty^{(\beta)}(f)\right)\PE_{Z_0^{\infty,T}}(dz)\right|\le \sup_{z\in K_\varepsilon} |P_T f(z)-\mu_\infty^{(\beta)}(f)|$$
and it follows from Step 2 that the right-hand member tends to $0$ as $T\rightarrow+\infty$.  From this, we can therefore conclude that 
for any bounded Lipschitz-continuous function $f$, a large enough  $T$ exists such that:
$$\left|\ES[f(Z^{\infty,T}_T)]-\mu_\infty^{(\beta)}(f)\right|\le C_f\varepsilon.$$
Since $\ES[f(Z^{\infty,T}_T)]=\nu(f)$, it follows that $\nu(f)=\mu_\infty^{(\beta)}(f)$. Finally, the set $\cal P$ is reduced to a single element ${\cal P}=\{\mu_\infty^{(\beta)}\}$, and the whole sequence $(\babar{Z}_n)_{n \geq 1}$ converges to $\mu_{\infty}^{(\beta)}$.

\smallskip

\noindent  Before ending this section, let us note that   $\mu_{_{\infty}}^{(\beta)}$ is a Gaussian centered distribution is a simple consequence of Remark \ref{rem:underdif}. We therefore leave this point to the reader.
\hfill $\diamond  \square$
\subsection{Limit variance}

We end this section on the analysis of the rescaled algorithm with some considerations on the invariant measure $\mu_{_{\infty}}^{(\beta)}$ involved in Theorem \ref{theo:TCL} for the exponential memoried stochastic HBF, \textit{i.e.} when $r_n=r$. As shown in the above paragraph,  this invariant measure describes the exact asymptotic variance of the initial algorithm. We now focus on its characterization $i.e.$, on the proof of Theorem \ref{theo:TCL}$(ii)$. In particular, to ease the presentation, we assume that the covariance matrix $\varlimit$ related to  $(\Delta M_{n+1})_{n \geq 1}$ is proportional to the identity matrix:

\vspace{0.5em}
\noindent
 
\begin{equation}\label{eq:HSigma}
\lim_{n \longrightarrow + \infty} \mathbb{E}\left[ \Delta M_{n+1} \left(\Delta M_{n+1}\right)^t \vert \mathcal{F}_n \right] = \sigma_0^2 I_d\quad\textnormal{in probability.}
\end{equation}
We also assume that  $\gamma_n =\gamma n^{-\beta}$ with $\beta<1$. Then, $(i)$ of Theorem \ref{theo:TCL} states that   $(\babar{Z}_n)_{n \geq 1}$ weakly converges toward a diffusion process, whose generator $\mathcal{L}$ is the one of an Ornstein-Uhlenbeck process. Assumption \eqref{eq:HSigma} leads to a simpler expression:
\begin{equation}\label{eq:Lbeta}
\mathcal{L}(\phi)(x,y)  = - \langle y, \nabla_{x} \phi \rangle + r  \langle D^2(f)(x^{\star}) x - y ,\nabla_{y} \phi \rangle + r^2 \frac{{\sigma_0}^2}{2} \Delta_y   \phi.
\end{equation}
A particular feature of Equation \eqref{eq:Lbeta} when $\gamma_n=\gamma n^{-\beta}$ is that   $\mathcal{L}$ does not depend on $\beta$ nor $\gamma$. The invariant measure $\mu_{_{\infty}}^{(\beta)}$ is a multivariate Gaussian distribution that may be well described in the basis given by the eigenvectors of the Hessian $D^2(f)(x^\star)$. The reduction to $d$ couples of two-dimensional system used in Section \ref{subs:redtotwo} makes it possible to use the spectral decomposition of $D^2(f)(x^\star)=P^{-1} \Lambda P$ where $P$ is an orthonormal matrix and $\Lambda$ a diagonal matrix with positive eigenvalues.
The process $(\cXn,\cYn)=(P \babar{X}_n,P\babar{Y}_n)$ is therefore centered and Gaussianly distributed asymptotically. This process is associated with $d$ $2 \times 2$ blockwise independent Ornstein-Uhlenbeck processes, whose generator is now
$$
\check{\mathcal{L}}(\phi)(\check{x},\check{y}) = - \langle \check{y}, \nabla_{\check{x}} \phi \rangle + r   \langle \Lambda \check{x} - \check{y} ,\nabla_{\check{y}} \phi \rangle + r^2 \frac{{\sigma_0}^2}{2} \Delta_{\check{y}}   \phi,
$$ 
where we used $\text{Tr}\left( P^t D^{2}_{\check{y}} P \right) = \text{Tr}\left(  D^{2}_{\check{y}} P P^t \right) = 
\text{Tr}\left(  D^{2}_{\check{y}} \right)$ in the last line  because $P^t P=I_d$.
If we denote $\check{\mu}_{_{\infty}}^{(\beta)}$ the associated invariant gaussian measure, the tensor structure of $\check{\mathcal{L}}$ leads to 
\begin{equation}\label{eq:covij}
\forall i \neq j \qquad \mathbb{E}_{(\check{x},\check{y}) \sim \check{\gamma}_{_{\infty}}^{\beta}} [\check{x}^{(i)} \check{x}^{(j)} ]=
\mathbb{E}_{(\check{x},\check{y}) \sim \check{\gamma}_{_{\infty}}^{\beta}} [\check{x}^{(i)} \check{y}^{(j)} ] = \mathbb{E}_{(\check{x},\check{y}) \sim \check{\gamma}_{_{\infty}}^{\beta}} [\check{y}^{(i)} \check{y}^{(j)} ] =0.
\end{equation}
Now, using the relationship $\displaystyle \int \check{\mathcal{L}}(\phi) d  \check{\mu}_{_{\infty}}^{(\beta)}=0$ for some well chosen functions $\phi$, we can identify the rest of the covariance matrix. Denote $i$ any integer in $\{1,\ldots,d\}$. We chose  $\phi (\check{x},\check{y})= \frac{\left\{ \check{x}^{(i)}\right\}^2}{2}$ and obtain that
$ \check{\mathcal{L}}\left(\frac{\left\{ \check{x}^{(i)}\right\}^2}{2} \right)(\check{x},\check{y}) = -   \check{x}^{(i)}  \check{y}^{(i)}$. It then implies that 
\begin{equation}\label{eq:covxy}
 \mathbb{E}_{(\check{x},\check{y}) \sim \check{\mu}_{_{\infty}}^{(\beta)}} [\check{x}^{(i)} \check{y}^{(i)} ] = 0.
\end{equation}
Picking now $ \phi (\check{x},\check{y})=\frac{\left\{ \check{y}^{(i)}\right\}^2}{2} $, we obtain $ \check{\mathcal{L}}\left(
\frac{\left\{ \check{y}^{(i)}\right\}^2}{2} \right)(\check{x},\check{y}) = r \lambda_i  \check{x}^{(i)}\check{y}^{(i)} - r  \left\{ \check{y}^{(i)}\right\}^2 + \frac{r^2 {\sigma_0}^2}{2}$ so that
\begin{equation}\label{eq:vary}
 \mathbb{E}_{(\check{x},\check{y}) \sim \check{\mu}_{_{\infty}}^{(\beta)}} [ \{\check{y}^{(i)}\}^2 ] = \frac{r {\sigma_0}^2}{2}.
\end{equation}
Finally, we chose $ \phi (\check{x},\check{y})= \check{x}^{(i)}  \check{y}^{(i)} $ and obtain 
$ \check{\mathcal{L}}\left( \check{x}^{(i)}  \check{y}^{(i)}  \right)(\check{x},\check{y}) = - \left\{ \check{y}^{(i)}\right\}^2 +  r \lambda_i \{ \check{x}^{(i)} \}^2 - r  \check{x}^{(i)}  \check{y}^{(i)}$. Therefore, we deduce that:
\begin{equation}\label{eq:varx}
 \mathbb{E}_{(\check{x},\check{y}) \sim \check{\mu}_{_{\infty}}^{(\beta)}} [ \{\check{x}^{(i)}\}^2 ] = \frac{ {\sigma_0}^2}{2 \lambda_i}.
\end{equation}
We can sum-up formulae \eqref{eq:covij}-\eqref{eq:varx} in $\check{\mu}_{_{\infty}}^{(\beta)} = \mathcal{N}\left(0,D_{r,{\sigma_0}}\right)$ with
$
D_{r,{\sigma_0}} = \frac{{\sigma_0}^2}{2} \left( \begin{matrix} \Lambda^{-1} & \mathbf{0}_{d\times d} \\
 \mathbf{0}_{d\times d} & r I_d
 \end{matrix}\right).
$
Since $(\babar{X}_n,\babar{Y}_n)=(P^{-1}\cXn,P^{-1}\cYn)$, we deduce that: 
$$\mu_{_{\infty}}^{(\beta)} = \mathcal{N}\left(0, \frac{{\sigma_0}^2}{2} 
 \left( \begin{matrix} \{D^2f(x^\star)\}^{-1} & \mathbf{0}_{d\times d} \\
 \mathbf{0}_{d\times d} & r I_d
 \end{matrix}\right)
\right).$$ \hfill $\diamond$

\smallskip

\noindent \textbf{Theorem \ref{theo:TCL}- Step size $\gamma_n =\gamma n^{-1}$}
 
This situation is more involved since we can observe that the drift of the limit diffusion is modified according to the size of $\gamma$. In particular, the generator $\mathcal{L}$  in that case is shifted from the one above by $\frac{1}{2 \gamma}  I$ so that:
$$
\mathcal{L}(\phi)(x,y)= \frac{1}{2 \gamma} \left[ \langle \nabla_x \phi,x \rangle + 
 \langle \nabla_y \phi,y \rangle \right] -\langle y, \nabla_x \phi\rangle + r \langle D^2 f(x^\star) x - y, \nabla_y \phi \rangle +r^2 \frac{ {\sigma_0}^2}{2}  \Delta_y \phi.
$$ 
Again, we can use the decomposition $D^2f(x^\star) = P^{-1} \Lambda P$ where $P$ is an orthonormal matrix, and the generator of the rotated process $(\cXn,\cYn)=(P \babar{X}_n,P\babar{Y}_n)$ is:
$$
\check{\mathcal{A}}(\phi)(x,y)=  \left\langle \frac{\check{x}}{2 \gamma} - \check{y},\nabla_{\check{x}} \phi \right\rangle + \left\langle r \Lambda \check{x} +\left(\frac{1}{2 \gamma}-r\right) \check{y},\nabla_{\check{y}} \phi \right\rangle + r^2 \frac{ {\sigma_0}^2}{2} \Delta_{\check{y}} \phi
$$
The associated Ornstein-Uhlenbeck process has a unique Gaussian invariant measure $\check{\mu}_{_{\infty}}^{(1)}$ if and only if $\gamma \alpha_{r} > 1$ where $\alpha_r$ is the constant defined in the statement of Proposition \ref{prop:vitesseexpo}. The following equations still hold:
\begin{equation}\label{eq:covij2}
\forall i \neq j \qquad \mathbb{E}_{(\check{x},\check{y}) \sim  \check{\mu}_{_{\infty}}^{(1)}} [\check{x}^{(i)} \check{x}^{(j)} ]=
\mathbb{E}_{(\check{x},\check{y}) \sim \check{\mu}_{_{\infty}}^{(1)} } [\check{x}^{(i)} \check{y}^{(j)} ] = \mathbb{E}_{(\check{x},\check{y}) \sim \check{\mu}_{_{\infty}}^{(1)}} [\check{y}^{(i)} \check{y}^{(j)} ] =0.
\end{equation}
To determine the rest of the covariance matrix, we follow the same strategy and only address the case $d=1$ for the sake of convenience.
We define:
$\sigma_{x}^2:=\EE_{(\check{x},\check{y}) \sim \check{\mu}_{_{\infty}}^{(1)} } \left[\check{x}^{2}\right] $, 
$\sigma_{y}^2:=\EE_{(\check{x},\check{y}) \sim \check{\mu}_{_{\infty}}^{(1)} } \left[\check{y}^{2}\right] $ and
$\sigma_{x,y}:=\EE_{(\check{x},\check{y}) \sim \check{\mu}_{_{\infty}}^{(1)} } \left[ \check{x}\check{y}\right] $.

We start by  chosing $\phi(\check{x},\check{y}) = \frac{\check{x}^{2}}{2}$ and obtain
$\check{\mathcal{A}}(\phi)(x,y)=  \frac{\check{x} ^{2}}{2 \gamma} -  \check{x} \check{y}$. Therefore, we deduce that:
\begin{equation}\label{eq:covxy2} 2 \gamma\,
\sigma_{x,y} = \sigma_{x}^2
\end{equation}
Now we pick $\phi(\check{x},\check{y}) = \frac{\check{y}^{2}}{2}$ and obtain
$\check{\mathcal{A}}(\phi)(x,y)=  r \lambda \check{x}\check{y} + \left(\frac{1}{2 \gamma}-r\right) \check{y}^2 + \frac{r^2 {\sigma_0}^2}{2}$ so that:
\begin{equation}\label{eq:varyy}
\left(r-\frac{1}{2 \gamma}\right)
\sigma_{y}^2 = r  \lambda \sigma_{x,y}+\frac{r^2 {\sigma_0}^2}{2}.
\end{equation}
Finally, the function  $\phi(\check{x},\check{y}) =   \check{x} \check{y} $ yields $\check{\mathcal{A}}(\phi)(x,y)= 
\check{x} \check{y}  \left(\frac{1}{ \gamma} - r \right) - \check{y}^2 + r \lambda \check{x}^2$, which implies:
\begin{equation}\label{eq:varxx}
\sigma_{y}^2 = r \lambda \sigma_{x}^2 +\left(\frac{1}{ \gamma} - r \right)\sigma_{x,y}
\end{equation}
We are led to the introduction of:
$$
\check{\alpha}_{-} =  1-\sqrt{1-\frac{4 \lambda}{r}}  \quad \text{and} \quad \check{\alpha}_{+} = 1+\sqrt{1-\frac{4 \lambda}{r}},
$$
which leads to:
$$
\sigma_{x}^2=  {\sigma_0}^2 \frac{ 2  \lambda r \gamma^3}{ (\gamma r - 1) ( 2  \lambda\gamma - \check{\alpha}_{-}) (2  \lambda\gamma - \check{\alpha}_{+})}, \quad \sigma_{y}^2 ={\sigma_0}^2 \frac{   \lambda r \gamma(2 \lambda r \gamma^2-r \gamma +1)}{ (\gamma r - 1) ( 2  \lambda\gamma - \check{\alpha}_{-}) (2  \lambda\gamma - \check{\alpha}_{+})}, 
$$
and
$$ 
\sigma_{x,y} =   {\sigma_0}^2 \frac{   \lambda r \gamma^2}{ (\gamma r - 1) ( 2  \lambda\gamma - \check{\alpha}_{-}) (2  \lambda\gamma - \check{\alpha}_{+})}.
$$
\hfill $\diamond \square$

\section{Numerical experiments}
In this short paragraph, we briefly investigate the behavior of several  algorithms, widely used in the field of stochastic approximation. In particular, we are interested in the convergence rates of each algorithm, as well as their behavior in the case of non-convex potential $f$ with multiple wells, to illustrate both Theorem \ref{theo:rates} and Theorem \ref{theo:asmin}.

\paragraph{Convergence rates}
We are first concerned by the 
typical behavior of the heavy ball stochastic approximation algorithm in the convex case. In particular, we are interested in the role played by the parameter $r$ that varies, both in the polynomial case and in the exponential case.  Figure \ref{Fig:HBF_evor} represents the logarithmic loss of the algorithms with respect to the logarithm of the number of iterations in the $1$ dimensional case with $f(x)=\frac{x^2}{2}$. The step size used is $\gamma_k=k^{-1}$.
\begin{figure}[h!]
\begin{center}
\includegraphics[scale=0.15]{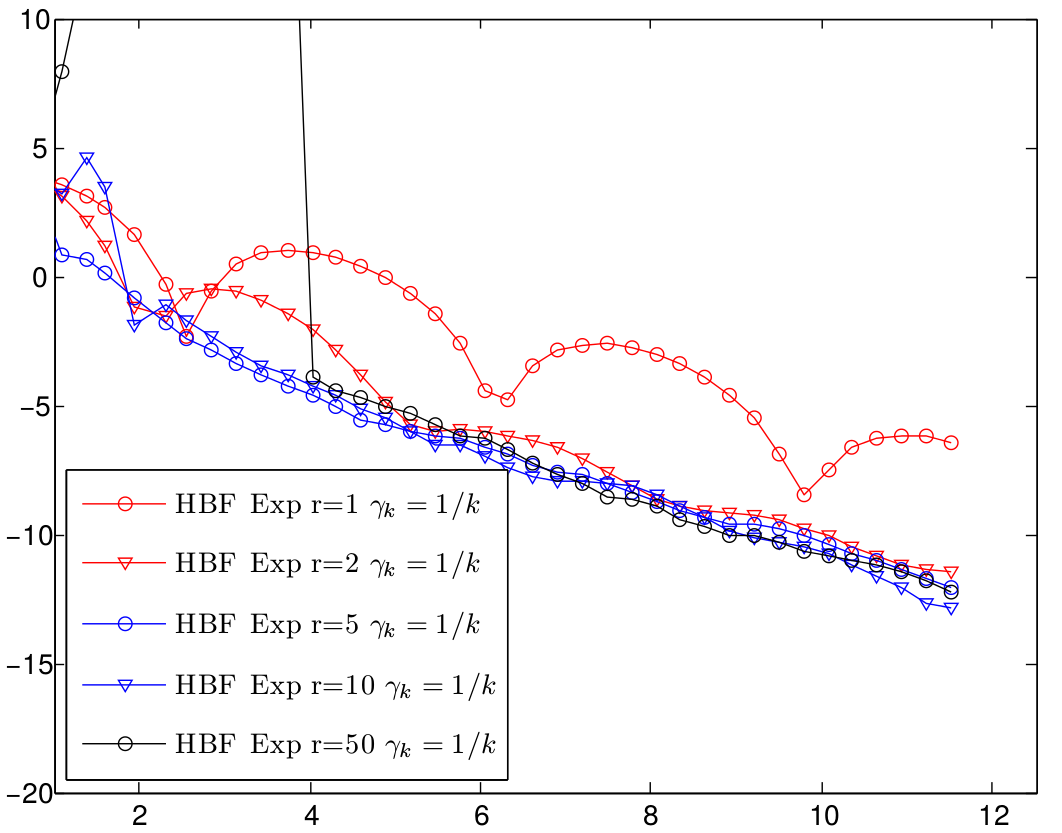}
\includegraphics[scale=0.15]{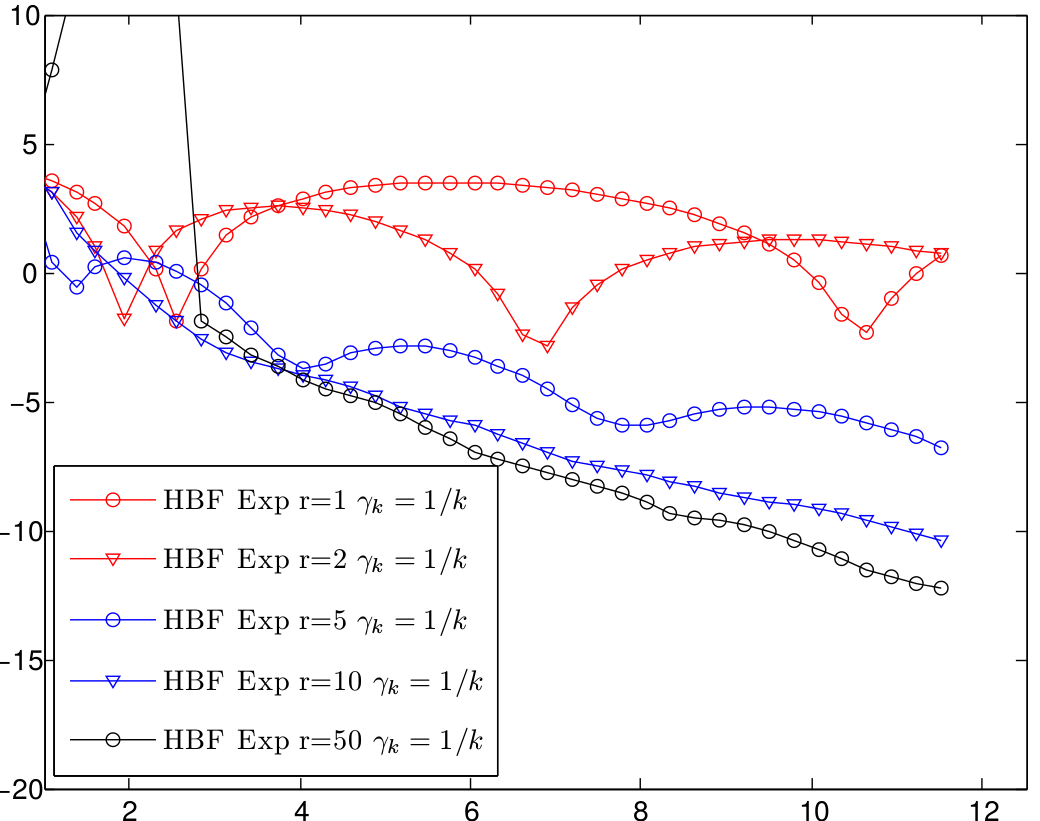}
\end{center}
\caption{Evolution of $\log(f(X_k))$ with respect to $\log(k)$. Left: Exponential memory. Right: Polynomial memory.\label{Fig:HBF_evor}}
\end{figure}
We immediately observe that for small values of $r$ (that correspond to a long-term memory case), the algorithm possesses a lengthy oscillating behavior, which is a feature of second-order algorithms with a very mild damping effect. This phenomenon has also been observed in previous works (see, \textit{e.g.}, \cite{BachFlammarion} and the references therein). We also observe that the use of an excessively large value of $r$ (say, when $r$ is greater than $10$) creates a numerical instability at the beginning of the iterations. This could be fixed by using a supplementary truncating trick introduced in \cite{Lemaire}.
Finally, the obtained rates are better (from a numerical point of view) when $r$ is chosen at around $5$ in the exponential case, and at around $10$ in the polynomial case, mainly because of the oscillations that deteriorate the convergence when $r$ is too small.

Figure \ref{Fig:rates} then compares several stochastic optimization algorithms in the toy example
  $f(x)=|x|^p/p$: the standard  Robbins-Monro stochastic gradient descent introduced in \cite{RobbinsMonro} (SGD) and several second order algorithms: the ``optimal" Ruppert-Polyak averaging algorithm (see \cite{polyakjuditsky,Ruppert}), the Nesterov accelerated gradient descent  \cite{Nesterov1} adapted in the stochastic framework in a straightforward way using an unbiased evaluation of the gradient in each iteration, and the recent SAGE method introduced in \cite{SAGE}. Note that the Rupper-Polyak averaging algorithm is used according to the recommendation of \cite{Bach_averaging} with $\gamma_k=k^{-1/2}$.

\begin{figure}[h!]
\begin{center}
\includegraphics[scale=0.15]{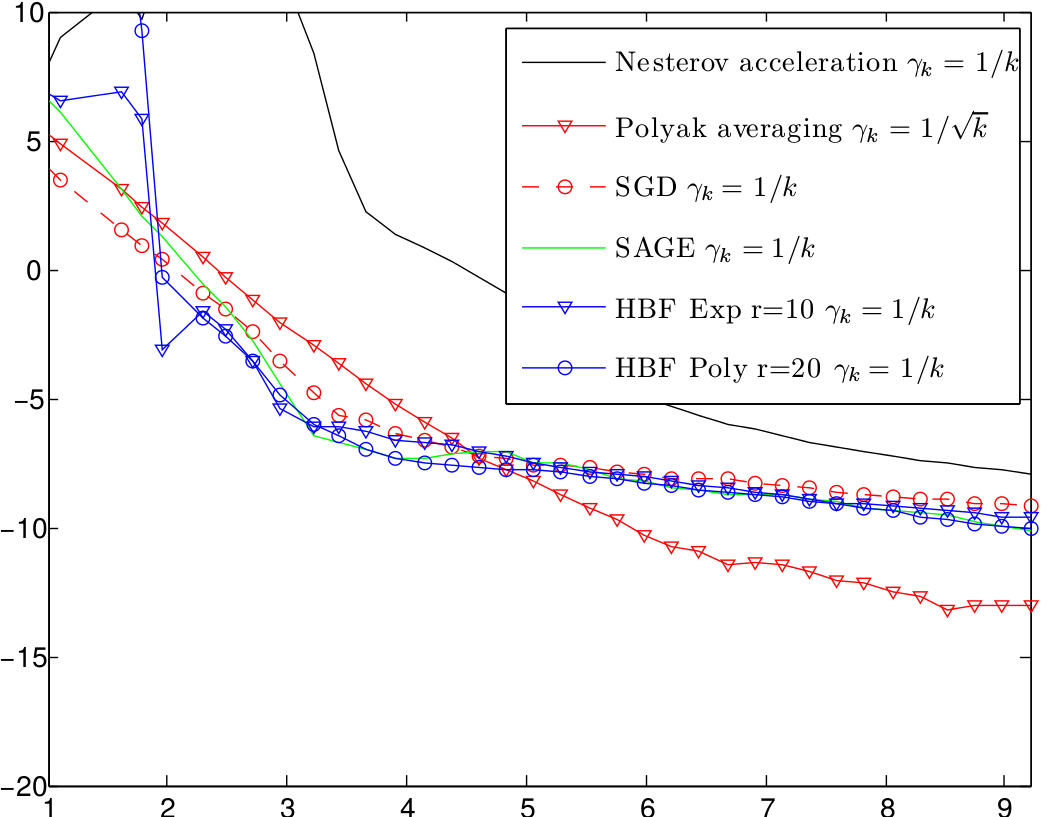}
\includegraphics[scale=0.15]{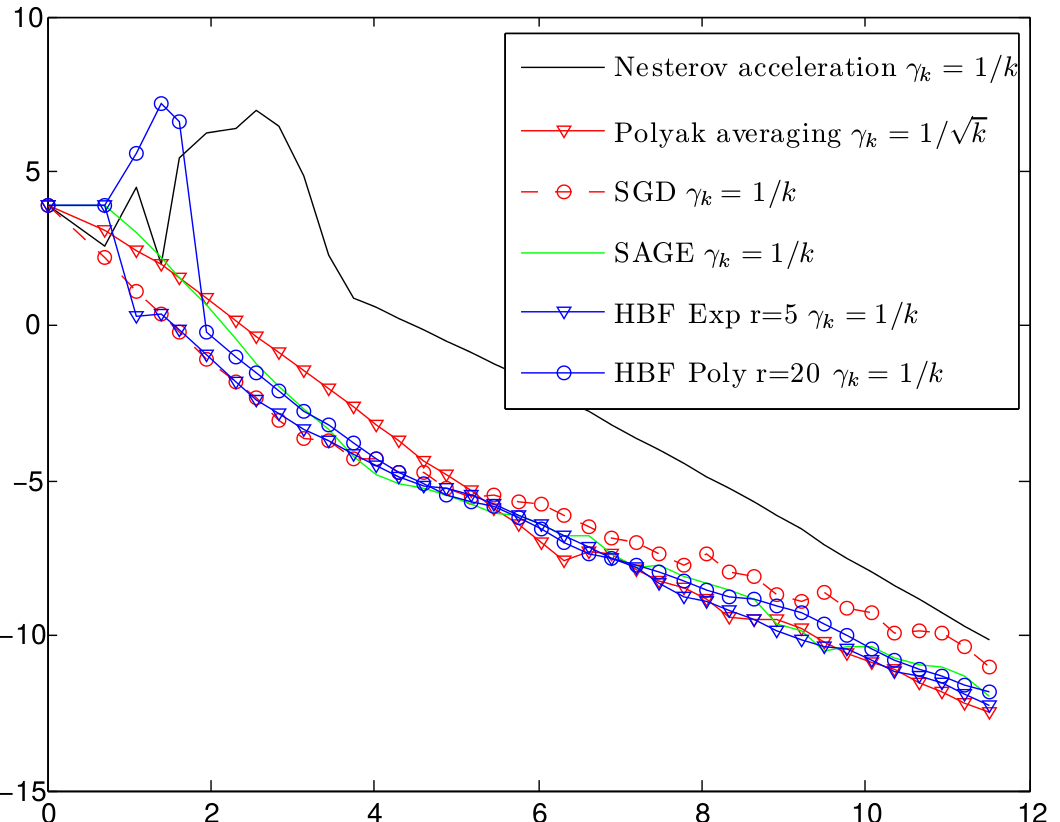}
\end{center}
\caption{Evolution of $\log(f(X_k))$ with respect to $\log(k)$ with $f(x)=|x|^p/p$. Left: Convex case $p=4$. Right: Strongly convex case $p=2$.\label{Fig:rates}}
\end{figure}
The first elementary remark is that the rate is of course deteriorated by the loss of strong convexity (left side, Figure \ref{Fig:rates}). In this case, the Ruppert-Polyak averaging outperforms other methods and attains the $O(1/\sqrt{n})$ minimax rate (see \cite{NemirovskiYudin}). When $f$ is strongly convex,  the second-order algorithms then all share an equivalent efficiency with, apparently a $\mathcal{O}(1/n)$ convergence rate. This corresponds to $(ii)$ of Theorem \ref{theo:rates} when the Hessian at the critical point is sufficiently large to make this minimax optimal rate possible. Nevertheless, the ability of the stochastic heavy ball in a more general situation may deserve further numerical investigation, which is beyond the scope of this paper.
 The SGD seems to be a little bit less effective in the strongly convex case. Finally, the Nesterov adaptation to the stochastic case does not lead to an efficient algorithm (in comparison to the other  methods tested). However, this remark should be balanced by the fact that we did not use the Lan  adaptation of the Nesterov accelerated gradient descent introduced in \cite{Lan}. It appears that this modification that consists in an addition of an intermediary point in the NAGD seems important to optimize the behavior of the algorithm in the stochastic case.

\paragraph{Non-convex case}

In this paragraph, we investigate the ability of the stochastic algorithm to avoid local traps and, in particular, we focus on the behavior of second order algorithms that may be an intermediary step towards global optimization methods such as simulated annealing.  For this purpose, we defined $f$ as:
$$
\forall x \in \RR \qquad f(x) = a x^4 + b(x-1)^2.
$$ 
with $a=1/40$ and $b=-1/5.$ These values have been fixed to guarantee the numerical stability of the stochastic procedures, but the results we obtained may be replicated for other values. The values of $a$ and $b$ above yield a double-well potential with a global minimizer of $f$ of around $x^\star\simeq -4.9$, although $f$ has a local trap on the positive part at  around $x_+ \simeq 4$. The function $f$ is represented on the top left of Figure \ref{Fig:multi}.

We used $\gamma_k=k^{-1}$ for all of the methods and we varied the initialization point of each algorithm from $-10$ to $10$ with $100$ Monte-Carlo replications. For each simulation, we arbitrarily stopped the evolution of the algorithm after $T=10^4$ iterations, and considered that optimization was successful when $|x_T-x^\star| \leq 1$. This criterion may be replaced by a more stringent inequality, at the price of an increase of $T$, without really changing the main conclusions below.

\begin{figure}[h!]
\begin{center}
\includegraphics[scale=0.2]{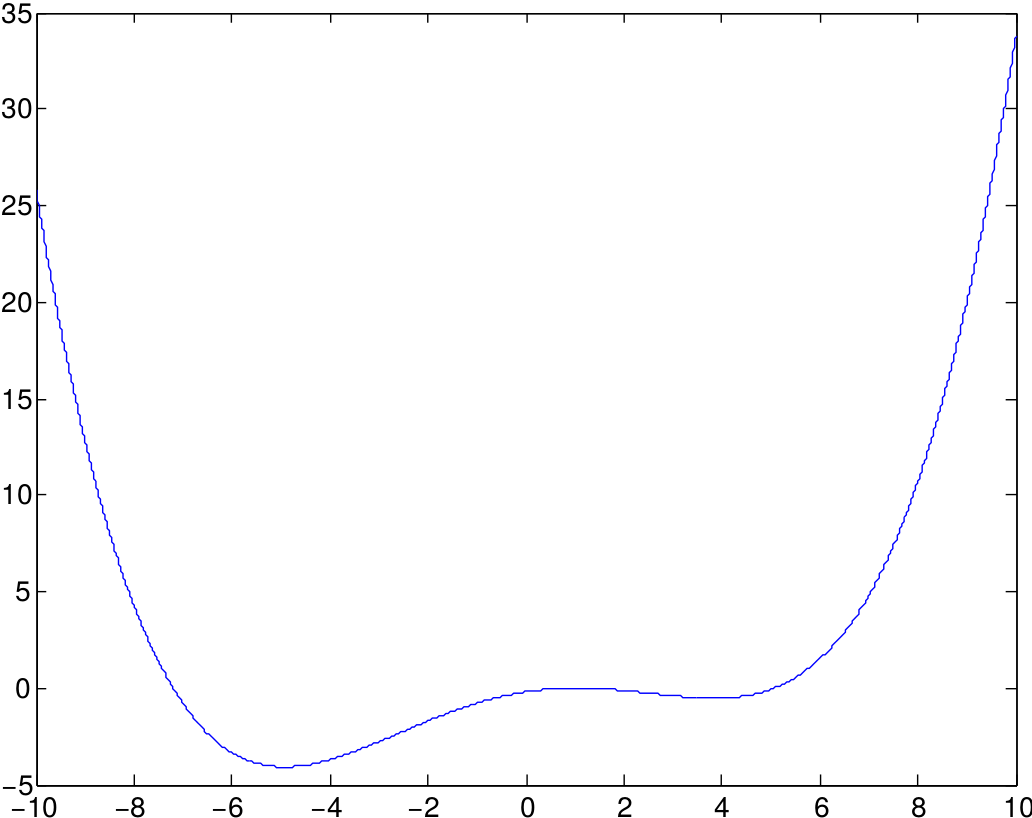}
\includegraphics[scale=0.2]{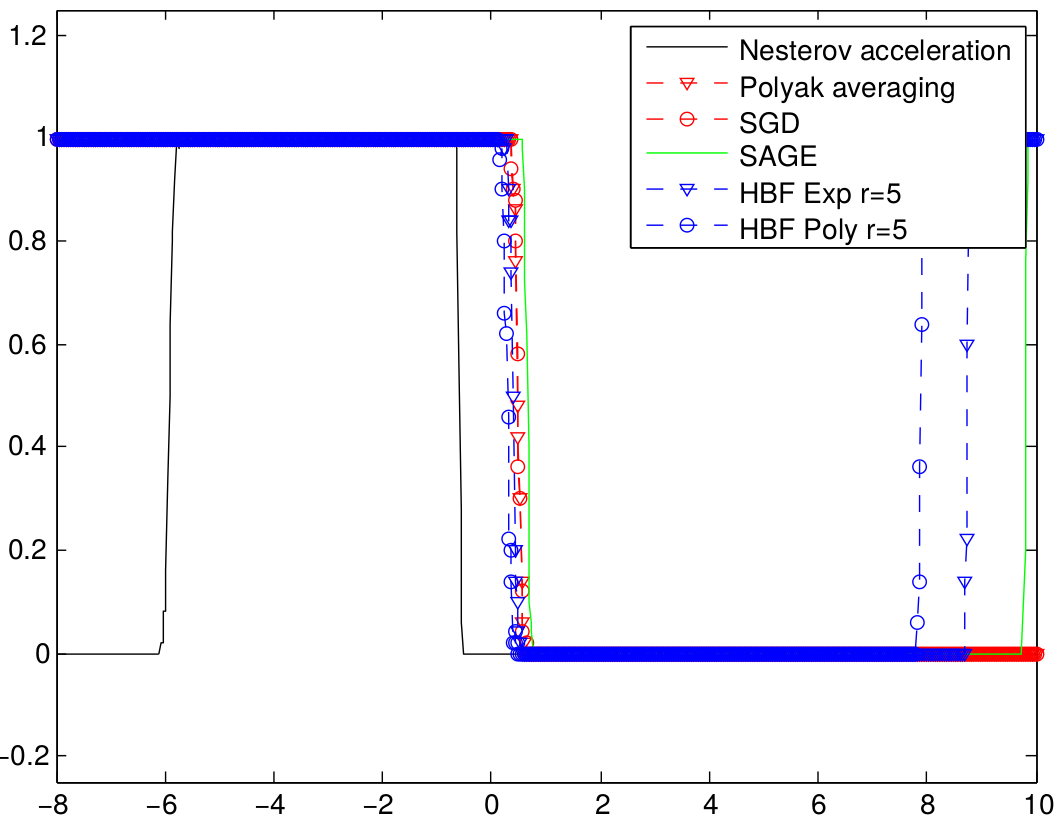}
\includegraphics[scale=0.2]{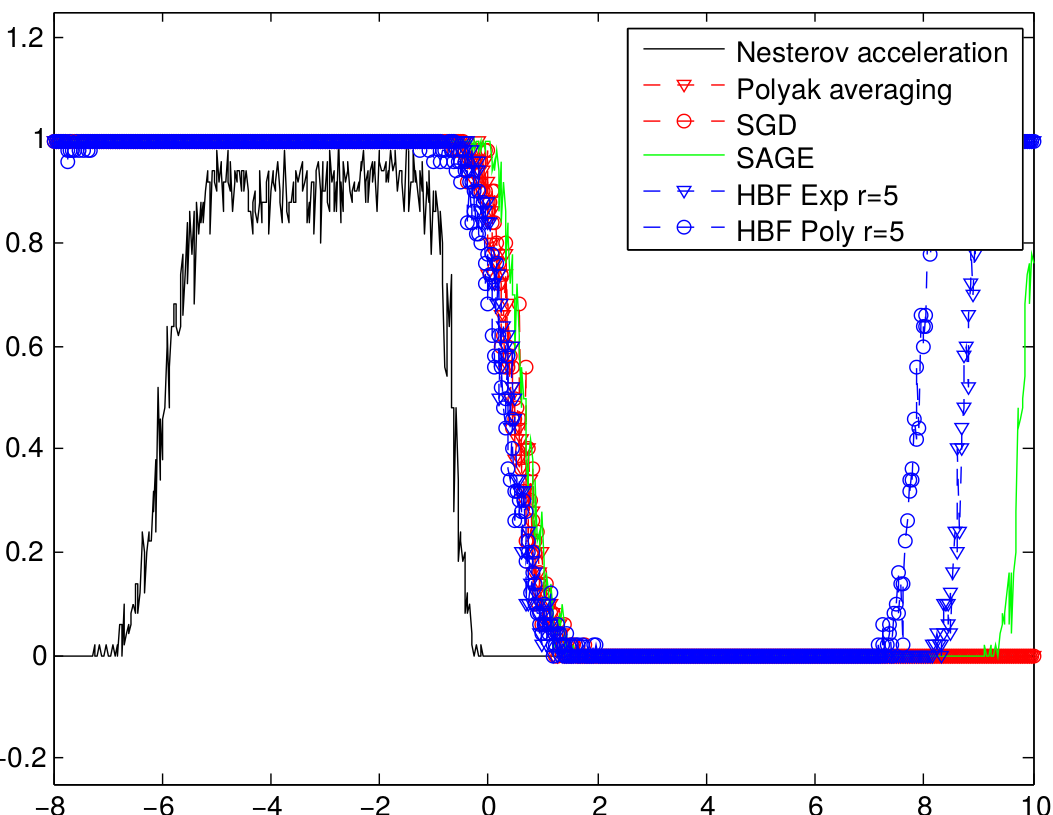}
\includegraphics[scale=0.2]{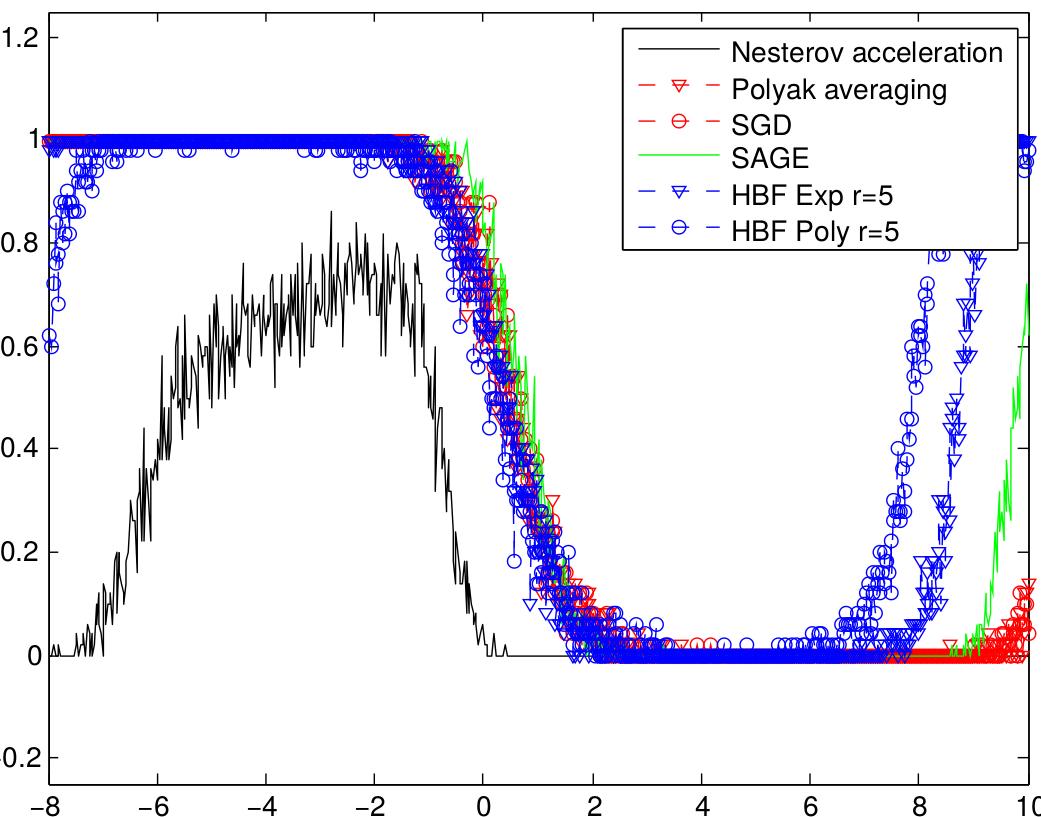}
\end{center}
\caption{Top left: function $f$ to be minimized. Top right: probability of success of the stochastic algorithms with respect to the initialization point with small variance: $\sigma=0.1$.
Bottom left: $\sigma=1$. Bottom right: $\sigma=2$.\label{Fig:multi}}
\end{figure}

Performances are reported in Figure \ref{Fig:multi}. We observe that both SGD and Ruppert-Polyak algorithms have the same behavior. This fact is absolutely clear because Polyak averaging is built with a Cesaro average of SGD. The target convergence point of SGD and of Polyak averaging are thus the same. We can also note that in the almost no noise setting, the basin of attraction of $x^\star$ for SGD may be  roughly approximated by $]-\infty,1]$. Nevertheless, both SAGE and HBF seem to behave better behaviour with a somewhat larger basin of attraction: in particular, it is possible to start from an initialization point $x_1=8$ and still obtain convergence of SAGE or HBF towards $x^\star$. This last point is clearly impossible with SGD. The same conclusions hold for different values of $\sigma$ (see Figure \ref{Fig:multi}, bottom left and right).
Finally, we observe that NAGD does not present  very good behavior: the probability of failure when the algorithm is initialized at $-4$ is lower than $1$ for $\sigma=1$ or $\sigma=2$.

We can calculate a more quantitative indicator of this behavior with the computation of the average rate of success of each algorithm when the initialization point is sampled uniformly over $[-10;10]$.
\begin{table}
\begin{center}
\begin{tabular}{|c|c|c|c|c|c|c|}
$\sigma$ & SGD & AV SGD & SAGE&  NAGD & HBF Poly r=5 & HBF Expo r=5 \\
0.1 &0.47 & 0.47 & 0.49 & 0.29 & 0.58 & 0.52\\
1   & 0.47 & 0.47 & 0.49 & 0.27 & 0.58 & 0.55\\
2   & 0.47 & 0.47 & 0.49 & 0.20 & 0.58 & 0.54 \\
\end{tabular}
\end{center}
\caption{Average rate of success of each stochastic algorithm with a uniformly sampled initialization over $[-10;10]$ when $\sigma$ varies.\label{table:rate}}
\end{table}
Table \ref{table:rate} seems to indicate that the stochastic heavy ball leads to a better exploration of the state space, in particular, with reasonable values of $r$ (see Table \ref{table:rate2}). These conclusions should be understood as numerical observations of experimental results on this particular type of synthetic case, but we do not have any theoretical arguments to strengthen these final observations at this time.

\begin{table}
\begin{center}
\begin{tabular}{|c|c|c|c|c|c|c|c|}
 Exp 1  & Exp 2 & Exp 5&  Exp 10 &Poly 1 & Poly 2 & Poly 5 & Poly 10\\
0.51 & 0.53 & 0.55 & 0.58 & 0.26 & 0.43 & 0.58 & 0.50\\
\end{tabular}
\end{center}
\caption{Average rate of success of  heavy ball stochastic algorithm for several values of $r$, when $\sigma=1$ and the initialization point is sampled  uniformly over $[-10;10]$.\label{table:rate2}}
\end{table}

\appendix
\section{Technical results}\label{sec:appendix}
\subsection{Standard tools of stochastic algorithms}\label{subsec:Rob-Sieg}
We recall below a standard version of the so-called Robbins-Siegmund Theorem (see $e.g.$ \cite{Duflo}): 
\begin{theo}\label{theo:Robbins_Siegmund} Given a filtration $\mathcal{F}_n$ and four positive, integrable and $\mathcal{F}_n$-adapted sequences $(\alpha_n)_n$,$(\beta_n)_n$, $(U_n)_n$ and $(V_n)_n$ satisfying:
\begin{itemize}
\item (i) $(\alpha_n)_n$,$(\beta_n)_n$, $(U_n)_n$ are predictible sequences.\\
\item (ii) $\sup_{\omega}\prod\limits_n(1+\alpha_n(\omega))<\infty$,  $\sum\limits_n\mathbb{E}(\beta_n)<\infty$.\\
\item(iii) $\forall n\in \mathbb{N}$, $$\mathbb{E}(V_{n+1}\vert\mathcal{F}_n)\leq V_n(1+\alpha_{n+1})+\beta_{n+1}-U_{n+1}$$
\end{itemize}
Then:
\begin{itemize}
\item[(i)] $V_n$ converges to $V_\infty$ in $L^1$ and $\sup_n \ES[{V}_n ]<\infty.$\\
\item[(ii)] $\sum\limits_n\mathbb{E}(U_n)<\infty$, $\sum\limits_n U_n<\infty$ a.s.
\end{itemize}

\end{theo}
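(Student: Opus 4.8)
The plan is to turn the ``almost-supermartingale'' inequality (iii) into a genuine nonnegative supermartingale through a predictable normalisation, and then to invoke Doob's convergence theorem. Set $P_0=1$ and $P_n=\prod_{k=1}^n(1+\alpha_k)^{-1}$. Since each $\alpha_k$ is predictable, $P_n$ is $\mathcal{F}_{n-1}$-measurable and $P_{n+1}$ is $\mathcal{F}_n$-measurable; moreover the deterministic bound $M:=\sup_\omega\prod_k(1+\alpha_k(\omega))<\infty$ from (ii) yields $1/M\le P_n\le 1$ for every $n$ and $\omega$, with $(P_n)$ non-increasing and converging to a strictly positive limit $P_\infty$. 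Multiplying (iii) by the $\mathcal{F}_n$-measurable factor $P_{n+1}$ and using the identity $P_{n+1}(1+\alpha_{n+1})=P_n$ gives
\[
\ES[P_{n+1}V_{n+1}\mid\mathcal{F}_n]\le P_nV_n+P_{n+1}\beta_{n+1}-P_{n+1}U_{n+1}.
\]

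Next I would define the process
\[
S_n:=P_nV_n+\sum_{k=1}^n P_kU_k+\sum_{k>n}P_k\beta_k,
\]
which is well defined and integrable because $\sum_k\ES[P_k\beta_k]\le\sum_k\ES[\beta_k]<\infty$ forces $\sum_k P_k\beta_k<\infty$ almost surely and in $L^1$. The three summands are nonnegative, so $S_n\ge0$, and a direct computation using the displayed inequality (the gain $P_{n+1}\beta_{n+1}$ is exactly absorbed when passing from $\sum_{k>n}$ to $\sum_{k>n+1}$, while $-P_{n+1}U_{n+1}$ is absorbed by the $U$-sum) shows $\ES[S_{n+1}\mid\mathcal{F}_n]\le S_n$. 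Hence $(S_n)$ is a nonnegative supermartingale, and Doob's theorem gives $S_n\to S_\infty$ almost surely with $S_\infty$ integrable. Since the tail $\sum_{k>n}P_k\beta_k\to0$, the quantity $P_nV_n+\sum_{k\le n}P_kU_k$ converges a.s.; the $U$-sum being non-decreasing, this forces $\sum_k P_kU_k<\infty$ a.s. and then the almost sure convergence of $P_nV_n$. Using $1/M\le P_n\to P_\infty\in(0,1]$, I deduce both $\sum_k U_k<\infty$ a.s. and the a.s. convergence of $V_n=(P_nV_n)/P_n$ to a finite limit $V_\infty$. Since $P_nV_n\le S_n$, taking expectations and monotone convergence give $\sum_k\ES[U_k]<\infty$ and $\sup_n\ES[V_n]\le M\sup_n\ES[P_nV_n]\le M\ES[S_0]<\infty$.

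The remaining and genuinely delicate point is the $L^1$ convergence $V_n\to V_\infty$, equivalently the uniform integrability of $(P_nV_n)$: almost sure convergence of a nonnegative supermartingale does not by itself transmit to $L^1$ (a mean-one nonnegative martingale collapsing to $0$ is the standard warning), which is precisely why the hypotheses are imposed in expectation form rather than merely almost surely. The plan here is to use the Doob decomposition $S_n=\mathcal{M}_n-A_n$ with $\mathcal{M}$ a martingale and $A$ predictable non-decreasing: condition (ii) gives $\ES[A_\infty]\le\ES[S_0]<\infty$, so $A_n\to A_\infty$ in $L^1$, while the $L^1$-summability of $\sum_kP_k\beta_k$ and of $\sum_kP_kU_k$ controls the finite-variation part; combined with the uniform lower bound $P_n\ge1/M$, one upgrades the a.s. convergence to $L^1$. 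I expect this uniform-integrability step to be the main obstacle, and it is the part for which one ultimately relies on the standard treatment of \cite{Duflo}.
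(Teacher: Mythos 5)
First, a point of comparison: the paper does not prove this theorem at all — it is recalled verbatim from \cite{Duflo} — so your proof can only be measured against the standard argument, and your route is indeed the classical one (predictable normalisation $P_n=\prod_{k\le n}(1+\alpha_k)^{-1}$, absorption of the $\beta$- and $U$-terms into a supermartingale, Doob convergence). Two issues, one minor and one genuine. The minor one: your $S_n$ contains the \emph{future} tail $\sum_{k>n}P_k\beta_k$, which is not $\mathcal{F}_n$-measurable, so $(S_n)$ is not an adapted supermartingale as claimed. This is easily repaired: either replace the tail by $\mathbb{E}\bigl[\sum_{k>n}P_k\beta_k\,\vert\,\mathcal{F}_n\bigr]$, or work directly with $W_n=P_nV_n+\sum_{k\le n}P_k(U_k-\beta_k)$, which is adapted, is a supermartingale by your displayed inequality, and satisfies $\sup_n\mathbb{E}[W_n^-]\le\sum_k\mathbb{E}[\beta_k]<\infty$, so Doob's convergence theorem applies and all your almost-sure conclusions, $\sum_n\mathbb{E}[U_n]<\infty$, and $\sup_n\mathbb{E}[V_n]<\infty$ go through exactly as you argue.

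The genuine gap is the final step, and it cannot be filled: the $L^1$ convergence of $V_n$, as stated in the theorem, is \emph{false} in this generality, and your Doob-decomposition plan necessarily fails. Take $\alpha_n=\beta_n=U_n\equiv 0$ (or exponentially small positive versions if strict positivity is insisted upon) and let $V_n$ be a nonnegative mean-one martingale collapsing a.s. to $0$, e.g.\ $V_{n+1}=V_n\varepsilon_{n+1}$ with i.i.d.\ $\varepsilon_k\in\{0,2\}$ equiprobable: all hypotheses (i)--(iii) hold, $V_n\to 0$ a.s., yet $\mathbb{E}[V_n]=1$ for all $n$. You actually quote this very example as the ``standard warning'' without noticing that it satisfies the hypotheses of the theorem and hence refutes the claim you are trying to prove; in particular, the $L^1$-summability of the finite-variation part says nothing about uniform integrability of the martingale part (in the example the Doob decomposition has $A\equiv 0$). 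The correct conclusion — which is what \cite{Duflo} gives and what the paper actually needs in Corollary \ref{coro:asconv} — is that $V_n$ converges \emph{almost surely} to a limit $V_\infty$ that is integrable (by Fatou, since $\sup_n\mathbb{E}[V_n]<\infty$), not that $V_n\to V_\infty$ in $L^1$; the flaw lies in the statement as transcribed in the paper rather than in your supermartingale construction.
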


\subsubsection{Step sizes $\gamma_n= \gamma\, n^{-\beta}$ with $\beta < 1$}

\begin{pro}\label{prop:controle_algo}
For any positive values $a>0$ and $b>0$, for any $\beta \in (0,1)$ and any sequence $(\gamma_n)_{n \geq 1}$ defined by
$\gamma_n = \gamma n^{-\beta}$, one has:
\begin{itemize}
\item[$(i)-a$] If $\beta < 1/2$, then $\sum\limits_{k=1}^n a \gamma_{k} - b \gamma_{k}^2 \geq \frac{a \gamma}{1-\beta} n^{1-\beta} - \frac{b \gamma^2}{1-2 \beta} n^{1-2 \beta}$\\
\item[$(i)-b$] If $\beta > 1/2$, then $\sum\limits_{k=1}^n a \gamma_{k} - b \gamma_{k}^2 \geq \frac{a \gamma}{1-\beta} n^{1-\beta} - \frac{b \gamma^2}{2 \beta-1}$\\
\item[$(i)-c$] If $\beta = 1/2$, then $\sum\limits_{k=1}^n a \gamma_{k} - b \gamma_{k}^2 \geq \frac{a \gamma}{1-\beta} n^{1-\beta} - b \gamma^2 \log n$\\

\item[$(ii)$]  An integer $n_0$ exists such that $\forall n \geq n_0 \quad \sum\limits_{k=1}^n\gamma_{k}^2\prod\limits_{l=k+1}^n(1- a\gamma_{l})^2 \leq \frac{2}{a}\gamma_{n+1}$\\
\item[$(iii)$] An integer $n_0$ exists such that $\forall n \geq n_0 \quad \sum\limits_{k=1}^n\gamma_{k}^2\prod\limits_{l=k+1}^n(1- a\gamma_{l}+b \gamma^2_l)\leq \frac{2}{a}\gamma_{n+1}$\\
\end{itemize}
\end{pro}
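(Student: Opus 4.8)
The plan is to treat the elementary summation bounds of $(i)$ by comparison with integrals, and to reduce the two product-sums $(ii)$ and $(iii)$ to a single scalar recursion that I control by induction; the genuine difficulty lies entirely in $(ii)$--$(iii)$.

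For $(i)$ I would exploit that $x\mapsto x^{-\beta}$ and $x\mapsto x^{-2\beta}$ are decreasing, so each sum is framed by the corresponding integrals $\int x^{-\beta}\,dx=\frac{x^{1-\beta}}{1-\beta}$ and $\int x^{-2\beta}\,dx$. The positive part $\sum_{k\le n}a\gamma_k=a\gamma\sum_{k\le n}k^{-\beta}$ produces the leading term $\frac{a\gamma}{1-\beta}n^{1-\beta}$, while the behaviour of the penalty $\sum_{k\le n}k^{-2\beta}$ splits into exactly the three announced regimes according to the sign of $1-2\beta$: for $\beta<1/2$ it is of order $n^{1-2\beta}/(1-2\beta)$, for $\beta=1/2$ it is logarithmic, and for $\beta>1/2$ it converges and is bounded by $(2\beta-1)^{-1}$. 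Assembling the two comparisons in each regime yields the three stated inequalities; this is routine and I would not dwell on the lower-order constants.

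The core is $(ii)$--$(iii)$. Writing $\rho_\ell=(1-a\gamma_\ell)^2$ for $(ii)$ and $\rho_\ell=1-a\gamma_\ell+b\gamma_\ell^2$ for $(iii)$, and $S_n=\sum_{k=1}^n\gamma_k^2\prod_{\ell=k+1}^n\rho_\ell$, the first step is to isolate the one-step recursion $S_n=\rho_n S_{n-1}+\gamma_n^2$ (the term $k=n$ contributing $\gamma_n^2$ through the empty product). I would then run an induction proving $S_n\le\frac2a\gamma_{n+1}$: assuming $S_{n-1}\le\frac2a\gamma_n$ gives $S_n\le\rho_n\frac2a\gamma_n+\gamma_n^2$, and a short expansion shows this is $\le\frac2a\gamma_{n+1}$ as soon as $\frac2a(\gamma_n-\gamma_{n+1})$ is dominated by the net negative quadratic term ($3\gamma_n^2$ in $(ii)$, $\gamma_n^2(1-\frac{2b}{a}\gamma_n)$ in $(iii)$). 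The decisive point is the elementary estimate $\gamma_n-\gamma_{n+1}=\gamma\big(n^{-\beta}-(n+1)^{-\beta}\big)=O(\gamma_n/n)=o(\gamma_n^2)$, which holds precisely because $\beta<1$ (indeed $\gamma_n/n\asymp n^{-1-\beta}$ against $\gamma_n^2\asymp n^{-2\beta}$). Hence the induction step closes for all $n$ beyond some threshold $n_1$, and the generous constant $\frac2a$ (versus the true steady-state size $\frac{1}{2a}\gamma_n$ in $(ii)$ and $\frac1a\gamma_n$ in $(iii)$) is exactly the slack that makes it go through.

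The subtle part --- and the one I expect to be the main obstacle --- is launching the induction, i.e. producing a valid base index $n_0\ge n_1$ with $S_{n_0}\le\frac2a\gamma_{n_0+1}$. Here I would argue through the transient/steady-state dichotomy: setting $\epsilon_n=\frac2a\gamma_{n+1}-S_n$, the recursion reads $\epsilon_n=\rho_n\epsilon_{n-1}+\delta_n$ with $\delta_n\ge0$ for $n\ge n_1$, so $\epsilon_n\ge\big(\prod_{\ell=n_1+1}^n\rho_\ell\big)\epsilon_{n_1}+\sum_{k}\big(\prod_{\ell=k+1}^n\rho_\ell\big)\delta_k$. The homogeneous factor $\prod_{\ell=n_1+1}^n\rho_\ell\le\exp(-c\,\Gamma_n)$ decays super-polynomially (using $\Gamma_n\to\infty$ from $(i)$), hence is negligible against $\gamma_n$, whereas the forced sum is itself of order $\gamma_n$; therefore $\epsilon_n>0$ for large $n$, which both supplies the base case and shows the bound cannot be persistently violated. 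Once $n_0$ is fixed, the induction of the previous paragraph propagates $S_n\le\frac2a\gamma_{n+1}$ for every $n\ge n_0$, completing $(ii)$ and $(iii)$.
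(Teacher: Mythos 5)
Your proof is correct, and for the substantive parts $(ii)$--$(iii)$ it takes a genuinely different route from the paper. The paper proceeds by bounding each factor exponentially, $(1-a\gamma_\ell)^2\le e^{-a\gamma_\ell}$ and $1-a\gamma_\ell+b\gamma_\ell^2\le e^{-a\gamma_\ell+b\gamma_\ell^2}$, rewriting the sum as $e^{-a\Gamma_n+b\Gamma_n^{(2)}}\sum_{k\le n}\gamma_k^2\,e^{a\Gamma_k-b\Gamma_k^{(2)}}$, comparing with an integral (using that $x\mapsto x^{-2\beta}e^{\frac{a\gamma}{1-\beta}x^{1-\beta}-\frac{b\gamma^2}{1-2\beta}x^{1-2\beta}}$ is eventually increasing) and then integrating by parts to extract the $\frac{\gamma_n}{a}$ order; case $(iii)$ there requires an additional splitting of the integrand and the threshold choice $t\ge(3b/a)^{1/\beta}$. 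You instead collapse $S_n=\sum_{k\le n}\gamma_k^2\prod_{\ell=k+1}^n\rho_\ell$ to the scalar recursion $S_n=\rho_nS_{n-1}+\gamma_n^2$ and close an induction on $S_n\le\frac{2}{a}\gamma_{n+1}$, with the single analytic input $\gamma_n-\gamma_{n+1}=O(n^{-1-\beta})=o(\gamma_n^2)$, valid exactly because $\beta<1$. This buys you a uniform treatment of $(ii)$ and $(iii)$, no integration by parts, and a clean localization of where $\beta<1$ enters --- consistent with the paper needing a separate proposition (with different rates) for $\beta=1$, where $n^{-1-\beta}$ and $\gamma_n^2$ are comparable. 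Your base-case argument via $\epsilon_n=\rho_n\epsilon_{n-1}+\delta_n$ is sound: with $n_1$ large enough that $\rho_\ell\ge0$ and $\delta_\ell\ge0$ (automatic in $(ii)$ since $\rho_\ell$ is a square; needs $\gamma_\ell$ small in $(iii)$), the homogeneous term is $O(e^{-c(\Gamma_n-\Gamma_{n_1})})=O(e^{-c'n^{1-\beta}})$, which is super-polynomially small against the crude lower bound $\epsilon_n$'s forced part $\ge\delta_n\gtrsim\gamma_n^2$ --- note that this crude bound already suffices, so your claim that the forced sum is of order $\gamma_n$, while true, is more than you need, and once positivity of $\epsilon_n$ holds for all large $n$ by this variation-of-constants inequality, the separate induction step becomes strictly redundant (harmlessly so). Part $(i)$ is the same routine integral comparison the paper dismisses as straightforward.
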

\noindent
\textit{Proof:} The upper bounds involved in $(i)-a$, $(i)-b$ and $(i)-c$ are straightforward. \hfill$\diamond$

\noindent  \underline{Proof of $(ii)$:} Using $\Gamma_n$ introduced in the beginning of Section \ref{sec:HBF}, we write:
\begin{eqnarray*}
\sum\limits_{k=1}^n\gamma_{k}^2\prod\limits_{l=k+1}^n(1- a\gamma_{l})^2& \leq & \sum\limits_{k=1}^n\gamma_{k}^2 e^{- a \sum_{k+1}^n \gamma_l}\\
&= & 
\sum\limits_{k=1}^n\gamma_{k}^2 e^{- a \Gamma_n+a \Gamma_k } \leq \gamma^2 e^{- a \Gamma_n }\sum\limits_{k=1}^n  k^{-2 \beta} e^{\frac{a \gamma}{1-\beta} k^{1-\beta} }
\end{eqnarray*}

The function $x\longmapsto x^{-2 \beta} e^{\frac{a \gamma}{1-\beta} x^{1-\beta}}$ being increasing for $x \geq c_{a,\gamma,\beta}$, we then obtain, considering an integer $t>c_{a,\gamma,\beta}$:
$$
\sum\limits_{k=1}^n\gamma_{k}^2\prod\limits_{l=k+1}^n(1- a\gamma_{l})^2 \leq 	\gamma^2 e^{- a \Gamma_n } \left( C_t + \int_{t}^n 
x^{-2 \beta} e^{\frac{a \gamma}{1-\beta} x^{1-\beta}} dx\right).
$$
We can write $x^{- 2 \beta} e^{K x^{1-\beta}} = \left( e^{K x^{1-\beta}}\right)' x^{-\beta} K^{-1} (1-\beta)^{-1}$ and integrating by parts, we obtain for a large enough $n$:
$$
\sum\limits_{k=1}^n\gamma_{k}^2\prod\limits_{l=k+1}^n(1- a\gamma_{l})^2 \leq   
\gamma^2 e^{- a \Gamma_n } \left(C_t + \frac{e^{a \Gamma_n} }{a \gamma} n^{-\beta} \right) \leq  \frac{2}{a} \gamma_n. 
$$
\hfill $\diamond$

\noindent  \underline{Proof of $(iii)$:}
We only deal with $\beta<1/2$, which is the most involved situation. Using $\Gamma_n$ and $\Gamma_n^{(2)}$ introduced in the beginning of Section \ref{sec:HBF}, we write:
\begin{eqnarray*}
\sum\limits_{k=1}^n\gamma_{k}^2\prod\limits_{l=k+1}^n(1- a\gamma_{l}+b \gamma^2_l)& \leq &\sum\limits_{k=1}^n\gamma_{k}^2 e^{- a \Gamma_n+a \Gamma_k + b \Gamma_n^{(2)} - b \Gamma_k^{(2)}}\\
& \leq & e^{- a \Gamma_n + b \Gamma_n^{(2)} } \sum\limits_{k=1}^n\gamma_{k}^2 e^{a \Gamma_k - b \Gamma_k^{(2)}} \\
& \leq &\gamma^2 e^{- a \Gamma_n + b \Gamma_n^{(2)}}\sum\limits_{k=1}^n  k^{-2 \beta} e^{\frac{a \gamma}{1-\beta} k^{1-\beta} - \frac{b \gamma^2}{1-2\beta} k^{1-2 \beta}}
\end{eqnarray*}
The function $x\longmapsto x^{-2 \beta} e^{\frac{a \gamma}{1-\beta} x^{1-\beta} - \frac{b \gamma^2}{1-2\beta} x^{1-2 \beta}}$ being increasing for $x \geq c_{a,b,\gamma,\beta}$, we then obtain considering an integer $t>c_{a,b,\gamma,\beta}$:
\begin{eqnarray*}
\sum\limits_{k=1}^n\gamma_{k}^2\prod\limits_{l=k+1}^n(1- a\gamma_{l}+b \gamma^2_l) &\leq &
\gamma^2 e^{- a \Gamma_n + b \Gamma_n^{(2)}}\left( \sum_{k=1}^{t} k^{-2 \beta} e^{\frac{a \gamma}{1-\beta} k^{1-\beta} - \frac{b \gamma^2}{1-2\beta} k^{1-2 \beta}} + \int_{t}^n x^{-2 \beta} e^{\frac{a \gamma}{1-\beta} x^{1-\beta}- \frac{b \gamma^2}{1-2\beta} x^{1-2 \beta}} dx \right) \\
& \leq &\gamma^2 e^{- a \Gamma_n + b \Gamma_n^{(2)}}\left( C_{t}    +
\int_{t}^n x^{- 2\beta} e^{\frac{a \gamma}{1-\beta} x^{1-\beta} - \frac{b \gamma^2}{1-2\beta} x^{1-2 \beta}} dx \right)\\
& \leq & \gamma^2 e^{- a \Gamma_n + b \Gamma_n^{(2)}}\huge\left( C_{t} \right.\\
& 
+& \left.
\int_{t}^n x^{- \beta} \left[ \frac{3}{2} \frac{a \gamma x^{-\beta} - b \gamma^2 x^{-2 \beta}}{a \gamma}+ \frac{3 b \gamma x^{-2 \beta} - a \gamma x^{-\beta}}{2}\right] e^{\frac{a \gamma}{1-\beta} x^{1-\beta} - \frac{b \gamma^2}{1-2\beta} x^{1-2 \beta}} dx \right)\\
\end{eqnarray*}
Now choosing  $t \geq (3 b /a)^{\beta^{-1}}$ yields $3 b \gamma x^{-2 \beta} \leq a \gamma x^{-\beta}$ for any $x \geq t$. Integrating by parts, we obtain:
\begin{eqnarray*}
\sum\limits_{k=1}^n\gamma_{k}^2\prod\limits_{l=k+1}^n(1- a\gamma_{l}+b \gamma^2_l) &\leq &
\gamma^2 e^{- a \Gamma_n + b \Gamma_n^{(2)}}\left( C_{t} + \frac{ n^{-\beta}}{a\gamma} e^{-a \Gamma_n+n \Gamma_n^{(2)}} \right) \\
& \leq & \frac{ \gamma  n^{-\beta}}{a} + \gamma^2 C_t e^{- a \Gamma_n + b \Gamma_n^{(2)}}.
\end{eqnarray*}
Then, choosing $n_0$ large enough (that depends on $a,b,\gamma$ and $\beta$), we deduce that:
$$
\forall n \geq n_0 \qquad \sum\limits_{k=1}^n\gamma_{k}^2\prod\limits_{l=k+1}^n(1- a\gamma_{l}+b \gamma^2_l) \leq \frac{2}{a} \gamma_n.
$$
\hfill $\diamond$

\hfill $\square$
\subsubsection{Step sizes $\gamma_n= \gamma\, n^{-1}$}

\begin{pro}\label{prop:controle_algo2}
For any positive values $a>0$ and $b>0$ and any sequence $(\gamma_n)_{n \geq 1}$ defined by
$\gamma_n = \gamma n^{-1}$, we have:
\begin{itemize}
\item[$(i)$] $\sum\limits_{k=1}^n a \gamma_{k} - b \gamma_{k}^2 \geq a \log n - b \pi^2/6$\\
\item[$(ii)$] $\sum\limits_{k=1}^n\gamma_{k}^2\prod\limits_{l=k+1}^n(1- a\gamma_{l})^2\leq 
C_{\gamma}
\displaystyle\begin{cases}
\frac{1}{a \gamma - 1} n^{-1} \,\,\, \qquad \text{if} \qquad a \gamma > 1\\
\log n\,n^{-1} \,\,\qquad \text{if} \qquad a \gamma = 1\\
\frac{1}{1-a \gamma} n^{-a \gamma}  \qquad \text{if} \qquad a \gamma < 1
\end{cases}$\\

\item[$(iii)$] $\sum\limits_{k=1}^n\gamma_{k}^2\prod\limits_{l=k+1}^n(1- a\gamma_{l}+b \gamma^2_l)\leq 
C_{\gamma,b}
\displaystyle\begin{cases}
\frac{1}{a \gamma - 1} n^{-1} \,\,\, \qquad \text{if} \qquad a \gamma > 1\\
\log n\,n^{-1} \,\,\qquad \text{if} \qquad a \gamma = 1\\
\frac{1}{1-a \gamma} n^{-a \gamma}  \qquad \text{if} \qquad a \gamma < 1
\end{cases}$\\
 \item[$(iv)$] For any $\epsilon>0$, $a>0$ and $b>0$: $\sum\limits_{k=1}^n\gamma_{k+1}\prod\limits_{l=k+1}^n(1- a\gamma_{l}+b\gamma_l^{1+\epsilon})\leq \frac{2e^{b \Gamma_{\infty}^{(1+\epsilon)}}}{a}$.
\end{itemize}
\end{pro}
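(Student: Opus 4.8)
The plan is to dispose of $(i)$ directly, to treat $(ii)$ and $(iii)$ by a single exponential-comparison argument, and to reserve an exact telescoping (Abel summation) identity for $(iv)$, which carries a first power of $\gamma_{k+1}$ rather than a square. For $(i)$, since $\gamma_k=\gamma/k$ one compares the harmonic partial sum with an integral, $\sum_{k=1}^n\gamma_k=\gamma\sum_{k=1}^n k^{-1}\ge\gamma\log n$, and uses the Basel bound $\sum_{k=1}^n\gamma_k^2=\gamma^2\sum_{k=1}^n k^{-2}\le\gamma^2\pi^2/6$; subtracting gives the stated inequality at once.

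For $(iii)$ (and $(ii)$ identically), the first step is to turn the product into an exponential via $1+x\le e^x$:
\[
\prod_{l=k+1}^n\bigl(1-a\gamma_l+b\gamma_l^2\bigr)\le\exp\Bigl(-a\!\!\sum_{l=k+1}^n\!\gamma_l+b\!\!\sum_{l=k+1}^n\!\gamma_l^2\Bigr).
\]
Because $\sum_{l=k+1}^n\gamma_l=\gamma(\log n-\log k)+O(1)$ uniformly and $\sum_{l\ge1}\gamma_l^2<\infty$, the right-hand side is bounded by $C\,(k/n)^{a\gamma}$ with $C$ independent of $k,n$; for $(ii)$ the square simply carries the contraction $(1-a\gamma_l)^2\le e^{-2a\gamma_l+a^2\gamma_l^2}$, so the same computation applies verbatim with the decay exponent read off from the product. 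Substituting yields
\[
\sum_{k=1}^n\gamma_k^2\prod_{l=k+1}^n\bigl(1-a\gamma_l+b\gamma_l^2\bigr)\le C\gamma^2\,n^{-a\gamma}\sum_{k=1}^n k^{a\gamma-2},
\]
and the trichotomy is precisely the three regimes of $\sum_{k=1}^n k^{a\gamma-2}$ compared with $\int^n x^{a\gamma-2}\,dx$, exactly as in the integration-by-parts step of the proof of Proposition~\ref{prop:controle_algo}: for $a\gamma>1$ the partial sum is of order $n^{a\gamma-1}/(a\gamma-1)$, producing the $n^{-1}$ rate; for $a\gamma=1$ it is $\log n$, producing $n^{-1}\log n$; and for $a\gamma<1$ it converges, to a constant of order $(1-a\gamma)^{-1}$ since $\zeta(2-a\gamma)\sim(1-a\gamma)^{-1}$ as $a\gamma\uparrow1$, leaving the factor $n^{-a\gamma}$.

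For $(iv)$ I would avoid the integral comparison entirely and use the exact telescoping identity. Setting $w_k=\prod_{l=k+1}^n(1-a\gamma_l)$ (so $w_n=1$), the relation $w_{k-1}=(1-a\gamma_k)w_k$ gives $a\gamma_k w_k=w_k-w_{k-1}$, whence $\sum_{k}a\gamma_k w_k=1-\prod_l(1-a\gamma_l)\le1$ once one starts beyond the finitely many indices with $1-a\gamma_l<0$; thus $\sum_k\gamma_k\prod_{l=k+1}^n(1-a\gamma_l)\le a^{-1}$. The correction is controlled by factoring $1-a\gamma_l+b\gamma_l^{1+\epsilon}=(1-a\gamma_l)\bigl(1+\tfrac{b\gamma_l^{1+\epsilon}}{1-a\gamma_l}\bigr)$ and bounding $\prod_l\bigl(1+\tfrac{b\gamma_l^{1+\epsilon}}{1-a\gamma_l}\bigr)\le\exp\bigl(b\sum_l\gamma_l^{1+\epsilon}\bigr)=e^{b\Gamma_\infty^{(1+\epsilon)}}$, which is finite because $1+\epsilon>1$. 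Combining this with the telescoping bound and absorbing the index shift $\gamma_{k+1}\leftrightarrow\gamma_k$ (and the initial indices) into the constant $2$ gives the claimed bound $2e^{b\Gamma_\infty^{(1+\epsilon)}}/a$.

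The only genuinely delicate point is the constant bookkeeping in $(ii)$--$(iii)$: one must make the $O(1)$ errors in the harmonic sums uniform in both indices and, above all, track how the constant degenerates as $a\gamma\to1$, since this is what produces the $(a\gamma-1)^{-1}$ and $(1-a\gamma)^{-1}$ prefactors. Following the route of Proposition~\ref{prop:controle_algo}, the cleanest way to obtain a clean constant is to verify that the integrand $x^{a\gamma-2}e^{(\cdots)}$ is eventually monotone and integrate by parts; the resulting boundary term supplies the leading contribution, while the fixed remainder from small indices is of order $n^{-a\gamma}$ and is harmlessly dominated by the advertised rate in each of the three cases.
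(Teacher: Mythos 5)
Your treatment of $(i)$--$(iii)$ coincides with the paper's own proof: the paper also disposes of $(i)$ by the harmonic/Basel comparison, and for $(iii)$ it uses exactly your chain — $1+x\le e^x$, the uniform bound $\Gamma_n^{(2)}\le \gamma^2\pi^2/6$ (which is what makes the $\beta=1$ case \emph{easier} than $\beta<1$), reduction to $\gamma^2 e^{b\gamma^2\pi^2/6}\,n^{-a\gamma}\sum_{k\le n}k^{a\gamma-2}$, and the three regimes of that partial sum, including the degeneration $\zeta(2-a\gamma)\sim(1-a\gamma)^{-1}$ as $a\gamma\uparrow1$. For $(ii)$ your contraction $(1-a\gamma_l)^2\le e^{-2a\gamma_l+a^2\gamma_l^2}$ actually yields the sharper exponent $2a\gamma$, so the thresholds move to $2a\gamma\lessgtr 1$ and the stated bound follows a fortiori; that is fine, just note explicitly that your conclusion implies (and slightly improves) the displayed trichotomy.

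For $(iv)$ you take a genuinely different route. The paper simply reuses the exponential comparison: it bounds the product by $e^{-a(\Gamma_n-\Gamma_k)}e^{b\Gamma_\infty^{(1+\epsilon)}}$, i.e.\ by $e^{b\Gamma_\infty^{(1+\epsilon)}}(k/n)^{a\gamma}$ up to a constant, and then compares $n^{-a\gamma}\sum_{k\le n} k^{a\gamma-1}$ with $n^{-a\gamma}\int_1^n x^{a\gamma-1}\,dx\le (a\gamma)^{-1}$, arriving directly at $e^{b\Gamma_\infty^{(1+\epsilon)}}/a$. Your Abel/telescoping identity $a\gamma_k w_k=w_k-w_{k-1}$, giving $\sum_k\gamma_k w_k\le a^{-1}$ \emph{exactly}, is arguably cleaner since it needs no harmonic-sum estimate at all and works for any nonincreasing step sequence. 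However, one intermediate inequality is reversed: since $(1-a\gamma_l)^{-1}\ge 1$, you only get
\begin{equation*}
\prod_{l}\Bigl(1+\tfrac{b\gamma_l^{1+\epsilon}}{1-a\gamma_l}\Bigr)\;\le\;\exp\Bigl(\,c\,b\,\Gamma_\infty^{(1+\epsilon)}\Bigr)\quad\text{with}\quad c=\sup_{l\ge l_0}(1-a\gamma_l)^{-1}>1,
\end{equation*}
(e.g.\ $c=2$ beyond the rank where $a\gamma_l\le 1/2$), \emph{not} $\exp(b\Gamma_\infty^{(1+\epsilon)})$ as you wrote; and $e^{2b\Gamma_\infty^{(1+\epsilon)}}\not\le 2e^{b\Gamma_\infty^{(1+\epsilon)}}$ in general, so you recover a bound of the advertised form but not literally the advertised constant. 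This is a bookkeeping blemish rather than a structural gap — and, to be fair, both your argument and the paper's silently restrict to indices beyond the finitely many $l$ with $1-a\gamma_l<0$ (termwise bounds cannot be multiplied through negative factors), absorbing the head into the constant; you at least acknowledge this, the paper does not.
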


\noindent
\textit{Proof:} The upper bounds involved in $(i)$ and $(ii)$ are straightforward. \hfill$\diamond$

\noindent \underline{Proof of $(iii)$:} The situation is easier than the one involved in point $(ii)$ of Proposition \ref{prop:controle_algo} because in that case, we have:  
$$ \forall n \geq 1 \qquad \Gamma_n^{(2)} \leq \gamma^2 \pi^2/6.$$
Therefore, we can repeat the computations above and get:
\begin{eqnarray*}
\sum\limits_{k=1}^n\gamma_{k}^2\prod\limits_{l=k+1}^n(1- a\gamma_{l}+b \gamma^2_l)&\leq&  \sum\limits_{k=1}^n\gamma_{k}^2 e^{- a \Gamma_n+a \Gamma_k + b \Gamma_n^{(2)} - b \Gamma_k^{(2)}}\\
& \leq & e^{- a \Gamma_n + b \gamma^2 \pi^2/6 } \sum\limits_{k=1}^n\gamma_{k}^2 e^{a \Gamma_k } \\
& \leq &\gamma^2 e^{ b \gamma^2 \pi^2/6} n^{- a \gamma} \sum\limits_{k=1}^n  k^{-2+a \gamma}.
\end{eqnarray*}
We then deduce that: 
$$
\sum\limits_{k=1}^n\gamma_{k}^2\prod\limits_{l=k+1}^n(1- a\gamma_{l}+b \gamma^2_l)=
\gamma^2 e^{ b \gamma^2 \pi^2/6}
\displaystyle\begin{cases}
\frac{1}{a \gamma - 1} n^{-1} \,\,\, \qquad \text{if} \qquad a \gamma > 1\\
\log n\,n^{-1} \,\,\qquad \text{if} \qquad a \gamma = 1\\
\frac{1}{1-a \gamma} n^{-a \gamma}  \qquad \text{if} \qquad a \gamma < 1
\end{cases}
$$
\hfill $\diamond$

\noindent \underline{Proof of $(iii)$:} 
We follow the same guideline: remark that $(\Gamma_n^{(1+\epsilon)})_{n \geq 1}$ is a bounded sequence and write
\begin{eqnarray*}
\sum\limits_{k=1}^n\gamma_{k+1}\prod\limits_{l=k+1}^n(1- a\gamma_{l}+b\gamma_l^{1+\epsilon})&\leq&
\sum\limits_{k=1}^n\frac{\gamma}{k+1} e^{-a \gamma \log n + a \gamma \log k + b \Gamma_n^{(1+\epsilon)}} \\
& \leq & \gamma e^{b \Gamma_{\infty}^{(1+\epsilon)}} n^{-a \gamma} \int_{1}^{n} x^{a \gamma-1} dx\\
& \leq & \frac{e^{b \Gamma_{\infty}^{(1+\epsilon)}}}{a}.
\end{eqnarray*}
\hfill $\square$
\subsection{Expectation of the supremum of the square of sub-Gaussian random variables}

We consider a sequence of independent random variables $(\xi_i)_{i \geq n}$ of $\mathbb{R}^d$ such that each coordinate satisfies a sub-Gaussian assumption $\Hsub$:
\begin{equation}\label{eq:subgaussian}
\forall \lambda \in \mathbb{R} \qquad 
\forall j \in \{1,\ldots,d\} \qquad \forall i \geq n \qquad 
\log \mathbb{E} \left[ e^{ \lambda \xi_i^j} \right] \leq \lambda^2 \frac{ \sigma^2}{2},
\end{equation}
where $\sigma^2$ is a variance factor. If  $(\gamma_k)_{k \geq n}$ is a decreasing sequence in $\ell^{2}(\mathbb{N})$, we are looking for an upper bound of:
\begin{equation}\label{eq:defsup}
m^{\star}_n =  \mathbb{E} \left[  \sup_{k \geq n} \left\{ \gamma_k^2 \|\xi_k\|^2 \right\} \right].
\end{equation}
For any $\nu>0$ and any decreasing sequence $\gamma_n \sim \gamma n^{-\nu}$, we  establish the following result (useful for Theorem \ref{theo:aspoly}).

\begin{theo}\label{theo:sup_sub}
If each coordinate $\xi^j_i$ is absolutely continuous w.r.t. the Lebesgue measure and satisfies $\Hsub$, then:
$$
m_n^{\star} \lesssim \sigma^2 d \, \gamma_n^2 \log(\gamma_n^{-2}),
$$
where $\lesssim$ refers to an inequality up to a universal constant.
\end{theo}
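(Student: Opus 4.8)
The plan is to start from the layer-cake (tail-integral) representation
\[
m_n^{\star}=\int_0^{\infty}\mathbb{P}\Big(\sup_{k\ge n}\gamma_k^2\|\xi_k\|^2>t\Big)\,dt,
\]
and to split this integral at a threshold $t_0$ of the announced order. Below $t_0$ I would simply bound the probability by $1$, which contributes exactly $t_0$; the whole strategy is then to calibrate $t_0\asymp d\sigma^2\gamma_n^2\log(\gamma_n^{-2})$ and to check that the contribution of $[t_0,+\infty)$ is of the same (or smaller) order. First I would convert the exponential-moment bound $\Hsub$ into a tail bound on the norm: the Chernoff bound applied coordinatewise to \eqref{eq:subgaussian} gives $\mathbb{P}(|\xi_k^j|>s)\le 2e^{-s^2/(2\sigma^2)}$ for every $j$, and then, via the inclusion $\{\|\xi_k\|^2>u\}\subset\bigcup_{j}\{(\xi_k^j)^2>u/d\}$ and a union bound over the $d$ coordinates,
\[
\mathbb{P}\big(\|\xi_k\|^2>u\big)\le 2d\,e^{-u/(2d\sigma^2)},\qquad u\ge0.
\]
(Only this tail estimate is used; the absolute continuity assumption serves merely to guarantee that the supremum is a well-defined random variable.)

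For the tail of the integral I would use countable subadditivity of $\mathbb{P}$ over $k\ge n$ together with Tonelli's theorem to exchange the sum and the integral, and then integrate the exponential explicitly:
\[
\int_{t_0}^{\infty}\mathbb{P}\Big(\sup_{k\ge n}\gamma_k^2\|\xi_k\|^2>t\Big)\,dt
\le\sum_{k\ge n}\int_{t_0}^{\infty}\mathbb{P}\big(\|\xi_k\|^2>t/\gamma_k^2\big)\,dt
\le 4d^2\sigma^2\sum_{k\ge n}\gamma_k^2\,e^{-t_0/(2d\sigma^2\gamma_k^2)}.
\]
Choosing $t_0:=4d\sigma^2\gamma_n^2\log(\gamma_n^{-2})$, the monotonicity of $(\gamma_k)$ gives $t_0/(4d\sigma^2\gamma_k^2)\ge t_0/(4d\sigma^2\gamma_n^2)=\log(\gamma_n^{-2})$ for every $k\ge n$. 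I would therefore split the exponent in two halves, peel off one factor $e^{-t_0/(4d\sigma^2\gamma_k^2)}\le e^{-t_0/(4d\sigma^2\gamma_n^2)}=\gamma_n^2$, and bound the residual series by the $\ell^2$ norm of the step sequence, $\sum_{k\ge n}\gamma_k^2\,e^{-t_0/(4d\sigma^2\gamma_k^2)}\le\sum_{k}\gamma_k^2=\|\gamma\|_{\ell^2}^2<+\infty$. This produces a tail contribution at most $4d^2\sigma^2\|\gamma\|_{\ell^2}^2\,\gamma_n^2$.

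Collecting the two pieces yields $m_n^{\star}\le t_0+4d^2\sigma^2\|\gamma\|_{\ell^2}^2\gamma_n^2$. Since $\gamma_n\sim\gamma n^{-\nu}\to0$, one has $\log(\gamma_n^{-2})\to+\infty$, so for $n$ large the residual term (in which $d$ enters but not the diverging logarithm) is dominated by $t_0$, and
\[
m_n^{\star}\lesssim d\,\sigma^2\,\gamma_n^2\,\log(\gamma_n^{-2}),
\]
as claimed. The delicate point is precisely the balance in the last two steps: the threshold must be taken large enough (here the factor $4$, i.e.\ a factor $\log(\gamma_n^{-2})$ in the exponent relative to the scale $\gamma_n^2$) that peeling off a factor $\gamma_n^2$ still leaves a convergent series. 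It is exactly the $\ell^2$ hypothesis on $(\gamma_k)$ that makes this residual series finite and thus prevents the supremum over the infinitely many slowly-decaying increments from blowing up; the polynomial profile $\gamma_n\sim\gamma n^{-\nu}$ intervenes only through $\log(\gamma_n^{-2})\sim2\nu\log n$, which is what aligns the leading term with the expected order $\gamma_n^2\log n$.
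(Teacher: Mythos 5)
Your proof is correct, and it takes a genuinely different route from the paper's. The paper's argument re-centers each coordinate at its median, uses the coupling of Lemma \ref{lemma:couplage} --- this is precisely where the absolute-continuity hypothesis enters, to invert the c.d.f. --- so as to dominate $|\xi_i^j|$ almost surely by a constant times a Gaussian, and then bounds $\mathbb{E}\sup_{k\ge n}\gamma_k^2\|Z_k\|^2$ for Gaussian vectors via the chaining estimate of Lemma \ref{lemma:chain}, applied with the pseudo-metric $d(i,j)=|\gamma_i^2-\gamma_j^2|$, the covering bound $N(\epsilon,\mathcal{T}_n)\sim \epsilon^{-1/(2\nu)}$, and two entropy integrals that produce the factor $\gamma_n^2\log(\gamma_n^{-2})$. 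You replace all of this by the layer-cake formula, the crude coordinatewise Chernoff-plus-union tail $\mathbb{P}(\|\xi_k\|^2>u)\le 2d\,e^{-u/(2d\sigma^2)}$, and a union bound over $k\ge n$ above the threshold $t_0\asymp d\sigma^2\gamma_n^2\log(\gamma_n^{-2})$; the calibration is exactly right, each exchange (countable subadditivity, Tonelli, the explicit exponential integral, monotonicity of $\gamma_k$) is legitimate, and the $\ell^2$ hypothesis on $(\gamma_k)$ --- which is part of the theorem's standing assumptions --- is what makes the residual series finite after you peel off the factor $\gamma_n^2$. Your route buys genuine generality: it uses only marginal tails, hence needs neither the independence of the $(\xi_i)_{i\ge n}$ (which the paper assumes, and which is delicate in the downstream application to martingale increments) nor the absolute continuity (which, as you correctly note, is only a measurability convenience, automatic for a countable supremum). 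What it costs is the constant: your final bound is $m_n^\star\le 4d\sigma^2\gamma_n^2\log(\gamma_n^{-2})+4d^2\sigma^2\|\gamma\|_{\ell^2}^2\gamma_n^2$, so the stated inequality with the prefactor $\sigma^2 d$ holds with a constant independent of $n$ only once $\log(\gamma_n^{-2})\gtrsim d\,\|\gamma\|_{\ell^2}^2$, and in particular you implicitly need $\gamma_n<1$ so that the logarithm is positive --- a caveat the paper's statement shares. Since the paper's own constant is not truly universal either (it depends on $\nu$ through the covering numbers), and since Theorem \ref{theo:aspoly} only invokes the asymptotic order $\gamma_n^2\log(\gamma_n^{-2})$, this slight loss does not affect any use made of the result.
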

We begin with a preliminary lemma.

\begin{lem}\label{lemma:couplage}
Assume that $X$ is a real random variable that satisfies $\Hsub$ with median $0$:
$$
\mathbb{P} \left(X>0\right) = \mathbb{P} \left(X<0\right) = \frac{1}{2}.
$$
Then, we can find $Y \sim \mathcal{N}(0, \sigma^2)$ on the same probability space and $c$ large enough s.t.
$$
|X| \leq c |Y| \qquad a.s.
$$
\end{lem}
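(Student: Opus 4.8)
The plan is to realize the coupling through a stochastic--domination argument. By Strassen's theorem, a coupling of $|X|$ and $|Y|$ with $|X|\le c|Y|$ almost surely exists if and only if $|X|$ is stochastically dominated by $c|Y|$, that is
$$\PE(|X|>ct)\le \PE(|Y|>t)\qquad\text{for all }t\ge0.$$
So the whole problem reduces to producing a constant $c$ for which this inequality holds for every $t$. Once this is done, Strassen (equivalently, the common quantile transform applied to the laws of $|X|$ and $|Y|$) provides $|Y|$ on the same probability space as $X$ with $|X|\le c|Y|$ a.s.; attaching to $|Y|$ an independent Rademacher sign then turns $|Y|$ into a genuine $\mathcal{N}(0,\sigma^2)$ variable $Y$ without altering $|Y|$, so the inequality $|X|\le c|Y|$ survives.

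To obtain the domination I would compare the two tails. On the one hand, the Chernoff bound applied to $\Hsub$ yields the two--sided estimate
$$\PE(|X|>s)\le 2\,e^{-s^2/(2\sigma^2)},\qquad s\ge 0,$$
obtained by optimising $\PE(X>s)\le e^{-\lambda s}\EE[e^{\lambda X}]\le e^{-\lambda s+\lambda^2\sigma^2/2}$ over $\lambda\ge0$ and using the symmetric bound for $-X$. On the other hand, the Gaussian Mills--ratio inequality gives the matching lower bound
$$\PE(|Y|>t)\ge \sqrt{\tfrac{2}{\pi}}\,\frac{t/\sigma}{1+(t/\sigma)^2}\,e^{-t^2/(2\sigma^2)}.$$
Taking $c>1$ and factoring $e^{-c^2t^2/(2\sigma^2)}=e^{-t^2/(2\sigma^2)}\,e^{-(c^2-1)t^2/(2\sigma^2)}$, the super--exponential factor beats the polynomial weight $t/\sigma$ in the Gaussian lower bound, so for some threshold $t_0=t_0(c,\sigma)$ the required inequality $2\,e^{-c^2t^2/(2\sigma^2)}\le \PE(|Y|>t)$ holds for all $t\ge t_0$ as soon as $c$ is fixed $>1$.

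The main obstacle is the complementary regime $0\le t\le t_0$, where the sub--Gaussian tail bound is vacuous. Here I would exploit the hypotheses not yet used: median zero forces $\PE(X=0)=0$, hence $\PE(|X|>0)=1=\PE(|Y|>0)$, which anchors the inequality at $t=0$, while the absolute continuity of $X$ guarantees that $|X|$ carries enough mass near the origin to match the Gaussian behaviour $\PE(|Y|>t)\to1$ as $t\downarrow0$. Concretely, on the compact range $[0,t_0]$ one bounds $\PE(|Y|>t)$ below by the positive constant $\PE(|Y|>t_0)$ and enlarges $c$ so that $\PE(|X|>ct)$ is pushed below this constant. The delicate point---and the step I expect to require the most care---is ruling out a crossing of the two tails for small $t>0$: this is exactly where absolute continuity of $X$ is essential, since the domination (and hence the whole statement) would fail for laws of $X$ that stay away from a neighbourhood of the origin. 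Combining the two ranges yields the domination for a single large $c$, and the coupling described in the first paragraph then completes the proof.
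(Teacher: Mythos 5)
Your route is, at bottom, the same as the paper's: the stochastic domination $\PP(|X|>ct)\le \PP(|Y|>t)$ for all $t\ge 0$ is exactly the tail form of the paper's inequality \eqref{eq:couple2}, and the coupling you extract from it via the common quantile transform is the very coupling the paper writes down directly ($X\sim F_X^{-1}(\mathcal{U})$, $Y\sim\Psi_{\sigma^2}^{-1}(\mathcal{U})$ with the same uniform $\mathcal{U}$; Strassen's theorem and the Rademacher sign are unnecessary detours, as you yourself note parenthetically). Your large-$t$ regime also matches the paper's: Chernoff under $\Hsub$ against a Mills-ratio lower bound for the Gaussian tail, with the super-exponential factor absorbing the polynomial weight; and the intermediate range $[\eta,t_0]$ is unproblematic for large $c$, since there $\PP(|X|>ct)\le\PP(|X|>c\eta)\to 0$ as $c\to\infty$ while $\PP(|Y|>t)\ge\PP(|Y|>t_0)>0$.

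The genuine gap is the regime $t\downarrow 0$, and the mechanism you propose there fails. For any fixed $c$, $\PP(|X|>ct)\to\PP(|X|>0)=1$ as $t\downarrow 0$, while your lower bound for the Gaussian tail on $[0,t_0]$ is the constant $\PP(|Y|>t_0)<1$; so ``enlarging $c$'' can never push $\PP(|X|>ct)$ below that constant uniformly near $t=0$. What is actually required near the origin is $\PP(|X|\le ct)\ge\PP(|Y|\le t)$, and since $\PP(|Y|\le t)\sim\sqrt{2/\pi}\,t/\sigma$, the law of $X$ must put mass of order $t$ in $[-ct,ct]$. Mere absolute continuity does not give this: a symmetric sub-Gaussian density vanishing at the origin, e.g.\ $f_X(u)\propto u^2e^{-u^2/(2\sigma^2)}$, has median $0$ yet $\PP(|X|\le ct)=O(t^3)$, so the domination --- and hence any a.s.\ coupling $|X|\le c|Y|$ --- fails for every $c$. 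The missing ingredient is positivity of the density at the median: the paper closes this step with the first-order comparison $F_X(c|y|)\ge\frac{1}{2}+cf_X(0)|y|+o(|y|)$ versus $\Psi_{\sigma^2}(|y|)=\frac{1}{2}+\frac{|y|}{\sqrt{2\pi}\,\sigma}+o(|y|)$, which works once $c$ exceeds roughly $(\sqrt{2\pi}\,\sigma f_X(0))^{-1}$ --- and which tacitly assumes $f_X(0)>0$. Your appeal to absolute continuity ``guaranteeing enough mass near the origin'' papers over exactly the hypothesis that makes the lemma true; to repair the proof you must run this local Taylor expansion and, like the paper, rely on the density being positive at the median.
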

\noindent
\textit{Proof:}

\noindent
We use a coupling argument. 
We denote $F_{X}$ as the cumulative distribution function:
$$
F_X(t) = \int_{-\infty}^t f_X(u) du= \mathbb{P}[X \leq t].
$$
Similarly, we also denote $\Psi_{\sigma^2}$ as the cumulative distribution function of a Gaussian random variable $\mathcal{N}(0,\sigma^2)$:
$$
\Psi_{\sigma^2}(t) = \int_{-\infty}^t \frac{e^{-x^2/2 \sigma^2}}{\sqrt{2 \pi} \sigma} dx= \mathbb{P}[ \mathcal{N}(0,\sigma^2) \leq t].
$$
Our assumption on the distribution on $X$ shows that the generalized inverse of $F_X$ (denoted $F_X^{-1}$) exists and 
if $\mathcal{U}$ is a uniform random variable between on $[0,1]$, then $X \sim F_X^{-1}(\mathcal{U})$.
We now consider the random variable $Y \sim  F_{\sigma^2}^{-1}(\mathcal{U})$ built with the same realization of $\mathcal{U}$.
Of course, $Y$ is distributed according to a Gaussian random variable $\mathcal{N}(0,\sigma^2)$.

We need to show that a sufficiently large $c>0$ exists such that $|X| \leq c |Y|$, that is:
\begin{equation}\label{eq:couple}
\left| F_X^{-1}(u) \right| \leq c \left| \Psi_{\sigma^2}^{-1}(u) \right|.
\end{equation}
Using the fact that $F_X$ is an increasing function, and letting $u = \Psi_{\sigma^2}(y)$, it is then equivalent to show that:
\begin{equation}\label{eq:couple2}
\forall y \in \mathbb{R} \qquad  F_X(- c |y|) \leq \Psi_{\sigma^2}(|y|) \leq F_X( c |y|) 
\end{equation}
We now study two different situations for $y$.
If $y = 0$, then Inequality \eqref{eq:couple2} holds since   the median of $X$ is $0$. If $|y| \leq \eta$ is close to $0$, the same inequality is satisfied with a first-order Taylor expansion. For example, the right hand side reads:
$$
F_X(c|y|) \sim \frac{1}{2} + \int_{0}^{c|y|} f_X(u) du \geq \frac{1}{2} + c f_X(0) |y| + o(|y|),
$$
which is greater than $\Psi_{\sigma^2}(|y|)$ for $c$ large enough. Hence, we deduce that Inequality \eqref{eq:couple2} holds around $0$.

Now, we assume that $|y| > \eta>0$, the desired upper bound \eqref{eq:couple2} is equivalent to:
$$
1-F_X(c|y|) \leq 1-\Psi_{\sigma^2}(|y|).
$$
The Chernoff bound associated with the sub-Gaussian assumption $\Hsub$ on the distribution of $X$ implies that:
$$
\mathbb{P}(X>c|y|) \leq \displaystyle e^{\inf_{\lambda >0} \left\{ \lambda^2 \sigma^2/2 - \lambda c |y|\right\}} = e^{- \frac{c^2 |y|^2}{2 \sigma^2}}.
$$
At the same time, the lower bound of the Gaussian tail is given by:
$$
1-\Psi_{\sigma^2}(c|y|) \geq \frac{e^{-|y|^2/2 \sigma^2}}{\sqrt{2 \pi} \sigma} \left[ |y|^{-1}-|y|^{-3}\right] \geq \kappa(\delta) e^{-|y|^2/2 \sigma^2},
$$
with $\kappa(\delta)$ a constant independent of $|y|\geq \delta$. Hence, the right hand side of \eqref{eq:couple2} holds for a large enough $c$ (independent on $\sigma^2$). A symmetry argument permits to conclude for the left hand side of \eqref{eq:couple2}.

\noindent
Inequality \eqref{eq:couple2} being  equivalent to \eqref{eq:couple}, the conclusion of the proof follows.
\hfill$\square$

\medskip
\noindent
We are now looking at to the proof of Theorem \ref{theo:sup_sub}.

\noindent
\textit{Proof of Theorem \ref{theo:sup_sub}:}

\noindent
 We will shift all of the coordinates  of the random variables $(\xi_{i})_{i \geq n}$ by their corresponding medians. Assuming $\Hsub$, the coordinates $(\xi_i^j)_{1 \leq j \leq d}$ are centered and have a second-order moment upper bounded by $\sigma^2$ (see \cite{Stromberg}, for example):
$$
\forall i \geq n \qquad \forall j \in \{1,\ldots,d\} \qquad \mathbb{E}[\{\xi_j^i\}^2] \leq \sigma^2.
$$
The Tchebychev inequality implies that each median $m_i^j$ of the random variables $\xi_i^j$ are bounded by:
\begin{equation}\label{eq:bound_median}
\forall i \geq n \qquad \forall j \in \{1,\ldots,d\} \qquad |m_i^j| \leq \sqrt{2}  \sigma.
\end{equation}
We then consider the centered (w.r.t. their medians) random variables:
$$
\tilde{\xi}_i^j = \xi_i^j - m_i^j,
$$
and use  the inequality $(a+b)^2 \leq 2 a^2 + 2 b^2$ together with the upper bound \eqref{eq:bound_median} to deduce that:
\begin{eqnarray*}
m_n^{\star} &=& \mathbb{E} \sup_{k \geq n} \gamma_k^2 \|\xi_k\|^2 = \mathbb{E} \sup_{k \geq n} \gamma_k^2 \sum_{j=1}^d \{\xi^j_k \}^2 \\
& \leq& 
\mathbb{E} \sup_{k \geq n} \gamma_k^2  \left[  2 \sum_{j=1}^d \{\xi^j_k - m_j^k \}^2 + 2 d  \sigma^2 \right] \\
& \leq & 2 d \sigma^2 \gamma_n^{2} + 2 \mathbb{E}  \sup_{k \geq n} \gamma_k^2 \|\tilde{\epsilon}_k\|^2.
\end{eqnarray*}
We can use Lemma \ref{lemma:couplage} and deduce that up to a multiplicative universal constant:
$$m_n^{\star} \lesssim 2d \sigma^2 \gamma_n^2 + 2 \sigma^2 \mathbb{E} \sup_{k \geq n} \gamma_k^2 \|Z_k\|^2,$$ where each $(Z_k)_{k \geq n}$ are  i.i.d. realizations of  Gaussian random variables $\mathcal{N}(0,\sigma^2 I_d)$.

We now aim to apply a chaining argument to control the supremum of the empirical process above. To apply Lemma 
\ref{lemma:chain}, we define $\mathcal{T}_n := \llbracket n ; + \infty  \llbracket$ and compute the Laplace transform of the chi-square-like random variables:
$$
\log \mathbb{E} e^{\lambda [\gamma_k^2 \|Z_k\|^2 -\gamma_j^2 \|Z_j\|^2]} =\frac{d}{2} \log \left( \frac{1-2 \lambda \gamma_j^2}{1-2 \lambda \gamma_k^2} \right)
$$
We can check that up to a universal multiplicative constant, we have:
$$
\forall \lambda \in \mathbb{R}_+ \quad \forall (a,b) \in \mathbb{R}_+ \times \mathbb{R}_+: \qquad 
\log \frac{1-a \lambda}{1-b \lambda} \lesssim \lambda |a-b| + \frac{|a-b|^2 \lambda^2}{1-\lambda |a-b|}.
$$
We are naturally driven to define the pseudo-metric on $\mathcal{T}_n$ by:
$$
\forall (i,j) \in \mathcal{T}_n^2 \qquad d(i,j) = \left| \gamma_i^2 - \gamma_j^2 \right|.
$$
It remains to upper bound the covering number of $\mathcal{T}_n$ according to $d$ for any radius $\epsilon>0$. Indeed, when $2 \gamma_n^2 \leq \epsilon$, we have $N(\epsilon, \mathcal{T}_n)=1$ although when $\epsilon \leq 2 \gamma_n^2$, we use the rough bound:
$$
N(\epsilon,\mathcal{T}_n) \leq \inf \left\{j \geq n \, : 2 \gamma_j^2 \leq \epsilon \right\}.
$$
In particular, if $\gamma_j = \gamma j^{-\nu}$, we then obtain
$$
N(\epsilon,\mathcal{T}_n) \sim \epsilon^{-1/2 \nu}.
$$
We apply Lemma \ref{lemma:chain} and obtain an upper bound for the right hand side of \eqref{eq:chain_bound}. The first term is proportionnal to $\gamma_n^2$. The  other terms lead to the computation of the two integrals (up to some universal multiplicative constants):
$$
\int_{0}^{\gamma_n^2} \sqrt{\log(\epsilon^{-1})} d\epsilon  \qquad \text{and} \qquad  \int_{0}^{\gamma_n^2}  \log(\epsilon^{-1}) d\epsilon 
$$
 The change of variable $\epsilon =e^{-x}$ and an integration by parts leads to  an upper bound whose size is $\log(\gamma_n^{-2}) \gamma_n^2$.
\hfill $\square$
\medskip
 The next Lemma, borrowed from \cite{BLM} (see Lemma 13.1, Chapter 13), provides a key estimate for the expectation of the suppremum of an empirical process indexed by a pseudo metric space $(\mathcal{T},d)$. This estimate involves the covering numbers $N(\delta,\mathcal{T})$ associated with the set $\mathcal{T}$ and the pseudo-metric $d$.

\begin{lem}\label{lemma:chain}
Let $\mathcal{T}$ be a separable metric space and $(X_t)_{t \in \mathcal{T}}$  be a collection of random variables such that for some constants $a,v,c>0$,
$$
\log \mathbb{E} e^{\lambda [X_i-X_j]} \leq a \lambda d(i,j) + \frac{v \lambda^2 d^2(i,j)}{2(1-c\lambda d(i,j)}
$$
for all $(i,j) \in \mathcal{T}^2$ and all $0 < \lambda < \{c d(i,j)\}^{-1}$. Then, for any $i_0\in \mathcal{T}$:
\begin{equation}\label{eq:chain_bound}
\mathbb{E} \sup_{i \in \mathcal{T}} [X_t - X_{i_0}] \leq 3 a \delta + 12 \sqrt{v} \int_{0}^{\delta/2} \sqrt{H(u,\mathcal{T})} du + 12 c
\int_{0}^{\delta/2}  H(u,\mathcal{T}) du 
\end{equation}
\end{lem}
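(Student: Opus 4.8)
The plan is to recognize that the hypothesis makes each recentered increment a \emph{sub-gamma} random variable and then to run a classical Dudley-type chaining argument over dyadic nets. Concretely, fix $(i,j)\in\mathcal{T}^2$ and set $Y_{i,j}=X_i-X_j-a\,d(i,j)$. The assumed bound on $\log\mathbb{E}\,e^{\lambda(X_i-X_j)}$ says exactly that $Y_{i,j}$ is sub-gamma with variance factor $v\,d^2(i,j)$ and scale factor $c\,d(i,j)$, so the standard Chernoff computation gives $\mathbb{P}(Y_{i,j}>\sqrt{2v}\,d(i,j)\sqrt{t}+c\,d(i,j)\,t)\le e^{-t}$. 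From this one obtains the elementary maximal inequality (as in \cite{BLM}): for any finite family $(Z_\ell)_{\ell\le m}$ of sub-gamma variables with common variance factor $\nu$ and scale $c$, $\mathbb{E}\max_\ell Z_\ell\le\sqrt{2\nu\log m}+c\log m$. This maximal inequality is the single analytic ingredient of the proof.

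Next I would set up the chaining. Writing $\delta=\mathrm{diam}(\mathcal{T})$, for each $k\ge 0$ I choose a minimal $\delta 2^{-k}$-net $\mathcal{T}_k$ of $\mathcal{T}$, so that $\log|\mathcal{T}_k|=H(\delta 2^{-k},\mathcal{T})$, together with a projection $\pi_k:\mathcal{T}\to\mathcal{T}_k$ with $d(t,\pi_k(t))\le\delta 2^{-k}$, and I take $\mathcal{T}_0=\{i_0\}$. Working first with an arbitrary finite subset $F\subset\mathcal{T}$ (so all suprema are genuine maxima), each $t\in F$ admits the telescoping decomposition $X_t-X_{i_0}=\sum_{k\ge 1}(X_{\pi_k(t)}-X_{\pi_{k-1}(t)})$, the sum being finite since $\pi_k(t)=t$ eventually. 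The key geometric estimate is $d(\pi_k(t),\pi_{k-1}(t))\le d(\pi_k(t),t)+d(t,\pi_{k-1}(t))\le 3\delta 2^{-k}$, while the number of \emph{distinct} links at level $k$ is at most $|\mathcal{T}_k|\,|\mathcal{T}_{k-1}|$, whose logarithm is at most $2H(\delta 2^{-k},\mathcal{T})$ because $u\mapsto H(u,\mathcal{T})$ is non-increasing.

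Then I would bound the expected supremum link by link. Applying the maximal inequality at level $k$ to the recentered increments (with $\nu=v(3\delta 2^{-k})^2$, scale $3c\,\delta 2^{-k}$ and $\log m\le 2H(\delta 2^{-k},\mathcal{T})$) and adding the deterministic drift contributions $a\,d(\pi_k,\pi_{k-1})\le 3a\delta 2^{-k}$ yields
\begin{equation*}
\mathbb{E}\max_{t\in F}[X_t-X_{i_0}]\le\sum_{k\ge 1}\left(3a\delta 2^{-k}+6\sqrt{v}\,\delta 2^{-k}\sqrt{H(\delta 2^{-k},\mathcal{T})}+6c\,\delta 2^{-k}H(\delta 2^{-k},\mathcal{T})\right).
\end{equation*}
The first series sums to $3a\delta$. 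For the other two I use that, since $H(\cdot,\mathcal{T})$ is non-increasing, $\delta 2^{-k}\sqrt{H(\delta 2^{-k},\mathcal{T})}\le 2\int_{\delta 2^{-k-1}}^{\delta 2^{-k}}\sqrt{H(u,\mathcal{T})}\,du$ and likewise for $H$ itself; the intervals $[\delta 2^{-k-1},\delta 2^{-k}]$ for $k\ge 1$ tile $(0,\delta/2]$, so summing the telescoping integrals produces $12\sqrt{v}\int_0^{\delta/2}\sqrt{H(u,\mathcal{T})}\,du$ and $12c\int_0^{\delta/2}H(u,\mathcal{T})\,du$, which is exactly \eqref{eq:chain_bound}. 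Finally, as the right-hand side does not depend on $F$ and $\mathcal{T}$ is separable, monotone convergence along an increasing sequence of finite sets exhausting a countable dense subset upgrades the bound to $\mathbb{E}\sup_{t\in\mathcal{T}}[X_t-X_{i_0}]$.

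The main obstacle I anticipate is not any individual estimate but the bookkeeping needed to reach the precise constants $3$, $12$, $12$: one must track the factor $3$ from the triangle inequality on net radii, the factor $2$ from counting pairs of net points in the step $\log m\le 2H$, and the factor $2$ from comparing the dyadic sums to the entropy integrals, all while applying the sub-gamma maximal inequality with the correct variance and scale at each scale. A secondary, routine but necessary point is justifying the finite-to-infinite passage through separability, which is what makes the supremum measurable and the telescoping decomposition legitimate.
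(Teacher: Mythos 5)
Your proof is correct, and it coincides with the argument behind the paper's own treatment: the paper does not prove this lemma but cites it as Lemma 13.1 of \cite{BLM}, whose proof is exactly your route --- recentering by the drift $a\,d(i,j)$ to get sub-gamma increments, the maximal inequality $\mathbb{E}\max_{\ell\le m}Z_\ell\le\sqrt{2\nu\log m}+c\log m$, dyadic $\delta 2^{-k}$-nets with the factor $3$ from the triangle inequality, the factor $2$ from $\log(|\mathcal{T}_k||\mathcal{T}_{k-1}|)\le 2H(\delta 2^{-k},\mathcal{T})$, and the dyadic-to-integral comparison yielding the constants $3$, $12$, $12$. Your bookkeeping at each scale (variance factor $v(3\delta 2^{-k})^2$, scale $3c\,\delta 2^{-k}$) and the separability/finite-subset limiting step all check out.
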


 \bibliographystyle{alpha}
 \bibliography{GPS_2016_new}
\vskip2cm
\hskip70mm\box5

\end{document}